\newcommand*\circled[1]{\tikz[baseline=(char.base)]{\node[shape=circle,draw,inner sep=1.0pt, minimum size=0.1cm] (char) {#1};}}
\icmltitlerunning{An Accelerated DFO Algorithm for Finite-sum Convex Functions}
\newcommand{\E}{{\mathbb E}}
\newcommand{\R}{{\mathbb{R}}}
\newtheorem{theorem}{Theorem}
\newtheorem{lemma}[theorem]{Lemma}
\newtheorem{corollary}[theorem]{Corollary}
\newtheorem*{remark}{Remark}
\begin{document}

\twocolumn[
\icmltitle{An Accelerated DFO Algorithm for Finite-sum Convex Functions}

\begin{icmlauthorlist}
\icmlauthor{Yuwen Chen}{eth}
\icmlauthor{Antonio Orvieto}{eth}
\icmlauthor{Aurelien Lucchi}{eth}
\end{icmlauthorlist}

\icmlaffiliation{eth}{Computer Science Department, ETH Z{\"u}rich, Switzerland}

\icmlcorrespondingauthor{Chen, Yuwen}{aaronchenyuwen@gmail.com}
\icmlcorrespondingauthor{Orvieto, Antonio}{antonio.orvieto@inf.ethz.ch}
\icmlcorrespondingauthor{Lucchi, Aurelien}{aurelien.lucchi@inf.ethz.ch}

\icmlkeywords{Machine Learning, ICML}

\vskip 0.3in
]



\printAffiliationsAndNotice{} 

\begin{abstract}
Derivative-free optimization (DFO) has recently gained a lot of momentum in machine learning, spawning interest in the community to design faster methods for problems where gradients are not accessible. While some attention has been given to the concept of acceleration in the DFO literature, existing stochastic algorithms for objective functions with a finite-sum structure have not been shown theoretically to achieve an accelerated rate of convergence. Algorithms that use acceleration in such a setting are prone to instabilities, making it difficult to reach convergence. In this work, we exploit the finite-sum structure of the objective in order to design a variance-reduced DFO algorithm that provably yields acceleration. We prove rates of convergence for both smooth convex and strongly-convex finite-sum objective functions. Finally, we validate our theoretical results empirically on several tasks and datasets.
\end{abstract}


\section{Introduction}

While gradient-based techniques are extremely popular in machine learning, there are applications where derivatives are too expensive to compute or might not even be accessible (black-box optimization). In such cases, an alternative is to use derivative-free methods which rely on function values instead of explicitly computing gradients. These methods date to the 1960's, including e.g.~\cite{matyas1965random,nelder1965simplex} and have recently gained more attention in machine learning in areas such as black-box adversarial attacks~\cite{chen2017zoo}, reinforcement learning~\cite{salimans2017evolution}, online learning~\cite{bubeck2012regret}, etc.

We focus our attention on optimizing finite-sum objective functions which are commonly encountered in machine learning and which can be formulated as:
\begin{equation}
\min_{x \in \R^d} \;\; \left[ f(x) := \frac{1}{n}\sum_{i=1}^n f_i(x) \right],
\label{eq:min_f}
\end{equation}
where each function $f_i: \R^d \to \R$ is convex and differentiable, but its derivatives are not directly accessible.

The problem of optimizing Eq.~\eqref{eq:min_f} has been addressed in a seminal work by~\cite{nesterov2017random} who introduced a deterministic random\footnote{The term random refers to the use of randomly sampled directions to estimate derivatives.} gradient-free method (RGF) using a two-point Gaussian random gradient estimator. The authors derived a rate of convergence for RGF for both convex and strongly-convex functions and they also introduced a variant with a provably accelerated rate of convergence.
Subsequently, ~\cite{ghadimi2013stochastic} developed a stochastic variant of RGF, proving a nearly\footnote{For a precise definition, see~\cite{ghadimi2013stochastic}.} optimal rate of convergence for convex functions.

In the field of first-order gradient-based methods, gradient descent has long been known to achieve a suboptimal convergence rate. In a seminal paper,~\citet{nesterov1983method} showed that one can construct an optimal -- i.e. accelerated -- algorithm that achieves faster rates of convergence for both convex and strongly-convex functions. Accelerated methods have attracted a lot of attention in machine learning, pioneering some popular momentum-based methods such as Adam~\cite{kingma2014adam} which is commonly used to train deep neural networks. It therefore seems natural to ask whether \textit{provably accelerated} methods can be designed in a derivative-free setting. While this question has been considered in a deterministic setting in~\cite{nesterov2017random} as well as in a stochastic setting~\cite{gorbunov2018accelerated, gorbunov2019stochastic}, none of these works provably derived an accelerated rate of convergence for the finite-sum setting presented in Eq.~\eqref{eq:min_f}.

The inherent difficulty of designing a stochastic algorithm with an accelerated rate of convergence is due to the instability of the momentum term~\cite{allen2017katyusha,orvieto2019role}. One way to reduce instabilities is to rely on stochastic variance reduction~\cite{johnson2013accelerating, defazio2014saga} which allows to achieve a linear rate of convergence for smooth and strongly convex functions in a gradient-based setting and then extended to nonconvex functions~\cite{fang2018spider,zhou2018nested}. This rate is however still suboptimal (see e.g.~\cite{lan2018optimal}) and there has been some recent effort to design an optimal variance-reduced method, including~\cite{lin2015universal, allen2017katyusha, lan2018optimal, lan2019unified}. We will build on the approach of~\cite{lan2019unified} as it relies on less restrictive assumptions than other methods (see discussion in Section~\ref{sec:related_work}). We design a novel algorithm that estimates derivatives using the Gaussian smoothing approach of~\cite{nesterov2017random} as well as the coordinate-wise approach of~\cite{ji2019improved}. We prove an accelerated rate of convergence for this algorithm in the case of convex and strongly-convex functions. Our experimental results on several datasets support our theoretical findings.


\section{Related Work}
\label{sec:related_work}
\paragraph{Momentum in gradient-based setting.}
The first accelerated proof of convergence for the deterministic setting dates back to~\citet{polyak1964some} who proved a local linear rate of convergence for Heavy-ball (with constant momentum) for twice continuously differentiable, $\tau$-strongly convex and $L$-smooth functions, with a constant of geometric decrease which is smaller than the one for gradient descent. A similar method, Nesterov's Accelerated Gradient~(NAG), was introduced by~\cite{nesterov1983method}. It achieves the optimal $\mathcal{O}(1/t^2)$ rate of convergence for convex functions and, with small modifications, an accelerated linear convergence rate for smooth and strongly-convex functions.

Prior work has shown that vanilla momentum methods lack stability in stochastic settings, where the evaluation of the gradients is affected by noise~(see e.g. motivation in~\cite{allen2017katyusha}). Various solutions have been suggested in the literature, including using a regularized auxiliary objective that enjoys a better condition number than the original objective~\cite{lin2015universal} or applying variance-reduction to obtain more stable momentum updates~\cite{allen2017katyusha, lan2019unified}.
We here build on the Varag approach presented in~\cite{lan2019unified} as it presents several advantages over prior work, including the ability to accelerate for smooth convex finite-sum problems as well as for strongly-convex problems without requiring an additional strongly-convex regularization term. Unlike Katyusha~\cite{allen2017katyusha}, Varag also only requires the solution of one, rather than two, subproblems per iteration~(discussed in~\cite{lan2019unified}). 

\vspace{-2mm}
\paragraph{Variance-reduced DFO.}
In the finite-sum setting introduced in Eq.~\eqref{eq:min_f}, variance-reduction techniques have become popular in machine learning. These techniques were originally developed for gradient-based methods and later adapted to the derivative-free setting in~\cite{liu2018stochastic} and~\cite{liu2018stochasticb}. Various improvements were later made in~\cite{ji2019improved} such as allowing for a larger constant stepsize, as well as extending the analysis of~\cite{fang2018spider} to a broader class of functions in a DFO setting. Finally, \cite{ji2019improved} introduced a coordinate-wise approach to estimate the gradients instead of the Gaussian smoothing method. This yields a more accurate estimation of the gradient at the price of a higher computational complexity. We rely on this technique to estimate the gradient at the pivot point in our analysis~(see details in Section~\ref{sec:algo_analysis}).

\vspace{-2mm}
\paragraph{Momentum in DFO.} As mentioned earlier,~\cite{nesterov2017random} proved a rate in a deterministic setting. ~\cite{gorbunov2018accelerated}, analyzes acceleration in a stochastic setting for general objective functions without explicitly exploiting any finite-sum structure~(hence, assuming finite variance). Closer to our setting, ~\cite{gorbunov2019stochastic} analyzes a stochastic momentum DFO method based on the three point estimation technique proposed in~\cite{bergou2019stochastic}. Although they do theoretically analyze the convergence of such algorithms, they only prove a suboptimal rate of convergence instead of the accelerated rate of convergence derived in our work.

\section{Background and Notation}
\label{sec:background}
In this paper we work in $\R^d$ with the standard Euclidean norm $\|\cdot\|$ and scalar product $\langle\cdot,\cdot\rangle$. Our goal, as stated in the introduction, is to minimize a convex function $f=\frac{1}{n}\sum_{i=1}^n f_i$ (with $f_i:\R^d\to\R$ for each $i=1\dots n$) without using gradient information. For our theoretical analysis, we will need the following standard assumption.
\begin{tcolorbox}
\textbf{(A1)} \ \ Each $f_i$ is convex, differentiable and $L$-smooth\footnote{for all $x,y\in\R^d$ we have $\|\nabla f_i(x)-\nabla f_i(y)\|\le L\|x-y\|$.}.\\ Hence, also $f=\frac{1}{n}\sum_{i=1}^n f_i$ is convex and $L$-smooth.
\end{tcolorbox}
To estimate gradients, we will use and combine two different gradient estimation techniques, with different properties.
\vspace{-2mm}
\paragraph{Estimation by Gaussian smoothing.} This technique was first presented by~\citet{nesterov2017random}: let $\mu$ be the smoothing parameter, then $f_\mu:\R^d\to\R$, the smoothed version of $f$, is defined to be such that for all $x\in\R^d$
  $$f_\mu(x) = \frac{1}{(2\pi)^{\frac{d}{2}}}\int_{\R^d} f(x+\mu u)e^{-\frac{1}{2}\|u\|^2} du.$$
In our setting, it is easy to see that (see discussion in the appendix) $f_\mu$ is still convex and $L$-smooth. Crucially, the integral in the definition of $f_\mu$ can be approximated by sampling random directions $u \in \R^d$ with a Gaussian distribution: $f_\mu(x) = \E_u[f(x + \mu u)]$. Note that, for $\mu\ll 1$, we have $f_\mu\approxeq f$.\\
The gradient of $f_\mu$ can be written as
\begin{align}
\nabla f_\mu(x) = \E_u \left[\frac{(f(x + \mu u) - f(x)) u}{\mu}\right]=: \E_u[g_{\mu}(x, u)]. \nonumber
\end{align}
A stochastic estimate of $g_{\mu}(x, u)$ using data-point $i$, which we denote by $g_{\mu}(x, u, i)$, can be then calculated as follows:
\vspace{-1mm}
\begin{align}
g_{\mu}(x,u,i) := \frac{f_{i}(x+\mu u)-f_{i}(x)}{\mu} u.
\label{DFO-framework-gaussian-smoothing}
\end{align}
As we will see more in more detail in the next section, this cheap estimate is not appropriate if we seek a solid approximation. Fortunately, for such a task, we can use the coordinate-wise finite difference method.
\vspace{-1mm}
\paragraph{Estimation by coordinate-wise finite difference.} This approach, introduced in~\cite{ji2019improved}, estimates $\nabla f_i (x)$ without introducing a smoothing distortion, by directly evaluating the function value in each coordinate:
 \vspace{-2mm}
\begin{align}
	g_{\nu}(x,i) = \sum_{j=1}^{d}\frac{f_{i}(x+\nu \text{e}_{j})-f_{i}(x-\nu \text{e}_{j})}{2\nu} \text{e}_{j},
\label{DFO-framework-cord-finite-difference}
\end{align}
where $\text{e}_{j}$ is the unit vector with only one non-zero entry $1$ at its $j^{th}$ coordinate. Note that, $g_{\nu}$ is $d$ times more expensive to compute compared to $g_{\mu}$. Besides, the coordinate-wise estimator of $\nabla f(x)$ is denoted as $g_{\nu}(x)$ where we remove the subscript $i$ from Eq.~\eqref{DFO-framework-cord-finite-difference}.

\section{Algorithm and Analysis}
\label{sec:algo_analysis}
The method we propose is presented as Algorithm~\ref{algorithm-VARAG} (ZO-Varag), and is an adaptation of Varag~\cite{lan2019unified} to the DFO setting. At it's core, ZO-Varag has the same structure of SVRG~\cite{johnson2013accelerating}, but profits from the mechanism of accelerated stochastic approximation~\cite{lan2012optimal} combined with the two different zero-order gradient estimators presented in the last section. We highlight some important details below:
 \vspace{-3mm}
\begin{enumerate}
\item At the beginning of epoch $s$, we compute a \textit{full} zero-order gradient $\tilde g^s$ at the \textit{pivotal point} $\tilde{x}^{s-1}$~(i.e. the approximation of the solution provided by the preceding epoch). Since the accuracy in $\tilde g^s$ drastically influences the progress made in the epoch, we choose for its approximation the coordinate-wise estimator in Eq.~\eqref{DFO-framework-cord-finite-difference}.  The estimate $\tilde g^s$ will then be used to perform $T_s$ inner-iterations and to compute the next approximation $\tilde x^s$ to the problem solution.

\item Each inner-iteration~(within an epoch) uses three sequences: $\{ x_t \}, \{ \underline x_t \}, \{ \bar x_t \}$. Each of these sequences play an important role in the acceleration mechanics~(see discussion in~\cite{lan2019unified}).
\item In the inner loop, at iteration $t$, a cheap \textit{variance-reduced gradient estimate} of $\nabla f_{\mu}(\underline{x}_t)$ is computed using the same technique as SVRG~\cite{johnson2013accelerating} combined with Gaussian smoothing~(see Eq.~\eqref{DFO-framework-gaussian-smoothing})
\begin{equation}
    G_t := g_{\mu}(\underline x_{t}, u_t, i_t) - g_{\mu}(\tilde{x}, u_t, i_t)  + \tilde{g}^s,
    \label{eq:svrg-gradient-estimation}
\end{equation}
where $u_t$ is a sample from a standard multivariate Gaussian, as required by the estimator definition and $\tilde{x}$ is the pivotal point for this inner loop (epoch $s$).
\item The choice of the additional parameters $\{T_s\}$, $\{\gamma_s\}$, $\{\alpha_s\}$, $\{p_s\}$, $\{\theta_t\}$ will be specified in the convergence theorems depending on each function class being considered (smooth, convex or strongly-convex).
\end{enumerate}
\begin{algorithm*}[ht!]
	\caption{ZO-Varag}
	\label{algorithm-VARAG}
	\renewcommand{\algorithmicoutput}{\textbf{Output:}}
	\begin{algorithmic}[H]
		\REQUIRE $x^0\in \mathbb{R}^d, \{T_s\}, \{\gamma_s\}, \{\alpha_s\}, \{p_s\}, \{\theta_t\}$.
		\STATE Set $\tilde x^0 = \bar{x}^0 = x^0$.
		\FOR {$s=1,2,\ldots, S$}
		\STATE \textbf{Option I \ :} $\tilde{x}=\tilde{x}^{s-1}$
		\STATE \textbf{Option II:} $\tilde{x}=\bar{x}^{s-1}$
		\STATE Set $x_0=x^{s-1}$, $\bar x_0 = \tilde x$.
		\STATE \textbf{Pivotal ZO gradient} $\tilde{g}^s = g_{\nu}(\tilde{x})$ using the coordinate-wise approach by Eq.~\eqref{DFO-framework-cord-finite-difference}.
		\FOR {$t=1,2,\ldots,T_s$}
		\STATE $\underline{x}_t = \big[(1+\tau \gamma_s)(1  - \alpha_s - p_s) \bar{x}_{t-1}  + \alpha_s x_{t-1}+ (1+\tau \gamma_s) p_s \tilde{x}\big]/[1+\tau \gamma_s(1-\alpha_s)]$\label{eqn:underlinex}.
		\STATE Pick $i_t\in \{1,\dots,m\}$ uniformly and generate $u_t$ from $\mathcal{N}(0, I_d)$. 
        \STATE $G_t = g_{\mu}(\underline x_{t}, u_t, i_t) - g_{\mu}(\tilde{x}, u_t, i_t)  + \tilde{g}^s$\label{eqn:estgradient}.
		\STATE $x_{t} =[\gamma_s G_t + \gamma_s \tau \underline{x}_t + x_{t-1}]/[1+\gamma_s \tau] \ \ \ \ \ \ \ \ \ \ \diamond = \arg \min_{x \in \R^d} \left\{ \gamma_s \left[\langle G_t, x \rangle + \frac{\tau}{2}\|\underline{x_t}- x \|^2\right] + \frac{1}{2}\|x_{t-1} - x\|^2 \right\}$\label{eqn:xt}.
		\STATE $\bar x_{t} =   (1 - \alpha_s-p_s) \bar x_{t-1}  + \alpha_s x_{t} + p_s \tilde x$\label{eqn:barx}.
		\ENDFOR
		\STATE Set  $x^s = x_{T_s}, \bar{x}^s = \bar{x}_{T_s}$ and $\tilde{x}^s =\sum_{t=1}^{T_s}(\theta_t \bar x_t)/\big(\sum_{t=1}^{T_s} \theta_t\big)$.
		\ENDFOR
		\OUTPUT {$\tilde{x}^{S}$}
	\end{algorithmic}
\end{algorithm*}

\subsection{Variance of the Gradient Estimators}
From our discussion above, it is clear the following \textit{error term} $\delta_t$ will heavily influence the analysis: the error in the estimation of the per-iteration direction $G_t$.
\begin{align*}
    \delta_t := G_t - \nabla f_{\mu}(\underline{x}_t)  \ \  & \text{\ \ \ (iteration gradient error).}
\end{align*}
The expectation of $\delta_t$, over $u_t, i_t$, is $e^s$ defined below:
\begin{align*}
       e^s := \tilde g^s -\nabla f_\mu(\tilde{x}) \ \ & \text{\ \ \ (pivotal gradient error).}
\end{align*}
This is different from the standard SVRG as the pivotal gradient error $e^s$ vanishes for gradient-based methods. The rest of this section is dedicated to the fundamental properties of this error.
\paragraph{Pivotal gradient error bound.} Crucially, note that $e^s$ is measured with respect to $f_\mu$ (the smoothed version of $f$). This provides consistency with $\delta_t$, at the price of a well-behaved additional error coming from the smoothing distortion. A necessary first step to start our analysis is to bound $\|e^s\|^2$ \textit{uniformly} by a problem-dependent constant $E$:
\begin{align}
    \|e^s\|^2 \le \ & \|\tilde g^s -\nabla f_\mu(\tilde x^{s-1})\|^2 \notag\\ 
    \le \ & 2 \big[\|\tilde{g}^{s}- \nabla f(\tilde{x}^{s-1})\|^2 \notag\\
    & + \|\nabla f(\tilde{x}^{s-1}) - \nabla f_{\mu}(\tilde{x}^{s-1})\|^2 \big] \notag\\
    \le \ &  2L^2d\nu^2 + \frac{\mu^2L^2(d+3)^3}{2} =: E,
    \label{definition_E}
\end{align}
where the last inequality is a combination of Lemma 3 from~\cite{ji2019improved} and Lemma 3 from~\cite{nesterov2017random}. Note that a similar inequality would not be possible by using an estimate obtained from sampling a random direction for pivotal $\tilde{g}^s$ --- as the strength of the error would depend on the gradient magnitude, i.e. cannot be uniformly bounded (see~Theorem 3 in~\cite{nesterov2017random}).

\paragraph{Iteration gradient error bound.}  Unfortunately, as ZO-Varag is a DFO algorithm, the expectation of $\delta_t$ is \textit{not vanishing}~(in contrast to standard SVRG and Varag). However, the next lemma shows that it is still possible to bound the (trace of the) variance of $G_t$.

\begin{lemma} (Variance of $G_t$)\label{VARAG-lemma-1}
Assume \textbf{(A1)}. Then, at any epoch $s\ge 1$ and iteration $1\le t\le T_s$ we have
 \vspace{-1mm}
\begin{align}
& \mathbb{E}_{u_t,i_t| \mathcal{F}_{t-1}} \big[\| G_t - \mathbb{E}_{u_t,i_t| \mathcal{F}_{t-1}}[G_t]\|^2\big] \\
&\le \ \ 18\mu^2L^2(d+6)^3 \nonumber \\
& \ \ \ \ \  + 8(d+4)L \big[f_{\mu}(\tilde{x})-f_{\mu}(\underline{x}_t) - \langle \nabla f_{\mu}(\underline{x}_t), \tilde{x} - \underline{x}_{t} \rangle \big], \nonumber
\end{align}
and 
\vspace{-3mm}
\begin{equation}
\mathbb{E}_{u_t, i_t| \mathcal{F}_{t-1}}\big[\delta_t\big] = \tilde{g}^s - \nabla f_{\mu} (\tilde{x}) \neq 0,
\vspace{-1mm}
\end{equation}
where $\mathcal{F}_t$ is the $\sigma$-algebra generated by the previous iterates in the current epoch, i.e. $\mathcal{F}_t := \{u_{t}, i_{t}, \ldots, u_{1}, i_{1} \}$, and $\tilde{x}$ is the pivotal point for the epoch $s$.

\end{lemma}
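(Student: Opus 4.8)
\emph{Proof plan.} The plan is to strip off the deterministic pivot gradient $\tilde g^s$, reduce the variance to a per-sample second-moment estimate, and then route the Gaussian-smoothing distortion so that the \emph{smoothed} objective $f_\mu$ (and its per-sample pieces $f_{i,\mu}(x):=\mathbb{E}_v[f_i(x+\mu v)]$), rather than $f$ itself, appears in the final Bregman term.

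First, since $\tilde g^s$, $\underline x_t$ and $\tilde x$ are all $\mathcal{F}_{t-1}$-measurable, conditioning on $\mathcal{F}_{t-1}$ gives $G_t-\mathbb{E}[G_t]=\Delta_t-\mathbb{E}[\Delta_t]$ with $\Delta_t:=g_\mu(\underline x_t,u_t,i_t)-g_\mu(\tilde x,u_t,i_t)$, so that $\mathbb{E}[\|G_t-\mathbb{E}[G_t]\|^2]=\mathbb{E}[\|\Delta_t\|^2]-\|\mathbb{E}[\Delta_t]\|^2\le \mathbb{E}[\|\Delta_t\|^2]$; and since $\mathbb{E}_{u_t,i_t}[g_\mu(x,u_t,i_t)]=\nabla f_\mu(x)$, evaluating at $x=\underline x_t$ and $x=\tilde x$ and adding $\tilde g^s$ yields $\mathbb{E}[\delta_t]=\tilde g^s-\nabla f_\mu(\tilde x)=e^s$, which is the second claim. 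For the first, I would condition further on $i_t$ and observe that $\Delta_t=\big(h_{i_t}(\mu u_t)-h_{i_t}(0)\big)u_t/\mu$, where $h_i(z):=f_i(\underline x_t+z)-f_i(\tilde x+z)$ is a difference of $L$-smooth functions, hence $2L$-smooth, with $\nabla h_i(0)=\nabla f_i(\underline x_t)-\nabla f_i(\tilde x)$ and, differentiating under the Gaussian integral defining $(h_i)_\mu(z):=\mathbb{E}_v[h_i(z+\mu v)]$, $\nabla (h_i)_\mu(0)=\mathbb{E}_v[\nabla h_i(\mu v)]=\nabla f_{i,\mu}(\underline x_t)-\nabla f_{i,\mu}(\tilde x)$.

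Next, a second-order Taylor expansion of $h_i$ at $0$, with remainder bounded by $L\mu^2\|u\|^2$, gives $(h_i(\mu u)-h_i(0))^2\le 2\mu^2\langle \nabla h_i(0),u\rangle^2+2L^2\mu^4\|u\|^4$, so that the Gaussian identities $\mathbb{E}_u[\langle a,u\rangle^2\|u\|^2]=(d+2)\|a\|^2$ and $\mathbb{E}_u[\|u\|^6]=d(d+2)(d+4)$ produce a per-sample bound of the form $\mathbb{E}_{u_t}[\|\Delta_t\|^2\mid i_t]\le 2(d+2)\|\nabla h_{i_t}(0)\|^2+2L^2\mu^2 d(d+2)(d+4)$. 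To trade the true gradient difference for the smoothed one, I would use $\|\nabla h_i(0)-\nabla(h_i)_\mu(0)\|\le 2L\mu\,\mathbb{E}_v\|v\|\le 2L\mu\sqrt d$ (Jensen) together with $\|a\|^2\le 2\|b\|^2+2\|a-b\|^2$, which replaces $\|\nabla h_i(0)\|^2$ by $2\|\nabla f_{i,\mu}(\underline x_t)-\nabla f_{i,\mu}(\tilde x)\|^2$ plus a term of order $L^2\mu^2 d$. Finally I would average over $i_t$ and apply co-coercivity of each convex $L$-smooth $f_{i,\mu}$, namely $\tfrac1n\sum_i\|\nabla f_{i,\mu}(\tilde x)-\nabla f_{i,\mu}(\underline x_t)\|^2\le 2L\big[f_\mu(\tilde x)-f_\mu(\underline x_t)-\langle\nabla f_\mu(\underline x_t),\tilde x-\underline x_t\rangle\big]$ (using $f_\mu=\tfrac1n\sum_i f_{i,\mu}$ and $\nabla f_\mu=\tfrac1n\sum_i\nabla f_{i,\mu}$), and collect the dimension-dependent constants via $d(d+2)(d+4)\le(d+6)^3$ to land on the stated bound (the constants $18$ and $8(d+4)$ leave some slack).

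The delicate point is this last conversion. A textbook SVRG argument bounds the variance by a Bregman divergence of $f$ (or of the individual $f_i$) centred at $\underline x_t$, whereas the lemma requires the Bregman divergence of $f_\mu$; the discrepancy between $f(\tilde x)-f(\underline x_t)-\langle\nabla f(\underline x_t),\tilde x-\underline x_t\rangle$ and its $f_\mu$-analogue contains a term $\langle\nabla f(\underline x_t)-\nabla f_\mu(\underline x_t),\tilde x-\underline x_t\rangle$ that no dimension-dependent constant can absorb in the purely convex setting. Passing the smoothing correction through $\nabla h_i$ instead — via $\nabla(h_i)_\mu(0)=\mathbb{E}_v[\nabla h_i(\mu v)]$, which gives the sharp $O(L\mu\sqrt d)$ bound on the gradient-smoothing error rather than the coarser $O(L\mu\,d^{3/2})$ one — is exactly what keeps the additive constant at the required $O(\mu^2L^2d^3)$ order once it is amplified by the $\Theta(d)$ factor intrinsic to the Gaussian estimator's variance.
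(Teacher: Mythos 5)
Your proposal is correct, and it reaches the same destination as the paper (co-coercivity of the smoothed per-sample objectives $f_{i,\mu}$, plus Gaussian moment bounds) via a genuinely different — and in fact cleaner — route. The paper converts the raw function values $f_{i_t}(\cdot)$ to their smoothed counterparts $f_{\mu,i_t}(\cdot)$ at the very start, introducing four error terms $e_1,\dots,e_4$ of size $O(\mu^2 Ld)$ (Nesterov's Theorem 1), then decomposes the integrand into four pieces and applies Nesterov's Theorem 3 (the directional-derivative bound with constant $d+4$) to the cross term $\langle\nabla f_{\mu,i_t}(\underline x_t)-\nabla f_{\mu,i_t}(\tilde x),u_t\rangle u_t$. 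You instead bundle the two finite differences into the single $2L$-smooth function $h_{i}(z)=f_i(\underline x_t+z)-f_i(\tilde x+z)$, do one Taylor expansion of $h_i$ at $0$, compute $\mathbb{E}_u[\langle a,u\rangle^2\|u\|^2]=(d+2)\|a\|^2$ exactly rather than appealing to Theorem 3, and then perform the smoothing correction \emph{at the gradient level}: $\|\nabla h_i(0)-\nabla(h_i)_\mu(0)\|\le 2L\mu\sqrt{d}$ via Jensen, which is the sharper $O(\mu\sqrt d)$ estimate rather than the $O(\mu d)$-after-dividing-by-$\mu$ estimate that falls out of the paper's function-value bookkeeping. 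This buys a slightly tighter constant in front of the Bregman divergence ($8(d+2)L$ rather than $8(d+4)L$) and a smaller additive term than $18\mu^2L^2(d+6)^3$, both of which are subsumed by the stated bound, as you note. The observation in your final paragraph about why one \emph{cannot} simply apply co-coercivity to the unsmoothed $f_i$'s and then try to repair the Bregman divergence of $f$ into that of $f_\mu$ correctly identifies the crux that both arguments must navigate.

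One point to make fully rigorous when writing this out: differentiating under the Gaussian integral to get $\nabla(h_i)_\mu(0)=\mathbb{E}_v[\nabla h_i(\mu v)]$ is justified by $L$-smoothness (so $\|\nabla h_i(\mu v)\|$ grows at most linearly in $\|v\|$ and is Gaussian-integrable), and the equality $\nabla(h_i)_\mu(0)=\nabla f_{i,\mu}(\underline x_t)-\nabla f_{i,\mu}(\tilde x)$ follows from linearity of the smoothing operator. Neither is difficult, but both should be stated.
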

Compared to Lemma 3 in \cite{lan2019unified}, the bound on the variance of the gradient in the DFO case is dimension-dependent and has an extra error $18\mu^2L^2(d+6)^3$ due to Gaussian smoothing (it comes from the fact that we also take the expectation over $u_t$).

\subsection{Analysis for Smooth and Convex Functions}
For our final complexity result in this section to hold, we need all the sequences generated by Algorithm~\ref{algorithm-VARAG} to be bounded \textit{in expectation}.

\begin{tcolorbox}
\textbf{(A2$_{\boldsymbol{\mu}}$)} \ \ Let $x_\mu^* \in \text{argmin}_{x} f_\mu(x)$ and consider the sequence of approximations $\{\tilde x^s\}$ returned by Algorithm~\ref{algorithm-VARAG}. There exists a \textit{finite} constant $Z<\infty$, potentially dependent on $L$ and $d$, such that, for $\mu$ small enough,
$$\sup_{s\ge0}\E\left[\|\tilde x^s-x_\mu^*\|\right]\le Z.$$
\end{tcolorbox}
Using an argument similar to~\cite{gadat2018stochastic}, it is possible to show that this assumption holds under the requirement that $f$ is coercive, i.e. $f(x)\to\infty$ as $\|x\|\to\infty$.
We are ready to state the main theorem of this section.
\begin{theorem}\label{VARAG-theorem-1}
	Assume \textbf{(A1)} and \textbf{(A2$_{\boldsymbol{\mu}}$)}. If we define $s_0 := \lfloor \log (d+4)n \rfloor+1$ and set $\{T_s\}$, $\{\gamma_s\}$ and $\{p_s\}$ as
	\begin{align}\label{parameter-deter-smooth1}
	T_s = \begin{cases}
	2^{s-1}, & s \le s_0\\
	T_{s_0}, & s > s_0
	\end{cases}, \ \ \ \ \
	\gamma_s = \tfrac{1}{12 (d+4)L \alpha_s}, \ \ \ \ \
	p_s = \tfrac{1}{2},
	\end{align}
	\vspace{-1.5mm}
	with
	\vspace{-5mm}
	\begin{align}\label{parameter-deter-alpha-sm}
	\alpha_s =
	\begin{cases}
	\tfrac{1}{2}, & s \le s_0\\
	\tfrac{2}{s-s_0+4},& s > s_0
	\end{cases}.
	\end{align}
	If we set
	\vspace{-3mm}
	\begin{align}\label{VARAG-def-theta-paper}
\theta_t =
\begin{cases}
\tfrac{\gamma_{s}}{\alpha_{s}} (\alpha_{s} + p_{s}) & 1 \le t \le T_s-1\\
\tfrac{\gamma_s}{\alpha_s} & t=T_s.
\end{cases}
\end{align} 
	we obtain
	\begin{align*}
	&\mathbb{E} \big[f_{\mu}(\tilde{x}^s)-f^*_{\mu}\big] \le \\
	& \ \ \ \ \ \ \begin{cases}
	\cfrac{(d+4)D_0}{2^{s+1}}  + 2\varsigma_1 +3\varsigma_2, & 1 \le s \le s_0,\\
	\cfrac{16 D_0}{n(s-s_0+4)^2} + \delta_s \cdot (\varsigma_1 +\varsigma_2),&  s > s_0,\\
	\end{cases}
	\end{align*}
	where $\varsigma_1 = \mu^2L(d+4)^2$, $\varsigma_2 = Z\sqrt{E}$, $\delta_s = \mathcal{O}(s-s_0)$ and $D_0$ is defined as
	\begin{align} \label{VARAG-def-D_0}
	D_0:= \frac{2}{(d+4)}[f_{\mu}(x^0) - f_{\mu}(x_{\mu}^*)] + 6L \|x^0-x_{\mu}^* \|^2,
	\end{align}
	\vspace{-2mm}
	where $x^*_\mu$ is any finite minimizer of $f_\mu$.
\end{theorem}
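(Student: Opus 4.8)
The plan is to transpose the Lyapunov analysis of Varag~\cite{lan2019unified} to the zero-order setting, the two genuinely new ingredients being the dimension factor carried by the estimators and — crucially — the \emph{non-vanishing} bias $e^s$, which must be absorbed using \textbf{(A2$_{\boldsymbol{\mu}}$)}. Throughout, everything is phrased in terms of $f_\mu$; the approximation $f_\mu\approx f$ and the smoothing estimates (Lemma~3 of~\cite{nesterov2017random}, Lemma~3 of~\cite{ji2019improved}) are invoked only inside the error constants.

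\textbf{Step 1 (one-iteration inequality).} Since $x_t$ in line~\eqref{eqn:xt} minimizes a $(1+\gamma_s\tau)$-strongly convex function, the three-point/prox inequality gives, for every $x\in\R^d$,
\[
\gamma_s\langle G_t, x_t - x\rangle + \tfrac{\gamma_s\tau}{2}\|\underline x_t - x_t\|^2 \le \tfrac12\|x_{t-1}-x\|^2 - \tfrac{1+\gamma_s\tau}{2}\|x_t - x\|^2 - \tfrac12\|x_{t-1}-x_t\|^2 .
\]
Writing $G_t=\nabla f_\mu(\underline x_t)+\delta_t$, I would (i) use $L$-smoothness of $f_\mu$ along the $\bar x_t$ sequence together with the identity $\bar x_t-\underline x_t=\alpha_s(x_t-x_{t-1})$ implied by lines~\eqref{eqn:underlinex} and~\eqref{eqn:barx}, turning $\langle\nabla f_\mu(\underline x_t),\bar x_t-\underline x_t\rangle$ into the descent term $f_\mu(\bar x_t)-f_\mu(\underline x_t)$ up to a $\tfrac{L\alpha_s^2}{2}\|x_t-x_{t-1}\|^2$ penalty (absorbed by the negative $\tfrac12\|x_{t-1}-x_t\|^2$ slack once $\gamma_s$ is small enough), and (ii) use convexity of $f_\mu$ at the reference points $x=x_\mu^*$ and $x=\tilde x$ to replace the remaining $\langle\nabla f_\mu(\underline x_t),\cdot\rangle$ terms by the gaps $f_\mu(\underline x_t)-f_\mu^*$ and $f_\mu(\underline x_t)-f_\mu(\tilde x)$.

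\textbf{Step 2 (controlling the noise).} Taking $\mathbb{E}_{u_t,i_t|\mathcal F_{t-1}}$, the cross term $\langle\delta_t,x_{t-1}-x\rangle$ splits into a zero-mean part (which pairs with the $\mathcal F_{t-1}$-measurable $x_{t-1}$ to vanish, leaving a $\tfrac{\gamma_s^2}{2}\mathbb E\|\delta_t-\mathbb E\delta_t\|^2$ after a Young step against the $\|x_{t-1}-x_t\|^2$ slack) and the bias part $\langle e^s,x_{t-1}-x\rangle$. For the variance I invoke Lemma~\ref{VARAG-lemma-1}: this produces the additive $18\mu^2L^2(d+6)^3$ (feeding $\varsigma_1$) and a term $8(d+4)L\,[f_\mu(\tilde x)-f_\mu(\underline x_t)-\langle\nabla f_\mu(\underline x_t),\tilde x-\underline x_t\rangle]$, which is of exactly the form already present on the right-hand side and is absorbed precisely because $\gamma_s=\tfrac{1}{12(d+4)L\alpha_s}$ leaves a constant fraction of slack. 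For the bias, Cauchy–Schwarz gives $|\langle e^s,x_{t-1}-x\rangle|\le\|e^s\|(\|x_{t-1}-x_\mu^*\|+\|x-x_\mu^*\|)$, and combining $\|e^s\|^2\le E$ from~\eqref{definition_E} with the uniform bound from \textbf{(A2$_{\boldsymbol{\mu}}$)} is what introduces $\varsigma_2=Z\sqrt E$. Collecting terms yields a per-iteration recursion of the schematic form
\[
\tfrac{\gamma_s}{\alpha_s}\mathbb E[f_\mu(\bar x_t)-f_\mu^*]+\tfrac12\mathbb E\|x_t-x_\mu^*\|^2 \le \tfrac{(1-\alpha_s-p_s)\gamma_s}{\alpha_s}\mathbb E[f_\mu(\bar x_{t-1})-f_\mu^*]+\tfrac{p_s\gamma_s}{\alpha_s}\mathbb E[f_\mu(\tilde x)-f_\mu^*]+\tfrac12\mathbb E\|x_{t-1}-x_\mu^*\|^2+\rho_s,
\]
with $\rho_s=\mathcal O(\gamma_s(\varsigma_1+\varsigma_2))$.

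\textbf{Step 3 (telescoping and the recursion across epochs).} Multiplying by $\theta_t$ from~\eqref{VARAG-def-theta-paper} and summing over $t=1,\dots,T_s$, the weights are tuned so that the $\bar x_{t-1}$/$\bar x_t$ coefficients telescope and so that $\sum_t\theta_t p_s$ combined with Jensen's inequality for $\tilde x^s=\sum_t\theta_t\bar x_t/\sum_t\theta_t$ turns the left side into $(\sum_t\theta_t)\,\mathbb E[f_\mu(\tilde x^s)-f_\mu^*]$. This gives an epoch-level inequality $\mathbb E[\mathcal L_s]\le\chi_s\,\mathbb E[\mathcal L_{s-1}]+R_s$ with $\mathcal L_s=w_s(f_\mu(\tilde x^s)-f_\mu^*)+\tfrac12\|x^s-x_\mu^*\|^2$ for suitable $w_s\asymp\gamma_s/\alpha_s$ and $R_s=\mathcal O((\varsigma_1+\varsigma_2)w_s)$. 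For $s\le s_0$, the choices $\alpha_s=p_s=\tfrac12$ and $T_s=2^{s-1}$ give $\chi_s\le\tfrac12$; unrolling produces the $\tfrac{(d+4)D_0}{2^{s+1}}$ term (with $\mathcal L_0$ bounded at $\tilde x^0=x^0$ yielding $D_0$ as in~\eqref{VARAG-def-D_0}) and a geometric error sum bounded by $2\varsigma_1+3\varsigma_2$. For $s>s_0$, $T_s$ is frozen at $T_{s_0}=\Theta((d+4)n)$ and $\alpha_s=\tfrac{2}{s-s_0+4}$; choosing $w_s\asymp(s-s_0+4)^2/((d+4)^2L)$ makes $w_s\chi_s\le w_{s-1}$, so unrolling from $s_0$ gives $\mathbb E[f_\mu(\tilde x^s)-f_\mu^*]\le\tfrac{16D_0}{n(s-s_0+4)^2}+\tfrac1{w_s}\sum_{k=s_0+1}^s R_k$, and since $\sum_{k\le s}w_k/w_s=\mathcal O(s-s_0)$ the accumulated error is $\delta_s\cdot(\varsigma_1+\varsigma_2)$ with $\delta_s=\mathcal O(s-s_0)$.

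\textbf{Main obstacle.} The genuinely new difficulty relative to Varag is the persistent bias $e^s$: it does not telescope and must be tamed through \textbf{(A2$_{\boldsymbol{\mu}}$)}, and one must check that its cumulative contribution — growing only like $\mathcal O(s-s_0)$ because it is scaled by the tiny constants $\varsigma_1,\varsigma_2$ and by $\gamma_s$ — stays dominated by the $\Theta(1/(s-s_0)^2)$ main term. A secondary, more mechanical point is bookkeeping the factor $(d+4)$: it forces $\gamma_s=\Theta(1/((d+4)L\alpha_s))$ instead of $\Theta(1/(L\alpha_s))$, which is exactly why the crossover epoch is $s_0=\lfloor\log((d+4)n)\rfloor+1$ and why $d$ appears in $D_0$ and in the pre-factors, and one must verify that the variance bound of Lemma~\ref{VARAG-lemma-1} is indeed absorbed with the constant $12$ rather than a tighter one.
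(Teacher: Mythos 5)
Your high-level strategy (prox inequality for $x_t$, absorbing the variance of $G_t$ via Lemma~\ref{VARAG-lemma-1} and the choice $\gamma_s=\tfrac{1}{12(d+4)L\alpha_s}$, $\theta_t$-weighted telescoping over the inner loop, then the $w_s=\mathcal{L}_s-\mathcal{R}_{s+1}\ge0$ argument across epochs) matches the paper's. Minor point: the stochastic cross term produced by $G_t=\nabla f_\mu(\underline{x}_t)+\delta_t$ is $\langle\delta_t,\,x_t-x\rangle$, not $\langle\delta_t,\,x_{t-1}-x\rangle$; one must split $x_t-x=(x_t-x_{t-1}^+)+(x_{t-1}^+-x)$ and only then does the $\mathcal{F}_{t-1}$-measurability argument apply to the second piece.

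The genuine gap is in how you handle the bias term. You bound it per iteration by
$|\langle e^s, x_{t-1}-x_\mu^*\rangle|\le\|e^s\|\cdot\|x_{t-1}-x_\mu^*\|$
and then invoke \textbf{(A2$_{\boldsymbol{\mu}}$)}. But \textbf{(A2$_{\boldsymbol{\mu}}$)} only guarantees $\sup_s\E\|\tilde x^s-x_\mu^*\|\le Z$, i.e.\ boundedness of the \emph{pivotal} iterates; it says nothing about the inner-loop iterates $x_t$, $\bar x_t$, $\underline{x}_t$. So your per-iteration Cauchy--Schwarz cannot be closed under \textbf{(A2$_{\boldsymbol{\mu}}$)} as stated — what you implicitly need is the stronger assumption \textbf{(A2$_{\boldsymbol{\nu}}$)}, which the paper only imposes for the coordinate-wise variant of Section~\ref{sec:coordinate-wise}. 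The paper's proof avoids this by \emph{not} bounding the bias per iteration: after taking conditional expectations it is left with $\sum_{t=1}^{T_s}\alpha_s\E[\langle e^s,x_t-x\rangle]$, and the key algebraic step (Eq.~\eqref{VARAG-pivotal-error}) rewrites this sum — using the update $\bar x_t=(1-\alpha_s-p_s)\bar x_{t-1}+\alpha_s x_t+p_s\tilde x$ and the definition of $\tilde x^s$ as a $\theta_t$-weighted average of $\bar x_t$ — into $\E[\langle e^s,\,\mathcal{L}_s(\tilde x^s-x)-\mathcal{R}_s(\tilde x^{s-1}-x)\rangle]$, which involves \emph{only} pivotal points. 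Only then is Cauchy--Schwarz and \textbf{(A2$_{\boldsymbol{\mu}}$)} applied, giving the $(\mathcal{L}_s+\mathcal{R}_s)Z\sqrt E$ term of Lemma~\ref{VARAG-lemma-4}. To repair your argument under the assumption actually in force, you need to discover this re-summation identity (or strengthen the assumption to inner-iterate boundedness, which changes the theorem's hypotheses). The rest of your Step~3 (the $w_s\ge0$ unrolling, the $\mathcal O(s-s_0)$ growth of the error, the constants in $D_0$) is in line with the paper's Lemmas~\ref{VARAG-lemma-3} and~\ref{VARAG-lemma-4}.
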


Compared to the gradient-based analysis of~\cite{lan2019unified}, \textit{two additional errors terms appear} because of the DFO framework: $\varsigma_1$ is the error due to the Gaussian smooth estimation and $\varsigma_2$ is an error due to the approximation made at the pivot point. It is essential to note that, in the bound for $s>s_0$, the error $\delta_s (\varsigma_1 +\varsigma_2)$ grows linearly with the number of epochs. In Corollary~\ref{VARAG-corollary-1} we show how it is possible to tune our zeroth-order estimators to make these errors vanish by choosing sufficiently small smoothing parameters $\mu$ and $\nu$, with an argument similar to the one used in Theorem 9 from~\citet{nesterov2017random}.

Based on Theorem~\ref{VARAG-theorem-1}, we obtain the following complexity bound.
\begin{tcolorbox}
\begin{corollary}\label{VARAG-corollary-1}
Assume \textbf{(A1)} and \textbf{(A2$_{\boldsymbol{\mu}}$)}. The total number $\bar{N}_\epsilon$ of function queries performed by Algorithm~\ref{algorithm-VARAG} to find a stochastic $\epsilon$-solution, i.e. a point $\bar{x} \in \R^d$ s.t. $\mathbb{E} [f(\bar{x})-f^*]\le \epsilon$,  can be bounded by
\begin{align*}
\bar{N}_\epsilon := \begin{cases}
\mathcal{O}\left\{  dn \log \tfrac{d D_0}{\epsilon} \right\}, & n \ge D_0/\epsilon\\
{\cal O} \left\{dn \log dn + d\sqrt{\tfrac{n D_0}{\epsilon}} \right\}, &  n < D_0/\epsilon.
\end{cases}
\end{align*}
\end{corollary}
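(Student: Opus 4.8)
The plan is a two-stage reduction from Theorem~\ref{VARAG-theorem-1}: first fix the number of epochs $S$ from the \emph{smoothing-free} part of the bound alone, and only afterwards choose the radii $\mu,\nu$ small enough to push all residual error terms below the target accuracy. Recall the smoothing estimate of~\citet{nesterov2017random}, $0\le f_\mu(x)-f(x)\le \mu^2 L d/2$, which gives $f(\tilde x^S)-f^*\le\big(f_\mu(\tilde x^S)-f_\mu^*\big)+\mu^2 L d/2$; hence it suffices to make $\mathbb{E}\big[f_\mu(\tilde x^S)-f_\mu^*\big]\le\epsilon/2$ and $\mu^2 L d/2\le\epsilon/2$. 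The structural fact that makes this decoupling legitimate is that the schedules $\{T_s\},\{\gamma_s\},\{\alpha_s\},\{p_s\},\{\theta_t\}$ in Theorem~\ref{VARAG-theorem-1} do not depend on $\mu$ or $\nu$, while $\varsigma_1=\mu^2L(d+4)^2$ and $\varsigma_2=Z\sqrt{E}$, with $E=2L^2d\nu^2+\mu^2L^2(d+3)^3/2$, both tend to $0$ as $\mu,\nu\to0$.

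\paragraph{Epoch count and error cancellation.}
If $n\ge D_0/\epsilon$, the regime $s\le s_0$ of Theorem~\ref{VARAG-theorem-1} already suffices: $(d+4)D_0/2^{s+1}\le\epsilon/4$ once $s\ge\log\big((d+4)D_0/\epsilon\big)-1$, and since $D_0/\epsilon\le n$ this bound on $s$ lies below $s_0=\lfloor\log((d+4)n)\rfloor+1$, so $S=\mathcal{O}\big(\log(dD_0/\epsilon)\big)$ and every epoch falls in the first regime. If $n<D_0/\epsilon$, after the $s_0$ warm-up epochs the second regime gives $16D_0/\big(n(s-s_0+4)^2\big)\le\epsilon/4$ once $s-s_0=\mathcal{O}\big(\sqrt{D_0/(n\epsilon)}\big)$, so $S=s_0+\mathcal{O}\big(\sqrt{D_0/(n\epsilon)}\big)$ with $s_0=\mathcal{O}(\log(dn))$. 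With $S$ now fixed, choose $\mu,\nu$ small enough that $\mu^2 L d/2\le\epsilon/4$ and that the residual error term of Theorem~\ref{VARAG-theorem-1} at index $S$ --- namely $2\varsigma_1+3\varsigma_2$ in the first regime, or $\delta_S(\varsigma_1+\varsigma_2)$ with $\delta_S=\mathcal{O}(S-s_0)$ a now-fixed finite number in the second --- is $\le\epsilon/4$; this is possible precisely because $S$, hence $\delta_S$, is already pinned down and $\varsigma_1,\varsigma_2\to0$, mirroring Theorem~9 of~\citet{nesterov2017random}. Combining, $\mathbb{E}[f(\tilde x^S)-f^*]\le\epsilon$, so $\bar x=\tilde x^S$ is a stochastic $\epsilon$-solution.

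\paragraph{Query count.}
Epoch $s$ costs $2dn$ function evaluations to form the coordinate-wise pivotal gradient $g_\nu(\tilde x)$ (two evaluations per coordinate, per summand $f_i$), plus a constant number of evaluations for each of the $T_s$ inner iterations (each $G_t$ evaluates $f_{i_t}$ at four points), i.e. $\mathcal{O}(T_s)$ in total; thus $\bar N_\epsilon=\mathcal{O}\big(\sum_{s=1}^S(dn+T_s)\big)$. Using $T_s=2^{s-1}$ for $s\le s_0$ and $T_s=2^{s_0-1}\le(d+4)n$ for $s>s_0$, one has $\sum_{s\le s_0}T_s\le 2^{s_0}=\mathcal{O}(dn)$ and $\sum_{s_0<s\le S}T_s=\mathcal{O}\big((S-s_0)dn\big)$. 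In the case $n\ge D_0/\epsilon$ all epochs lie in the first regime and $2^S=\mathcal{O}\big((d+4)D_0/\epsilon\big)=\mathcal{O}(dn)$, so $\bar N_\epsilon=\mathcal{O}(dnS+2^S)=\mathcal{O}\big(dn\log(dD_0/\epsilon)\big)$. In the case $n<D_0/\epsilon$, substituting $S=\mathcal{O}(\log(dn))+\mathcal{O}\big(\sqrt{D_0/(n\epsilon)}\big)$ and $S-s_0=\mathcal{O}\big(\sqrt{D_0/(n\epsilon)}\big)$ yields $\bar N_\epsilon=\mathcal{O}\big(dn\log(dn)+dn\sqrt{D_0/(n\epsilon)}\big)=\mathcal{O}\big(dn\log(dn)+d\sqrt{nD_0/\epsilon}\big)$.

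\paragraph{Main obstacle.}
The delicate step is the decoupling itself: one must argue that $S$ is governed purely by the smoothing-free terms, which needs (i) the epoch/step-size schedule to be independent of $\mu,\nu$ (true by construction) and (ii) $D_0$ --- defined through $f_\mu$ --- to remain uniformly bounded as $\mu\to0$ (it converges, since $x_\mu^*\to$ a minimizer of $f$ and $f_\mu\to f$ locally uniformly), so that the computed $S$ is genuinely finite and $\mu,\nu$-independent. Granting this, absorbing the \emph{linearly-growing-in-epochs} error $\delta_s(\varsigma_1+\varsigma_2)$ of the second regime is routine --- it is killed by shrinking $\mu,\nu$ \emph{after} $S$ is fixed --- and what remains is bookkeeping on the $2^{s_0}\approx(d+4)n$ geometric structure of $\{T_s\}$.
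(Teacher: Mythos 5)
Your proposal is correct and follows essentially the same route as the paper: use $f(\tilde x^S)-f^*\le\big(f_\mu(\tilde x^S)-f_\mu^*\big)+\mathcal{O}(\mu^2Ld)$, fix the epoch count $S$ purely from the $D_0$-terms of Theorem~\ref{VARAG-theorem-1} (the budget split is the paper's $\epsilon/4+\epsilon/2+\epsilon/4$ up to a harmless relabeling), then shrink $\mu,\nu$ \emph{a posteriori} to kill the residual $\varsigma_1,\varsigma_2,\delta_S$ terms, and finally count $\mathcal{O}(dn+T_s)$ queries per epoch using $2^{s_0}=\Theta(dn)$. The one small refinement you add that the paper only implicitly assumes is the observation that $D_0$ --- defined through $f_\mu$ --- stays uniformly bounded as $\mu\to 0$, which is needed to justify treating $S$ as $\mu$-independent.
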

\end{tcolorbox}

The reasoning behind the proof is quite standard in the DFO literature~\cite{nesterov2017random}, yet it contains some important ideas. Hence we include a proof sketch in order to give some additional intuition to the reader, who might wonder how to control the error terms from the theorem. Details can be found in the appendix.
\begin{proof}[Proof sketch]
The procedure consists in deriving three bounds, that combined give the desired suboptimality $\epsilon$:
\vspace{-4mm}
\begin{enumerate}[wide, labelwidth=1pt, labelindent=1pt]
    \itemsep0em
    \item In general, $x_\mu^*\ne x^*\in\text{argmin}_x f(x)$. Yet, Theorem~\ref{VARAG-theorem-1} gives us a procedure to approximate $x_\mu^*$. Hence we have to show that $f_{\mu}(\tilde{x}^s) - f_{\mu}(x_{\mu}^*)$ and $ f(\tilde{x}^s) - f(x^*)$ are close enough.
    In particular, we have $f_{\mu}(\tilde{x}^s) - f_{\mu}(x_{\mu}^*) \ge f(\tilde{x}^s) - f(x^*) - \mu^2 Ld$, directly from Theorem 1 in~\cite{nesterov2017random}. Hence, we get the following sufficient condition: $\frac{\epsilon}{4}\ge {\mu^2Ld}$. Therefore, the desired bound holds for $\mu^2\le\frac{\epsilon}{4Ld}$. Since $\mu$ is a design parameter, \textit{which does not affect the convergence speed but just the error}, we can choose it small enough so that this requirement is satisfied.
    \item Next, assume $\varsigma_1 = \varsigma_2 = 0$ --- we will deal with these terms at the end of the proof. We can then follow the proof of Theorem 1 in~\cite{lan2019unified}, but with the requirement of $\frac{\epsilon}{2}$ accuracy. This gives us the desired number $\bar N_\epsilon$ of function queries, which correspond to $\bar s_\epsilon$ epochs. 
    \item Last, we spend the last $\frac{\epsilon}{4}$ accuracy to bound the error terms, now that we know we need to be running the algorithm only for $\bar s_\epsilon$ epochs. First, we group together the error terms in $\varsigma = (1+\delta_{\bar s_\epsilon})(\varsigma_1 + \varsigma_2)$. We recall that, by Eq.~\eqref{definition_E}, $(\varsigma_1 + \varsigma_2) \propto \mu^2 + \sqrt{\mu^2 + \nu^2}$. Hence, again as for the first point of this proof, we can choose $\mu$ and $\nu$ small enough such that $\varsigma\le\frac{\epsilon}{4}$. Note that it is exactly in this step that we need \textbf{(A2$_{\boldsymbol{\mu}}$)}.
\end{enumerate}
 \vspace{-1mm}
Hence, we can reach accuracy $\epsilon = \frac{\epsilon}{4} + \frac{\epsilon}{2} + \frac{\epsilon}{4}.$
\end{proof}
We make two important remarks.
\begin{remark}[Error terms]
As we discussed in the proof of Corollary~\ref{VARAG-corollary-1}, smaller smoothing parameters yields smaller additional errors. Thus, in line with the previous literature~\cite{nesterov2017random,ji2019improved} we can choose the smoothing parameters $\mu,\nu$ arbitrarily small as long as they are less than the upper bounds derived in "Proof of Corollary~\ref{VARAG-corollary-1}" in the appendix. Theoretically, $\mu,\nu$ relies on a good estimation of $Z$ in A2$_{\mu}$. However, from a more practical side, we note in our experimental results in Section~\ref{sec:experiments} that the worst-case guarantees are not necessarily tight since we do not observe any significant error accumulation.
\end{remark}

\vspace{-2mm}
\begin{remark}[Dependency on the problem dimension] 
The overall dependency of $\bar N_\epsilon$ on the problem dimension is $\mathcal{O}(d\log(d))$. This complexity is comparable\footnote{As an interesting side-note, if $d$ is the ratio between the diameter of the universe and the diameter of a proton (i.e. $\approx 10^{42}$), we have $\log(d)<100$.} to the usual $\mathcal{O}(d)$ found in the classical literature~\cite{nesterov2017random,ghadimi2013stochastic}.
\end{remark}

We conclude by observing that, in the case $n \ge D_0/\epsilon$, Algorithm~\ref{algorithm-VARAG} achieves a linear rate of convergence when the desired accuracy is low ($\epsilon$ has a large value) and/or $n$ is large. In the other case $n < D_0/\epsilon$ (i.e. high accuracy), Algorithm~\ref{algorithm-VARAG} achieves \textit{acceleration}.

\subsection{Analysis for Smooth and Strongly-convex Functions}

We now analyze the case where $f$ is strongly-convex.

\begin{tcolorbox}
\textbf{(A3)} \ \ $f=\frac{1}{n}\sum_{i=1}^n f_i$ is $\tau$-strongly convex. That is, for all $x,y\in\R^d, \ f(y)\ge f(x)+\langle\nabla f(x),y-x\rangle +\frac{\tau}{2}\|y-x\|^2$.
\end{tcolorbox}

For this case, we do not need \textbf{(A2$_{\boldsymbol{\mu}}$)} since we will leverage on strong convexity~(which implies coercivity) to include $Z$ directly into our analysis.

\begin{theorem}\label{VARAG-theorem-2}
	Assume \textbf{(A1)} and \textbf{(A3)}. Let us denote $s_0 := \lfloor \log (d+4)n\rfloor+1$ and assume that the
	weights $\{\theta_t\}$ are set to Eq.~\eqref{VARAG-def-theta-paper} if $1\le s \le s_0$. Otherwise, they are set to
	\vspace{-1mm}
	\begin{align}\label{VARAG-def-theta-2}
	\theta_t =
	\begin{cases}
	\Gamma_{t-1} - (1 - \alpha_s - p_s) \Gamma_{t}, & 1 \le t \le T_s-1,\\
	\Gamma_{t-1}, & t = T_s,
	\end{cases}
	\end{align}
	where $\Gamma_t= \big(1+\frac{\tau\gamma_s}{2}\big)^t$.
	If the parameters $\{T_s\}$, $\{\gamma_s\}$ and $\{p_s\}$ are set to Eq.~\eqref{parameter-deter-smooth1} with 
	\begin{align}\label{parameter-deter-alpha-unified}
	\alpha_s =
	\begin{cases}
	\tfrac{1}{2}, & s \le s_0,\\
	\min\{\sqrt{\frac{n \tau}{24L}}, \tfrac{1}{2}\},& s > s_0,
	\end{cases}
	\end{align}
	we obtain
	\begin{align*}
	& \mathbb{E} \big[f_{\mu}(\tilde{x}^{s})-f_{\mu}^{*}\big] \le \\
	& \begin{cases}
	\cfrac{1}{2^{s+1}} (d+4)D_0 + 2\varsigma_1 + 0.5\varsigma_2, & \ \ \ \ \ 1 \le s \le s_0\\
	& \\
	 (4/5)^{s-s_0}\cfrac{D_0}{n}+ 12\varsigma_1 + 5\varsigma_2, &\ \ \ \ \ s > s_0 \text{ and } n \ge \frac{6L}{\tau} \\
	& \\
	  \left(1+\frac{1}{4} \sqrt{\frac{n\tau}{6L}}\right)^{-(s-s_0)} \cfrac{D_0}{n}  & \ \ \ \ \ s > s_0 \text{ and } n < \frac{6L}{\tau} \\
	\ \ \ \ \ \ + \Big(8\sqrt{\frac{6L}{n\tau}}+4\Big)\varsigma_1 + 5\varsigma_2,
	\end{cases}
	\end{align*}
where $\varsigma_1 = \mu^2L(d+4)^2$, $\varsigma_2 = E/\tau$ and $D_0$ is defined as in Eq.~\eqref{VARAG-def-D_0}.
\end{theorem}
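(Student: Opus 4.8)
\textbf{Proof proposal for Theorem~\ref{VARAG-theorem-2}.}

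The plan is to mirror the strongly-convex analysis of Varag in~\cite{lan2019unified}, tracking carefully the two extra DFO error terms $\varsigma_1$ (Gaussian-smoothing bias) and $\varsigma_2$ (pivotal-point error $E/\tau$) that did not appear there. First I would establish a one-iteration progress bound for the inner loop. Using the definitions of $\underline{x}_t$, $x_t$, $\bar{x}_t$ in Algorithm~\ref{algorithm-VARAG} and the optimality condition marked with $\diamond$, I would derive the standard three-point/prox inequality relating $f_\mu(\bar{x}_t)-f_\mu(x)$ to $\|x_{t-1}-x\|^2-(1+\tau\gamma_s)\|x_t-x\|^2$, the gap term $f_\mu(\tilde{x})-f_\mu(\underline{x}_t)-\langle\nabla f_\mu(\underline{x}_t),\tilde{x}-\underline{x}_t\rangle$, and an inner product $\langle\delta_t, x-x_t\rangle$ with the gradient error. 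Here Lemma~\ref{VARAG-lemma-1} is the workhorse: taking conditional expectation, the variance term of $G_t$ is controlled by $18\mu^2L^2(d+6)^3$ plus $8(d+4)L$ times that same gap term, which is exactly why the stepsize choice $\gamma_s = \frac{1}{12(d+4)L\alpha_s}$ makes the gap contributions cancel (leaving a bit of slack), while the nonzero mean $\E[\delta_t]=\tilde g^s-\nabla f_\mu(\tilde{x})=e^s$ produces a residual term $\langle e^s, x-x_t\rangle$ that must be carried along and ultimately bounded via $\|e^s\|\le\sqrt{E}$ together with the boundedness of the iterates.

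Next I would telescope over $t=1,\dots,T_s$ within an epoch, weighting by $\theta_t$ as in Eq.~\eqref{VARAG-def-theta-2} with $\Gamma_t=(1+\tau\gamma_s/2)^t$. The weights are designed so that the $\bar x_{t-1}$ terms telescope and the $\|x_{t-1}-x_\mu^*\|^2$ terms form a contracting geometric sum with ratio governed by $(1+\tau\gamma_s/2)$; the outer-epoch recursion on the potential $\mathcal{L}^s := \E[f_\mu(\tilde x^s)-f_\mu^*] + (\text{multiple of } L)\,\E\|x^s-x_\mu^*\|^2$ then reads, schematically, $\mathcal{L}^s \le (\text{contraction factor})\cdot\mathcal{L}^{s-1} + (\text{accumulated DFO error})$. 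Using $\tilde g^s = g_\nu(\tilde x^{s-1})$ and Eq.~\eqref{definition_E} gives $\|e^s\|^2\le E$ uniformly, so the per-epoch error contribution from the $\langle e^s,\cdot\rangle$ terms is bounded by $O(E/\tau) = O(\varsigma_2)$ after using strong convexity to absorb the linear-in-distance term (this is exactly where, unlike in Theorem~\ref{VARAG-theorem-1}, strong convexity replaces the need for \textbf{(A2$_{\boldsymbol\mu}$)} — the distance $\|\tilde x^{s-1}-x_\mu^*\|$ is controlled by the potential itself), while the Gaussian-smoothing variance term $18\mu^2L^2(d+6)^3$ contributes $O(\mu^2 L (d+4)^2) = O(\varsigma_1)$ after multiplication by $\gamma_s$ and the $\theta_t$-normalization. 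The three cases in the statement then come from: (i) $s\le s_0$, where $\alpha_s=p_s=1/2$, $T_s=2^{s-1}$ and the recursion halves $D_0(d+4)$ each epoch; (ii) $s>s_0$, $n\ge 6L/\tau$, where $\alpha_s=1/2$ forces a fixed contraction $4/5$ per epoch (the constant $4/5$ emerging from $T_{s_0}\approx (d+4)n$ inner steps with ratio $(1+\tau\gamma_s/2)^{-T_{s_0}}$); and (iii) $s>s_0$, $n<6L/\tau$, where $\alpha_s=\sqrt{n\tau/24L}$ yields the accelerated factor $(1+\frac14\sqrt{n\tau/6L})^{-1}$ per epoch.

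The main obstacle I expect is the careful bookkeeping of the nonzero-mean error term: unlike classical SVRG/Varag where $\E[\delta_t]=0$ kills the cross term, here every inner iteration leaks $\langle e^s, x_\mu^*-x_t\rangle$, and I must show these leaks, summed with the $\theta_t$ weights and then across epochs, stay at the level $O(\varsigma_2)$ rather than blowing up — in the strongly convex regime this works because the geometric weighting and the strong-convexity term $\frac{\tau}{2}\|x_t-x_\mu^*\|^2$ let me apply Young's inequality $\langle e^s, x_\mu^*-x_t\rangle \le \frac{1}{2\tau}\|e^s\|^2 + \frac{\tau}{2}\|x_\mu^*-x_t\|^2$ and absorb the quadratic part, which is precisely what fails in the merely-convex case and why there the error $\delta_s(\varsigma_1+\varsigma_2)$ grows with $s$. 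A secondary technical point is verifying that the stepsize $\gamma_s=\frac{1}{12(d+4)L\alpha_s}$ leaves enough slack after Lemma~\ref{VARAG-lemma-1} (whose constant is $8(d+4)L$, not $6(d+4)L$ as in~\cite{lan2019unified}) to still close the recursion with the stated constants; I would check that $8(d+4)L\gamma_s = \frac{2}{3\alpha_s}\le\frac{4}{3}$ suffices in the relevant $\alpha_s\le 1/2$ range. Once the epoch recursion is in hand, the three displayed bounds follow by unrolling the geometric recursion and crude constant-chasing, and the conversion from $f_\mu$ to $f$ (and hence to a genuine $\epsilon$-solution) is deferred to the corresponding corollary exactly as in the convex case.
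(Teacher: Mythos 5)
Your overall plan matches the paper's proof very closely: the paper indeed proceeds through a per-iteration inequality (its Lemma~\ref{VARAG-lemma-3}), then an epoch-level recursion (Lemmas~\ref{VARAG-lemma-5} through~\ref{VARAG-lemma-8} in the appendix), with Lemma~\ref{VARAG-lemma-1} supplying the variance bound and the nonzero-mean pivotal error handled by a Young/strong-convexity absorption exactly as you describe. However, the specific Young's inequality you write down,
\[
\langle e^s, x_\mu^*-x_t\rangle \le \tfrac{1}{2\tau}\|e^s\|^2 + \tfrac{\tau}{2}\|x_\mu^*-x_t\|^2,
\]
when multiplied by $\gamma_s$, consumes the entire strong-convexity budget $\tfrac{\tau\gamma_s}{2}\|x_t-x_\mu^*\|^2$ that appears in the prox coefficient $\tfrac{1+\tau\gamma_s}{2}$. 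That leaves a bare $\tfrac{1}{2}\|x_t-x_\mu^*\|^2$ on the left-hand side — enough for the telescoping in the $s\le s_0$ case, but it destroys the geometric contraction you need for $s>s_0$, since you have nothing left to match the $\Gamma_t=(1+\tau\gamma_s/2)^t$ weighting. The paper introduces a free splitting parameter $c\in(0,1]$ precisely for this reason:
\[
-\gamma_s\langle e^s, x_t-x_\mu^*\rangle - \tfrac{c\tau\gamma_s}{2}\|x_t-x_\mu^*\|^2 \le \tfrac{\gamma_s}{2c\tau}\|e^s\|^2,
\]
taking $c=1$ for $s\le s_0$ (your version) but $c=\tfrac12$ for $s>s_0$, so that a residual $\tfrac{\tau\gamma_s}{4}\|x_t-x_\mu^*\|^2$ remains available and the per-step coefficient becomes $\tfrac{1+\tau\gamma_s/2}{2}$, consistent with $\Gamma_t$. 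Your proposal as written would not close the $s>s_0$ recursion without this change, and the cost of the pivotal error doubles from $\tfrac{\gamma_s}{2\tau}E$ to $\tfrac{\gamma_s}{\tau}E$, which is where the constants in $\varsigma_2$ come from.

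A secondary, minor issue: your slack check $8(d+4)L\gamma_s=\tfrac{2}{3\alpha_s}\le\tfrac43$ has the inequality reversed (for $\alpha_s\le\tfrac12$ the quantity is $\ge\tfrac43$), and in any case the relevant constraint from the paper is Eq.~\eqref{VARAG-lemma-3-assump-2}, which involves the product $4(d+4)L\alpha_s\gamma_s=\tfrac13$ — $\alpha_s$-independent because $\alpha_s\gamma_s$ is fixed at $\tfrac{1}{12(d+4)L}$. The constraint $p_s-\tfrac{1/3}{1+\tau\gamma_s-1/12}\ge 0$ holds for $p_s=\tfrac12$, so the factor of $8$ (rather than $6$) from Lemma~\ref{VARAG-lemma-1} is indeed compatible, but the quantity you isolate isn't the one that needs verifying.
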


\begin{remark}
Unlike the result in Theorem~\ref{VARAG-theorem-1} for convex functions, the error term in Theorem~\ref{VARAG-theorem-2} for the strongly-convex case does not increase with the epoch $s$. This is consistent with Theorem 9 in~\cite{nesterov2017random}.
\end{remark}

Using the same technique as for the proof of Corollary~\ref{VARAG-corollary-1}, we get the following complexity bound.

\begin{tcolorbox}
\begin{corollary}\label{VARAG-corollary-2}
	Assume \textbf{(A1)} and \textbf{(A3)}. The total number $\bar{N}_\epsilon$ of function queries performed by Algorithm~\ref{algorithm-VARAG} to find a stochastic $\epsilon$-solution, i.e. a point $\bar{x} \in \R^d$ s.t. $\mathbb{E} [f(\bar{x})-f^*]\le \epsilon$,  can be bounded by
	\begin{align*}
	\bar N_\epsilon := 
	\begin{cases}
	\mathcal{O}\big\{dn\log \big(\frac{d D_0}{\epsilon}\big)\big\}, &  \scriptstyle n \ge D_0/\epsilon \mathrm{~or~} n \ge 6L/\tau,\\
	& \\
	\mathcal{O} \bigg\{dn \log(dn)  & \scriptstyle n <D_0/\epsilon \text{ and } n < 6L/\tau  \\
	+ d \sqrt{\frac{nL}{\tau}} \log \big(\frac{D_0}{n\epsilon}\big)\bigg\},&  \\
	\end{cases}
	\end{align*}
\end{corollary}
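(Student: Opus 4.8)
The plan is to follow the three-step reduction used in the proof sketch of Corollary~\ref{VARAG-corollary-1}, now plugging in the convergence rates of Theorem~\ref{VARAG-theorem-2}. Split the target as $\epsilon = \tfrac{\epsilon}{4} + \tfrac{\epsilon}{2} + \tfrac{\epsilon}{4}$. The first $\tfrac{\epsilon}{4}$ absorbs the smoothing gap: by Theorem~1 in~\cite{nesterov2017random} we have $f(\tilde x^s) - f^* \le f_\mu(\tilde x^s) - f_\mu^* + \mu^2 L d$, so requiring $\mu^2 \le \tfrac{\epsilon}{4Ld}$ it suffices to drive $f_\mu(\tilde x^s) - f_\mu^*$ below $\tfrac{\epsilon}{2} + \tfrac{\epsilon}{4}$. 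Since $\mu$ only affects the error and not the speed, this costs nothing; note also that under (A3) strong convexity implies coercivity, so no analogue of (A2$_\mu$) is needed here.

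Second, set $\varsigma_1 = \varsigma_2 = 0$ for the moment and use Theorem~\ref{VARAG-theorem-2} to count the epochs $\bar s_\epsilon$ needed to bring the leading term below $\tfrac{\epsilon}{2}$, then convert epochs to function queries. Epoch $s$ costs $2dn$ evaluations to form the coordinate-wise pivot $\tilde g^s = g_\nu(\tilde x)$ (that is, $2d$ evaluations per $f_i$, over all $n$ of them) plus $4T_s$ evaluations for the $T_s$ inner iterations (four per iteration, for the pair $g_\mu(\underline x_t, u_t, i_t)$, $g_\mu(\tilde x, u_t, i_t)$), i.e. $\mathcal{O}(dn + T_s)$ in total. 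Since $T_s = 2^{s-1}$ for $s \le s_0$ and $T_s = T_{s_0} = \mathcal{O}(dn)$ afterwards, with $s_0 = \mathcal{O}(\log(dn))$ and $2^{s_0} = \mathcal{O}(dn)$, the first $s_0$ epochs together cost $\mathcal{O}(dn\log(dn))$ and every subsequent epoch costs $\mathcal{O}(dn)$; hence the total is $\mathcal{O}\!\big(dn\log(dn) + (\bar s_\epsilon - s_0)_+ \, dn\big)$. From Theorem~\ref{VARAG-theorem-2}: when $n \ge 6L/\tau$ the leading term is $(4/5)^{s-s_0} D_0/n$, so $(\bar s_\epsilon - s_0)_+ = \mathcal{O}(\log\tfrac{D_0}{n\epsilon})$ (and $\bar s_\epsilon \le s_0$ once $D_0/n \lesssim \epsilon$, which covers $n \ge D_0/\epsilon$); when $n < 6L/\tau$ it is $\big(1+\tfrac14\sqrt{n\tau/(6L)}\big)^{-(s-s_0)} D_0/n$, and using $\log(1+\beta)\ge \beta/2$ for $\beta \in (0,1)$ gives $(\bar s_\epsilon - s_0)_+ = \mathcal{O}\!\big(\sqrt{\tfrac{L}{n\tau}}\log\tfrac{D_0}{n\epsilon}\big)$. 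Substituting and collapsing logarithms ($\log(dn) + \log\tfrac{D_0}{n\epsilon} = \log\tfrac{dD_0}{\epsilon}$; and $n = \mathcal{O}(D_0/\epsilon)$ in the accelerated branch so that $dn\log(dn) = \mathcal{O}(dn\log\tfrac{dD_0}{\epsilon})$) reproduces the two cases of the statement, with the disjunction ``$n \ge D_0/\epsilon$ or $n \ge 6L/\tau$'' both falling into the first.

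Third, spend the last $\tfrac{\epsilon}{4}$ on the smoothing errors. The crucial difference from Corollary~\ref{VARAG-corollary-1} is that in Theorem~\ref{VARAG-theorem-2} the coefficients multiplying $\varsigma_1, \varsigma_2$ do \emph{not} grow with $s$: they are the fixed problem-dependent constants $12, 5$ (or $8\sqrt{6L/(n\tau)}+4, 5$). Since $\varsigma_1 = \mu^2 L(d+4)^2$ and $\varsigma_2 = E/\tau$ with $E = 2L^2 d \nu^2 + \tfrac12 \mu^2 L^2 (d+3)^3$ from Eq.~\eqref{definition_E}, both are $\mathcal{O}(\mu^2 + \nu^2)$ with coefficients independent of $\epsilon$ and of $\bar s_\epsilon$; hence, after $\bar s_\epsilon$ has been fixed, one can pick $\mu$ and $\nu$ small enough that the whole error block is $\le \tfrac{\epsilon}{4}$. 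Adding the three contributions gives $\mathbb{E}[f(\tilde x^S) - f^*] \le \epsilon$ within the claimed query budget.

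The step I expect to require the most care is the bookkeeping in the second part: handling the $s_0$ threshold correctly in $\sum_s \mathcal{O}(dn + T_s)$, inverting the two distinct geometric rates of Theorem~\ref{VARAG-theorem-2}, and simplifying the resulting logarithms into $\log\tfrac{dD_0}{\epsilon}$ versus $dn\log(dn) + d\sqrt{nL/\tau}\log\tfrac{D_0}{n\epsilon}$ --- in particular verifying that both halves of the ``$n \ge D_0/\epsilon$ or $n \ge 6L/\tau$'' disjunction, despite arising from different branches of the theorem, yield the same first case. By contrast, the error-term control is strictly easier than in the convex setting, precisely because these terms are $s$-independent and strong convexity removes the need for (A2$_\mu$).
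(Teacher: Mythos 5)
Your proposal is correct and follows the same three-step reduction that the paper itself uses: absorb the $f$-vs-$f_\mu$ gap with a small $\mu$, invert the leading term of Theorem~\ref{VARAG-theorem-2} to count epochs (distinguishing the $(4/5)^{s-s_0}$ and the $\big(1+\tfrac14\sqrt{n\tau/(6L)}\big)^{-(s-s_0)}$ rates), convert epochs to queries via $\mathcal{O}(dn+T_s)$ per epoch, and then dispose of the $s$-independent error block $\varsigma_1,\varsigma_2$ with a final choice of $\mu,\nu$. The epoch-to-query bookkeeping, the use of $\log(1+\beta)\ge\beta/2$ in the accelerated branch, and the merging of the two halves of the disjunction into the first case all match the paper's proof.
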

\end{tcolorbox}

We conclude this subsection by commenting on the optimality of this complexity result, following the discussion in~\cite{lan2019unified}.
When $\tau$ and $\epsilon$ are small enough (i.e. the second case in Corollary~\ref{VARAG-corollary-2}, ill-conditioned), ZO-Varag exhibits an accelerated linear rate of convergence which depends on the square root of the condition number $\sqrt{L/\tau}$. Else, if $\epsilon$ or $\tau$ are relatively large~(first case), ZO-Varag treats the problem as if it was not strongly convex and retrieves the complexity bound of Corollary~\ref{VARAG-corollary-1}. Again, similarly to the smooth convex case, the dependency of $\bar N_\epsilon$ on the problem dimension is $\mathcal{O}(d\log(d))$.

\section{A Coordinate-wise Variant}
\label{sec:coordinate-wise}
In this section, we study the effect of replacing the gradient estimator $g_{\mu}(x, u, i)$ in the inner-loop of Algorithm \ref{algorithm-VARAG} with the coordinate wise variant $g_{\nu}(x,i)$ proposed in \cite{ji2019improved} and already used in the last section for the computation of the pivotal gradient $\tilde g^s$. More precisely, we consider the following modification ($g_{\nu}$ defined in Eq.~\eqref{DFO-framework-cord-finite-difference}): at each inner-loop iteration $t$,
 \vspace{-2mm}
$$G_t = g_{\nu}(\underline x_{t}, i_t) - g_{\nu}(\tilde{x}, i_t)  + \tilde{g}^s.$$
As we are not using a smoothed version of $f$ anymore, we need to introduce a slight modification on \textbf{(A2$_{\boldsymbol{\mu}}$)}. 
\begin{tcolorbox}
\textbf{(A2$_{\boldsymbol{\nu}}$)} \ \ Let $x^*\in \text{argmin}_{x\in\R^d} f(x)$. For any epoch $s$ of Algorithm~\ref{algorithm-VARAG}, consider the inner-loop sequences $\{ \underline x_t \}$ and $\{ \bar x_t \}$. There exist a \textit{finite} constant $Z<\infty$, potentially dependent on $L$ and $d$, such that, for $\nu$ small enough,
$$\sup_{s\ge0}\max_{\ \ x\in\{\bar x_t  \}\cup\{\underline x_t\}}\E\left[\|x-x^*\|\right]\le Z.$$
\end{tcolorbox}
Again, as mentioned in the context of \textbf{(A2$_{\boldsymbol{\mu}}$)} in the last section, it is possible to show that this assumption holds under the requirement that $f$ is coercive.

\subsection{Modified Analysis for Smooth and Convex Functions}
We follow the same proof procedure from last section, and comment the results with a remark at the end of this section.
\begin{theorem}\label{VARAG-cord-theorem-1}
	Consider the coordinate-wise variant of Algorithm~\ref{algorithm-VARAG} we just discussed. Assume \textbf{(A1)} and \textbf{(A2$_{\boldsymbol{\nu}}$)}. Let us denote $s_0 := \lfloor \log n \rfloor+1$. Suppose the weights $\{\theta_t\}$ are set as in Eq.~\eqref{VARAG-def-theta-paper}
	and parameters $\{T_s\}$, $\{\gamma_s\}$, $\{p_s\}$ are set as
	 \vspace{-1mm}
	\begin{align}\label{cord-parameter-deter-smooth1}
	T_s = \begin{cases}
	2^{s-1}, & s \le s_0\\
	T_{s_0}, & s > s_0
	\end{cases}, \
	\gamma_s = \tfrac{1}{12 L \alpha_s}, \
	\ \
	p_s = \tfrac{1}{2}, \ \mbox{with}
	\end{align}
	\vspace{-5mm}
	\begin{align}\label{cord-parameter-deter-alpha-sm}
	\alpha_s =
	\begin{cases}
	\tfrac{1}{2}, & s \le s_0\\
	\tfrac{2}{s-s_0+4},& s > s_0
	\end{cases}.
	\end{align}
	Then, we have
	\begin{align*}
	& \mathbb{E} \big[f(\tilde{x}^s)-f^*\big] \le \\
	& \ \ \ \ \ \ \ \begin{cases} 
	\cfrac{D_0'}{2^{s+1}}  + \varsigma_1 + 4\varsigma_2, &  \ \ \ \ \ \  1 \le s \le s_0\\
	\cfrac{16 D_0'}{n(s-s_0+4)^2} + \delta_s \cdot (\varsigma_1+\varsigma_2), & \ \ \ \ \ \ s > s_0 \\
	\end{cases}
	\end{align*}
	where $\varsigma_1=\nu^2Ld$, $\varsigma_2=L\sqrt{d}Z\nu$, $\delta_s = \mathcal{O}(s-s_0)$ and $D_0'$ is defined as
	\begin{align} \label{VARAG-cord-def-D_0}
	D_0':= 2[f(x^0) - f(x^*)] + 6L \|x^0-x^* \|^2,
	\end{align}
	where $x^*$ is any finite minimizer of $f$.
\end{theorem}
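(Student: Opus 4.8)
The plan is to mirror the proof of Theorem~\ref{VARAG-theorem-1} (which itself follows the Varag analysis of~\cite{lan2019unified}), running the same epoch-level recursion but replacing every smoothed quantity $f_\mu,\nabla f_\mu,g_\mu$ by its exact counterpart $f,\nabla f,g_\nu$ and re-accounting for the new, $\nu$-dependent, error sources. Note first that this is in one respect \emph{simpler} than Theorem~\ref{VARAG-theorem-1}: since we never work with a smoothed objective in the inner loop, there is no $f_\mu$-versus-$f$ gap to control, and the theorem bounds $\mathbb{E}[f(\tilde x^s)-f^*]$ directly. The first ingredient I would establish is the analogue of Lemma~\ref{VARAG-lemma-1} for the coordinate-wise inner estimator. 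Because the per-sample bound $\|g_\nu(x,i)-\nabla f_i(x)\|=\mathcal{O}(\sqrt d\,L\nu)$ holds \emph{deterministically} (Lemma~3 in~\cite{ji2019improved}), the estimator $G_t=g_\nu(\underline x_t,i_t)-g_\nu(\tilde x,i_t)+\tilde g^s$ satisfies $\mathbb{E}_{i_t}[G_t]=g_\nu(\underline x_t)-g_\nu(\tilde x)+\tilde g^s$, so its bias with respect to $\nabla f(\underline x_t)$ is $\big(g_\nu(\underline x_t)-\nabla f(\underline x_t)\big)+e^s$, each summand of norm $\mathcal{O}(\sqrt d\,L\nu)$, while its conditional variance is bounded, exactly as in ordinary SVRG, by $\mathcal{O}(L)\big[f(\tilde x)-f(\underline x_t)-\langle\nabla f(\underline x_t),\tilde x-\underline x_t\rangle\big]$ plus an $\mathcal{O}(\nu^2L^2d)$ remainder from replacing $\nabla f_i$ by $g_\nu(\cdot,i)$. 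Crucially there is no $(d+6)^3$ or $(d+4)$ blow-up here — this is precisely why one may take $\gamma_s=\tfrac{1}{12L\alpha_s}$ and $s_0=\lfloor\log n\rfloor+1$ rather than their dimension-inflated versions from Theorem~\ref{VARAG-theorem-1}.

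Next I would reproduce the per-iteration ``three-point'' inequality of~\cite{lan2019unified}: using the prox-characterization of the $x_t$ update (the $\diamond$ identity in Algorithm~\ref{algorithm-VARAG}), the definitions of $\underline x_t$ and $\bar x_t$, and convexity plus $L$-smoothness of $f$, one gets for each inner iteration a bound of the form
\begin{align*}
\tfrac{\gamma_s}{\alpha_s}\big[(f(\bar x_t)-f^*) - (1-\alpha_s-p_s)(f(\bar x_{t-1})-f^*) - p_s(f(\tilde x)-f^*)\big] \le (\text{telescoping }\|x_{t-1}-x^*\|^2\text{ terms}) + R_t,
\end{align*}
where $R_t$ collects the variance term above together with the inner product $\langle \mathbb{E}_{i_t}[G_t]-\nabla f(\underline x_t),\,x^*-x_{t-1}\rangle$ coming from the bias. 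That inner product is handled by Cauchy--Schwarz together with $\|e^s\|^2\le E=\mathcal{O}(d\nu^2L^2)$ (the $\nu$-only specialization of Eq.~\eqref{definition_E}) and $\mathbb{E}\|\underline x_t-x^*\|\le Z$; this is exactly where \textbf{(A2$_{\boldsymbol{\nu}}$)} enters, and one should observe that here it is invoked for the \emph{inner-loop} iterate $\underline x_t$ (which appears inside the bias term) rather than for the epoch output $\tilde x^s$ as in \textbf{(A2$_{\boldsymbol{\mu}}$)}. After rescaling this gives $\mathbb{E}[R_t]=\mathcal{O}(\nu^2L^2d)+\mathcal{O}(L\sqrt d\,Z\nu)=\mathcal{O}(\varsigma_1+\varsigma_2)$ per iteration.

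The third step is to take expectations, multiply the $t$-th inequality by $\theta_t$ from Eq.~\eqref{VARAG-def-theta-paper}, and sum over $t=1,\dots,T_s$; the weights are designed so the $f(\bar x_t)-f^*$ terms telescope, and, by convexity and the definition $\tilde x^s=\sum_t\theta_t\bar x_t/\sum_t\theta_t$, the left side becomes proportional to $f(\tilde x^s)-f^*$. This produces a one-epoch recursion of the form $w_s\,\mathbb{E}[f(\tilde x^s)-f^*]\le(1-\alpha_s-p_s)\,w_s\,\mathbb{E}[f(\tilde x^{s-1})-f^*] + (\text{distance terms at }s) + T_s\cdot\mathcal{O}(\varsigma_1+\varsigma_2)$, with the distance terms absorbed into $D_0'$ by the classical Varag bookkeeping (the factor $2$, rather than $\tfrac{2}{d+4}$, on the initial gap reflecting the absent dimension blow-up). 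Finally I would unroll this recursion in the two regimes separately: for $1\le s\le s_0$, where $T_s=2^{s-1}$ and $\alpha_s=p_s=\tfrac12$, the homogeneous part contracts geometrically, giving the $\tfrac{D_0'}{2^{s+1}}$ term while the summed error stays bounded by a constant multiple of $\varsigma_1+\varsigma_2$ (the particular constants $1$ and $4$ dropping out of the geometric sums); for $s>s_0$, where $T_s$ is frozen and $\alpha_s=\tfrac{2}{s-s_0+4}$, the standard accelerated telescoping yields the $\tfrac{16D_0'}{n(s-s_0+4)^2}$ rate, and since each of the $s-s_0$ extra epochs contributes $\mathcal{O}(\varsigma_1+\varsigma_2)$ to the running error the total is $\delta_s\cdot(\varsigma_1+\varsigma_2)$ with $\delta_s=\mathcal{O}(s-s_0)$.

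I expect the main obstacle to be this last step: bounding the accumulation of the error terms across epochs tightly enough to obtain only \emph{linear} growth $\delta_s=\mathcal{O}(s-s_0)$ in the $s>s_0$ regime — a naive reweighting of the per-iteration errors by the $\mathcal{O}((s-s_0+4)^2)$ accelerated weights would suggest quadratic growth, so one has to exploit the cancellation in the telescoped sum carefully — while simultaneously keeping the constants clean enough to match the stated coefficients. A secondary subtlety, already flagged above, is to make sure that the only place an un-telescoped iterate norm $\|\cdot-x^*\|$ appears is the bias inner product, so that \textbf{(A2$_{\boldsymbol{\nu}}$)} on $\{\underline x_t\}\cup\{\bar x_t\}$ is exactly what is needed and no boundedness of $\{\tilde x^s\}$ has to be assumed.
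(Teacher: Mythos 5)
Your plan matches the paper's proof essentially step for step: a coordinate-wise variance lemma (Lemma~\ref{VARAG-cord-lemma-1}) with no dimensional blow-up, the three-point inequality (Lemma~\ref{VARAG-cord-lemma-2}) feeding a per-iteration progress bound (Lemma~\ref{VARAG-cord-lemma-3}) where the bias contributes an inner product $\langle g_\nu(\underline{x}_t)-\nabla f(\underline{x}_t), x_{t-1}^+-x^*\rangle$, then an epoch-wise recursion (Lemma~\ref{VARAG-cord-lemma-4}) unrolled against the weights $w_s=\mathcal{L}_s-\mathcal{R}_{s+1}\ge 0$. Your two structural observations are exactly the ones the paper uses: the absence of any $f_\mu$-versus-$f$ gap lets one bound $\E[f(\tilde x^s)-f^*]$ directly, and the bias term involves the \emph{inner-loop} iterate $\underline x_t$ (through $x_{t-1}^+$), which is precisely why \textbf{(A2$_{\boldsymbol{\nu}}$)} is stated on $\{\underline x_t\}\cup\{\bar x_t\}$ instead of on $\{\tilde x^s\}$. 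Your worry about quadratic error growth in the $s>s_0$ regime resolves exactly as you suspect: the accumulated error is $\propto\sum_{j}T_j\gamma_j/\alpha_j=\mathcal{O}(n(s-s_0)^3/L)$, and dividing by $\mathcal{L}_s=\Omega(n(s-s_0+4)^2/L)$ leaves $\delta_s=\mathcal{O}(s-s_0)$.

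One small bookkeeping slip: you write the bias of $G_t$ as $(g_\nu(\underline x_t)-\nabla f(\underline x_t))+e^s$, with $e^s=\tilde g^s-\nabla f(\tilde x)$. But in the coordinate-wise variant the pivot gradient and the inner-loop pivot correction are built from the \emph{same} estimator, so $\tilde g^s - g_\nu(\tilde x)=0$ and the $e^s$ contribution cancels exactly against the corresponding $-(g_\nu(\tilde x)-\nabla f(\tilde x))$ term. The true conditional bias is simply $\mathbb{E}_{i_t}[\delta_t]=g_\nu(\underline x_t)-\nabla f(\underline x_t)$, a single term of norm $\le L\sqrt d\,\nu$. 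This does not change the asymptotic scale you were carrying, but the paper (in Lemma~\ref{VARAG-cord-lemma-1} and the Remark after Lemma~\ref{VARAG-cord-lemma-4}) is explicit that this is where the coordinate-wise analysis structurally departs from the Gaussian-smoothing analysis, where the bias $\tilde g^s-\nabla f_\mu(\tilde x)$ is tied to the \emph{pivot} and telescopes across the epoch.
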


\begin{tcolorbox}
\begin{corollary}\label{VARAG-cord-corollary-1}
		Consider the coordinate-wise variant of Algorithm~\ref{algorithm-VARAG}. Assume \textbf{(A1)} and \textbf{(A2$_{\boldsymbol{\nu}}$)}. The total number $\bar{N}_\epsilon$ of function queries performed by Algorithm~\ref{algorithm-VARAG} to find a stochastic $\epsilon$-solution, i.e. a point $\bar{x} \in \R^d$ s.t. $\mathbb{E} [f(\bar{x})-f^*]\le \epsilon$,  can be bounded by
	\begin{align*}
	\bar{N}_\epsilon := \begin{cases}
	\mathcal{O}\left\{  dn \log \tfrac{D_0'}{\epsilon} \right\}, & n \ge D_0'/\epsilon,\\
	{\cal O} \left\{dn \log n + d\sqrt{\frac{n D_0'}{\epsilon}} \right\}, &  n < D_0'/\epsilon.
	\end{cases}
	\end{align*}
\end{corollary}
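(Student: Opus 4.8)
The plan is to mirror the proof sketch of Corollary~\ref{VARAG-corollary-1}, noting that the present statement is in fact \emph{easier}: since the coordinate-wise inner estimator $g_\nu$ acts directly on $f$ rather than on a smoothed surrogate $f_\mu$, there is no gap $f_\mu-f$ to control, so the first of the three steps in that sketch disappears. Accordingly I would split the target accuracy as $\epsilon=\tfrac{\epsilon}{2}+\tfrac{\epsilon}{2}$, spending the first half on the ``clean'' rate of Theorem~\ref{VARAG-cord-theorem-1} with $\varsigma_1=\varsigma_2=0$, and the second half on the residual errors.

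First I would set $\varsigma_1=\varsigma_2=0$ and read off from Theorem~\ref{VARAG-cord-theorem-1} the number $\bar s_\epsilon$ of epochs needed for $\mathbb{E}[f(\tilde x^{\bar s_\epsilon})-f^*]\le\tfrac{\epsilon}{2}$. In the geometric phase $s\le s_0$ the bound is $D_0'/2^{s+1}$, so $\bar s_\epsilon=\lceil\log(D_0'/\epsilon)\rceil$ epochs suffice; because $D_0'/\epsilon\le n$ implies $\log(D_0'/\epsilon)\le\log n\le s_0$, this entire run stays inside the geometric phase exactly when $n\ge D_0'/\epsilon$. Otherwise ($n<D_0'/\epsilon$) I would run all $s_0=\lfloor\log n\rfloor+1$ geometric epochs, reaching accuracy $\le D_0'/(2n)$, and then $k=s-s_0$ further epochs in the sublinear phase, where the bound is $16D_0'/(n(k+4)^2)$; taking $k=\mathcal{O}(\sqrt{D_0'/(n\epsilon)})$ gives $\tfrac{\epsilon}{2}$.

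Next I would convert epochs into function queries. Each evaluation of a single-index $g_\nu(\cdot,i)$ costs $2d$ function queries and each pivotal full gradient $\tilde g^s=g_\nu(\tilde x)$ costs $2dn$, so epoch $s$ costs $2dn+\mathcal{O}(dT_s)$, with $T_s=2^{s-1}$ for $s\le s_0$ and $T_s=T_{s_0}=\mathcal{O}(n)$ afterwards. When $n\ge D_0'/\epsilon$, summing over the first $\lceil\log(D_0'/\epsilon)\rceil$ epochs gives $\mathcal{O}(dn\log\tfrac{D_0'}{\epsilon})$, since the geometric sum $\sum_s 2^{s-1}$ contributes only $\mathcal{O}(d\,D_0'/\epsilon)\le\mathcal{O}(dn)$. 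When $n<D_0'/\epsilon$, the geometric phase costs $\sum_{s\le s_0}(2dn+\mathcal{O}(dT_s))=\mathcal{O}(dn\log n)$, and the $\mathcal{O}(\sqrt{D_0'/(n\epsilon)})$ extra sublinear epochs cost $\mathcal{O}(dn)$ each, adding $\mathcal{O}(d\sqrt{nD_0'/\epsilon})$; together this is the second branch of $\bar N_\epsilon$.

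Finally I would spend the remaining $\tfrac{\epsilon}{2}$ on the error terms, grouped as $\varsigma=(1+\delta_{\bar s_\epsilon})(\varsigma_1+\varsigma_2)$ with $\varsigma_1=\nu^2Ld$ and $\varsigma_2=L\sqrt d\,Z\nu$, so that $\varsigma_1+\varsigma_2=\mathcal{O}(\nu^2+\nu)$. Since the previous step has already fixed $\bar s_\epsilon$ — and hence the finite, linearly growing factor $\delta_{\bar s_\epsilon}=\mathcal{O}(\bar s_\epsilon-s_0)$ — the parameter $\nu$ can now be chosen small enough that $\varsigma\le\tfrac{\epsilon}{2}$; this is precisely the step that needs \textbf{(A2$_{\boldsymbol{\nu}}$)}, which guarantees that $Z$ (and therefore $\varsigma_2$) is finite. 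Combining the two halves yields $\mathbb{E}[f(\tilde x^{\bar s_\epsilon})-f^*]\le\tfrac{\epsilon}{2}+\tfrac{\epsilon}{2}=\epsilon$. I expect the only real subtlety — beyond routine geometric-series bookkeeping — to be this ordering of operations: one must fix $\bar s_\epsilon$ before selecting $\nu$, otherwise the bound on $\varsigma$ becomes circular.
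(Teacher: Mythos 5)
Your proposal is correct and follows essentially the same route as the paper: both split the accuracy budget in half, read off the epoch count $\bar s_\epsilon$ from the $\varsigma_1=\varsigma_2=0$ version of Theorem~\ref{VARAG-cord-theorem-1}, convert epochs to queries via the $\mathcal{O}(dn+dT_s)$ per-epoch cost, and only then shrink $\nu$ to absorb the (now $\bar s_\epsilon$-dependent) error terms. Your explicit observation that the $f_\mu-f$ gap step from Corollary~\ref{VARAG-corollary-1} disappears, and that $\nu$ must be chosen after $\bar s_\epsilon$ is fixed, are both accurate and match what the paper does implicitly.
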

\end{tcolorbox}

\subsection{Modified Analysis under Strong Convexity}

\begin{theorem}\label{VARAG-cord-theorem-2}
	Consider the coordinate-wise variant of Algorithm~\ref{algorithm-VARAG}. Assume \textbf{(A1)}, \textbf{(A2$_{\boldsymbol{\nu}}$)} and \textbf{(A3)}. Let us denote $s_0 := \lfloor \log n \rfloor+1$ and assume that the
	weights $\{\theta_t\}$ are set to Eq.~\eqref{VARAG-def-theta-paper} if $1\le s \le s_0$. Otherwise, they are set to
	\vspace{-1mm}
	\begin{align}\label{VARAG-cord-def-theta-2}
	\theta_t =
	\begin{cases}
	\Gamma_{t-1} - (1 - \alpha_s - p_s) \Gamma_{t}, & 1 \le t \le T_s-1,\\
	\Gamma_{t-1}, & t = T_s,
	\end{cases}
	\end{align}
	where $\Gamma_t= \big(1+\tau\gamma_s\big)^t$.
	If the parameters $\{T_s\}$, $\{\gamma_s\}$ and $\{p_s\}$ set to	Eq.~\eqref{cord-parameter-deter-smooth1} with 
	\begin{align}\label{cord-parameter-deter-alpha-unified}
	\alpha_s =
	\begin{cases}
	\tfrac{1}{2}, & s \le s_0,\\
	\min\{\sqrt{\frac{n \tau}{12L}}, \tfrac{1}{2}\},& s > s_0,
	\end{cases}
	\end{align}
	\vspace{-1.5mm}
	We obtain
	\begin{align*}
	& \mathbb{E} \big[f(\tilde{x}^{s})-f^*\big] \le \\
	& \begin{cases}
	\cfrac{1}{2^{s+1}} D_0' + 1.5\varsigma_1 + 4\varsigma_2, & \ \ \ \ 1 \le s \le s_0\\
	& \\
	(4/5)^{s-s_0}\cfrac{D_0'}{n} + 9\varsigma_1 + 24\varsigma_2,  &  \ \ \ \ s > s_0 \text{ and } n \ge \frac{3L}{\tau}\\
	& \\
	\left(1+\frac{1}{4} \sqrt{\frac{n\tau}{3L}}\right)^{-(s-s_0)} \cfrac{D_0'}{n} & \\
	+ \big(2\sqrt{\frac{3L}{n\tau}}+1\big)(3\varsigma_1 + 8\varsigma_2), &  \ \ \ \  s > s_0 \text{ and } n < \frac{3L}{\tau}
	\end{cases}
	\end{align*}
	where $\varsigma_1=\nu^2Ld$, $\varsigma_2 = L\sqrt{d}Z\nu$ and $D_0'$ is defined as in Eq.~\eqref{VARAG-cord-def-D_0}.
\end{theorem}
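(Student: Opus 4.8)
The plan is to replay the three-sequence analysis of Varag (Theorem~3 of~\cite{lan2019unified}) directly on $f$ — no smoothing is involved anymore — while carrying along the two DFO-induced perturbations produced by the coordinate-wise estimator $g_\nu$ of Eq.~\eqref{DFO-framework-cord-finite-difference}. First I would establish the analogue of Lemma~\ref{VARAG-lemma-1} for $G_t = g_\nu(\underline x_t,i_t) - g_\nu(\tilde x,i_t) + \tilde g^s$. Since $g_\nu(\cdot,i)$ is an unbiased-per-coordinate estimate of $\nabla f_i$ up to a deterministic finite-difference bias whose squared norm is $O(\nu^2L^2d)$ (Lemma~3 of~\cite{ji2019improved}), the conditional variance of $G_t$ decomposes into the usual SVRG sampling variance, controlled by the Bregman divergence $f(\tilde x)-f(\underline x_t)-\langle\nabla f(\underline x_t),\tilde x-\underline x_t\rangle$ times $O(L)$, plus an additive $O(\nu^2 L^2 d)$ term — notably without the dimension blow-up $(d+6)^3$ nor the $(d+4)$ multiplicative factor of the Gaussian-smoothing variant. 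Its conditional mean is $\E[\delta_t\mid\mathcal F_{t-1}] = \tilde g^s-\nabla f(\tilde x) = e^s$, with the uniform deterministic bound $\norm{e^s}\le L\sqrt d\,\nu$. With this in hand I would run the one-inner-iteration estimate of Varag: using optimality of the prox step in line~\ref{eqn:xt}, the relations defining $\underline x_t$ (line~\ref{eqn:underlinex}) and $\bar x_t$ (line~\ref{eqn:barx}), together with $L$-smoothness and $\tau$-strong convexity of $f$, one obtains a per-step inequality relating $\tfrac{\gamma_s}{\alpha_s}\big(f(\bar x_t)-f^*\big)$ and a distance term $\tfrac12\norm{x_t-x^*}^2$ (carrying the strong-convexity factor $1+\tau\gamma_s$) to their values at $t-1$, plus error contributions $\gamma_s^2\cdot O(\nu^2L^2d)$ from the variance and $\gamma_s\langle e^s,\,\cdot\,\rangle$ from the biased mean; the latter I bound by $\gamma_s\norm{e^s}\cdot Z\le\gamma_s L\sqrt d\,\nu\,Z$ via \textbf{(A2$_{\boldsymbol{\nu}}$)}.

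Next I would telescope inside an epoch: multiplying the per-step inequality by $\theta_t$ and summing $t=1,\dots,T_s$, the weights — arithmetic (Eq.~\eqref{VARAG-def-theta-paper}) for $s\le s_0$ and geometric $\Gamma_t=(1+\tau\gamma_s)^t$ (Eq.~\eqref{VARAG-cord-def-theta-2}) for $s>s_0$ — are designed so the $f(\bar x_t)$ terms collapse, the geometric ones additionally folding the strong-convexity contraction into the distance terms. Using convexity of $f$ and $\tilde x^s=\sum_t\theta_t\bar x_t/\sum_t\theta_t$ this yields an epoch recursion of the form
\[
w_s\,\E[f(\tilde x^s)-f^*] + \kappa_s\,\E\norm{x^s-x^*}^2 \;\le\; C_s\,\E[f(\tilde x^{s-1})-f^*] + \kappa_s'\,\E\norm{x^{s-1}-x^*}^2 + \mathrm{err}_s,
\]
with $\mathrm{err}_s = O\!\big(T_s\gamma_s^2\nu^2L^2d\big) + O\!\big(T_s\gamma_s L\sqrt d\,\nu Z\big)$; after dividing by $w_s=\Theta(T_s\gamma_s/\alpha_s)$ and using $\gamma_s\alpha_s=\Theta(1/L)$ this is $O(\varsigma_1+\varsigma_2)$ per epoch, with $\varsigma_1=\nu^2Ld$ and $\varsigma_2=L\sqrt d Z\nu$. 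I would then unroll this in the three regimes dictated by Eq.~\eqref{cord-parameter-deter-alpha-unified}: (i) $s\le s_0$, where $\alpha_s=\tfrac12$ and $T_s=2^{s-1}$ force a factor-$\tfrac12$ contraction of the potential, giving $D_0'/2^{s+1}$ plus a bounded accumulated error; (ii) $s>s_0$ with $n\ge 3L/\tau$, where $\gamma_s=1/(6L)$ and $T_{s_0}=2^{s_0-1}\in[n/2,n)$ give the per-epoch factor $(1+\tau\gamma_s)^{-T_{s_0}}\le 4/5$, so the error terms sum to a convergent geometric series (a constant times $\varsigma_1+\varsigma_2$); (iii) $s>s_0$ with $n<3L/\tau$, where $\alpha_s=\sqrt{n\tau/(12L)}$ and $\gamma_s=1/(12L\alpha_s)$ yield $\tau\gamma_s=\sqrt{\tau/(12Ln)}$ and hence the accelerated per-epoch contraction $(1+\tfrac14\sqrt{n\tau/(3L)})^{-1}$, again with a convergent error series — this is where the $\sqrt{L/\tau}$ dependence appears.

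The main obstacle is dealing with the non-vanishing conditional mean $\E[\delta_t\mid\mathcal F_{t-1}] = e^s$, which is the essential departure from gradient-based Varag, where it is zero. The cross terms $\langle e^s,\cdot\rangle$ it creates are linear in the iterates, so they can be absorbed by Cauchy–Schwarz against $\norm{x_\bullet - x^*}$, but only because \textbf{(A2$_{\boldsymbol{\nu}}$)} furnishes a uniform bound $Z$ on those distances in expectation and because $\norm{e^s}\le L\sqrt d\,\nu$ is a uniform deterministic bound — the latter being exactly why a coordinate-wise (rather than random-direction) estimator must be used for $\tilde g^s$. The secondary difficulty is checking that, for every $s>s_0$ in regimes (ii) and (iii), the per-epoch contraction factor is bounded away from $1$ by an absolute constant, so that the accumulated error stays $O(\varsigma_1+\varsigma_2)$ rather than growing linearly in $s$ as in the convex case (Theorem~\ref{VARAG-cord-theorem-1}, where $\delta_s=O(s-s_0)$); the two computations sketched above for (ii) and (iii) do give exactly this. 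The remaining bookkeeping — propagating the numerical constants $1.5,4,9,24$ and the precise forms $1+\tfrac14\sqrt{n\tau/(3L)}$ and $2\sqrt{3L/(n\tau)}+1$ through the telescoping — is routine once the recursion above is set up, and mirrors step by step the corresponding argument in~\cite{lan2019unified} with $L$ and the smoothing quantities replaced by those appearing in Eq.~\eqref{cord-parameter-deter-smooth1}.
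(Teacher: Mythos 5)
There is a genuine gap in the treatment of the bias term, which is precisely where the coordinate-wise variant departs structurally from the Gaussian-smoothing variant. You assert that
$\mathbb{E}[\delta_t\mid\mathcal{F}_{t-1}] = \tilde g^s - \nabla f(\tilde x) = e^s$,
but this identity is specific to the Gaussian-smoothing case. In the coordinate-wise variant one has
$G_t = g_\nu(\underline x_t, i_t) - g_\nu(\tilde x, i_t) + \tilde g^s$ with $\tilde g^s = g_\nu(\tilde x) = \tfrac1n\sum_i g_\nu(\tilde x,i)$, so taking expectation over $i_t$ gives
$\mathbb{E}_{i_t}[G_t] = g_\nu(\underline x_t) - g_\nu(\tilde x) + \tilde g^s = g_\nu(\underline x_t)$,
hence $\mathbb{E}_{i_t}[\delta_t] = g_\nu(\underline x_t) - \nabla f(\underline x_t)$. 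The pivotal bias $\tilde g^s - \nabla f(\tilde x)$ cancels exactly in the control-variate, and the residual bias lives at the \emph{inner} iterate $\underline x_t$, not at the pivot. This is not a cosmetic distinction: the paper (Lemma~\ref{VARAG-cord-lemma-1}) explicitly calls it out, and it is the reason the cross term in the per-iteration inequality is
$\langle g_\nu(\underline x_t)-\nabla f(\underline x_t),\, x_{t-1}^+-x^*\rangle$
rather than the epoch-constant $\langle e^s,\,\cdot\rangle$ that arises in the Gaussian case.

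This error also makes your own use of \textbf{(A2$_{\boldsymbol\nu}$)} internally inconsistent. If the conditional bias really were $e^s$, constant across a whole epoch, you could sum the cross terms over $t$ exactly as in the Gaussian case (Eq.~\eqref{VARAG-pivotal-error}) and the sum would telescope into an expression involving only $\tilde x^s$ and $\tilde x^{s-1}$ — the weaker assumption \textbf{(A2$_{\boldsymbol\mu}$)} on the pivot sequence would suffice. The reason \textbf{(A2$_{\boldsymbol\nu}$)} must bound the inner-loop sequences $\{\underline x_t\}$ and $\{\bar x_t\}$ is precisely that the bias is $g_\nu(\underline x_t)-\nabla f(\underline x_t)$ and therefore must be controlled per inner iteration, against $\alpha_s(x_{t-1}^+ - x^*) = -(1-\alpha_s-p_s)(\bar x_{t-1}-x^*) - p_s(\tilde x - x^*) + (\underline x_t-x^*)$, each piece uniformly bounded by $Z$ via Cauchy--Schwarz and $\|g_\nu(x)-\nabla f(x)\|\le L\sqrt d\,\nu$. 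With this correction the bound is still $\mathcal O(L\sqrt d\,Z\nu)$ per iteration, so your final constants and three-regime unrolling would survive essentially unchanged; but the intermediate step as stated would not go through, because the $\tilde x$-level bias you propose to control is zero in expectation.

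As a minor point, the observation that ``a coordinate-wise (rather than random-direction) estimator must be used for $\tilde g^s$'' does not distinguish the coordinate-wise variant — $\tilde g^s$ is computed coordinate-wise in Algorithm~\ref{algorithm-VARAG} in both variants; what Section~\ref{sec:coordinate-wise} changes is the inner-loop estimator, from $g_\mu(\cdot,u,i)$ to $g_\nu(\cdot,i)$.
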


\begin{tcolorbox}
\begin{corollary}\label{VARAG-cord-corollary-2}
    	Consider the coordinate-wise variant of Algorithm~\ref{algorithm-VARAG}. Assume \textbf{(A1)}, \textbf{(A2$_{\boldsymbol{\nu}}$)} and \textbf{(A3)}. The total number $\bar{N}_\epsilon$ of function queries performed by Algorithm~\ref{algorithm-VARAG} to find a stochastic $\epsilon$-solution, i.e. a point $\bar{x} \in \R^d$ s.t. $\mathbb{E} [f(\bar{x})-f^*]\le \epsilon$,  can be bounded by
	\begin{align*}
	\bar N := 
	\begin{cases}
	\mathcal{O}\big\{dn\log \big(\frac{D_0'}{\epsilon}\big)\big\}, & n \ge \frac{D_0'}{\epsilon} \text{ or } n \ge \frac{3L}{\tau},\\
	& \\
	\mathcal{O} \bigg\{dn \log(n)  &  n < \frac{D_0'}{\epsilon} \text{ and } n < \frac{3L}{\tau}\\
	+ d \sqrt{\frac{nL}{\tau}} \log \big(\frac{D_0'}{n\epsilon}\big)\bigg\}, & \\
	\end{cases}
	\end{align*}
\end{corollary}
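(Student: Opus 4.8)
The plan is to mirror exactly the derivation of Corollary~\ref{VARAG-corollary-2} (the strongly-convex complexity bound for the Gaussian-smoothing variant), replacing every invocation of Theorem~\ref{VARAG-theorem-2} by its coordinate-wise analogue Theorem~\ref{VARAG-cord-theorem-2}. The proof splits into the same three-stage structure used in the proof sketch of Corollary~\ref{VARAG-corollary-1}: (i) since the coordinate-wise variant already works directly on $f$ rather than on a smoothed surrogate, the first reconciliation step is \emph{trivial} here — there is no gap $f_\mu(\tilde x^s)-f_\mu(x_\mu^*)$ versus $f(\tilde x^s)-f(x^*)$ to bridge, so we can allocate the full budget $\epsilon$ to the two remaining steps; (ii) temporarily set $\varsigma_1=\varsigma_2=0$ and read off from Theorem~\ref{VARAG-cord-theorem-2} how many epochs $\bar s_\epsilon$ are needed to reach accuracy $\epsilon/2$, then convert epochs into function queries; (iii) spend the remaining $\epsilon/2$ to force the error terms $\varsigma_1=\nu^2Ld$ and $\varsigma_2 = L\sqrt d\, Z\nu$ below $\epsilon/2$ by choosing $\nu$ small enough — this is where \textbf{(A2$_{\boldsymbol\nu}$)} enters, since it certifies the finiteness of $Z$ so that the bound on $\nu$ is well-defined.

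First I would carry out the epoch-counting for step (ii). Recall $s_0=\lfloor\log n\rfloor+1$, so the first phase $1\le s\le s_0$ costs $\sum_{s=1}^{s_0}T_s = \sum_{s=1}^{s_0}2^{s-1} = 2^{s_0}-1 = \Theta(n)$ inner iterations, and since each inner iteration of the coordinate-wise variant makes $\Theta(d)$ function queries (each $g_\nu$ evaluation costs $2d$ queries, plus one full pivotal $g_\nu(\tilde x)$ per epoch costing $\Theta(dn)$ queries, with $\sum_{s\le s_0}$ pivotal cost $=\Theta(dn)\cdot s_0 = \Theta(dn\log n)$), the first phase alone contributes $\mathcal O(dn\log n)$. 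At the end of phase one Theorem~\ref{VARAG-cord-theorem-2} gives $\mathbb E[f(\tilde x^{s_0})-f^*]\le \tfrac{1}{2^{s_0+1}}D_0'\le \tfrac{D_0'}{n}$ (dropping the error terms). Then for $s>s_0$ I split on the conditioning: if $n\ge D_0'/\epsilon$ or $n\ge 3L/\tau$ the first-phase output already has accuracy $D_0'/n\le\epsilon$ (in the first case) or the contraction factor $(4/5)^{s-s_0}$ needs only $\mathcal O(\log(D_0'/(n\epsilon)))=\mathcal O(\log(D_0'/\epsilon))$ additional epochs each of bounded length $T_{s_0}=\Theta(n)$, giving total $\mathcal O(dn\log(D_0'/\epsilon))$; if instead $n<D_0'/\epsilon$ and $n<3L/\tau$, the contraction factor is $\bigl(1+\tfrac14\sqrt{n\tau/(3L)}\bigr)^{-(s-s_0)}$, so reaching $\epsilon$ from $D_0'/n$ requires $s-s_0 = \mathcal O\bigl(\sqrt{L/(n\tau)}\,\log(D_0'/(n\epsilon))\bigr)$ epochs, each costing $\Theta(dn)$ queries, i.e. $\mathcal O\bigl(d\sqrt{nL/\tau}\,\log(D_0'/(n\epsilon))\bigr)$; adding the phase-one cost $\mathcal O(dn\log n)$ yields the stated second branch.

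For step (iii), by Theorem~\ref{VARAG-cord-theorem-2} the total error contribution in the regime $s>s_0$ is a fixed (dimension- and condition-number-dependent but $\epsilon$-independent) constant times $\varsigma_1+\varsigma_2 = \nu^2 Ld + L\sqrt d\,Z\nu$, which is $\mathcal O(\nu)$ as $\nu\to0$; hence there is a threshold $\bar\nu>0$, depending on $L,d,\tau,n,Z$ and the already-fixed $\bar s_\epsilon$, such that for all $\nu\le\bar\nu$ this contribution is at most $\epsilon/2$. Since $\nu$ is a pure design parameter that does not affect the convergence rate (only the floor error), choosing $\nu\le\bar\nu$ is free. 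Combining $\epsilon/2$ (rate) $+$ $\epsilon/2$ (error floor) gives $\mathbb E[f(\bar x)-f^*]\le\epsilon$ with the claimed query count. The main obstacle — and the only place requiring genuine care rather than bookkeeping — is making sure that in the ill-conditioned branch the $\varsigma$-multipliers coming from Theorem~\ref{VARAG-cord-theorem-2}, namely $2\sqrt{3L/(n\tau)}+1$ times $3\varsigma_1+8\varsigma_2$, stay $\epsilon$-independent so that the threshold $\bar\nu$ is well-defined uniformly in $\bar s_\epsilon$; this is exactly the content of the Remark following Theorem~\ref{VARAG-theorem-2} (the strongly-convex error does not grow with $s$), which is what distinguishes this corollary from the merely-convex Corollary~\ref{VARAG-cord-corollary-1} where $\delta_s=\mathcal O(s-s_0)$ forces a more delicate tuning.
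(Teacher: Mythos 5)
Your proposal mirrors the paper's approach: both tune $\nu$ to push the DFO error terms below a fixed fraction of $\epsilon$ (the paper uses $\epsilon/2$), then carry out the Lan-style epoch count for the remaining budget, case by case on $n$ versus $D_0'/\epsilon$ and $3L/\tau$. You also make the same two observations the paper relies on: the coordinate-wise variant targets $f$ directly, so there is no $f_\mu$--$f$ reconciliation to pay for (unlike Corollary~\ref{VARAG-corollary-2}), and the $\varsigma$-multipliers in Theorem~\ref{VARAG-cord-theorem-2} are $s$-independent, which makes the choice of $\nu$ a one-time fix independent of the number of epochs actually run.

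There is one gap. In the sub-case $n\ge D_0'/\epsilon$ you argue that the end of phase one ($s=s_0$, with stated phase-one cost $\mathcal O(dn\log n)$) already yields $D_0'/n\le\epsilon$; but $\mathcal O(dn\log n)$ is not bounded by the claimed $\mathcal O(dn\log(D_0'/\epsilon))$ when $n$ is much larger than $D_0'/\epsilon$. The missing step is the early-termination argument used in the proof of Corollary~\ref{VARAG-cord-corollary-1}: when $n\ge D_0'/\epsilon$, one needs only $S_l=\min\{\lceil\log(D_0'/\epsilon)\rceil,\,s_0\}=\lceil\log(D_0'/\epsilon)\rceil$ epochs, since $D_0'/2^{s+1}\le\epsilon/2$ already at $s=\lceil\log(D_0'/\epsilon)\rceil\le s_0$, and the query count up to that epoch is $d\bigl(n S_l+\sum_{s\le S_l}T_s\bigr)=\mathcal O(dn\log(D_0'/\epsilon))$. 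With that inserted, your argument yields the stated bound in all three branches.
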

\end{tcolorbox}
\begin{remark}[Complexity using the coordinate-wise variant]
Note that, the complexity results found in Corollary~\ref{VARAG-cord-corollary-1} and~\ref{VARAG-cord-corollary-2} are comparable to the ones for Gaussian smoothing (Corollary~\ref{VARAG-corollary-1} and~\ref{VARAG-corollary-2}) while the dimensional dependency here is $d$ rather than $d \log(d)$.
\end{remark}


\section{Experiments}
\label{sec:experiments}

\begin{figure}[t!]
 \centering          \begin{tabular}{c@{}c@{}c@{}c@{}}
        &   \scriptsize{Diabetes, S = 100, b = 5} & \scriptsize{ijcnn1, S = 50, b = 500} \\
        \rotatebox[origin=c]{90}{\scriptsize{Logistic, $\lambda = 0$}} &    
             \includegraphics[width=0.475\linewidth,valign=c]{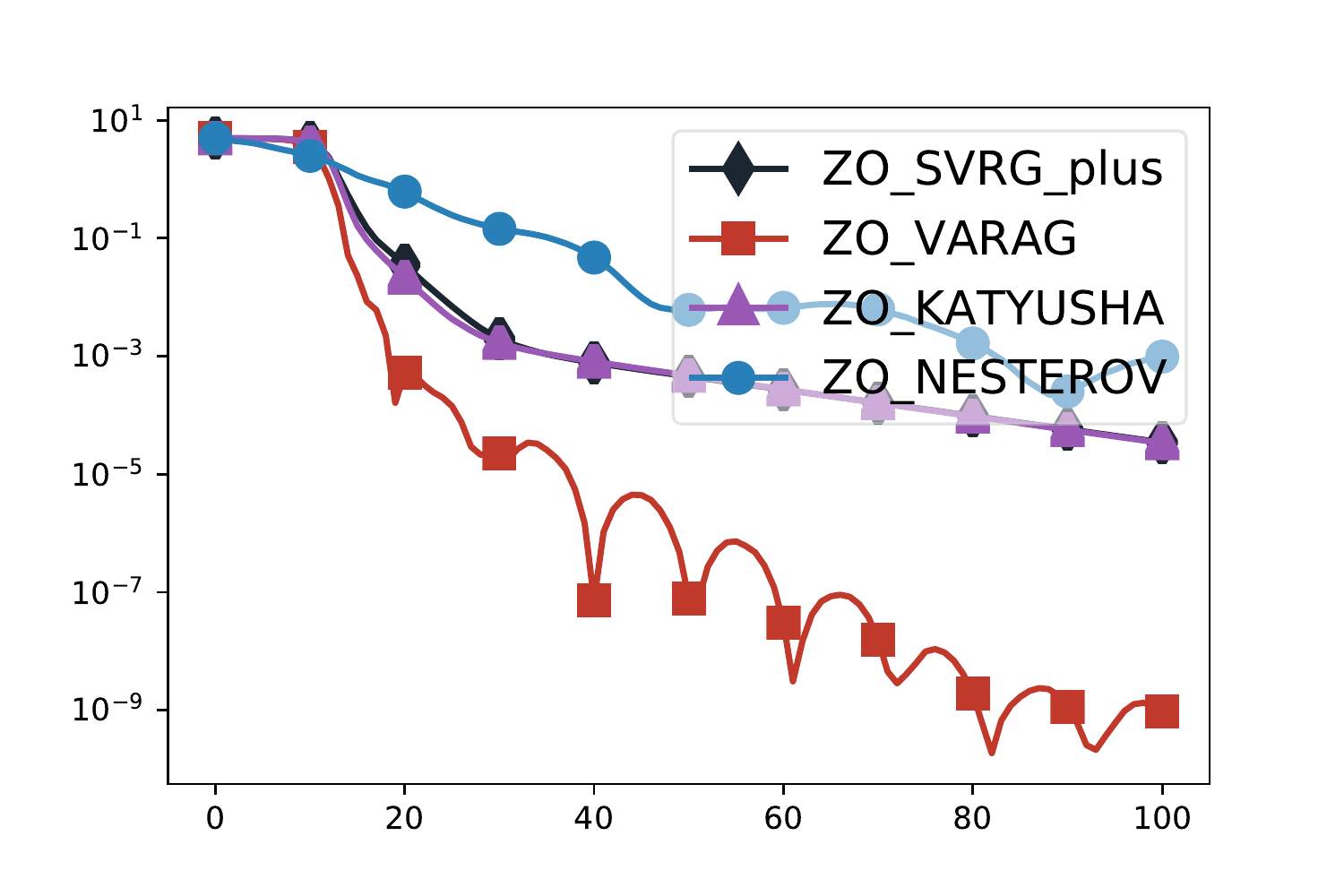}  &  \includegraphics[width=0.475\linewidth,valign=c]{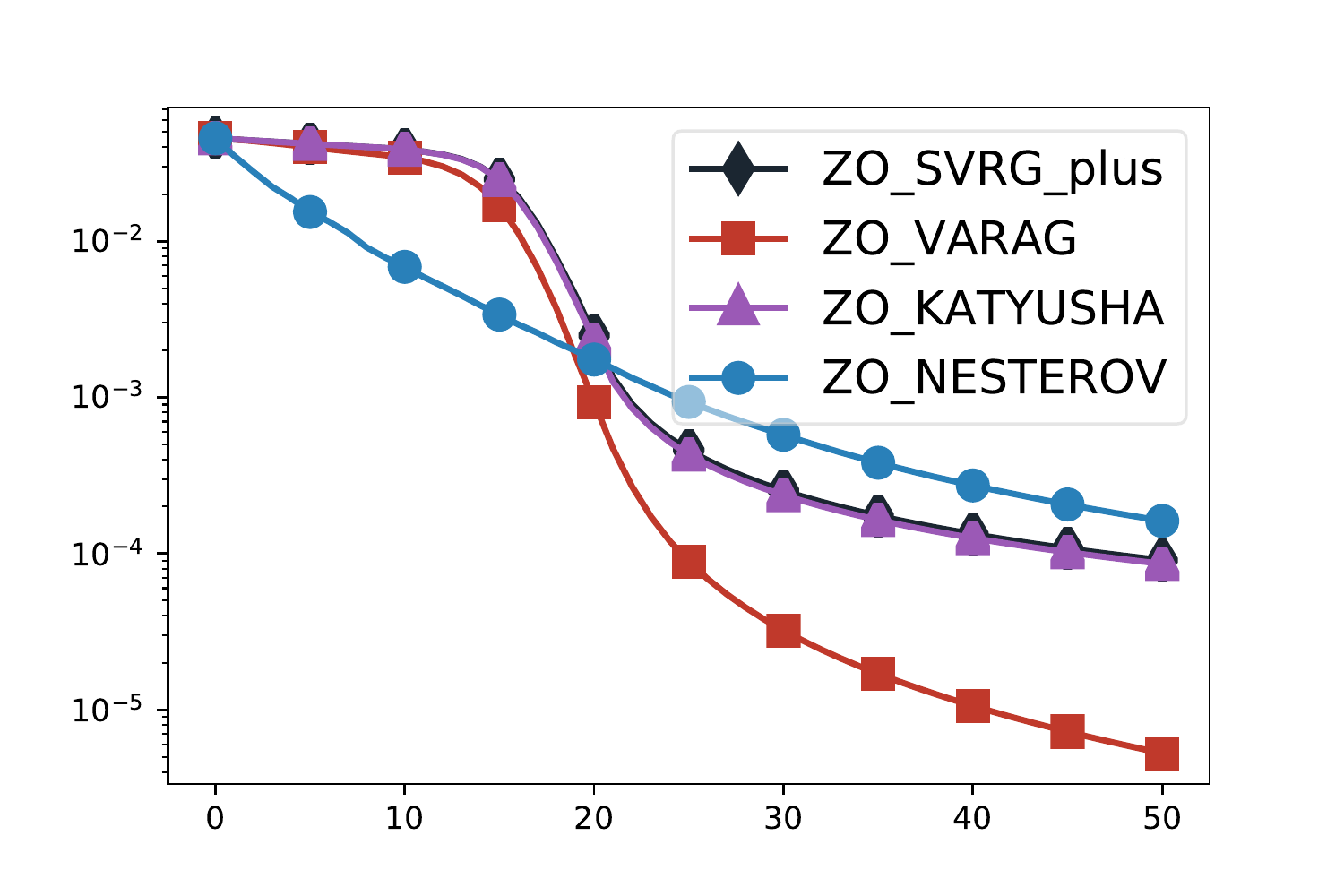}
             \\
             
        \rotatebox[origin=c]{90}{\scriptsize{Logistic, $\lambda = 1e^{-5}$}}&
             
              \includegraphics[width=0.475\linewidth,valign=c]{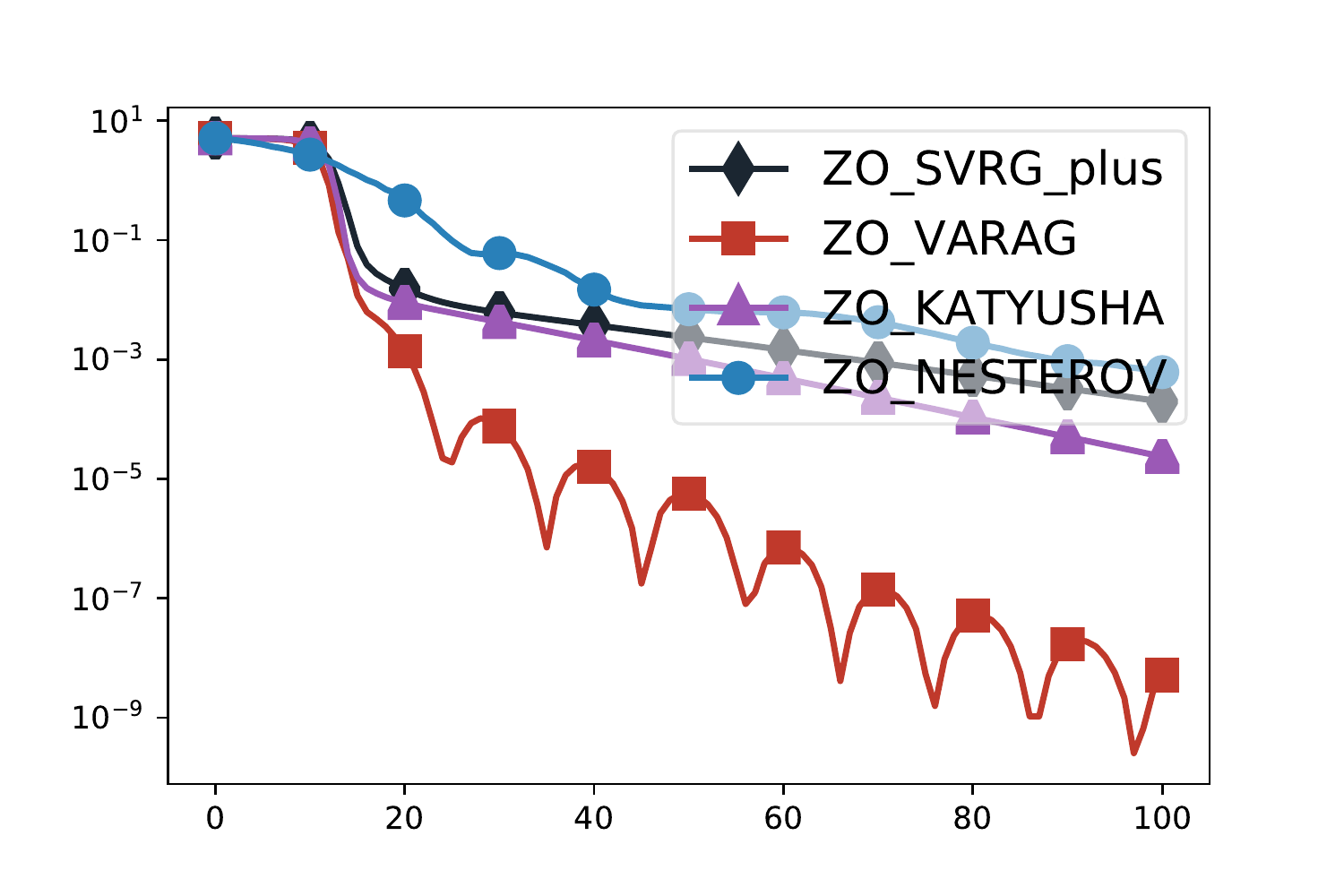}& \includegraphics[width=0.475\linewidth,valign=c]{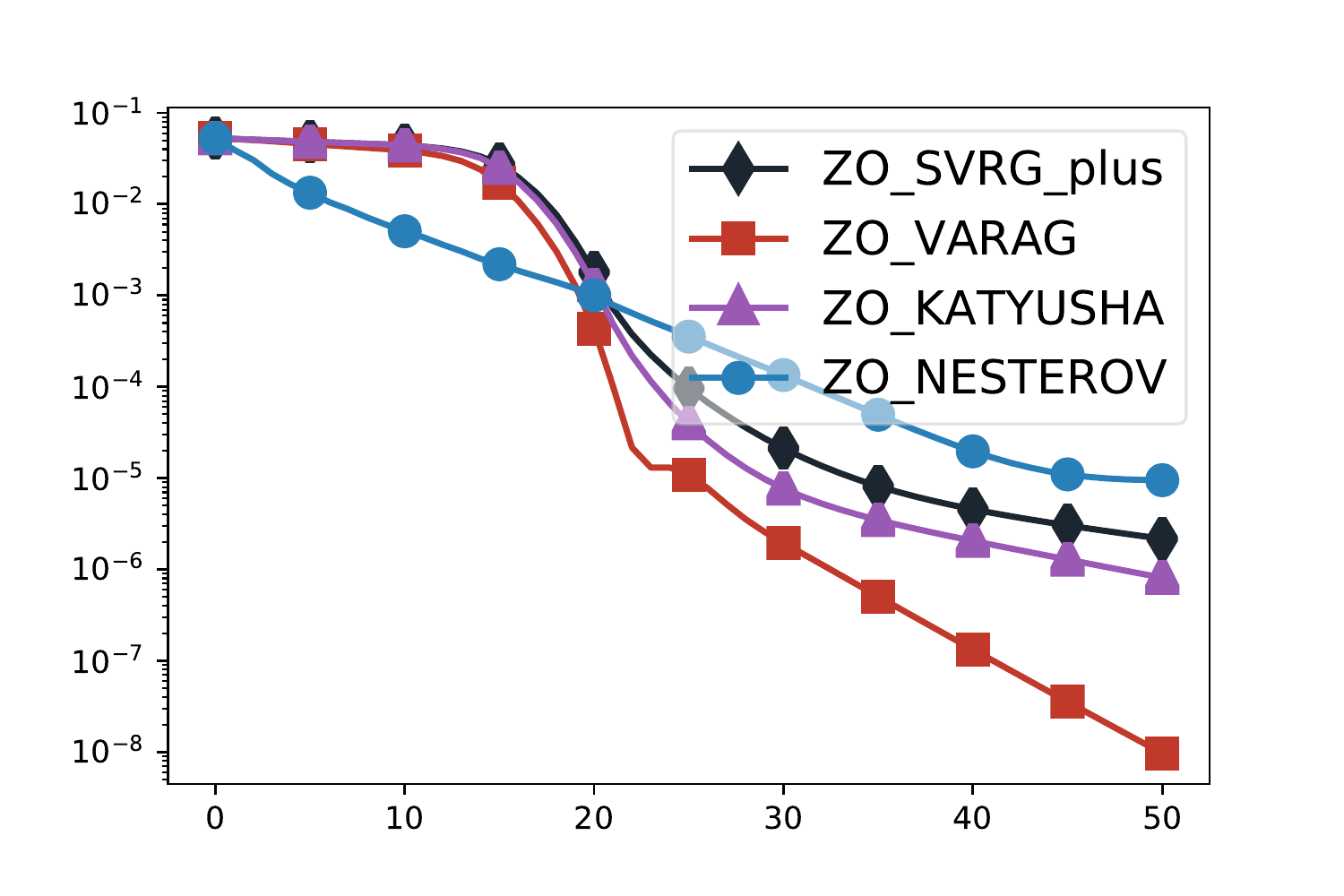}
             \\
        \rotatebox[origin=c]{90}{\scriptsize{Ridge, $\lambda = 0$}} &    
             \includegraphics[width=0.475\linewidth,valign=c]{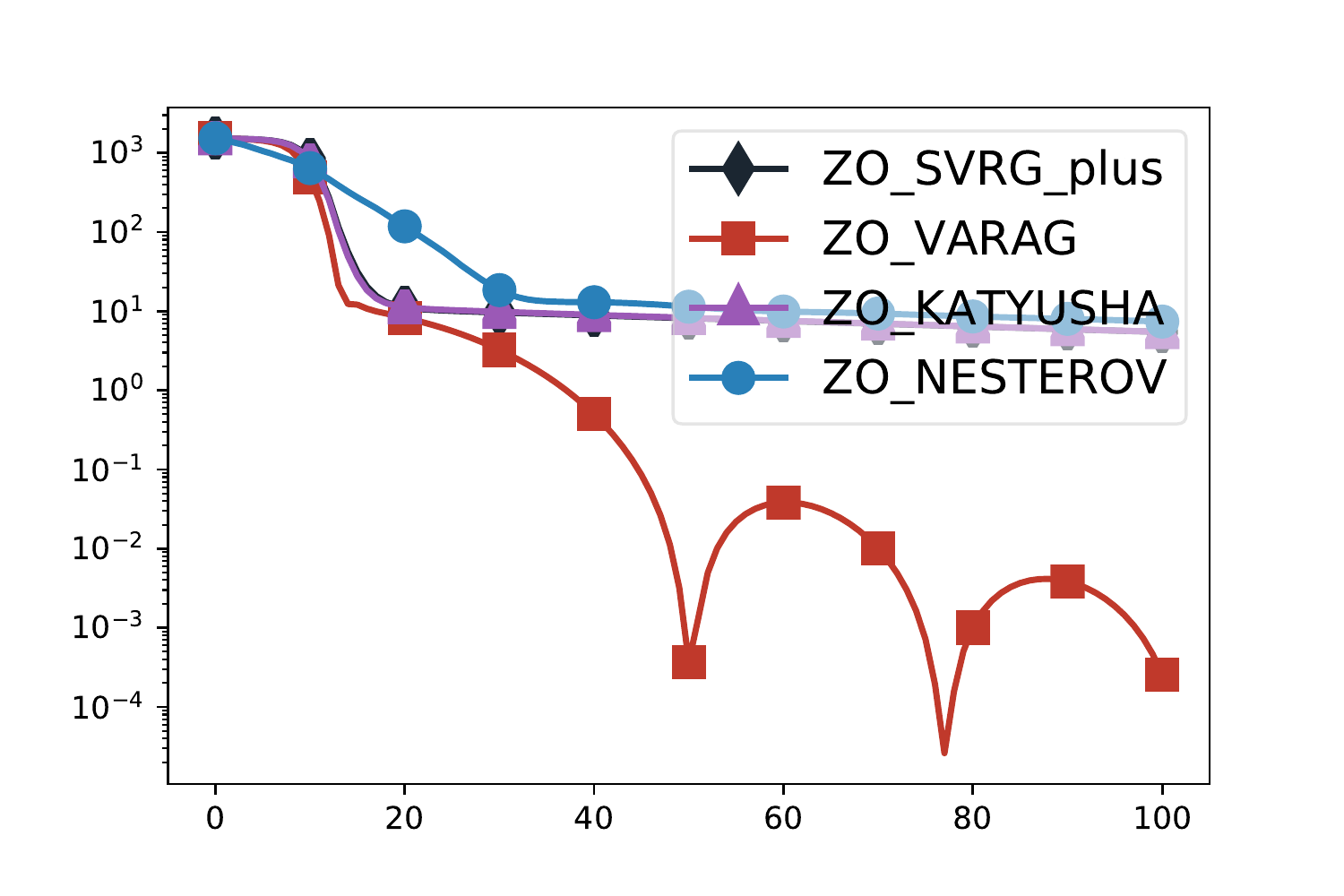}  &  \includegraphics[width=0.475\linewidth,valign=c]{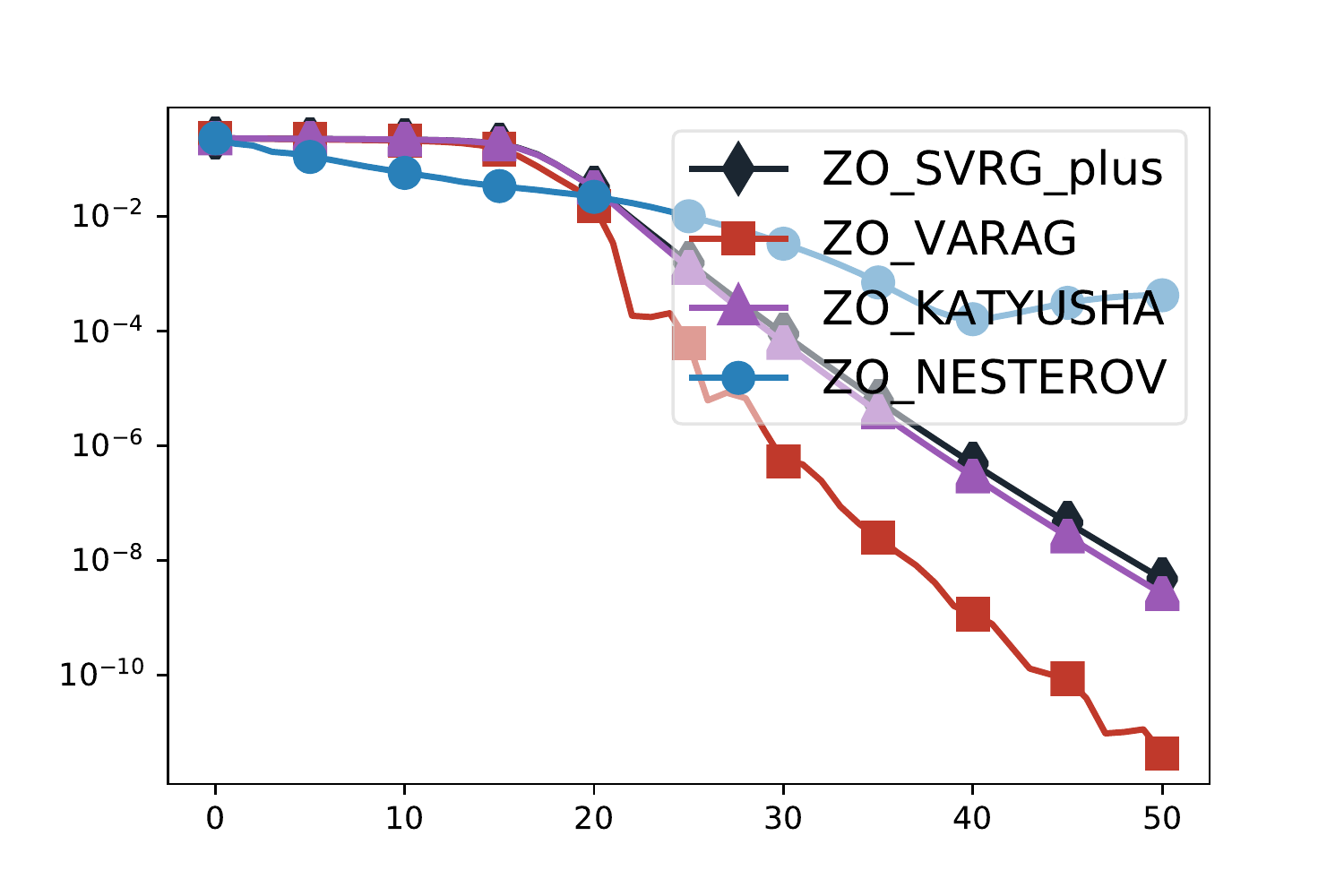}
             \\
             
        \rotatebox[origin=c]{90}{\scriptsize{Ridge, $\lambda = 1e^{-5}$}}&
             
              \includegraphics[width=0.475\linewidth,valign=c]{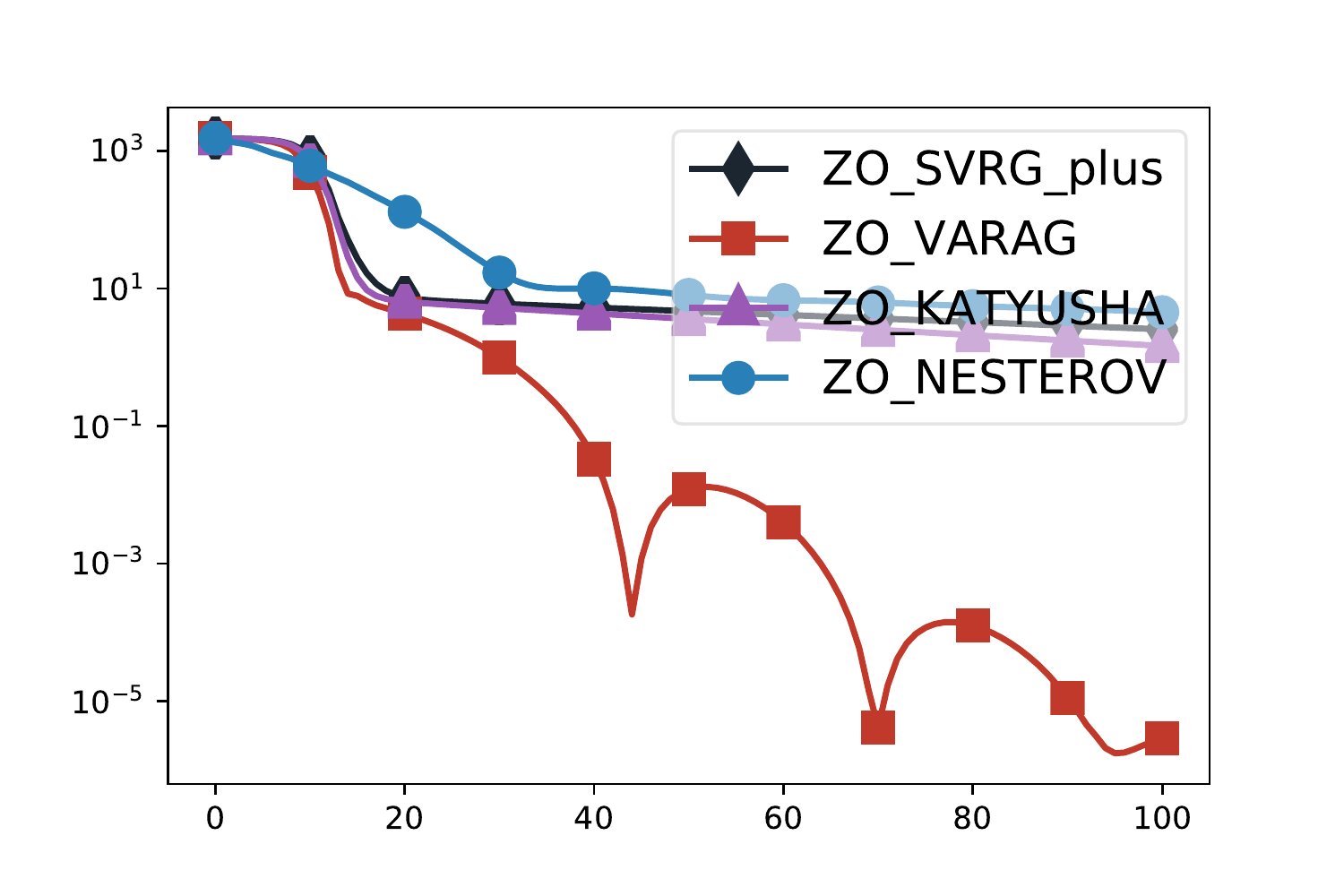}& \includegraphics[width=0.475\linewidth,valign=c]{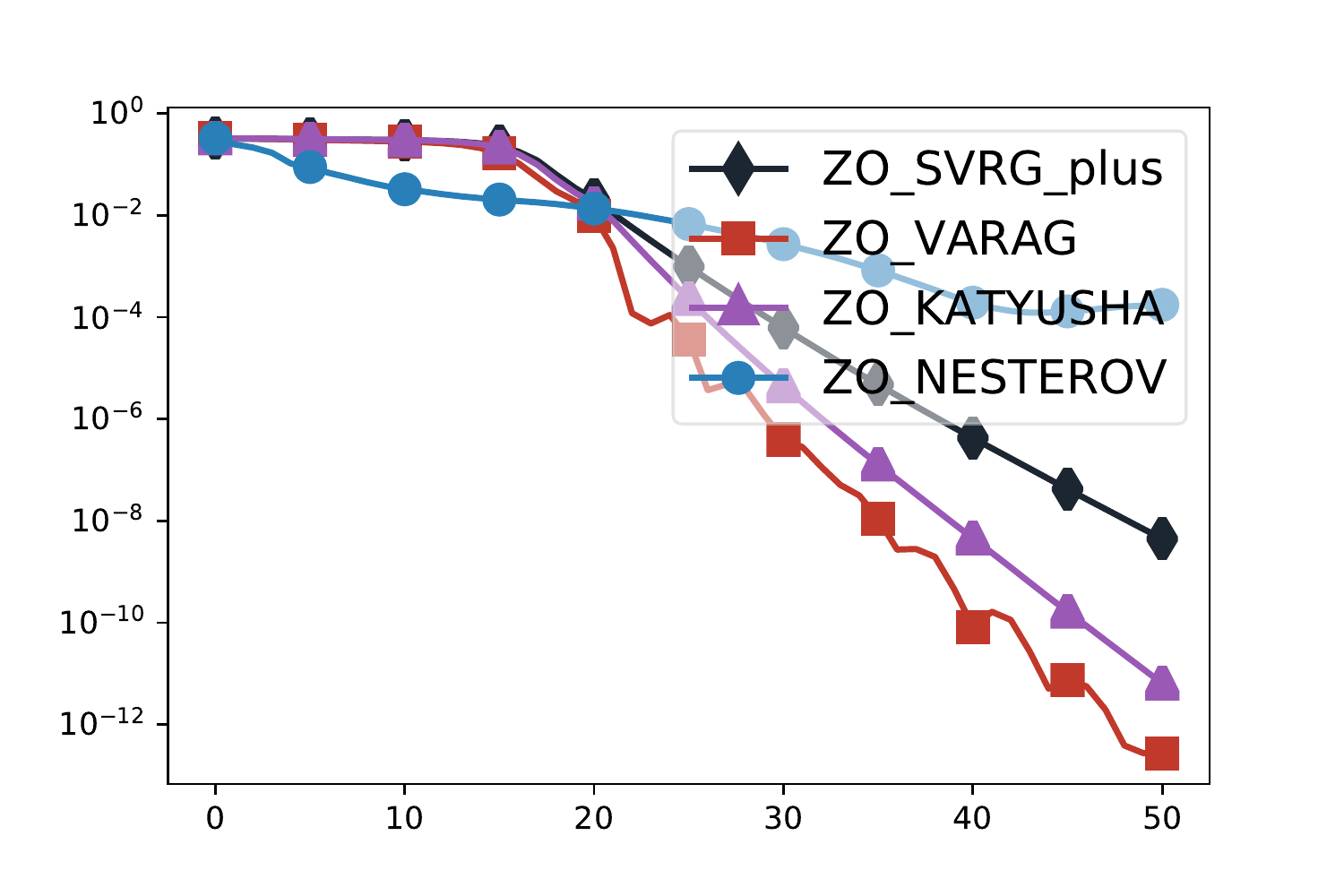}
            \\
        &   \scriptsize{epochs} & \scriptsize{epochs}
   \end{tabular}
          \caption{ \footnotesize{Loss $\log (f -f^*)$ over epochs. We show more results for various hyperparameters in the appendix.}}
          \label{fig:basic_test}
\end{figure}

In this section, we compare the empirical performance of ZO-Varag with ZO-SVRG-Coord-Rand in \cite{ji2019improved} and a simplified ZO-Katyusha which is the ZO-version of the simplified Katyusha algorithm in \cite{fanhua2017simKatyusha}, see Algorithm \ref{algorithm-Katyusha} in the appendix. We conduct experiments for both logistic regression and ridge regression~\footnote{Note that logistic regression with $\lambda = 0$ is not guaranteed to be coercive. However, this does not appear to be a problem in practice.} with and without $\ell_2$ regularization on the diabete dataset ($n=442, d=10$) from sklearn and the ijcnn1 dataset ($n=49990, d=22$) from LIBSVM. The choice of the hyperparameters chosen for each algorithm is detailed in the appendix.

Based on our theoretical analysis, we require $\frac{(d+4)n}{2} \le k \le (d+4)n$ iterations per epoch for ZO-Varag. However, we can lower the computational cost by using a batch update with $b$ samples per iteration and decrease the number of iterations to be $b$ times smaller, i.e. $\frac{(d+4)n}{b}$ for each epoch.

\subsection{Overall Performance}

We first compare the performance of ZO-Varag to the baselines for two different regularizers: $\lambda = \{ 0, 1e^{-5} \}$ (i.e. adding $\lambda \| x \|^2$ to the loss). In this part, we set the Katyusha momentum to a constant $p_0$ such that $p_0 + \alpha_0 = 1$. We then set the Katyusha momentum to $p_0 = 0.5$ ({see additional results for different values of $p_0$ in the appendix}).
The results shown in Figure~\ref{fig:basic_test} demonstrate that ZO-Varag does achieve an accelerated rate for all settings. While the zero-th order adaptation of simplified Katyusha does seem to be faster than the other two approaches, its performance is still close to the ZO-SVRG-Coord-Rand introduced in \cite{ji2019improved}. Finally, we note that Nesterov's ZO (ZO-Nesterov) method \cite{nesterov2017random} is a deterministic approach and it therefore has a much higher complexity per step. Indeed, while one step of ZO-Nesterov requires $2n$ queries, all the other methods require $2b$ queries. In order to establish a fair comparison, we plot the results of ZO-Nesterov with the nearest functional queries w.r.t. the results at the pivotal points for other stochastic methods.

\subsection{Options for Pivotal Point} 
\begin{figure}[t!]
 \centering          \begin{tabular}{c@{}c@{}c@{}c@{}}
        &   \scriptsize{ijcnn1, S = 50, b = 500, $\lambda = 0$} & \scriptsize{ijcnn1, S = 50, b = 500, $\lambda = 1e^{-5}$} \\
        \rotatebox[origin=c]{90}{\scriptsize{Logistic}} &    
             \includegraphics[width=0.475\linewidth,valign=c]{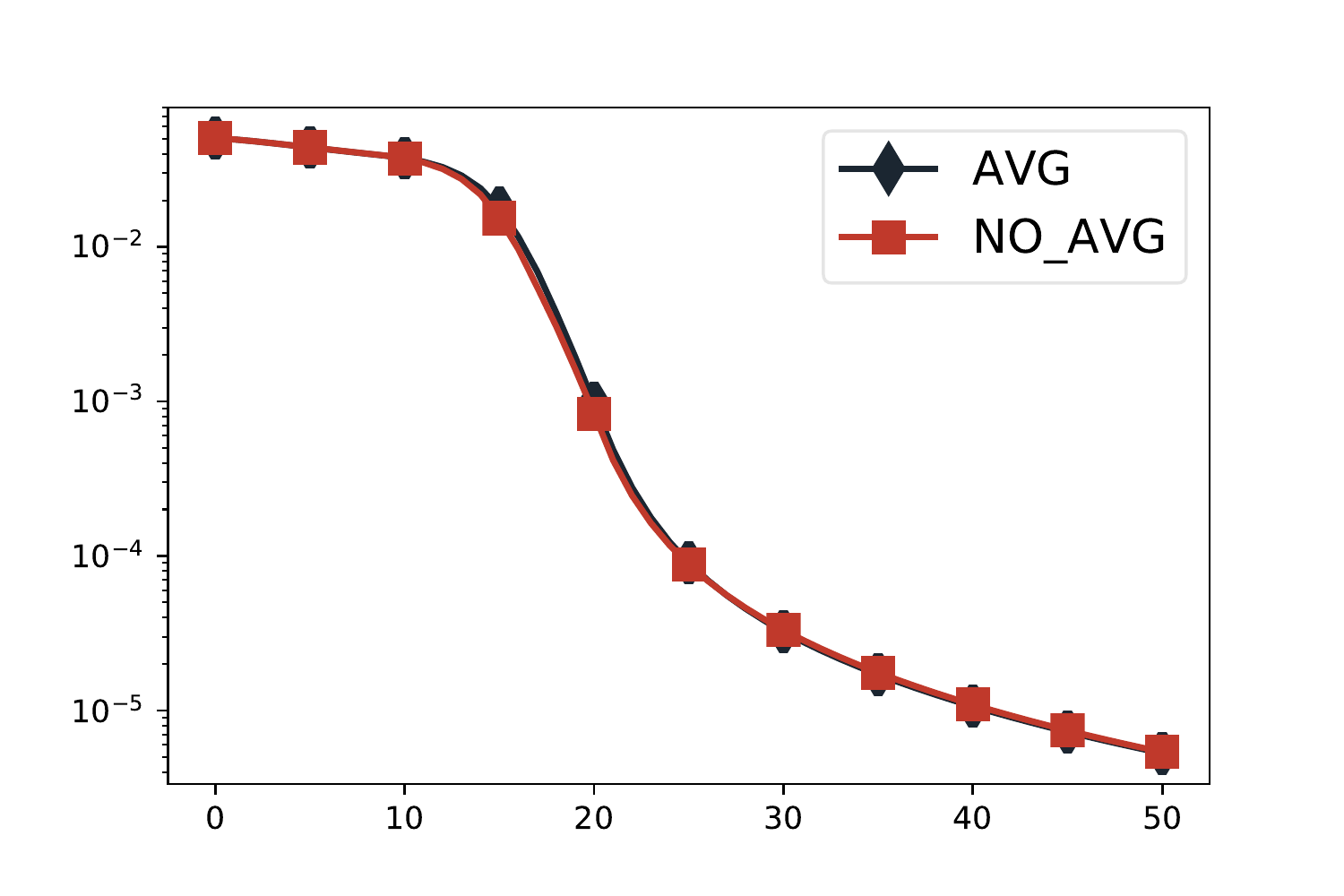}  &  \includegraphics[width=0.475\linewidth,valign=c]{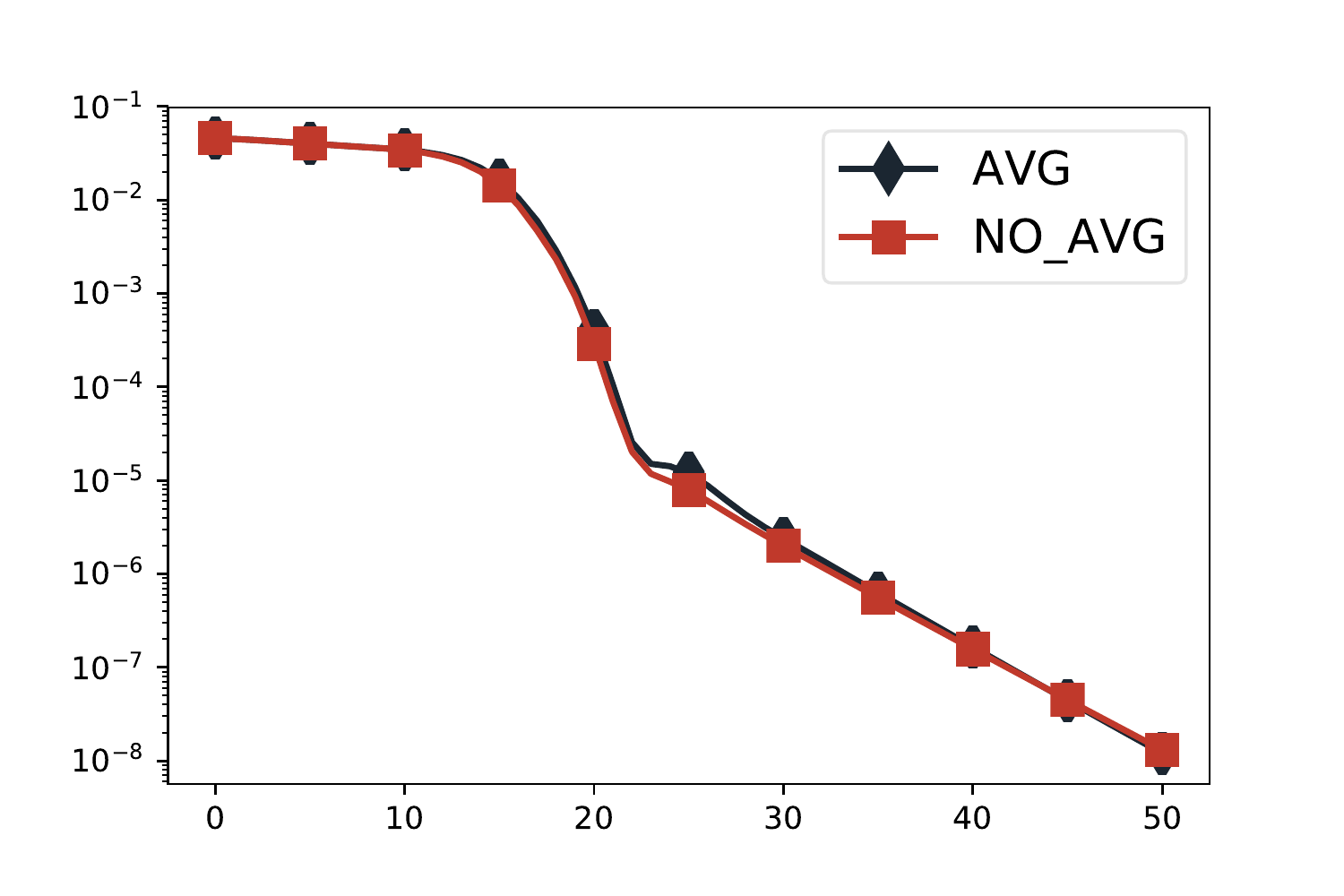}
             \\
             
        \rotatebox[origin=c]{90}{\scriptsize{Ridge}}&
             
              \includegraphics[width=0.475\linewidth,valign=c]{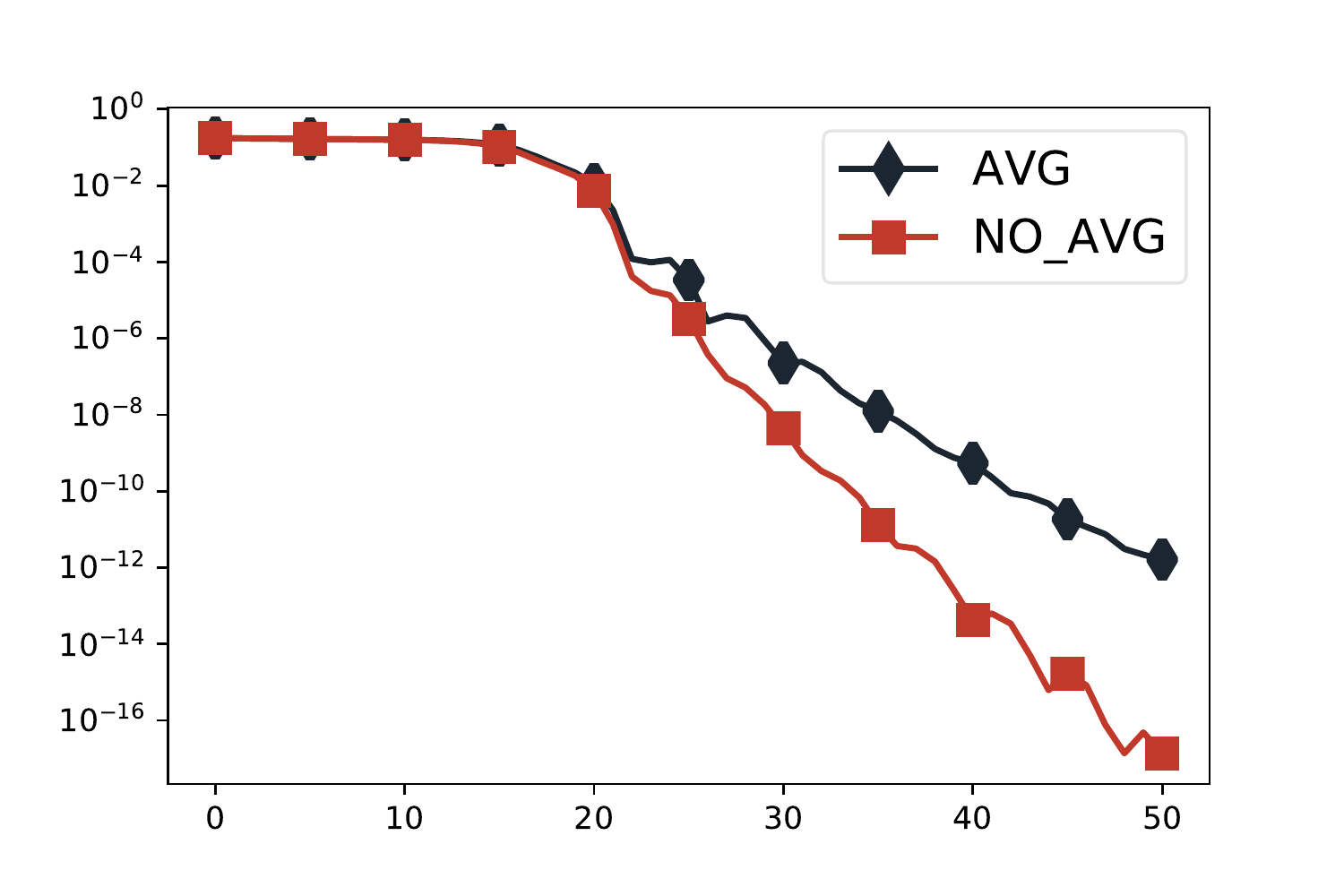}& \includegraphics[width=0.475\linewidth,valign=c]{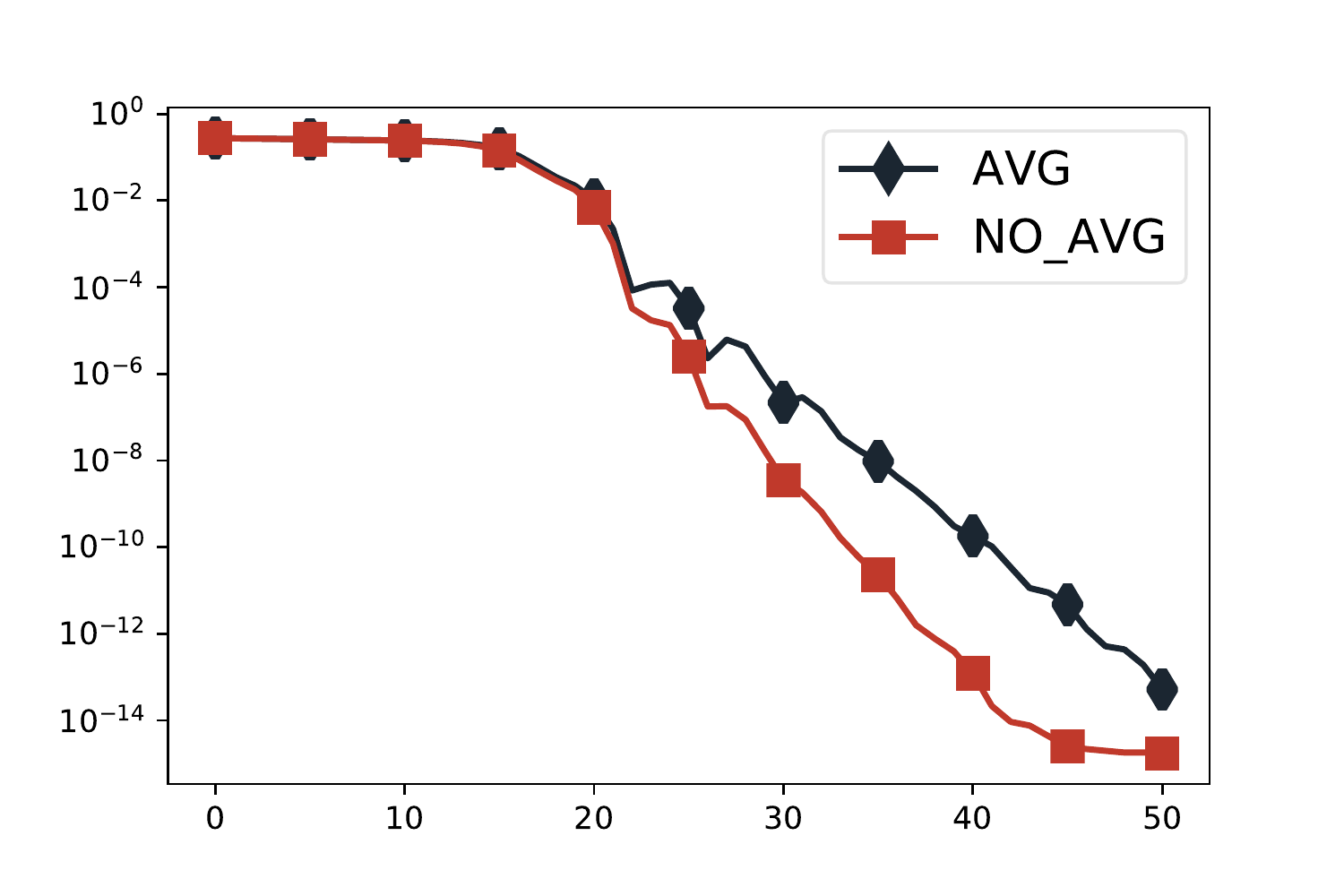}
            \\
        &   \scriptsize{epochs} & \scriptsize{epochs}
   \end{tabular}
          \caption{ \footnotesize{ZO-Varag, averaging vs. no-averaging}}
          \label{fig:pivot_selection_ijcnn1}
\end{figure}

As in~\cite{johnson2013accelerating}, we consider two options for specifying the pivot point: i) $\tilde{x} = \tilde{x}^{s-1}$ (as used in our analysis), or ii)  $\tilde{x} = \bar{x}^{s-1}$. The comparison of these two options is shown in Fig. \ref{fig:pivot_selection_ijcnn1} as well as in the appendix. Although option ii) does not have any theoretical guarantee, it empirically converges at a slightly faster rate than i) for logistic regression and significantly more for ridge regression.

\subsection{Effect of the Regularizer $\lambda$}
\begin{figure}[t!]
 \centering          \begin{tabular}{c@{}c@{}c@{}c@{}}
        &   \scriptsize{Diabetes, S = 300, b = 5} & \scriptsize{ijcnn1, S = 100, b = 500} \\
        \rotatebox[origin=c]{90}{\scriptsize{Logistic}} &    
             \includegraphics[width=0.475\linewidth,valign=c]{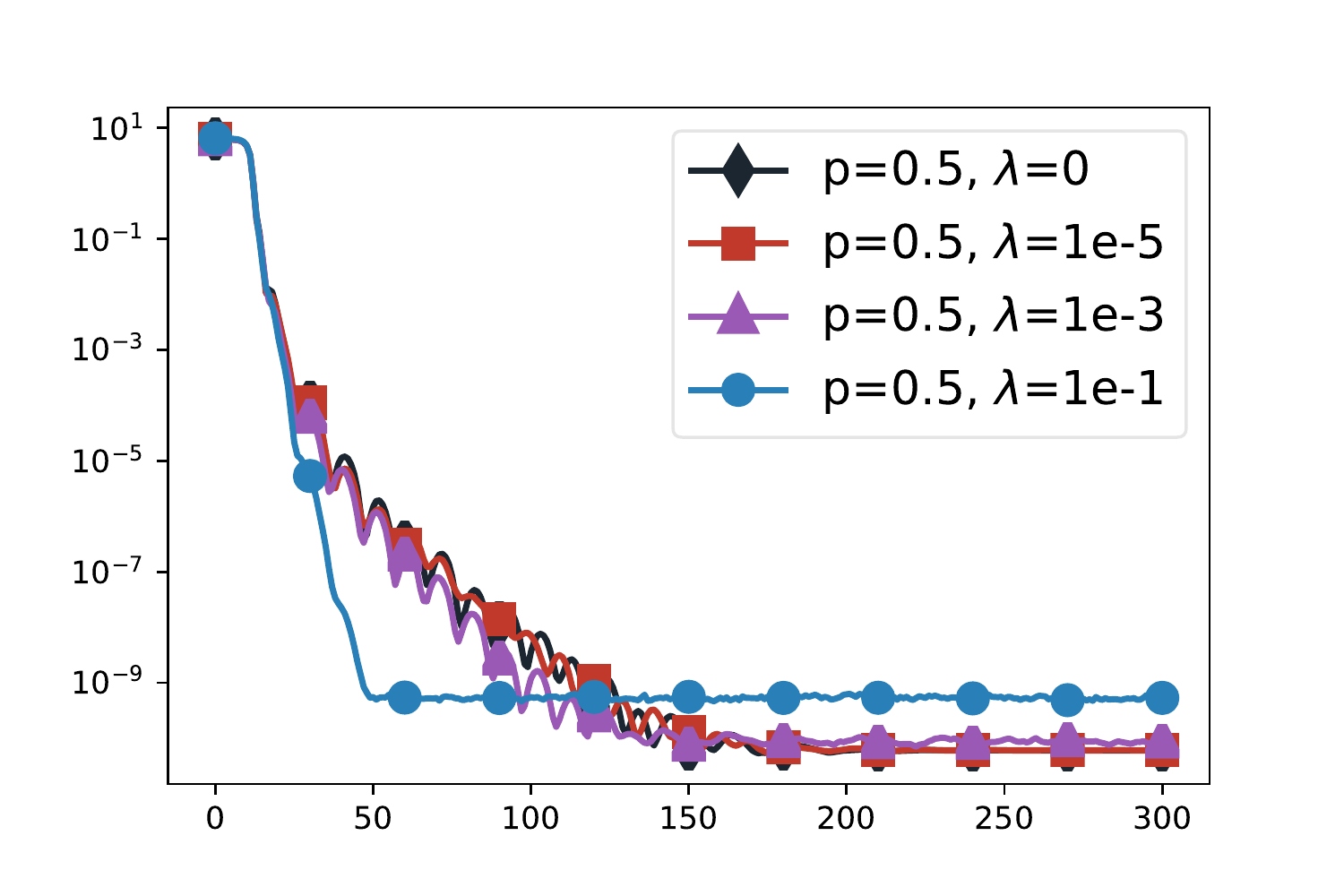}  &  \includegraphics[width=0.475\linewidth,valign=c]{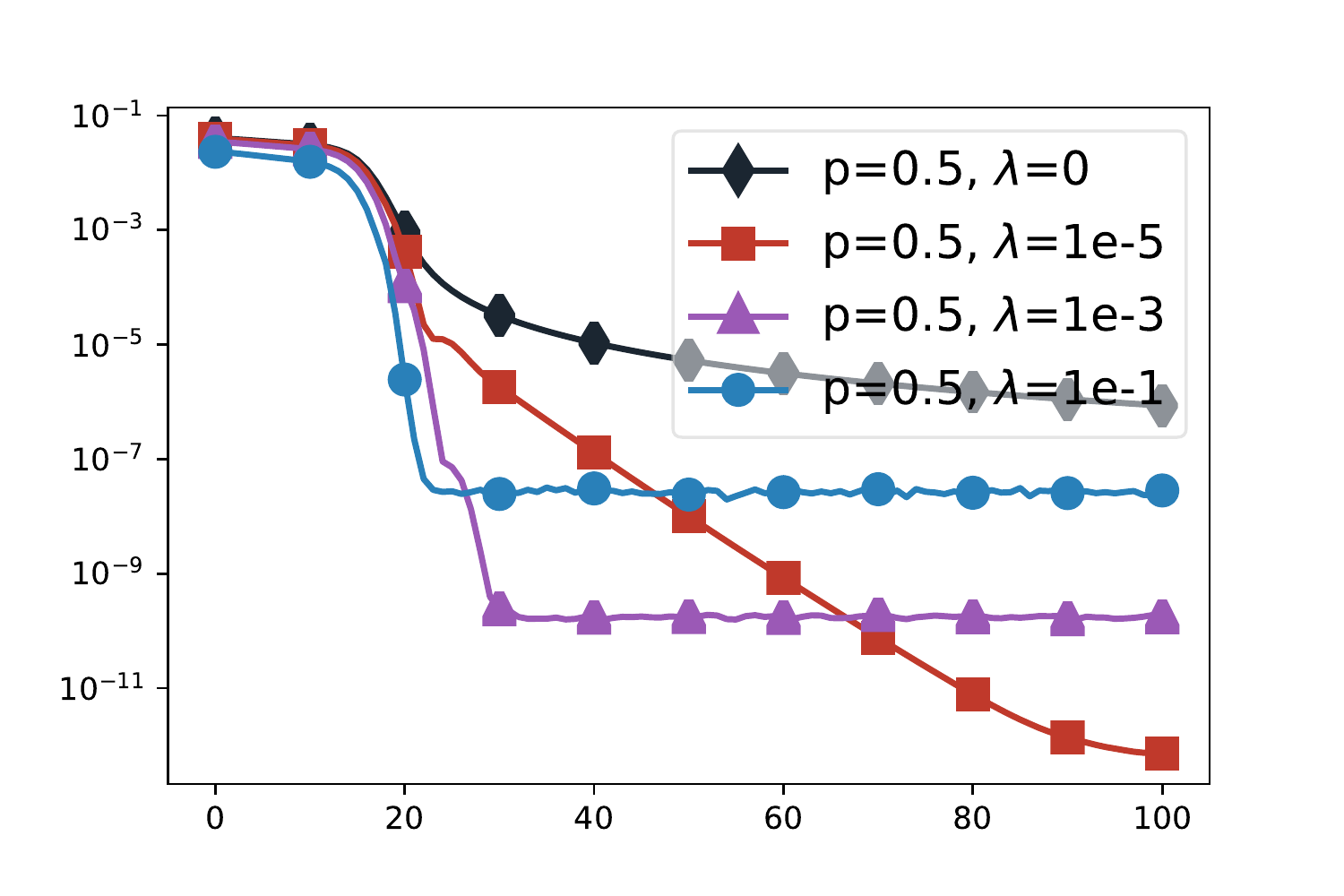}
             \\
             
        \rotatebox[origin=c]{90}{\scriptsize{Ridge}}&
             
              \includegraphics[width=0.475\linewidth,valign=c]{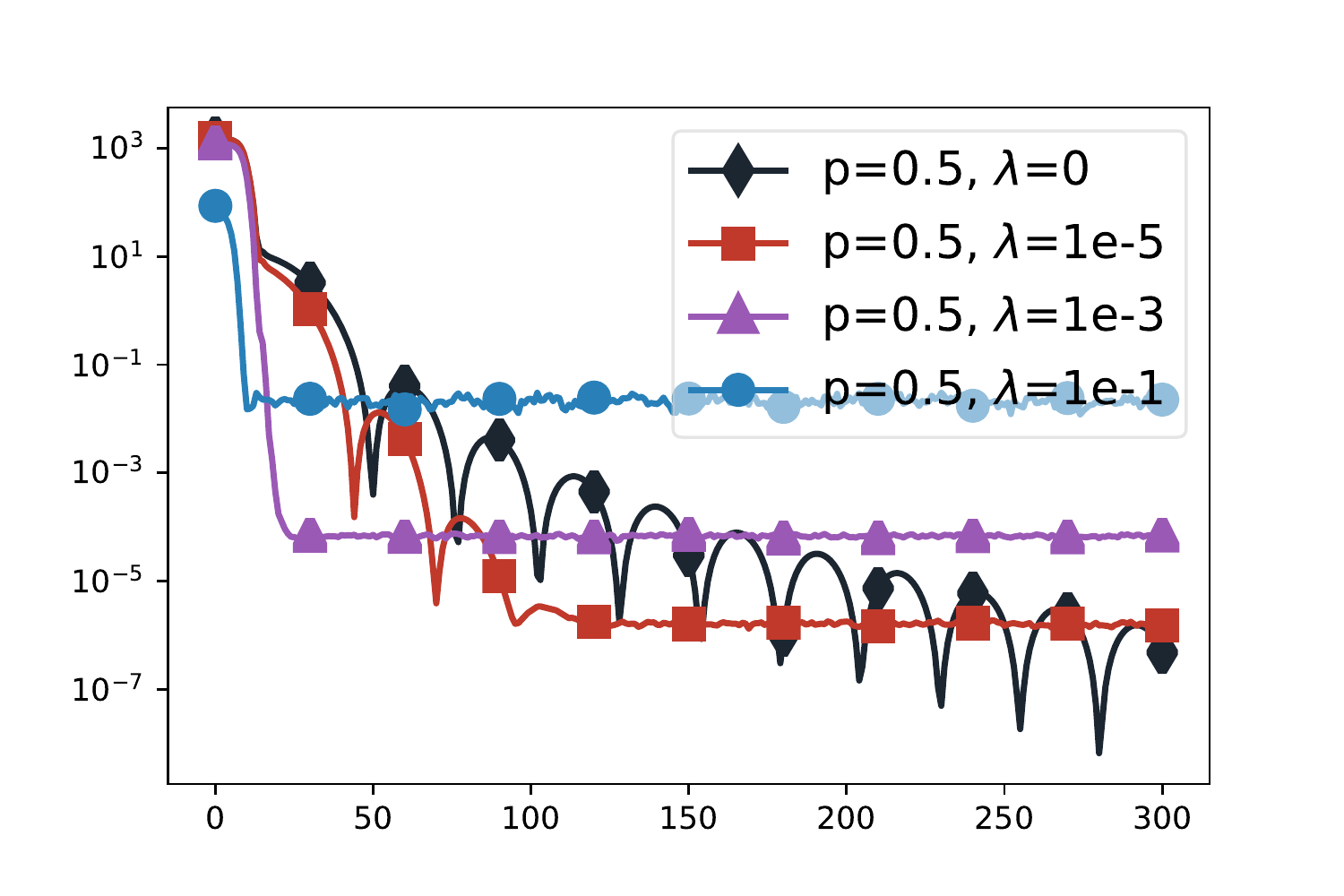}& \includegraphics[width=0.475\linewidth,valign=c]{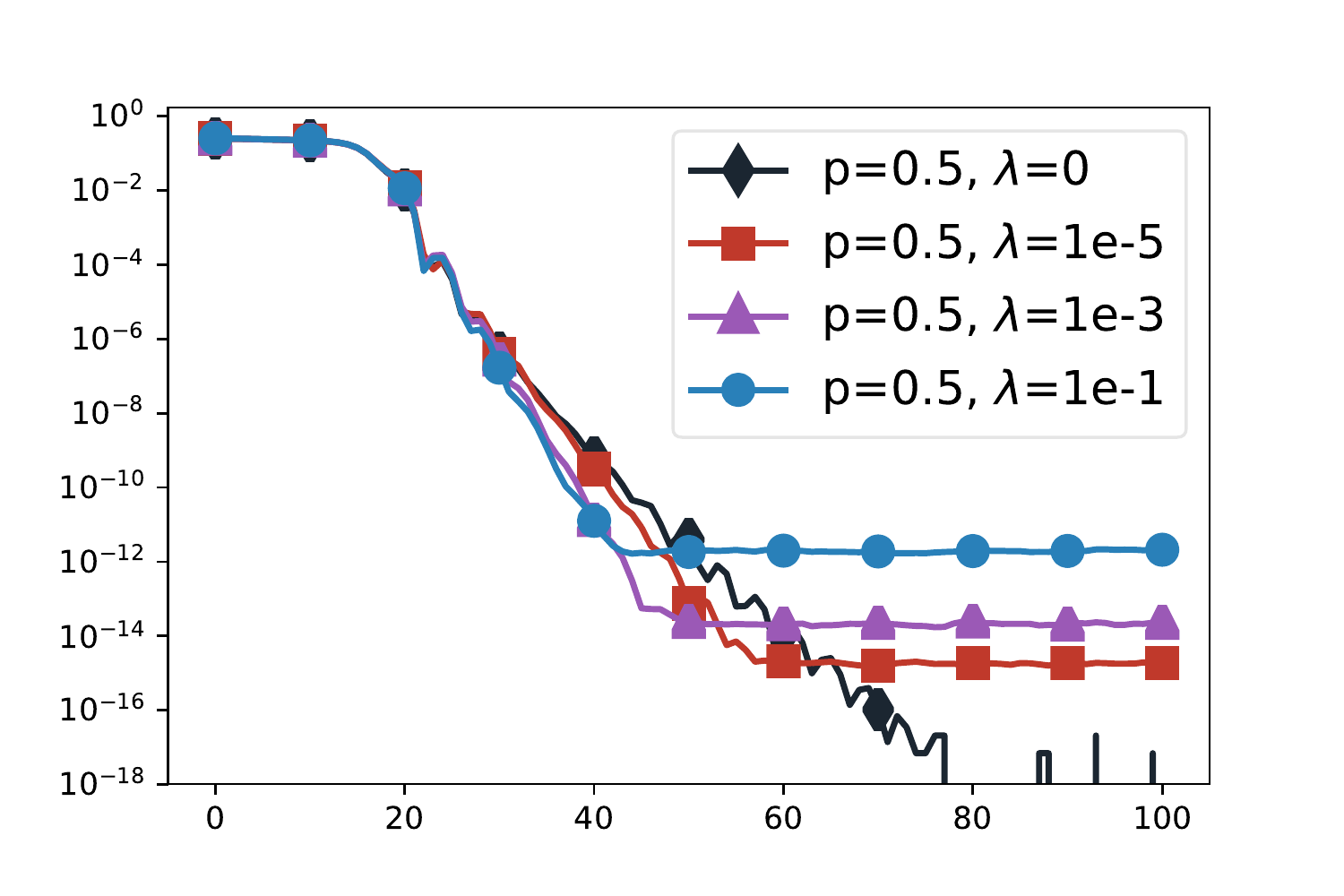}
            \\
        &   \scriptsize{epochs} & \scriptsize{epochs}
   \end{tabular}
          \caption{ \footnotesize{ZO-Varag, effect of varying the regularizer $\lambda$.}}
          \label{fig:vary_regularizer}
\end{figure}

We vary the strength of the regularizer to understand the behavior of the algorithms for objectives with stronger convexity constants and also to observe the convergence of the algorithm to the optimal solution. These results are shown in Figure \ref{fig:vary_regularizer} for increasing values of $\lambda$. ZO-Varag is faster in the initial stage but for all values of $\lambda$, we observe that it converges to a ball around the optimum. At first, one could expect that this might be due to the DFO errors $\varsigma_1, \varsigma_2$ shown in our convergence theorems, which would only appears to be a problem in high-accuracy regimes. However, the reason may come from other two additional sources: 1) the non-vanishing SVRG variance problem raised in the SARAH paper (see Fig. 1 in \cite{nguyen2017sarah} and our discussion of Fig.~\ref{fig:varying_step} in the appendix) and 2) the fact that stronger convexity constants increase the approximation error of Gaussian smoothing.


\section{Conclusion}

We presented a derivative-free algorithm that achieves the first accelerated rate of convergence for stochastic optimization of a convex finite-sum objective function. We also extended our analysis to the case of strongly-convex functions and included a variant of our algorithm for a coordinate-wise estimation of the gradient based on~\cite{ji2019improved}. Besides, a proximal variant of our approach could probably be derived, to deal with non-smooth problems as in the original Varag algorithm~\cite{lan2019unified}. Finally, we conducted experiments on several datasets demonstrating that our algorithm performs better than all existing non-accelerated DFO algorithms.

\clearpage 

\bibliography{main}

\begin{thebibliography}{30}
\providecommand{\natexlab}[1]{#1}
\providecommand{\url}[1]{\texttt{#1}}
\expandafter\ifx\csname urlstyle\endcsname\relax
  \providecommand{\doi}[1]{doi: #1}\else
  \providecommand{\doi}{doi: \begingroup \urlstyle{rm}\Url}\fi

\bibitem[Allen-Zhu(2017)]{allen2017katyusha}
Allen-Zhu, Z.
\newblock Katyusha: The first direct acceleration of stochastic gradient
  methods.
\newblock In \emph{Proceedings of the 49th Annual ACM SIGACT Symposium on
  Theory of Computing}, pp.\  1200--1205. ACM, 2017.

\bibitem[Bergou et~al.(2019)Bergou, Gorbunov, and
  Richt{\'a}rik]{bergou2019stochastic}
Bergou, E.~H., Gorbunov, E., and Richt{\'a}rik, P.
\newblock Stochastic three points method for unconstrained smooth minimization.
\newblock \emph{arXiv preprint arXiv:1902.03591}, 2019.

\bibitem[Bubeck et~al.(2012)Bubeck, Cesa-Bianchi, et~al.]{bubeck2012regret}
Bubeck, S., Cesa-Bianchi, N., et~al.
\newblock Regret analysis of stochastic and nonstochastic multi-armed bandit
  problems.
\newblock \emph{Foundations and Trends{\textregistered} in Machine Learning},
  5\penalty0 (1):\penalty0 1--122, 2012.

\bibitem[Chen et~al.(2017)Chen, Zhang, Sharma, Yi, and Hsieh]{chen2017zoo}
Chen, P.-Y., Zhang, H., Sharma, Y., Yi, J., and Hsieh, C.-J.
\newblock Zoo: Zeroth order optimization based black-box attacks to deep neural
  networks without training substitute models.
\newblock In \emph{Proceedings of the 10th ACM Workshop on Artificial
  Intelligence and Security}, pp.\  15--26. ACM, 2017.

\bibitem[Defazio et~al.(2014)Defazio, Bach, and
  Lacoste-Julien]{defazio2014saga}
Defazio, A., Bach, F., and Lacoste-Julien, S.
\newblock Saga: A fast incremental gradient method with support for
  non-strongly convex composite objectives.
\newblock In \emph{Advances in neural information processing systems}, pp.\
  1646--1654, 2014.

\bibitem[Fang et~al.(2018)Fang, Li, Lin, and Zhang]{fang2018spider}
Fang, C., Li, C.~J., Lin, Z., and Zhang, T.
\newblock Spider: Near-optimal non-convex optimization via stochastic
  path-integrated differential estimator.
\newblock In \emph{Advances in Neural Information Processing Systems}, pp.\
  689--699, 2018.

\bibitem[Gadat et~al.(2018)Gadat, Panloup, Saadane,
  et~al.]{gadat2018stochastic}
Gadat, S., Panloup, F., Saadane, S., et~al.
\newblock Stochastic heavy ball.
\newblock \emph{Electronic Journal of Statistics}, 12\penalty0 (1):\penalty0
  461--529, 2018.

\bibitem[Ghadimi \& Lan(2013)Ghadimi and Lan]{ghadimi2013stochastic}
Ghadimi, S. and Lan, G.
\newblock Stochastic first-and zeroth-order methods for nonconvex stochastic
  programming.
\newblock \emph{SIAM Journal on Optimization}, 23\penalty0 (4):\penalty0
  2341--2368, 2013.

\bibitem[Gorbunov et~al.(2018)Gorbunov, Dvurechensky, and
  Gasnikov]{gorbunov2018accelerated}
Gorbunov, E., Dvurechensky, P., and Gasnikov, A.
\newblock An accelerated method for derivative-free smooth stochastic convex
  optimization.
\newblock \emph{arXiv preprint arXiv:1802.09022}, 2018.

\bibitem[Gorbunov et~al.(2019)Gorbunov, Bibi, Sener, Bergou, and
  Richt{\'a}rik]{gorbunov2019stochastic}
Gorbunov, E., Bibi, A., Sener, O., Bergou, E.~H., and Richt{\'a}rik, P.
\newblock A stochastic derivative free optimization method with momentum.
\newblock \emph{arXiv preprint arXiv:1905.13278}, 2019.

\bibitem[Ji et~al.(2019)Ji, Wang, Zhou, and Liang]{ji2019improved}
Ji, K., Wang, Z., Zhou, Y., and Liang, Y.
\newblock Improved zeroth-order variance reduced algorithms and analysis for
  nonconvex optimization.
\newblock In \emph{International Conference on Machine Learning}, pp.\
  3100--3109, 2019.

\bibitem[Johnson \& Zhang(2013)Johnson and Zhang]{johnson2013accelerating}
Johnson, R. and Zhang, T.
\newblock Accelerating stochastic gradient descent using predictive variance
  reduction.
\newblock In \emph{Advances in neural information processing systems}, pp.\
  315--323, 2013.

\bibitem[Kingma \& Ba(2014)Kingma and Ba]{kingma2014adam}
Kingma, D.~P. and Ba, J.
\newblock Adam: A method for stochastic optimization.
\newblock \emph{arXiv preprint arXiv:1412.6980}, 2014.

\bibitem[Lan(2012)]{lan2012optimal}
Lan, G.
\newblock An optimal method for stochastic composite optimization.
\newblock \emph{Mathematical Programming}, 133\penalty0 (1-2):\penalty0
  365--397, 2012.

\bibitem[Lan \& Zhou(2018)Lan and Zhou]{lan2018optimal}
Lan, G. and Zhou, Y.
\newblock An optimal randomized incremental gradient method.
\newblock \emph{Mathematical programming}, 171\penalty0 (1-2):\penalty0
  167--215, 2018.

\bibitem[Lan et~al.(2019)Lan, Li, and Zhou]{lan2019unified}
Lan, G., Li, Z., and Zhou, Y.
\newblock A unified variance-reduced accelerated gradient method for convex
  optimization.
\newblock \emph{arXiv preprint arXiv:1905.12412}, 2019.

\bibitem[Lin et~al.(2015)Lin, Mairal, and Harchaoui]{lin2015universal}
Lin, H., Mairal, J., and Harchaoui, Z.
\newblock A universal catalyst for first-order optimization.
\newblock In \emph{Advances in neural information processing systems}, pp.\
  3384--3392, 2015.

\bibitem[Liu et~al.(2018{\natexlab{a}})Liu, Cheng, Hsieh, and
  Tao]{liu2018stochasticb}
Liu, L., Cheng, M., Hsieh, C.-J., and Tao, D.
\newblock Stochastic zeroth-order optimization via variance reduction method.
\newblock \emph{arXiv preprint arXiv:1805.11811}, 2018{\natexlab{a}}.

\bibitem[Liu et~al.(2018{\natexlab{b}})Liu, Kailkhura, Chen, Ting, Chang, and
  Amini]{liu2018stochastic}
Liu, S., Kailkhura, B., Chen, P.-Y., Ting, P., Chang, S., and Amini, L.
\newblock Zeroth-order stochastic variance reduction for nonconvex
  optimization.
\newblock In \emph{Advances in Neural Information Processing Systems}, pp.\
  3727--3737, 2018{\natexlab{b}}.

\bibitem[Matyas(1965)]{matyas1965random}
Matyas, J.
\newblock Random optimization.
\newblock \emph{Automation and Remote control}, 26\penalty0 (2):\penalty0
  246--253, 1965.

\bibitem[Nelder \& Mead(1965)Nelder and Mead]{nelder1965simplex}
Nelder, J.~A. and Mead, R.
\newblock A simplex method for function minimization.
\newblock \emph{The computer journal}, 7\penalty0 (4):\penalty0 308--313, 1965.

\bibitem[Nesterov(2014)]{nesterov2004introbook}
Nesterov, Y.
\newblock \emph{Introductory Lectures on Convex Optimization: A Basic Course}.
\newblock Springer Publishing Company, Incorporated, 1 edition, 2014.
\newblock ISBN 1461346916.

\bibitem[Nesterov \& Spokoiny(2011)Nesterov and Spokoiny]{nesterov2017random}
Nesterov, Y. and Spokoiny, V.
\newblock Random gradient-free minimization of convex functions.
\newblock \emph{Foundations of Computational Mathematics}, 17\penalty0
  (2):\penalty0 527--566, 2011.

\bibitem[Nesterov(1983)]{nesterov1983method}
Nesterov, Y.~E.
\newblock A method for solving the convex programming problem with convergence
  rate o (1/k\^{} 2).
\newblock In \emph{Dokl. akad. nauk Sssr}, volume 269, pp.\  543--547, 1983.

\bibitem[Nguyen et~al.(2017)Nguyen, Liu, Scheinberg, and
  Takáč]{nguyen2017sarah}
Nguyen, L.~M., Liu, J., Scheinberg, K., and Takáč, M.
\newblock Sarah: A novel method for machine learning problems using stochastic
  recursive gradient, 2017.

\bibitem[Orvieto et~al.(2019)Orvieto, Kohler, and Lucchi]{orvieto2019role}
Orvieto, A., Kohler, J., and Lucchi, A.
\newblock The role of memory in stochastic optimization.
\newblock In \emph{UAI}, 2019.

\bibitem[Polyak(1964)]{polyak1964some}
Polyak, B.~T.
\newblock Some methods of speeding up the convergence of iteration methods.
\newblock \emph{USSR Computational Mathematics and Mathematical Physics},
  4\penalty0 (5):\penalty0 1--17, 1964.

\bibitem[Salimans et~al.(2017)Salimans, Ho, Chen, Sidor, and
  Sutskever]{salimans2017evolution}
Salimans, T., Ho, J., Chen, X., Sidor, S., and Sutskever, I.
\newblock Evolution strategies as a scalable alternative to reinforcement
  learning.
\newblock \emph{arXiv preprint arXiv:1703.03864}, 2017.

\bibitem[{Shang} et~al.(2017){Shang}, {Liu}, {Cheng}, and
  {Zhuo}]{fanhua2017simKatyusha}
{Shang}, F., {Liu}, Y., {Cheng}, J., and {Zhuo}, J.
\newblock {Fast Stochastic Variance Reduced Gradient Method with Momentum
  Acceleration for Machine Learning}.
\newblock \emph{arXiv e-prints}, Mar 2017.

\bibitem[Zhou et~al.(2020)Zhou, Xu, and Gu]{zhou2018nested}
Zhou, D., Xu, P., and Gu, Q.
\newblock Stochastic nested variance reduction for nonconvex optimization.
\newblock \emph{Journal of Machine Learning Research}, 2020.

\end{thebibliography}
\bibliographystyle{icml2020}


\newpage
\onecolumn
\appendix


{\huge Appendix 
}\\ \ \\
This supplementary material is organized as follows:
\begin{itemize}
    \item In Appendix~\ref{app:grad} we discuss some fundamental properties of zero-order gradient estimation techniques.
    \item In Appendix~\ref{app:proofs_gauss} we give detailed proofs for the results in Section~\ref{sec:algo_analysis}.
    \item In Appendix~\ref{app:proofs_coord} we give detailed proofs for the results in Section~\ref{sec:coordinate-wise}.
    \item In Appendix~\ref{app:exp} we give further details for the experiments in Section~\ref{sec:experiments} and provide additional empirical results.
\end{itemize}

We summarize some notation used in the paper. The vector $x \in \mathbb{R}^d$ is the variable to optimize and $n$ is the cardinality of the dataset. The Gaussian smoothed gradient estimator is denoted as $g_{\mu}$~( Eq.~\eqref{DFO-framework-gaussian-smoothing}) while the coordinate-wise gradient estimator is denoted as $g_{\nu}$~(Eq.~\eqref{DFO-framework-cord-finite-difference}). The variable $i$ indexes the data-point, and sometimes we specify it in the subscript of our estimators, e.g. $g_{\mu, i}, g_{\nu, i}$. The vector $u\in\R^d$ is the random direction generated from $\mathcal{N}(0, I_d)$ for Gaussian smoothing estimator ($I_d$ is the identity matrix in $\R^d$). $D_0, D_0'$ are some suboptimality measures for the initial states (see main paper). $s$ is the index of epoch, and we usually omit the superscript when discussing inner iterations inside each epoch, e.g. $x_t$ (which should be denoted as $x_t^s$ rigorously). The pivotal information always has a superscript ``$\sim$'', e.g. $\tilde{x}$ denotes the current pivotal point and $\tilde{g}$ denotes the pivotal gradient estimation at epoch $s$.

\section{Zero-Order Gradient estimation with variance reduction}
\label{app:grad}
We discuss here some fundamental properties of zero-order gradient estimation. We will use these properties heavily in Appendix~\ref{app:proofs_gauss} and Appendix~\ref{app:proofs_coord}.
\subsection{Gaussian smoothing approach}
We start by recalling some definitions presented in Section~\ref{sec:background} of the main paper.
Consider a differentiable function $f:\R^d\to\R$; its smoothed version $f_\mu:\R^d\to\R$ is defined pointwise as
$$f_\mu(x) = \frac{1}{(2\pi)^{\frac{d}{2}}}\int_{\R^d} f(x+\mu u)e^{-\frac{1}{2}\|u\|^2} du, \quad \forall x\in\R^d.$$

We list some useful properties of $f_\mu$ in the next lemma.\\ We recall that we say $f$ is $L$\textit{-smooth} if, $\forall x,y\in\R^d$, $\|\nabla f(x)-\nabla f(y)\|\le L\|x-y\|$.

\begin{mdframed}
\begin{lemma} The following properties hold :\label{Nes-DFO-property}
\\
(1) \  If $f$ is convex, then $f_{\mu}$ is also convex.\\
\\
(2) \  If $f$ is $L$-smooth, then $f_{\mu}$ is also $L$-smooth.\\
\\
(3) \ If $f$ is $\tau$-strongly convex, then $f_{\mu}$ is also $\tau$-strongly convex.\\
\\
(4) \ (Lemma 1 in \cite{nesterov2017random}) Let $u\sim\mathcal{N}(0,I_d)$, the standard normal distribution in $\R^d$. For $p \ge 2$, $d^{\frac{p}{2}}\le \mathbb{E}_u\big[\|u\|^p\big] \le (d+p)^{\frac{p}{2}}$.
\end{lemma}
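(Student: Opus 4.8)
\textbf{Proof plan for Lemma~\ref{Nes-DFO-property}.}

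The plan is to establish the four properties essentially from the integral definition of $f_\mu$, treating each as a direct consequence of the fact that $f_\mu(x) = \mathbb{E}_{u\sim\mathcal{N}(0,I_d)}[f(x+\mu u)]$ is a convex combination (an average) of shifted copies of $f$. For part (1), I would fix $x,y\in\R^d$ and $\lambda\in[0,1]$, and observe that $f_\mu(\lambda x + (1-\lambda)y) = \mathbb{E}_u[f(\lambda(x+\mu u) + (1-\lambda)(y+\mu u))] \le \mathbb{E}_u[\lambda f(x+\mu u) + (1-\lambda)f(y+\mu u)] = \lambda f_\mu(x) + (1-\lambda)f_\mu(y)$, where the inequality is just convexity of $f$ applied pointwise inside the expectation and linearity of the expectation. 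The key point is that the shift $\mu u$ is the same in both arguments, so convexity passes through cleanly.

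For part (2), I would use the representation $\nabla f_\mu(x) = \mathbb{E}_u[\nabla f(x+\mu u)]$ (valid here since $f$ is differentiable with Lipschitz gradient, which justifies differentiating under the integral), and then for any $x,y$ write $\|\nabla f_\mu(x) - \nabla f_\mu(y)\| = \|\mathbb{E}_u[\nabla f(x+\mu u) - \nabla f(y+\mu u)]\| \le \mathbb{E}_u[\|\nabla f(x+\mu u) - \nabla f(y+\mu u)\|] \le \mathbb{E}_u[L\|x-y\|] = L\|x-y\|$, using Jensen's inequality for the norm and then $L$-smoothness of $f$. For part (3), $\tau$-strong convexity of $f$ means $g(x) := f(x) - \tfrac{\tau}{2}\|x\|^2$ is convex; I would check that $g_\mu(x) = f_\mu(x) - \tfrac{\tau}{2}\mathbb{E}_u[\|x+\mu u\|^2] = f_\mu(x) - \tfrac{\tau}{2}(\|x\|^2 + \mu^2 d)$ (expanding the square and using $\mathbb{E}_u[u]=0$, $\mathbb{E}_u[\|u\|^2]=d$), so $f_\mu(x) - \tfrac{\tau}{2}\|x\|^2 = g_\mu(x) + \tfrac{\tau\mu^2 d}{2}$ is convex by part (1) applied to the convex function $g$; the additive constant does not affect convexity, hence $f_\mu$ is $\tau$-strongly convex. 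Part (4) is quoted directly as Lemma 1 of~\cite{nesterov2017random} and requires no independent argument; if needed one computes the $p$-th moment of the chi distribution, but I would simply cite it.

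There is no real obstacle here — each part is a one- or two-line manipulation. The only point requiring a word of care is the interchange of differentiation and integration in part (2) (and implicitly the finiteness of the various integrals throughout), which is justified by $L$-smoothness of $f$ giving a locally integrable dominating function for $\nabla f(x+\mu u)$ against the Gaussian density; I would mention this briefly rather than belabor it. Everything else is convexity applied pointwise under the expectation plus Jensen's inequality.
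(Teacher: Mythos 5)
Your proofs of parts (1), (2), and (4) are correct and standard; the paper simply cites Nesterov for these. The interesting point of comparison is part (3), which is the only one the paper proves in full, and you take a genuinely different route. The paper works directly with the Jensen-type inequality characterization of strong convexity: it writes out $f(\alpha x' + (1-\alpha)y') \le \alpha f(x') + (1-\alpha)f(y') - \frac{\alpha(1-\alpha)\tau}{2}\|x'-y'\|^2$, substitutes $x' = x+\mu u$ and $y' = y+\mu u$ inside the Gaussian integral, and exploits that the common shift cancels so $\|x'-y'\| = \|x-y\|$; the inequality then passes through linearly to give strong convexity of $f_\mu$ with the same modulus $\tau$. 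You instead use the characterization that $f$ is $\tau$-strongly convex iff $g := f - \frac{\tau}{2}\|\cdot\|^2$ is convex, apply part (1) to $g$, and then explicitly compute $\mathbb{E}_u[\|x+\mu u\|^2] = \|x\|^2 + \mu^2 d$ (using $\mathbb{E}[u]=0$ and $\mathbb{E}[\|u\|^2]=d$) to conclude that $f_\mu - \frac{\tau}{2}\|\cdot\|^2 = g_\mu + \frac{\tau\mu^2 d}{2}$ is convex up to an irrelevant constant. Both are correct and about equally short; the paper's version avoids any moment computation and works for arbitrary probability measures (not just Gaussians), while yours makes the reduction to part (1) explicit and reuses it, which is a bit tidier structurally. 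Either is a fine proof of the statement.
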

\end{mdframed}

We give a proof of the third property, since it is not explicitly carried out in~\cite{nesterov2017random}.

\begin{proof}[Proof of Lemma~\ref{Nes-DFO-property}]
    $f$ is $\tau$-strongly convex if and only if (see e.g. Theorem 2.1.9 in \cite{nesterov2004introbook}) for all $x',y'\in\R^d$ and $\alpha\in[0,1],$
    $$f(\alpha x' + (1-\alpha) y')\le \alpha f(x') +(1-\alpha)f(y') - \frac{\alpha(1-\alpha)\tau}{2}\|x'-y'\|^2.$$
    We want to prove the same inequality for $f_\mu$. Let $x,y\in\R^d$ and $\alpha\in[0,1]$:
    \begin{align*}
        f_\mu(\alpha x + (1-\alpha) y)& = \frac{1}{(2\pi)^{\frac{d}{2}}}\int_{\R^d} f(\alpha x + (1-\alpha) y+\mu u)e^{-\frac{1}{2}\|u\|^2} du\\
        &=\frac{1}{(2\pi)^{\frac{d}{2}}}\int_{\R^d} f(\alpha (x+\mu u) + (1-\alpha) (y+\mu u))e^{-\frac{1}{2}\|u\|^2} du.
    \end{align*}
By picking $x' = x+\mu u$ and $y' = y+\mu u$ in the definition of strong convexity for $f$, by linearity of integration and noting that $x'-y' = x - y$, we get the desired result:
\begin{align*}
    f_\mu(\alpha x + (1-\alpha) y)&\le\frac{1}{(2\pi)^{\frac{d}{2}}}\int_{\R^d} \left(\alpha f(x+\mu u) + (1-\alpha) f(y+\mu u) - \frac{\alpha(1-\alpha)\tau}{2}\|x-y\|^2\right)e^{-\frac{1}{2}\|u\|^2} du\\
    &= \alpha f_\mu(x) + (1-\alpha) f_\mu(y) - \frac{\alpha(1-\alpha)\tau}{2}\|x-y\|^2,
\end{align*}
where in the last equality we used the fact that $\frac{1}{(2\pi)^{\frac{d}{2}}}\int_{\R^d} e^{-\frac{1}{2}\|u\|^2} du=1$.
\end{proof}

\paragraph{Properties of the smoothed gradient field.} Note that $f_\mu(x) = \E_u[f(x + \mu u)]$, with $u\sim\mathcal{N}(0,I_d)$, the standard normal distribution in $\R^d$. Hence, the gradient of $f_\mu$ can be written as
\begin{align}
\nabla f_\mu(x) = \E_u[g_{\mu}(x,u)],\quad g_{\mu}(x,u):= \frac{(f(x + \mu u) - f(x)) u}{\mu}.\nonumber
\end{align}

We list below some useful bounds from \cite{nesterov2017random}.
\begin{mdframed}
\begin{lemma}\label{Nes-DFO-error}
	If $f:\R^d\to\R$ is $L$-smooth, then\\
	(1) \ (Theorem 1 from \cite{nesterov2017random})
	\begin{align*}
	|f_{\mu}(x)-f(x)|\le \frac{\mu^2Ld}{2};
	\end{align*}
	and, if $f$ is convex (inequality (11) in \cite{nesterov2017random})
	\begin{align*}
	f_{\mu}(x) \ge f(x);
	\end{align*}
	(2) \  (Lemma 3 from \cite{nesterov2017random})
	\begin{align*}
	\|\nabla f_{\mu}(x)-\nabla f(x)\| \le \frac{\mu L (d+3)^{\frac{3}{2}}}{2};
	\end{align*}
	(3) \ (Lemma 4 from \cite{nesterov2017random})
	\begin{align*}
	\|\nabla f(x)\|^2 \le 2\|\nabla f_{\mu}(x)\|^2 + \frac{\mu^2 L^2 (d+6)^3}{2};
	\end{align*}
	(4) \ (Theorem 4 from \cite{nesterov2017random})
	\begin{align*}
	\mathbb{E}_{u}\big[\|g_{\mu}(x,u)\|^2\big] \le \frac{\mu^2 L^2 (d+6)^3}{2}+2(d+4)\|\nabla f(x)\|^2;
	\end{align*}
	(5) \ (Lemma 5 from \cite{nesterov2017random})
	\begin{align*}
	\mathbb{E}_{u}\big[\|g_{\mu}(x,u)\|^2\big] \le 3\mu^2 L^2 (d+4)^3+4(d+4)\|\nabla f_{\mu}(x)\|^2.
	\end{align*}
\end{lemma}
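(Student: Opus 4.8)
The plan is to derive all five bounds from a single toolkit, each being a one-dimensional Taylor expansion of $f$ along a random Gaussian direction followed by an expectation. The ingredients are: (i) the descent-lemma consequence of $L$-smoothness, $|f(y)-f(x)-\langle\nabla f(x),y-x\rangle|\le\tfrac{L}{2}\|y-x\|^2$ for all $x,y$, and the subgradient inequality $f(y)\ge f(x)+\langle\nabla f(x),y-x\rangle$ when $f$ is convex; (ii) the Gaussian moment estimates of Lemma~\ref{Nes-DFO-property}, which in particular give $\mathbb{E}_u[\|u\|^2]=d$, $\mathbb{E}_u[\|u\|^3]\le(d+3)^{3/2}$, $\mathbb{E}_u[\|u\|^4]\le(d+4)^2$ and $\mathbb{E}_u[\|u\|^6]\le(d+6)^3$; (iii) the elementary isotropic-Gaussian identities $\mathbb{E}_u[u]=0$, $\mathbb{E}_u[\langle a,u\rangle u]=a$ and $\mathbb{E}_u[\langle a,u\rangle^2\|u\|^2]=(d+2)\|a\|^2$, together with the representation $\nabla f_\mu(x)=\tfrac{1}{\mu}\mathbb{E}_u[(f(x+\mu u)-f(x))u]=\mathbb{E}_u[g_\mu(x,u)]$, which I would obtain by the change of variables $v=x+\mu u$ in the definition of $f_\mu$, differentiating under the integral sign (legitimate since $L$-smoothness forces at most quadratic growth of $f$), and subtracting $\tfrac{1}{\mu}\mathbb{E}_u[f(x)u]=0$. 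Finally I will use $(a+b)^2\le 2a^2+2b^2$ and Cauchy--Schwarz.

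For item (1) I would write $f_\mu(x)-f(x)=\mathbb{E}_u[f(x+\mu u)-f(x)]$; the first-order term $\mu\langle\nabla f(x),u\rangle$ vanishes in expectation, and the remainder is at most $\tfrac{L}{2}\mu^2\|u\|^2$ in absolute value, so $|f_\mu(x)-f(x)|\le\tfrac{L}{2}\mu^2\mathbb{E}_u[\|u\|^2]=\tfrac{\mu^2Ld}{2}$; when $f$ is convex, the subgradient inequality applied pointwise and integrated gives $f_\mu(x)\ge f(x)$. For item (2), using the representation in (iii) and $\nabla f(x)=\tfrac{1}{\mu}\mathbb{E}_u[\mu\langle\nabla f(x),u\rangle u]$, the difference is $\nabla f_\mu(x)-\nabla f(x)=\tfrac{1}{\mu}\mathbb{E}_u[R(u)\,u]$ with $R(u):=f(x+\mu u)-f(x)-\mu\langle\nabla f(x),u\rangle$, hence $|R(u)|\le\tfrac{L}{2}\mu^2\|u\|^2$ and $\|\nabla f_\mu(x)-\nabla f(x)\|\le\tfrac{L}{2}\mu\,\mathbb{E}_u[\|u\|^3]\le\tfrac{\mu L(d+3)^{3/2}}{2}$. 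Item (3) then follows at once from $\|\nabla f(x)\|^2\le 2\|\nabla f_\mu(x)\|^2+2\|\nabla f(x)-\nabla f_\mu(x)\|^2$ together with item (2), after relaxing $(d+3)^3\le(d+6)^3$.

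For item (4), I would use $g_\mu(x,u)=\langle\nabla f(x),u\rangle u+\tfrac{1}{\mu}R(u)\,u$ to bound $\|g_\mu(x,u)\|^2\le 2\langle\nabla f(x),u\rangle^2\|u\|^2+\tfrac{2}{\mu^2}R(u)^2\|u\|^2\le 2\langle\nabla f(x),u\rangle^2\|u\|^2+\tfrac{L^2\mu^2}{2}\|u\|^6$; taking expectations and invoking $\mathbb{E}_u[\langle\nabla f(x),u\rangle^2\|u\|^2]=(d+2)\|\nabla f(x)\|^2\le(d+4)\|\nabla f(x)\|^2$ and $\mathbb{E}_u[\|u\|^6]\le(d+6)^3$ yields the stated bound. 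Item (5) is the same variance estimate expressed with $\|\nabla f_\mu(x)\|$ in place of $\|\nabla f(x)\|$ --- the natural centering since $\mathbb{E}_u[g_\mu(x,u)]=\nabla f_\mu(x)$; one splits $g_\mu(x,u)=\langle\nabla f_\mu(x),u\rangle u+\big(g_\mu(x,u)-\langle\nabla f_\mu(x),u\rangle u\big)$ and controls the second summand via the descent lemma for $f$ combined with the gradient-error bound (2) just established, before applying the moment estimates.

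I expect item (5) to be the main obstacle: the crude route of substituting item (3) into item (4) --- or any use of $(a+b)^2\le 2a^2+2b^2$ on the centered estimator --- produces an additive term of polynomial degree four in $d$, whereas the claimed bound keeps the sharper degree-three term $3\mu^2L^2(d+4)^3$. Matching it requires retaining the cross terms in the expansion and estimating them with Cauchy--Schwarz against the moment bounds rather than discarding them, which is precisely the bookkeeping carried out in Lemma~5 of~\cite{nesterov2017random}; the remaining four items follow the template above essentially verbatim, and I would simply cite the corresponding results there for the exact constants.
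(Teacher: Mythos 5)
Your proposal is correct in substance, but it does more work than the paper does: the paper offers no proof of this lemma at all --- each of the five items is imported verbatim from \cite{nesterov2017random} (Theorem~1, inequality~(11), Lemma~3, Lemma~4, Theorem~4 and Lemma~5 there), and the parenthetical citations \emph{are} the proof. Your derivations of items (1)--(4) are exactly the arguments of that reference and are sound as sketched: the cancellation of the first-order term plus the descent lemma for (1), the representation $\nabla f_\mu(x)-\nabla f(x)=\tfrac{1}{\mu}\E_u[R(u)u]$ with $|R(u)|\le\tfrac{L\mu^2}{2}\|u\|^2$ for (2), the inequality $\|a+b\|^2\le2\|a\|^2+2\|b\|^2$ for (3), and the split $g_\mu=\langle\nabla f(x),u\rangle u+\tfrac{1}{\mu}R(u)u$ with the moment bounds for (4). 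Your diagnosis of item (5) is also accurate and worth keeping: the mechanical substitutions (item (3) into item (4), or recentering at $\nabla f_\mu$ and crushing the remainder with Young's inequality) really do produce an additive term of order $\mu^2L^2 d^4$ rather than the stated $3\mu^2L^2(d+4)^3$, so a self-contained proof of (5) with that constant requires the finer bookkeeping of Nesterov and Spokoiny's Lemma~5. Since you ultimately cite that lemma for the exact constants --- which is all the paper does for every item --- there is no gap relative to what the paper requires; the only caveat is that your sketch of (5), taken on its own, should not be presented as establishing the stated constant.
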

\end{mdframed}

\paragraph{Stochastic approximation of $\boldsymbol{\nabla f_\mu}$.} In the context of this paper, $f:=\frac{1}{n}\sum_i f_i$. A stochastic estimate of $g_{\mu}(x,u)$ using data-point $i$ can be then calculated as follows:
$$g_{\mu}(x,u,i) := \frac{f_{i}(x+\mu u)-f_{i}(x)}{\mu} u,\quad u\sim\mathcal{N}(0,I_d).$$

In the inner loop of Algorithm~\ref{algorithm-VARAG}, at iteration $t$, we use $g_{\mu}(x,u,i)$ to get a \textit{variance-reduced gradient estimate} of $\nabla f_{\mu}(\underline{x}_t)$:
$$G_t := g_{\mu}(\underline x_{t}, u_t, i_t) - g_{\mu}(\tilde{x}, u_t, i_t)  + \tilde{g},$$
where \textit{we dropped the epoch index} (i.e. $s$) for simplicity, as we will often do in the next pages. To study Algorithm~\ref{algorithm-VARAG}, it is necessary to get an estimate of $\mathbb{E}_{u_t,i_t| \mathcal{F}_{t-1}} \big[\| G_t - \mathbb{E}_{u_t,i_t| \mathcal{F}_{t-1}}[G_t]\|^2\big]$, where $\mathcal{F}_{t-1}$ denotes the past iterates in the current epoch. Such a bound is provided by Lemma \ref{VARAG-lemma-1} --- our main lemma for DFO variance reduction. Before proving this bound, we need a result from \cite{nesterov2017random}.
\begin{mdframed}
\begin{lemma}\label{Nes-DFO-lemma-directional-derivative}
(Theorem 3 from \cite{nesterov2017random}) Denote $f'(x,u)$ the directional derivative of $f$ at $x$ along direction $u$:
\begin{align*}
f'(x,u) = \lim_{\alpha \downarrow 0} \frac{1}{\alpha} \big[f(x+\alpha u)-f(x)\big].
\end{align*}
Let $g_{0}(x,u) := f'(x,u) \cdot u$. If f is differentiable at $x$, then $f'(x,u) = \langle \nabla f(x), u \rangle$, and $g_{0}(x,u) = \langle \nabla f(x), u \rangle \cdot u$. Also, the following inequality holds:
\begin{align*}
\mathbb{E}_{u}\big[\|g_{0}(x,u)\|^2\big] \le (d+4)\|\nabla f(x)\|^2.
\end{align*}
\end{lemma}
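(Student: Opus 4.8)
The plan is to establish the two assertions in turn: first the identification $f'(x,u) = \langle \nabla f(x), u\rangle$, which is immediate from the definition of differentiability, and then the second-moment estimate for $g_0(x,u)$, which reduces to a short Gaussian moment computation.

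For the first part, recall that $f$ being differentiable at $x$ means $f(x+h) = f(x) + \langle \nabla f(x), h\rangle + o(\|h\|)$ as $h \to 0$. Taking $h = \alpha u$ with $\alpha > 0$ and dividing by $\alpha$ gives $\tfrac{1}{\alpha}[f(x+\alpha u) - f(x)] = \langle \nabla f(x), u\rangle + o(1)$, so the limit defining $f'(x,u)$ exists and equals $\langle \nabla f(x), u\rangle$; consequently $g_0(x,u) = f'(x,u)\,u = \langle \nabla f(x), u\rangle\,u$. If $\nabla f(x) = 0$ the entire statement is trivial, so I may assume $\nabla f(x) \neq 0$ henceforth.

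For the bound, write $g := \nabla f(x)$, so that $\|g_0(x,u)\|^2 = \langle g, u\rangle^2\,\|u\|^2$ and I must estimate $\mathbb{E}_u[\langle g, u\rangle^2 \|u\|^2]$ for $u \sim \mathcal{N}(0, I_d)$. I would use the rotational invariance of the standard Gaussian: choosing an orthogonal matrix $R$ with $R e_1 = g/\|g\|$, the vector $Ru$ has the same law as $u$ and $\langle g, Ru\rangle = \|g\|\, u_1$, whence $\mathbb{E}_u[\|g_0(x,u)\|^2] = \|g\|^2\,\mathbb{E}_u[u_1^2\|u\|^2]$. Expanding $\|u\|^2 = \sum_{j=1}^d u_j^2$, using independence of the coordinates, and plugging in $\mathbb{E}[u_1^4] = 3$ and $\mathbb{E}[u_j^2] = 1$ yields $\mathbb{E}_u[u_1^2\|u\|^2] = 3 + (d-1) = d+2$, so $\mathbb{E}_u[\|g_0(x,u)\|^2] = (d+2)\|\nabla f(x)\|^2 \le (d+4)\|\nabla f(x)\|^2$, which is the claim.

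I do not expect a genuine obstacle here — the only point requiring a little care is the reduction step (either the rotation argument above, or equivalently a direct Isserlis/Wick computation giving $\sum_k \mathbb{E}[u_i u_j u_k^2] = (d+2)\delta_{ij}$) together with recalling the value of the fourth Gaussian moment. Note moreover that the computation produces the sharper constant $d+2$, so the stated bound with $d+4$ holds with room to spare; the slightly looser constant is presumably kept only to match the uniform constants appearing in the companion estimates of Lemma~\ref{Nes-DFO-error}.
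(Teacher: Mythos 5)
Your argument is correct. Note, however, that the paper does not prove this lemma at all: it is imported verbatim as Theorem~3 of \cite{nesterov2017random}, and is used downstream (in the proof of Lemma~\ref{VARAG-lemma-1}) as a black box. So the relevant comparison is with the original source rather than with anything in this paper. Nesterov and Spokoiny obtain the bound by routing the quantity $\mathbb{E}_u\big[\langle \nabla f(x),u\rangle^2\|u\|^2\big]$ through their general Gaussian moment estimates (the $\mathbb{E}_u[\|u\|^p]\le (d+p)^{p/2}$ machinery of their Lemma~1), which is why the constant appears as $d+4$. Your rotation-plus-Wick computation is more elementary and in fact exact: $\mathbb{E}[u_1^2\|u\|^2]=\mathbb{E}[u_1^4]+(d-1)=d+2$, so you recover the stated inequality with the sharper constant $d+2$. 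Both the reduction by orthogonal invariance ($Ru\overset{d}{=}u$, $Re_1=\nabla f(x)/\|\nabla f(x)\|$) and the first-part identification of the directional derivative via first-order differentiability are correct as written; there is no gap. The only caveat worth recording is that the constant $d+4$ in the statement is not tight, and is presumably retained by the paper only for consistency with the other bounds of Lemma~\ref{Nes-DFO-error} that genuinely need it.
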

\end{mdframed}

Now, let us start the proof of Lemma~\ref{VARAG-lemma-1} in the main paper. \textit{Note that this lemma requires each $f_i$ to be $L$-smooth.}
\begin{proof}[Proof of Lemma \ref{VARAG-lemma-1}]
    According to $\mathbb{E}\big[||\xi - \mathbb{E}[\xi]||^2 \big] \le \mathbb{E}\big[||\xi||^2 \big]$, we have
    \begin{align*}
        \mathbb{E}_{u_t,i_t| \mathcal{F}_{t-1}} \big[\| G_t - \mathbb{E}_{u_t,i_t| \mathcal{F}_{t-1}}[G_t]\|^2\big] & = \mathbb{E}_{u_t,i_t| \mathcal{F}_{t-1}} \big[ \|g_{\mu}(\underline{x}_t, u_t, i_t) - g_{\mu}(\tilde{x}, u_t, i_t) - \nabla f_{\mu}(\underline{x}_t)+\nabla f_{\mu}(\tilde{x})\|^2 \big]\\
        & \le \mathbb{E}_{u_t,i_t| \mathcal{F}_{t-1}} \big[\| g_{\mu} \left( \underline{x}_t,u_t,i_t\right) - g_{\mu} \left(\tilde{x},u_t,i_t\right)\|^2 \big].
    \end{align*}
    The term $\| g_{\mu} \left( \underline{x}_t,u_t,i_t\right) - g_{\mu} \left(\tilde{x},u_t,i_t\right)\|^2$ can be bounded as follows:
	\begin{align*}
	& \| g_{\mu} \left( \underline{x}_t,u_t,i_t\right) - g_{\mu} \left(\tilde{x},u_t,i_t\right)\|^2\\
	= \  & \left\|\frac{f_{i_t}(\underline{x}_t+\mu u_t)-f_{i_t}(\underline{x}_t)}{\mu} \cdot u_t - \frac{f_i(\tilde{x}+\mu u_t)-f_{i_t}(\tilde{x})}{\mu} \cdot u_t\right\|^2\\
	= \  & \left\|\frac{f_{\mu,i_t}(\underline{x}_t+\mu u_t)-e_1-f_{\mu,i_t}(\underline{x}_t)+e_2}{\mu} \cdot u_t - \frac{f_{\mu,i}(\tilde{x}+\mu u_t)-e_3-f_{\mu,i_t}(\tilde{x})+e_4}{\mu} \cdot u_t\right\|^2,\\
	\end{align*}
	where $e_1, e_2, e_3, e_4$ denote some errors due to the \textit{small} difference between $f_{i_t}$ and $f_{\mu,i_t}$, which we will bound shortly. We proceed with some additional algebraic manipulations.
    \begin{align*}
	& \| g_{\mu} \left( \underline{x}_t,u_t,i_t\right) - g_{\mu} \left(\tilde{x},u_t,i_t\right)\|^2\\
	= \  & \bigg\|\frac{f_{\mu,i_t}(\underline{x}_t+\mu u_t)-f_{\mu,i_t}(\underline{x}_t)}{\mu} \cdot u_t - \frac{f_{\mu,i}(\tilde{x}+\mu u_t)-f_{\mu,i_t}(\tilde{x})}{\mu} \cdot u_t+\frac{e_1-e_2-e_3+e_4}{\mu}\cdot u_t\bigg\|^2\\
	= \  & \bigg\|\frac{f_{\mu,i_t}(\underline{x}_t+\mu u_t)-f_{\mu,i_t}(\underline{x}_t)-\mu \langle \nabla f_{\mu,i_t}(\underline{x}_t),u_t\rangle}{\mu} \cdot u_t - \frac{f_{\mu,i_t}(\tilde{x}+\mu u_t)-f_{\mu,i_t}(\tilde{x})-\mu \langle \nabla f_{\mu,i_t}(\tilde{x}),u_t\rangle}{\mu} \cdot u_t\\
	& + \langle \nabla f_{\mu,i_t}(\underline{x}_t)-\nabla f_{\mu,i_t}(\tilde{x}), u_t \rangle \cdot u_t++\frac{e_1-e_2-e_3+e_4}{\mu}\cdot u_t\bigg\|^2\\
	\le \  & 4\bigg( \left\|\frac{f_{\mu,i_t}(\underline{x}_t+\mu u_t)-f_{\mu,i_t}(\underline{x}_t)-\mu \langle \nabla f_{\mu,i_t}(\underline{x}_t),u_t\rangle}{\mu} \cdot u_t \right\|^2 + \left\| \frac{f_{\mu,i_t}(\tilde{x}+\mu u_t)-f_{\mu,i_t}(\tilde{x})-\mu \langle \nabla f_{\mu,i_t}(\tilde{x}),u_t\rangle}{\mu} \cdot u_t \right\|^2\\
	& + \left\|\langle \nabla f_{\mu,i_t}(\underline{x}_t)-\nabla f_{\mu,i_t}(\tilde{x}), u_t \rangle \cdot u_t\right\|^2 + \left\|\frac{e_1-e_2-e_3+e_4}{\mu}\cdot u_t\right\|^2 \bigg)\\
	\le \  & 4\bigg( \left(\frac{\mu}{2}L \|u_t\|^3\right)^2 + \left(\frac{\mu}{2}L \|u_t\|^3\right)^2 + \|\langle \nabla f_{\mu,i_t}(\underline{x}_t)-\nabla f_{\mu,i_t}(\tilde{x}), u_t \rangle \cdot u_t\|^2 +\left\|\frac{e_1-e_2-e_3+e_4}{\mu}\cdot u_t\right\|^2  \bigg)\\
	\le \  & 4\bigg( \frac{\mu^2}{2}L^2 \|u_t\|^6 + \|\langle \nabla f_{\mu,i_t}(\underline{x}_t)-\nabla f_{\mu,i_t}(\tilde{x}), u_t \rangle \cdot u_t\|^2 +4\mu^2L^2d^2\|u_t\|^2 \bigg),
	\end{align*}
	 where the second last inequality comes from the smoothness of $f_{\mu,i}$ and the last inequality is from (1) in Lemma \ref{Nes-DFO-error}. 
	Now, we define a new function $f_{\mu,i_t}^{e}(x) = f_{\mu,i_t}(x) - \langle \nabla f_{\mu,i_t}(\tilde{x}), x \rangle $ and 
	\begin{align*}
	\nabla f_{\mu,i_t}^e(x) = \nabla f_{\mu,i_t}(x)-\nabla f_{\mu,i_t}(\tilde{x}).
	\end{align*}
	Also, we define $g_{0,\mu,i_t}^e(x,u)$ as
	\begin{align}
	g_{0,\mu,i_t}^e(x,u) := {(f_{\mu,i_t}^{e})}^{'}(x,u) \cdot u.
	\end{align}
	Note that $g_{0,\mu,i_t}^e(x,u_t)$ is related to the second term in the inequality before:
	\begin{align*}
	\|g_{0,\mu,i_t}^e(x,u_t)\|^2 = \langle \nabla f_{\mu,i_t}^e(x),u_t\rangle^2 \cdot \|u_t\|^2 = \|\langle \nabla f_{\mu,i_t}(x)-\nabla f_{\mu,i_t}(\tilde{x}), u_t \rangle \cdot u_t\|^2.
	\end{align*}
	Next, we apply Lemma \ref{Nes-DFO-lemma-directional-derivative} to $f_{\mu,i_t}^e$:
	\begin{align*}
	\mathbb{E}_{u_t}\big[\|g_{0,\mu,i_t}^e(x,u_t)\|^2 \big] \le \  & (d+4)\|\nabla f_{\mu,i_t}^e(x)\|^2\\
	= \  & (d+4)\|\nabla f_{\mu,i_t}(x)-\nabla f_{\mu,i_t}(\tilde{x}) \|^2.
	\end{align*}
	Putting it all together, we obtain the desired bound:
	\begin{align}
	& \mathbb{E}_{u_t,i_t| \mathcal{F}_{t-1}} \big[\| G_t - \mathbb{E}_{u_t,i_t| \mathcal{F}_{t-1}}[G_t]\|^2\big] \notag\\
	\le \  & \mathbb{E}_{u_t,i_t| \mathcal{F}_{t-1}} \big[\| g_{\mu} \left( \underline{x}_t,u_t,i_t\right) - g_{\mu} \left(\tilde{x},u_t,i_t\right)\|^2\big] \notag\\
	\le \  & 2\mu^2 L^2 \mathbb{E}_{u_t| \mathcal{F}_{t-1}} \big[\|u_t\|^6\big] + 4(d+4)\mathbb{E}_{i_t| \mathcal{F}_{t-1}} \big[ \|\nabla f_{\mu,i_t}(\underline{x}_t)-\nabla f_{\mu,i_t}(\tilde{x}) \|^2 \big] + 16\mu^2L^2d^2\mathbb{E}_{u_t| \mathcal{F}_{t-1}}\big[\|u_t\|^2\big] \notag\\
	\le \  & 2\mu^2L^2(d+6)^3+ 16\mu^2L^2d^3+ 8(d+4)L\mathbb{E}_{i_t| \mathcal{F}_{t-1}}\big[f_{\mu,i_t}(\tilde{x})-f_{\mu,i_t}(\underline{x}_t) - \langle \nabla f_{\mu,i_t}(\underline{x}_t), \tilde{x} - \underline{x}_t \rangle \big] \notag\\
	\le \  & 18\mu^2L^2(d+6)^3 + 8(d+4)L \big[f_{\mu}(\tilde{x})-f_{\mu}(\underline{x}_t) - \langle \nabla f_{\mu}(\underline{x}_t), \tilde{x} - \underline{x}_t \rangle \big]. \notag
	\end{align}
	The second last inequality holds thanks to Theorem 2.1.5 in \cite{nesterov2004introbook}.
\end{proof}

\subsection{Coordinate-wise approach}

In Section~\ref{sec:coordinate-wise} we replace the Gaussian smoothing estimator of Eq.~\eqref{DFO-framework-gaussian-smoothing} with the coordinate-wise approach of~\cite{ji2019improved} for computing $G_t$ in Algorithm \ref{algorithm-VARAG}. That is, we set
$$G_t = g_{\nu}(\underline{x}_t,i_t) - g_{\nu}(\tilde{x},i_t) + g_{\nu}(\tilde{x}),$$
with, as we specified in Eq.~\eqref{DFO-framework-cord-finite-difference} of the main paper:
$$g_{\nu}(x,i) = \sum_{j=1}^{d}\frac{f_{i}(x+\nu \text{e}_{j})-f_{i}(x-\nu \text{e}_{j})}{2\nu} \text{e}_{j},$$
where $\text{e}_{j}$ is the unit vector with only one non-zero entry $1$ at its $j^{th}$ coordinate. Note that, $g_{\nu}$\textit{ is $d$ times more expensive to compute compared compared to }$g_{\mu}$, which we discussed before.\\
The following lemma gives an useful approximation error bound.

\begin{mdframed}
\begin{lemma} (Lemma 3~(Appendix D) from~\cite{ji2019improved}) \label{VARAG-cord-lemma-0}
Suppose each $f_i$ is $L$-smooth and that we use the coordinate-wise gradient estimation in Eq.~\eqref{DFO-framework-cord-finite-difference}. For any smoothing parameter $\nu > 0$ and any $x \in \mathbb{R}^d$, we have
\begin{align*}
\|g_{\nu}(x,i) - \nabla f_i(x) \|^2 \le L^2 d \nu^2.
\end{align*}
Also, if we define $g_{\nu}(x):=\frac{1}{n}\sum_{i=1}^n g_{\nu}(x,i)$, we clearly have $\|g_{\nu}(x) - \nabla f(x) \|^2 \le L^2 d \nu^2.$
\end{lemma}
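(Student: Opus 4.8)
The plan is to reduce the claim to a one-dimensional estimate along each coordinate axis and then recombine using orthogonality. First I would expand $\nabla f_i(x) = \sum_{j=1}^d \langle \nabla f_i(x), e_j\rangle\, e_j$ and subtract it from the definition of $g_\nu(x,i)$ in Eq.~\eqref{DFO-framework-cord-finite-difference}; since both vectors are written in the orthonormal basis $\{e_1,\dots,e_d\}$, the squared norm of the difference splits into a sum of squared scalar errors:
$$\|g_\nu(x,i) - \nabla f_i(x)\|^2 = \sum_{j=1}^d\left(\frac{f_i(x+\nu e_j) - f_i(x-\nu e_j)}{2\nu} - \langle\nabla f_i(x), e_j\rangle\right)^2.$$
Thus it suffices to bound each summand by $L^2\nu^2$.

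For the per-coordinate bound I would invoke the standard consequence of $L$-smoothness (e.g.\ Theorem~2.1.5 in~\cite{nesterov2004introbook}), namely $|f_i(y) - f_i(x) - \langle\nabla f_i(x), y-x\rangle| \le \frac{L}{2}\|y-x\|^2$. Applying it with $y = x + \nu e_j$ and then with $y = x - \nu e_j$ produces two remainder terms, each of magnitude at most $\frac{L\nu^2}{2}$; subtracting the two expansions, the first-order parts combine into $2\nu\langle\nabla f_i(x), e_j\rangle$ while the remainders add to at most $L\nu^2$ in absolute value. Dividing by $2\nu$ then shows the $j$-th scalar error is at most $\frac{L\nu}{2}$ in absolute value, so its square is at most $\frac{L^2\nu^2}{4}\le L^2\nu^2$. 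Summing over $j=1,\dots,d$ gives $\|g_\nu(x,i) - \nabla f_i(x)\|^2 \le L^2 d\nu^2$ (indeed with the sharper constant $L^2 d\nu^2/4$).

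For the averaged statement I would simply use convexity of $\|\cdot\|^2$ (Jensen): writing $g_\nu(x) - \nabla f(x) = \frac1n\sum_{i=1}^n\big(g_\nu(x,i) - \nabla f_i(x)\big)$, we get $\|g_\nu(x) - \nabla f(x)\|^2 \le \frac1n\sum_{i=1}^n\|g_\nu(x,i) - \nabla f_i(x)\|^2 \le L^2 d\nu^2$ by applying the first part to each $i$.

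I do not expect a genuine obstacle here: the argument is a routine coordinate-wise application of the descent lemma. The only point needing a little care is that a symmetric (central) difference is used rather than a forward difference, so one must subtract two Taylor-type expansions instead of using one — but this only improves the constant. As an alternative to the descent lemma, one could write the central difference as $\frac{1}{2\nu}\int_{-\nu}^{\nu}\langle\nabla f_i(x+te_j), e_j\rangle\,dt$ and bound the deviation of the integrand from $\langle\nabla f_i(x), e_j\rangle$ by $L|t|$, which yields the same estimate after integrating.
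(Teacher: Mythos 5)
Your proof is correct, and the paper itself does not reproduce an argument here — it simply cites Lemma~3 of~\cite{ji2019improved}, so you are reconstructing the proof from scratch. Your reconstruction is sound: the orthogonal decomposition over $\{e_j\}$ is exactly the right reduction, and the two-sided application of the quadratic-upper-bound form of $L$-smoothness (Theorem~2.1.5 in~\cite{nesterov2004introbook}) gives a per-coordinate error of at most $L\nu/2$, hence a squared error of at most $L^2\nu^2/4$ per coordinate. Summing and then invoking convexity of $\|\cdot\|^2$ for the average is exactly the standard route. In fact you obtain the sharper bound $L^2 d\nu^2/4$, which trivially implies the stated $L^2 d\nu^2$; the looser constant in the lemma statement simply reflects how~\cite{ji2019improved} record it (their version is stated for their own estimator and they do not chase constants), and nothing in the paper's downstream use of this lemma needs the improvement. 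Your alternative derivation via the integral representation of the central difference would also work and gives the same constant, so either path is fine.
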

\end{mdframed}

In the next lemma, we bound the variance of $G_t$. As the reader will soon notice, compared to Lemma~\ref{VARAG-lemma-1}, the proof in the coordinate-wise case is simpler and closely related to the standard variance reduction analysis \footnote{See e.g. Lemma 2.4 in \cite{allen2017katyusha}.}.

\begin{mdframed}
\begin{lemma} \label{VARAG-cord-lemma-1}
	When we use coordinate-wise gradient estimator Eq.~\eqref{DFO-framework-cord-finite-difference} for computing $G_t$, we can obtain a DFO variance reduction as follows:\\
	\begin{align}
    \mathbb{E}_{i_t} \big[\|G_t - \nabla f(\underline{x}_t)\|^2\big] \le 12L^2d\nu^2 + 8L \big[f(\tilde{x})-f(\underline{x}_t) - \langle \nabla f(\underline{x}_t), \tilde{x} - \underline{x}_t \rangle \big],
	\end{align}
	where $ G_t$ is defined as
	\begin{align*}
	G_t = g_{\nu}(\underline{x}_t,i_t) - g_{\nu}(\tilde{x},i_t) + g_{\nu}(\tilde{x})
	\end{align*} 
	and $g_{\nu}$ is the gradient estimator as defined by Eq.~\eqref{DFO-framework-cord-finite-difference}. Moreover, the expectation of the gradient estimation is
	\begin{align}
	\mathbb{E}_{i_t}\big[\delta_t\big] = g_{\nu}(\underline{x}_t) - \nabla f(\underline{x}_t) \neq 0,
	\end{align}
	which is different from $\mathbb{E}_{i_t| \mathcal{F}_{t-1}}\big[\delta_t\big] = g_{\nu}(\tilde{x}) - \nabla f_{\mu}(\tilde{x})$ in Lemma \ref{VARAG-lemma-1}.
\end{lemma}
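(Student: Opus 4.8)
The plan is to mimic the standard SVRG-style variance-reduction argument (in the spirit of Lemma~2.4 in~\cite{allen2017katyusha}), while carefully isolating the extra error introduced by the coordinate-wise discretization. The starting observation is that, by Lemma~\ref{VARAG-cord-lemma-0}, every coordinate-wise estimate can be written as an exact gradient plus a uniformly bounded perturbation: $g_{\nu}(x,i) = \nabla f_i(x) + r(x,i)$ with $\|r(x,i)\|^2 \le L^2 d \nu^2$, and similarly $g_{\nu}(\tilde{x}) = \nabla f(\tilde{x}) + \bar r$ with $\|\bar r\|^2 \le L^2 d \nu^2$. Substituting these into $G_t = g_{\nu}(\underline{x}_t, i_t) - g_{\nu}(\tilde{x}, i_t) + g_{\nu}(\tilde{x})$ and subtracting $\nabla f(\underline{x}_t)$ gives the decomposition
$$G_t - \nabla f(\underline{x}_t) = A + r(\underline{x}_t, i_t) - r(\tilde{x}, i_t) + \bar r, \qquad A := \nabla f_{i_t}(\underline{x}_t) - \nabla f_{i_t}(\tilde{x}) - \nabla f(\underline{x}_t) + \nabla f(\tilde{x}).$$

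Next I would apply the elementary inequality $\|v_1 + v_2 + v_3 + v_4\|^2 \le 4(\|v_1\|^2 + \|v_2\|^2 + \|v_3\|^2 + \|v_4\|^2)$ to this sum and take the expectation over $i_t$. The three perturbation terms each contribute at most $L^2 d\nu^2$ by Lemma~\ref{VARAG-cord-lemma-0}, so together they account for the $4\cdot 3 L^2 d\nu^2 = 12 L^2 d\nu^2$ part of the bound. For $A$, the key point is that $\mathbb{E}_{i_t}[\nabla f_{i_t}(\underline{x}_t) - \nabla f_{i_t}(\tilde{x})] = \nabla f(\underline{x}_t) - \nabla f(\tilde{x})$, so $A$ is centered and $\mathbb{E}_{i_t}[\|A\|^2] \le \mathbb{E}_{i_t}[\|\nabla f_{i_t}(\underline{x}_t) - \nabla f_{i_t}(\tilde{x})\|^2]$; then invoking that each $f_i$ is convex and $L$-smooth via the co-coercivity estimate $\|\nabla f_i(x) - \nabla f_i(y)\|^2 \le 2L\big(f_i(x) - f_i(y) - \langle \nabla f_i(y), x - y\rangle\big)$ (Theorem~2.1.5 in~\cite{nesterov2004introbook}) with $x = \tilde{x}$, $y = \underline{x}_t$, and averaging over $i_t$, one gets $\mathbb{E}_{i_t}[\|A\|^2] \le 2L\big(f(\tilde{x}) - f(\underline{x}_t) - \langle \nabla f(\underline{x}_t), \tilde{x} - \underline{x}_t\rangle\big)$. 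Multiplying by the factor $4$ produces the $8L[\cdots]$ term, and adding the two contributions yields the claimed inequality.

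For the statement about the expectation of the gradient estimate I would simply use linearity: $\mathbb{E}_{i_t}[g_{\nu}(\underline{x}_t, i_t)] = g_{\nu}(\underline{x}_t)$ and $\mathbb{E}_{i_t}[g_{\nu}(\tilde{x}, i_t)] = g_{\nu}(\tilde{x})$, hence $\mathbb{E}_{i_t}[G_t] = g_{\nu}(\underline{x}_t)$ and $\mathbb{E}_{i_t}[\delta_t] = \mathbb{E}_{i_t}[G_t] - \nabla f(\underline{x}_t) = g_{\nu}(\underline{x}_t) - \nabla f(\underline{x}_t)$. This is generically nonzero precisely because the coordinate-wise estimator $g_{\nu}$ carries the discretization bias quantified in Lemma~\ref{VARAG-cord-lemma-0}, which does not cancel in the variance-reduced combination.

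I do not expect a genuine obstacle here: the argument is considerably simpler than the Gaussian-smoothing counterpart (Lemma~\ref{VARAG-lemma-1}), since no expectation over random directions is involved and the bias is deterministic. The only delicate point is the bookkeeping of constants --- splitting $G_t - \nabla f(\underline{x}_t)$ into four terms \emph{simultaneously} (rather than via nested binary splits) is what keeps the numerical factors at exactly $8L$ and $12L^2 d\nu^2$ --- together with remembering that the pivotal approximation error $\bar r$ enters only once per inner iteration, not once per coordinate of $g_{\nu}$.
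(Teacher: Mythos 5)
Your proposal matches the paper's proof essentially line for line: the same four-term decomposition of $G_t - \nabla f(\underline{x}_t)$ into the variance-reduced gradient difference $A$ plus the three coordinate-wise discretization errors, the same $\|\sum_{j=1}^4 v_j\|^2 \le 4\sum_j\|v_j\|^2$ splitting, the centering bound $\mathbb{E}_{i_t}[\|A\|^2]\le\mathbb{E}_{i_t}[\|\nabla f_{i_t}(\underline{x}_t)-\nabla f_{i_t}(\tilde{x})\|^2]$ followed by Theorem~2.1.5 of~\cite{nesterov2004introbook}, and linearity of expectation for $\mathbb{E}_{i_t}[\delta_t]$. The argument and constants are correct.
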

\end{mdframed}

\begin{proof}
Note that $G_t - \nabla f(\underline{x}_t)$ can be decoupled as
\begin{align*}
G_t - \nabla f(\underline{x}_t) = \  & \nabla f_{i_t}(\underline{x}_t) - \nabla f_{i_t}(\tilde{x})  - \big(\nabla f(\underline{x}_t) - \nabla f(\tilde{x})\big) + g_{\nu}(\underline{x}_t, i_t) - \nabla f_{i_t}(\underline{x}_t) \\
& - g_{\nu}(\tilde{x}, i_t) + \nabla f_{i_t}(\tilde{x}) + g_{\nu}(\tilde{x}) - \nabla f(\tilde{x}).
\end{align*}
Therefore, we have
\begin{align*}
 \mathbb{E}_{i_t} \big[\|G_t - \nabla f(\underline{x}_t)\|^2\big] \le \  & 4 \mathbb{E}_{i_t} \biggl[ \|\nabla f_{i_t}(\underline{x}_t) - \nabla f_{i_t}(\tilde{x})  - \big(\nabla f(\underline{x}_t) - \nabla f(\tilde{x})\big)\|^2 + \| g_{\nu}(\underline{x}_t, i_t) - \nabla f_{i_t}(\underline{x}_t)\|^2\\
& + \|g_{\nu}(\tilde{x}, i_t) - \nabla f_{i_t}(\tilde{x})\|^2 + \| g_{\nu}(\tilde{x}) - \nabla f(\tilde{x})\|^2
\biggr]\\
\le \  & 4 \mathbb{E}_{i_t} \big[ \|\nabla f_{i_t}(\underline{x}_t) - \nabla f_{i_t}(\tilde{x})\|^2 + 3L^2 d \nu^2
\big]\\
\le \  & 8L \mathbb{E}_{i_t}\big[f_{i_t}(\tilde{x})-f_{i_t}(\underline{x}_t) - \langle \nabla f_{i_t}(\underline{x}_t), \tilde{x} - \underline{x}_t \rangle \big] + 12L^2 d \nu^2\\
= \  & 8L \big[f(\tilde{x})-f(\underline{x}_t) - \langle \nabla f(\underline{x}_t), \tilde{x} - \underline{x}_t \rangle \big] + 12L^2 d \nu^2.
\end{align*}
The second inequality holds because of $\mathbb{E}\big[\|\xi - \mathbb{E}[\xi]\|^2\big] \le \mathbb{E}\big[\|\xi\|^2\big]$ and thanks to Lemma \ref{VARAG-cord-lemma-0}. The last inequality holds thanks to Theorem 2.1.5 in \cite{nesterov2004introbook}.
\end{proof}

\section{Proofs for Section~\ref{sec:algo_analysis}}
\label{app:proofs_gauss}
The proofs of Theorem \ref{VARAG-theorem-1} and Theorem~\ref{VARAG-theorem-2} follow the same structure as in \cite{lan2019unified}, with some modifications due to the zero-order gradient estimation techniques, supported by the bounds in Appendix~\ref{app:grad}. We recall our basic assumption:
\begin{tcolorbox}
\textbf{(A1)} \ \ Each $f_i$ is convex, differentiable and $L$-smooth. Hence, also $f=\frac{1}{n}\sum_{i=1}^n f_i$ is convex and $L$-smooth.
\end{tcolorbox}

To make the notation compact, we define, again in analogy with~\cite{lan2019unified}:
\begin{align}\label{VARAG-def-x+}
x_{t-1}^{+} := \frac{1}{1+\tau \gamma_s}(x_{t-1}+\tau \gamma_s \underline{x}_t)
\end{align}
and 
\begin{align}\label{VARAG-def-lf}
    l_{f}(z,x) := f(z) + \langle \nabla f(z), x-z \rangle.
\end{align}

Using the definition of $\bar{x}_t$ and $x_t$ in Algorithm~\ref{algorithm-VARAG}, we have:
\begin{align}\label{VARAG-eq-1}
\bar{x}_t - \underline{x}_t = \alpha_{s}(x_t - x_{t-1}^+).
\end{align}

The first result is simply an adaptation of Lemma 5 in~\cite{lan2019unified} for the non-regularized Euclidean case. Hence, it does not require a proof.
\begin{mdframed}
\begin{lemma}
\label{VARAG-lemma-2}
Assume \textbf{(A1)}. For any $x \in \R^d$, we have
\begin{multline*}
\gamma_s [l_{f_{\mu}}(\underline x_t, x_t) - l_{f_{\mu}}(\underline x_t, x)] \le \\ \frac{\tau\gamma_s}{2}\|\underline{x_t} - x\|^2 + \frac{1}{2}\|x_{t-1} - x\|^2 - \frac{1+\tau \gamma_s}{2}\|x_t - x\|^2  - \frac{1+\tau \gamma_s}{2} \|x_t - x^+_{t-1}\|^2 - \gamma_s \langle \delta_t, x_t - x \rangle,
\end{multline*}
which can be rewritten as
\begin{multline*}
\gamma_s \langle \nabla f_{\mu}(\underline{x}_t), x_t - x \rangle \le \frac{\tau\gamma_s}{2}\|\underline{x_t} - x\|^2 + \frac{1}{2}\|x_{t-1} - x\|^2 - \frac{1+\tau \gamma_s}{2}\|x_t - x\|^2 - \frac{1+\tau \gamma_s}{2} \|x_t - x^+_{t-1}\|^2 - \gamma_s \langle \delta_t, x_t - x \rangle.
\end{multline*}
\end{lemma}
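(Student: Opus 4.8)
The plan is to apply the standard prox-lemma (three-point) argument to the variational characterization of $x_t$ appearing in Algorithm~\ref{algorithm-VARAG} (the ``$\diamond$'' line): $x_t=\arg\min_{x\in\R^d}\phi(x)$ with $\phi(x):=\gamma_s\big[\langle G_t,x\rangle+\tfrac{\tau}{2}\|\underline x_t-x\|^2\big]+\tfrac12\|x_{t-1}-x\|^2$. First I would observe that $\phi$ is a quadratic with Hessian $(1+\tau\gamma_s)\Im$, hence $(1+\tau\gamma_s)$-strongly convex, so its unconstrained minimizer $x_t$ satisfies the exact identity $\phi(x)=\phi(x_t)+\tfrac{1+\tau\gamma_s}{2}\|x-x_t\|^2$ for every $x\in\R^d$. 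Expanding both sides and solving for $\gamma_s\langle G_t,x_t-x\rangle$ gives
$$\gamma_s\langle G_t,x_t-x\rangle=\tfrac{\tau\gamma_s}{2}\|\underline x_t-x\|^2+\tfrac12\|x_{t-1}-x\|^2-\tfrac{\tau\gamma_s}{2}\|\underline x_t-x_t\|^2-\tfrac12\|x_{t-1}-x_t\|^2-\tfrac{1+\tau\gamma_s}{2}\|x-x_t\|^2.$$

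Next I would invoke the completing-the-square identity: for any $y\in\R^d$,
$$\tfrac{\tau\gamma_s}{2}\|\underline x_t-y\|^2+\tfrac12\|x_{t-1}-y\|^2=\tfrac{1+\tau\gamma_s}{2}\|y-x_{t-1}^+\|^2+\tfrac{\tau\gamma_s}{2(1+\tau\gamma_s)}\|\underline x_t-x_{t-1}\|^2,$$
with $x_{t-1}^+$ the convex combination defined in~\eqref{VARAG-def-x+}; the last term is constant in $y$ and nonnegative. Applying this with $y=x_t$ and discarding that nonnegative constant replaces the two middle quadratics above by $-\tfrac{1+\tau\gamma_s}{2}\|x_t-x_{t-1}^+\|^2$, so that
$$\gamma_s\langle G_t,x_t-x\rangle\le\tfrac{\tau\gamma_s}{2}\|\underline x_t-x\|^2+\tfrac12\|x_{t-1}-x\|^2-\tfrac{1+\tau\gamma_s}{2}\|x-x_t\|^2-\tfrac{1+\tau\gamma_s}{2}\|x_t-x_{t-1}^+\|^2.$$

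To conclude, I would substitute $G_t=\nabla f_\mu(\underline x_t)+\delta_t$ (the definition of the iteration gradient error) and carry $\gamma_s\langle\delta_t,x_t-x\rangle$ over to the right-hand side; this is exactly the second displayed inequality in the lemma. The first form is then immediate from the definition $l_{f_\mu}(z,x)=f_\mu(z)+\langle\nabla f_\mu(z),x-z\rangle$ in~\eqref{VARAG-def-lf}, since $l_{f_\mu}(\underline x_t,x_t)-l_{f_\mu}(\underline x_t,x)=\langle\nabla f_\mu(\underline x_t),x_t-x\rangle$. I do not anticipate any genuine obstacle here: the argument is purely mechanical, and the only point demanding care is the bookkeeping of the three quadratic terms through the completing-the-square step together with verifying that the discarded constant has the correct (nonnegative) sign. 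This is precisely why the statement can be presented as a direct transcription of Lemma~5 in~\cite{lan2019unified}, with $\nabla f$ replaced by the zero-order estimate $G_t$ and $f$ by its Gaussian-smoothed surrogate $f_\mu$.
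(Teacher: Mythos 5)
Your proposal is correct. The paper itself does not supply a proof of this lemma; it simply states that the result ``is simply an adaptation of Lemma 5 in~\cite{lan2019unified} for the non-regularized Euclidean case. Hence, it does not require a proof.'' Your argument is exactly the standard three-point / prox-lemma argument that adaptation would use: exploit that $\phi$ is a $(1+\tau\gamma_s)$-strongly-convex quadratic so the exact identity $\phi(x)=\phi(x_t)+\tfrac{1+\tau\gamma_s}{2}\|x-x_t\|^2$ holds, complete the square with $y=x_t$ to absorb $-\tfrac{\tau\gamma_s}{2}\|\underline x_t-x_t\|^2-\tfrac12\|x_{t-1}-x_t\|^2$ into $-\tfrac{1+\tau\gamma_s}{2}\|x_t-x_{t-1}^+\|^2$ (dropping the nonnegative $\tfrac{\tau\gamma_s}{2(1+\tau\gamma_s)}\|\underline x_t-x_{t-1}\|^2$), and finally substitute $G_t=\nabla f_\mu(\underline x_t)+\delta_t$. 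All the algebra checks out, so this is a valid, self-contained proof of the lemma the paper elides.
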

\end{mdframed}

The following lemma bounds the progress made at each inner iteration, and is similar to Lemma 6 in~\cite{lan2019unified}, but with some additional error terms coming from the zero-order estimation error for the gradients.
\begin{mdframed}
\begin{lemma}
\label{VARAG-lemma-3}
Assume \textbf{(A1)}. Assume that $\alpha_{s} \in [0,1]$, $p_s \in [0,1]$ and $\gamma_{s} > 0$ satisfy
\begin{align}\label{VARAG-lemma-3-assump-1}
1+\tau \gamma_{s} - L\alpha_{s}\gamma_{s} > 0,
\end{align}
\begin{align}\label{VARAG-lemma-3-assump-2}
p_s - \frac{4(d+4)L\alpha_{s}\gamma_{s}}{1+\tau \gamma_{s} - L\alpha_{s}\gamma_{s}} \ge 0.
\end{align}
Conditioned on past events $\mathcal{F}_{t-1}$ and taking the expectation of $u_t, i_t$, we have
\begin{align}
& \mathbb{E}_{u_t, i_t| \mathcal{F}_{t-1}}\bigg[\frac{\gamma_{s}}{\alpha_{s}}\big[f_{\mu}(\bar{x}_t)-f_{\mu}(x)\big] + \frac{(1+\tau \gamma_{s})}{2}\|x_t-x\|^2\bigg] \notag\\
\le \  & \frac{\gamma_{s}}{\alpha_{s}}(1-\alpha_{s}-p_s)\big[f_{\mu}(\bar{x}_{t-1})-f_{\mu}(x)\big]+\frac{\gamma_{s}p_s}{\alpha_{s}} \big[f_{\mu}(\tilde{x})-f_{\mu}(x)\big] + \frac{1}{2}\|x_{t-1}-x\|^2 + \frac{\gamma_{s}}{\alpha_{s}} \cdot \frac{9\alpha_{s}\gamma_{s}\mu^2L^2(d+6)^3}{1+\tau\gamma_{s}-L\alpha_{s}\gamma_{s}}\notag\\
& - \frac{\gamma_{s}}{\alpha_{s}} \cdot \alpha_{s} \mathbb{E}_{u_t, i_t| \mathcal{F}_{t-1}}\big[\langle \tilde{g}-\nabla f_{\mu}(\tilde{x}), x_t - x \rangle \big] \label{VARAG-lemma-3-result}
\end{align}
for any $x \in \mathbb{R}^d$.
\end{lemma}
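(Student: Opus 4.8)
The plan is to follow the template of Lemma~6 in~\cite{lan2019unified}, starting from the $L$-smoothness of the smoothed objective $f_\mu$ and plugging in the zero-order variance estimate of Lemma~\ref{VARAG-lemma-1} wherever the classical analysis would exploit that $\mathbb{E}[\delta_t\mid\mathcal F_{t-1}]=0$. First I would write the one-step descent inequality $f_\mu(\bar x_t)\le l_{f_\mu}(\underline x_t,\bar x_t)+\tfrac{L}{2}\|\bar x_t-\underline x_t\|^2$, with $l_{f_\mu}$ as in~\eqref{VARAG-def-lf}. Since $\bar x_t=(1-\alpha_s-p_s)\bar x_{t-1}+\alpha_s x_t+p_s\tilde x$ is a convex combination and $l_{f_\mu}(\underline x_t,\cdot)$ is affine, I can split $l_{f_\mu}(\underline x_t,\bar x_t)$ along these three points, use convexity to bound $l_{f_\mu}(\underline x_t,\bar x_{t-1})\le f_\mu(\bar x_{t-1})$, and --- crucially --- keep $l_{f_\mu}(\underline x_t,\tilde x)$ unexpanded for the moment. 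Using~\eqref{VARAG-eq-1} I rewrite $\|\bar x_t-\underline x_t\|^2=\alpha_s^2\|x_t-x_{t-1}^{+}\|^2$ with $x_{t-1}^{+}$ from~\eqref{VARAG-def-x+}.

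Next I would multiply through by $\gamma_s/\alpha_s$ and invoke Lemma~\ref{VARAG-lemma-2} on the resulting $\gamma_s[l_{f_\mu}(\underline x_t,x_t)-l_{f_\mu}(\underline x_t,x)]$ term. When $\tau>0$ this is exactly where one additionally needs $f_\mu$ to be $\tau$-strongly convex (which follows from~\textbf{(A3)} via Lemma~\ref{Nes-DFO-property}): the $+\tfrac{\tau\gamma_s}{2}\|\underline x_t-x\|^2$ produced by Lemma~\ref{VARAG-lemma-2} cancels against the $-\tfrac{\tau\gamma_s}{2}\|\underline x_t-x\|^2$ arising from $l_{f_\mu}(\underline x_t,x)\le f_\mu(x)-\tfrac{\tau}{2}\|\underline x_t-x\|^2$; for $\tau=0$ (the setting of Theorem~\ref{VARAG-theorem-1}) this step is vacuous. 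The two $\|x_t-x_{t-1}^{+}\|^2$ contributions combine into $-\tfrac12(1+\tau\gamma_s-L\alpha_s\gamma_s)\|x_t-x_{t-1}^{+}\|^2$, a nonpositive slack term precisely under assumption~\eqref{VARAG-lemma-3-assump-1}.

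Then I take $\mathbb{E}_{u_t,i_t\mid\mathcal F_{t-1}}$ and deal with the bias $-\gamma_s\langle\delta_t,x_t-x\rangle$, which does not vanish in expectation here. Writing $\delta_t=(\delta_t-e^s)+e^s$ with $e^s=\tilde g^s-\nabla f_\mu(\tilde x)=\mathbb{E}[\delta_t\mid\mathcal F_{t-1}]$ (Lemma~\ref{VARAG-lemma-1}) and splitting $x_t-x=(x_t-x_{t-1}^{+})+(x_{t-1}^{+}-x)$, the cross term $\langle\delta_t-e^s,x_{t-1}^{+}-x\rangle$ vanishes in expectation because $x_{t-1}^{+}$ is $\mathcal F_{t-1}$-measurable, while Young's inequality yields $-\gamma_s\langle\delta_t-e^s,x_t-x_{t-1}^{+}\rangle\le\tfrac{\gamma_s^2}{2(1+\tau\gamma_s-L\alpha_s\gamma_s)}\|\delta_t-e^s\|^2+\tfrac12(1+\tau\gamma_s-L\alpha_s\gamma_s)\|x_t-x_{t-1}^{+}\|^2$, the last term being absorbed by the slack term from the previous step. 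What remains of the bias is exactly $-\gamma_s\mathbb{E}[\langle\tilde g^s-\nabla f_\mu(\tilde x),x_t-x\rangle]$, the last term of~\eqref{VARAG-lemma-3-result}.

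Finally I substitute $\mathbb{E}[\|\delta_t-e^s\|^2]=\mathbb{E}[\|G_t-\mathbb{E}[G_t\mid\mathcal F_{t-1}]\|^2]\le 18\mu^2L^2(d+6)^3+8(d+4)L[f_\mu(\tilde x)-l_{f_\mu}(\underline x_t,\tilde x)]$ from Lemma~\ref{VARAG-lemma-1}. The constant part gives the error term $\tfrac{\gamma_s}{\alpha_s}\cdot\tfrac{9\alpha_s\gamma_s\mu^2L^2(d+6)^3}{1+\tau\gamma_s-L\alpha_s\gamma_s}$; the remaining part $\tfrac{4(d+4)L\gamma_s^2}{1+\tau\gamma_s-L\alpha_s\gamma_s}[f_\mu(\tilde x)-l_{f_\mu}(\underline x_t,\tilde x)]$ is recombined with the retained $\tfrac{\gamma_s p_s}{\alpha_s}l_{f_\mu}(\underline x_t,\tilde x)$, and assumption~\eqref{VARAG-lemma-3-assump-2} guarantees that the coefficient of $l_{f_\mu}(\underline x_t,\tilde x)$ stays nonnegative, so that bounding $l_{f_\mu}(\underline x_t,\tilde x)\le f_\mu(\tilde x)$ produces the desired $\tfrac{\gamma_s p_s}{\alpha_s}[f_\mu(\tilde x)-f_\mu(x)]$; collecting terms gives~\eqref{VARAG-lemma-3-result}. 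I expect the main difficulty to be this last bookkeeping: tracking the exact coefficients so that the variance-induced $l_{f_\mu}(\underline x_t,\tilde x)$ contribution is absorbed using precisely the room left by~\eqref{VARAG-lemma-3-assump-1}--\eqref{VARAG-lemma-3-assump-2}, which is where the finite-sum/DFO structure and the acceleration parameters interact most tightly.
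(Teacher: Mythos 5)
Your plan reproduces the paper's proof of Lemma~\ref{VARAG-lemma-3} step for step: smoothness of $f_\mu$, decomposition along the convex combination defining $\bar x_t$, Lemma~\ref{VARAG-lemma-2} (and strong convexity of $f_\mu$ when $\tau>0$) for the $\alpha_s$-term, the slack term $-\tfrac{1}{2}(1+\tau\gamma_s-L\alpha_s\gamma_s)\|x_t-x_{t-1}^+\|^2$ absorbed via Young's inequality, and Lemma~\ref{VARAG-lemma-1} to control $\mathbb E\|G_t-\mathbb E G_t\|^2$ with the resulting $l_{f_\mu}(\underline x_t,\tilde x)$-coefficient kept nonnegative by~\eqref{VARAG-lemma-3-assump-2}. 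The only (algebraically equivalent) difference is that you split $\delta_t=(\delta_t-e^s)+e^s$ along both $x_t-x_{t-1}^+$ and $x_{t-1}^+-x$ and let the middle cross term vanish in expectation, whereas the paper keeps $\delta_t$ intact on the $\mathcal F_{t-1}$-measurable direction and only centers it along $x_t-x_{t-1}^+$; both yield the same Young term and the same residual bias $-\gamma_s\mathbb E[\langle\tilde g-\nabla f_\mu(\tilde x),x_t-x\rangle]$.
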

\end{mdframed}

\begin{remark}
The second term in Eq.~\eqref{VARAG-lemma-3-assump-2} has a dependency on $(d+4)$, due to the Gaussian smoothing distortion.
\end{remark}

\begin{proof}[Proof of Lemma \ref{VARAG-lemma-3}]
By the $L$-smoothness of $f_{\mu}$ (from Lemma \ref{Nes-DFO-property}),
\begin{align*}
f_{\mu}(\bar{x}_t) \le \ & f_{\mu}(\underline{x}_t) + \langle \nabla f_{\mu}(\underline{x}_t), \bar{x}_t - \underline{x}_t \rangle + \frac{L}{2}\|\bar{x}_t-\underline{x}_t\|^2\\
= \ & (1-\alpha_{s}-p_s)\big[f_{\mu}(\underline{x}_t)+\langle \nabla f_{\mu}(\underline{x}_t), \bar{x}_{t-1} - \underline{x}_t \rangle\big] + \alpha_{s}\big[f_{\mu}(\underline{x}_t)+\langle \nabla f_{\mu}(\underline{x}_t), x_t - \underline{x}_t \rangle\big]\\
& + p_s\big[f_{\mu}(\underline{x}_t)+\langle \nabla f_{\mu}(\underline{x}_t), \tilde{x} - \underline{x}_t \rangle\big] + \frac{L\alpha_{s}^2}{2}\|x_t - x_{t-1}^+\|^2.
\end{align*}
The equality above holds because of the update rule of $\bar{x}_t$ in Algorithm \ref{algorithm-VARAG} and the Eq.~\eqref{VARAG-eq-1}. Next, applying Lemma \ref{VARAG-lemma-2} for the inequality above, we have
\begin{align*}
f_{\mu}(\bar{x}_t)
\le \ & (1-\alpha_{s}-p_s)\big[f_{\mu}(\underline{x}_t)+\langle \nabla f_{\mu}(\underline{x}_t), \bar{x}_{t-1} - \underline{x}_t \rangle\big] + \alpha_{s}\big[f_{\mu}(\underline{x}_t)+\langle \nabla f_{\mu}(\underline{x}_t), x - \underline{x}_t \rangle\big] \notag\\
& \ +\alpha_{s}\big[\frac{\tau}{2}\|\underline{x_t} - x\|^2 + \frac{1}{2\gamma_{s}}\|x_{t-1} - x\|^2 - \frac{1+\tau \gamma_s}{2\gamma_{s}}\|x_t - x\|^2 - \frac{1+\tau \gamma_s}{2\gamma_{s}} \|x_t - x^+_{t-1}\|^2 - \langle \delta_t, x_t - x \rangle\big] \notag\\
& \ + p_s\big[f_{\mu}(\underline{x}_t)+\langle \nabla f_{\mu}(\underline{x}_t), \tilde{x} - \underline{x}_t \rangle\big] + \frac{L\alpha_{s}^2}{2}\|x_t - x_{t-1}^+\|^2 \notag\\
\le \ & (1-\alpha_{s}-p_s)\big[f_{\mu}(\bar{x}_{t-1}) - \frac{\tau}{2}\|\bar{x}_{t-1}-\underline{x}_t\|^2\big] + \alpha_{s}\big[f_{\mu}(x) - \frac{\tau}{2}\|x-\underline{x}_t\|^2\big] \notag\\
& \ +\alpha_{s}\big[\frac{\tau}{2}\|\underline{x_t} - x\|^2 + \frac{1}{2\gamma_{s}}\|x_{t-1} - x\|^2 - \frac{1+\tau \gamma_s}{2\gamma_{s}}\|x_t - x\|^2 \big] \notag\\
& + p_s\big[f_{\mu}(\underline{x}_t)+\langle \nabla f_{\mu}(\underline{x}_t), \tilde{x} - \underline{x}_t \rangle\big] - \frac{\alpha_{s}}{2\gamma_{s}}(1+\tau \gamma_{s} - L\alpha_{s} \gamma_{s})\|x_t - x_{t-1}^+\|^2 -\alpha_{s}\langle \delta_t, x_t - x \rangle \notag\\
= \ & (1-\alpha_{s}-p_s)\big[f_{\mu}(\bar{x}_{t-1}) - \frac{\tau}{2}\|\bar{x}_{t-1}-\underline{x}_t\|^2\big] +\alpha_{s}\big[f_{\mu}(x) + \frac{1}{2\gamma_{s}}\|x_{t-1} - x\|^2 - \frac{1+\tau \gamma_s}{2\gamma_{s}}\|x_t - x\|^2 \big] \notag\\
& + p_s\big[f_{\mu}(\underline{x}_t)+\langle \nabla f_{\mu}(\underline{x}_t), \tilde{x} - \underline{x}_t \rangle\big] - \frac{\alpha_{s}}{2\gamma_{s}}(1+\tau \gamma_{s} - L\alpha_{s} \gamma_{s})\|x_t - x_{t-1}^+\|^2 \notag\\
& -\alpha_{s}\langle \delta_t - \tilde{g} + \nabla f_{\mu}(\tilde{x}), x_t - x_{t-1}^{+} \rangle -\alpha_{s}\langle \tilde{g} - \nabla f_{\mu}(\tilde{x}), x_t - x_{t-1}^{+} \rangle -\alpha_{s}\langle \delta_t, x_{t-1}^{+} - x \rangle  \notag\\
= \ & (1-\alpha_{s}-p_s)\big[f_{\mu}(\bar{x}_{t-1}) - \frac{\tau}{2}\|\bar{x}_{t-1}-\underline{x}_t\|^2\big] +\alpha_{s}\big[f_{\mu}(x) + \frac{1}{2\gamma_{s}}\|x_{t-1} - x\|^2 - \frac{1+\tau \gamma_s}{2\gamma_{s}}\|x_t - x\|^2 \big] \notag\\
& + p_s\big[f_{\mu}(\underline{x}_t)+\langle \nabla f_{\mu}(\underline{x}_t), \tilde{x} - \underline{x}_t \rangle\big] - \frac{\alpha_{s}}{2\gamma_{s}}(1+\tau \gamma_{s} - L\alpha_{s} \gamma_{s})\|x_t - x_{t-1}^+\|^2 \notag\\
& -\alpha_{s}\langle g_{\mu}(\underline{x}_t,u_t,i_t) - g_{\mu}(\tilde{x},u_t, i_t) - \nabla f_{\mu}(\underline{x}_t)+\nabla f_{\mu}(\tilde{x}), x_t - x_{t-1}^{+} \rangle \notag\\
& -\alpha_{s}\langle \tilde{g} - \nabla f_{\mu}(\tilde{x}), x_t - x_{t-1}^{+} \rangle -\alpha_{s}\langle \delta_t, x_{t-1}^{+} - x \rangle.\\
\end{align*}
The second inequality holds thanks to the strong convexity~(with $\tau\ge0$) of $f_{\mu}$ (see again Lemma \ref{Nes-DFO-property}) and the last equality comes from the definition
$$\delta_t = G_t - \nabla f_{\mu}(\underline{x}_t).$$

Next, note that for any $a>0, b\in\R$ and $u,v\in \R^d$, it holds that $b \langle u,v \rangle - \frac{a}{2}\|v\|^2 \le \frac{b^2}{2a}\|u\|^2$. If we set $a = \frac{\alpha_{s}}{\gamma_{s}}(1+\tau \gamma_{s} - L\alpha_{s} \gamma_{s})$ and $b = -\alpha_{s}$ (requiring $1+\tau \gamma_{s} - L\alpha_{s} \gamma_{s} > 0$), we get 
\begin{align}
f_{\mu}(\bar{x}_t)\le \ & (1-\alpha_{s}-p_s)\big[f_{\mu}(\bar{x}_{t-1}) - \frac{\tau}{2}\|\bar{x}_{t-1}-\underline{x}_t\|^2\big] +\alpha_{s}\big[f_{\mu}(x) + \frac{1}{2\gamma_{s}}\|x_{t-1} - x\|^2 - \frac{1+\tau \gamma_s}{2\gamma_{s}}\|x_t - x\|^2 \big] \notag\\
& + p_s\big[f_{\mu}(\underline{x}_t)+\langle \nabla f_{\mu}(\underline{x}_t), \tilde{x} - \underline{x}_t \rangle\big] +\frac{\alpha_{s}\gamma_{s}}{2(1+\tau \gamma_{s}-L\alpha_{s} \gamma_{s})}\|g_{\mu}(\underline{x}_t,u_t,i_t) - g_{\mu}(\tilde{x},u_t,i_t) - \nabla f_{\mu}(\underline{x}_t)+\nabla f_{\mu}(\tilde{x})\|^2 \notag\\
& -\alpha_{s}\langle \tilde{g} - \nabla f_{\mu}(\tilde{x}), x_t - x_{t-1}^{+} \rangle -\alpha_{s}\langle \delta_t, x_{t-1}^{+} - x \rangle \notag\\
= \ & (1-\alpha_{s}-p_s)\big[f_{\mu}(\bar{x}_{t-1}) - \frac{\tau}{2}\|\bar{x}_{t-1}-\underline{x}_t\|^2\big] +\alpha_{s}\big[f_{\mu}(x) + \frac{1}{2\gamma_{s}}\|x_{t-1} - x\|^2 - \frac{1+\tau \gamma_s}{2\gamma_{s}}\|x_t - x\|^2 \big] \notag\\
& + p_s\big[f_{\mu}(\underline{x}_t)+\langle \nabla f_{\mu}(\underline{x}_t), \tilde{x} - \underline{x}_t \rangle\big] +\frac{\alpha_{s}\gamma_{s}}{2(1+\tau \gamma_{s}-L\alpha_{s} \gamma_{s})}\|G_t - \mathbb{E}_{u_t,i_t}[G_t]\|^2 \notag\\
& -\alpha_{s}\langle \tilde{g} - \nabla f_{\mu}(\tilde{x}), x_t - x_{t-1}^{+} \rangle -\alpha_{s}\langle \delta_t, x_{t-1}^{+} - x \rangle. \label{VARAG-smooth-ineq-1}
\end{align}

Taking the expectation w.r.t. $u_t, i_t$ conditional on past iterates and applying Lemma \ref{VARAG-lemma-1},
\begin{align}
& p_s \big[f_{\mu}(\underline{x}_t)+\langle \nabla f_{\mu}(\underline{x}_t), \tilde{x} - \underline{x}_t \rangle\big] +\frac{\alpha_{s}\gamma_{s}}{2(1+\tau \gamma_{s}-L\alpha_{s} \gamma_{s})}\mathbb{E}_{u_t, i_t| \mathcal{F}_{t-1}}\big[\|G_t - \mathbb{E}_{u_t,i_t}[G_t]\|^2\big] \notag\\
& -\alpha_{s}\mathbb{E}_{u_t, i_t| \mathcal{F}_{t-1}} \big[\langle \tilde{g} - \nabla f_{\mu}(\tilde{x}), x_t - x_{t-1}^{+} \rangle\big] -\alpha_{s}\mathbb{E}_{u_t, i_t| \mathcal{F}_{t-1}} \big[\langle \delta_t, x_{t-1}^{+} - x \rangle\big] \notag\\
\le \ & p_s \big[f_{\mu}(\underline{x}_t)+\langle \nabla f_{\mu}(\underline{x}_t), \tilde{x} - \underline{x}_t \rangle\big] + \frac{9\alpha_{s}\gamma_{s}\cdot \mu^2L^2(d+6)^3}{1+\tau \gamma_{s} - L\alpha_{s}\gamma_{s}} +\frac{4\alpha_{s}\gamma_{s}(d+4)L}{1+\tau \gamma_{s}-L\alpha_{s} \gamma_{s}}\big[f_{\mu}(\tilde{x})-f_{\mu}(\underline{x}_t) - \langle \nabla f_{\mu}(\underline{x}_t), \tilde{x} - \underline{x}_t \rangle \big] \notag\\
& -\alpha_{s}\mathbb{E}_{u_t, i_t| \mathcal{F}_{t-1}} \big[\langle \tilde{g} - \nabla f_{\mu}(\tilde{x}), x_t - x \rangle\big] \notag\\
= \ & \big(p_s-\frac{4\alpha_{s}\gamma_{s}(d+4)L}{1+\tau \gamma_{s}-L\alpha_{s} \gamma_{s}}\big) \big[f_{\mu}(\underline{x}_t)+\langle \nabla f_{\mu}(\underline{x}_t), \tilde{x} - \underline{x}_t \rangle\big] + \frac{9\alpha_{s}\gamma_{s}\cdot \mu^2L^2(d+6)^3}{1+\tau \gamma_{s} - L\alpha_{s}\gamma_{s}} +\frac{4\alpha_{s}\gamma_{s}(d+4)L}{1+\tau \gamma_{s}-L\alpha_{s} \gamma_{s}}f_{\mu}(\tilde{x})\notag\\ & -\alpha_{s}\mathbb{E}_{u_t, i_t| \mathcal{F}_{t-1}} \big[\langle \tilde{g} - \nabla f_{\mu}(\tilde{x}), x_t - x \rangle\big] \notag\\
\le \ & \big(p_s-\frac{4\alpha_{s}\gamma_{s}(d+4)L}{1+\tau \gamma_{s}-L\alpha_{s} \gamma_{s}}\big) \big[f_{\mu}(\tilde{x}) - \frac{\tau}{2}\|\tilde{x} - \underline{x}_t\|^2\big] + \frac{9\alpha_{s}\gamma_{s}\cdot \mu^2L^2(d+6)^3}{1+\tau \gamma_{s} - L\alpha_{s}\gamma_{s}} +\frac{4\alpha_{s}\gamma_{s}(d+4)L}{1+\tau \gamma_{s}-L\alpha_{s} \gamma_{s}}f_{\mu}(\tilde{x})\notag\\
& -\alpha_{s}\mathbb{E}_{u_t, i_t| \mathcal{F}_{t-1}} \big[\langle \tilde{g} - \nabla f_{\mu}(\tilde{x}), x_t - x
\rangle\big] \notag\\
= \ & p_s f_{\mu}(\tilde{x})-\big(p_s-\frac{4\alpha_{s}\gamma_{s}(d+4)L}{1+\tau \gamma_{s}-L\alpha_{s} \gamma_{s}}\big) \cdot \frac{\tau}{2}\|\tilde{x} - \underline{x}_t\|^2 + \frac{9\alpha_{s}\gamma_{s}\cdot \mu^2L^2(d+6)^3}{1+\tau \gamma_{s} - L\alpha_{s}\gamma_{s}}\notag \\ & -\alpha_{s}\mathbb{E}_{u_t, i_t| \mathcal{F}_{t-1}} \big[\langle \tilde{g} - \nabla f_{\mu}(\tilde{x}), x_t - x \rangle\big]. \label{VARAG-smooth-ineq-2}
\end{align}
The last inequality holds when $p_s-\frac{4\alpha_{s}\gamma_{s}(d+4)L}{1+\tau \gamma_{s}-L\alpha_{s} \gamma_{s}} \ge 0$. Combining Eq.~\eqref{VARAG-smooth-ineq-1} with Eq.~\eqref{VARAG-smooth-ineq-2}, we obtain
\begin{align}
&\mathbb{E}_{u_t, i_t| \mathcal{F}_{t-1}}\big[f_{\mu}(\bar{x}_t) + \frac{\alpha_{s}(1+\tau \gamma_{s})}{2\gamma_{s}}\|x_t-x\|^2\big] \notag\\
\le \ & (1-\alpha_{s}-p_s)f_{\mu}(\bar{x}_{t-1})+p_s f_{\mu}(\tilde{x}) + \alpha_{s} f_{\mu}(x) + \frac{\alpha_{s}}{2\gamma_{s}}\|x_{t-1}-x\|^2 + \frac{9\alpha_{s}\gamma_{s}\mu^2L^2(d+6)^3}{1+\tau\gamma_{s}-L\alpha_{s}\gamma_{s}} \notag\\
&- \alpha_{s} \mathbb{E}_{u_t, i_t| \mathcal{F}_{t-1}} \big[\langle \tilde{g}-\nabla f_{\mu}(\tilde{x}), x_t - x \rangle \big] - \frac{(1-\alpha_{s}-p_s)\tau}{2}\|\bar{x}_{t-1}-\underline{x}_t\|^2 -\big(p_s-\frac{4\alpha_{s}\gamma_{s}(d+4)L}{1+\tau \gamma_{s}-L\alpha_{s} \gamma_{s}}\big) \cdot \frac{\tau}{2}\|\tilde{x} - \underline{x}_t\|^2 \notag\\
\le \ & (1-\alpha_{s}-p_s)f_{\mu}(\bar{x}_{t-1})+p_s f_{\mu}(\tilde{x}) + \alpha_{s} f_{\mu}(x) + \frac{\alpha_{s}}{2\gamma_{s}}\|x_{t-1}-x\|^2 + \frac{9\alpha_{s}\gamma_{s}\mu^2L^2(d+6)^3}{1+\tau\gamma_{s}-L\alpha_{s}\gamma_{s}} \notag\\
& - \alpha_{s} \mathbb{E}_{u_t, i_t| \mathcal{F}_{t-1}} \big[\langle \tilde{g}-\nabla f_{\mu}(\tilde{x}), x_t - x \rangle \big]. \notag
\end{align}
Multiplying both sides by $\frac{\gamma_{s}}{\alpha_{s}}$ and then rearranging the inequality, we finish the proof of this lemma, i.e. Eq.~\eqref{VARAG-lemma-3-result}.
\end{proof}

\subsection{Proof of Theorem~\ref{VARAG-theorem-1}}

Before giving the convergence result for convex smooth $f$, we provide a lemma for the \textit{epoch-wise analysis}. This lemma needs an additional technical assumption.
\begin{tcolorbox}
\textbf{(A2$_{\boldsymbol{\mu}}$)} \ \ Let $x_\mu^* \in \text{argmin}_{x} f_\mu(x)$ and consider the sequence of approximations $\{\tilde x^s\}$ returned by Algorithm~\ref{algorithm-VARAG}. There exist a \textit{finite} constant $Z<\infty$, potentially dependent on $L$ and $d$, such that, for $\mu$ small enough,
$$\sup_{s\ge0}\E\left[\|\tilde x^s-x_\mu^*\|\right]\le Z.$$
Using an argument similar to~\cite{gadat2018stochastic}, it is possible to show that this assumption holds under the requirement that $f$ is coercive, i.e. $f(x)\to\infty$ as $\|x\|\to\infty$.
\end{tcolorbox}

\begin{mdframed}
\begin{lemma}\label{VARAG-lemma-4}
Assume \textbf{(A1)}, \textbf{(A2$_{\boldsymbol{\mu}}$)}. Suppose that the weights $\{\theta_t\}$ are set as
\begin{align}\label{VARAG-def-theta-1}
\theta_t =
\begin{cases}
\tfrac{\gamma_{s}}{\alpha_{s}} (\alpha_{s} + p_{s}) & 1 \le t \le T_s-1\\
\tfrac{\gamma_s}{\alpha_s} & t=T_s.
\end{cases}
\end{align} 
Define:
\begin{align}
\mathcal{L}_s :=\frac{\gamma_{s}}{\alpha_{s}}+(T_s-1)\frac{\gamma_{s}(\alpha_{s}+p_s)}{\alpha_{s}},
\end{align}
\begin{align}
\mathcal{R}_s := \frac{\gamma_{s}}{\alpha_{s}}(1-\alpha_{s})+(T_s-1)\frac{\gamma_{s}p_s}{\alpha_{s}}.
\end{align}
Under the conditions in Eq.~\eqref{VARAG-lemma-3-assump-1} and Eq.~\eqref{VARAG-lemma-3-assump-2}, we have:
\begin{multline*}
\mathcal{L}_s \mathbb{E}_{\mathcal{F}_{T_s}} \big[f_{\mu}(\tilde{x}^s)-f_{\mu}(x_{\mu}^*)\big]\\ 
\le \mathcal{R}_s \cdot \big[f_{\mu}(\tilde{x}^{s-1})-f_{\mu}(x_{\mu}^*)\big]
+ \big(\frac{1}{2}\|x^{s-1}-x_{\mu}^*\|^2-\frac{1}{2}\|x^s-x_{\mu}^*\|^2\big) + T_s \frac{\gamma_{s}}{\alpha_{s}} \cdot \frac{9\alpha_{s}\gamma_{s}\mu^2L^2(d+6)^3}{1-L\alpha_{s}\gamma_{s}} + (\mathcal{L}_s+\mathcal{R}_s)Z||e^s||\\
\le \mathcal{R}_s \cdot \big[f_{\mu}(\tilde{x}^{s-1})-f_{\mu}(x_{\mu}^*)\big]
+ \big(\frac{1}{2}\|x^{s-1}-x_{\mu}^*\|^2-\frac{1}{2}\|x^s-x_{\mu}^*\|^2\big) + \underbrace{T_s \frac{\gamma_{s}}{\alpha_{s}} \cdot \frac{9\alpha_{s}\gamma_{s}\mu^2L^2(d+6)^3}{1-L\alpha_{s}\gamma_{s}}}_{\circled{1}} + \underbrace{(\mathcal{L}_s+\mathcal{R}_s)Z\sqrt{E}}_{\circled{2}}.
\end{multline*}
where $e^s = \tilde{g}^s-\nabla f_{\mu}(\tilde{x}^{s-1})$ and $||e^s||^2 \le E$, which is consistent with the definition of $E$ in Section 4.1. Here, the expectation is taken over $\mathcal{F}_{T_s}$ inside the epoch $s$.
\end{lemma}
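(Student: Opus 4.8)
The plan is to apply the per-iteration estimate of Lemma~\ref{VARAG-lemma-3} with the test point $x=x_{\mu}^{*}$, sum it over the inner loop $t=1,\dots,T_s$ (using the tower property over the $\sigma$-algebras $\mathcal{F}_{t-1}$, so that the summed inequality is in full expectation inside the epoch), and then control the only quantity that is genuinely new with respect to~\cite{lan2019unified}: the accumulated inner-product error $-\gamma_s\sum_{t}\langle\tilde g-\nabla f_{\mu}(\tilde x),x_t-x_{\mu}^{*}\rangle=-\gamma_s\sum_{t}\langle e^s,x_t-x_{\mu}^{*}\rangle$, which no longer vanishes in expectation. Since $\tau=0$ here, Lemma~\ref{VARAG-lemma-3} reads, after rearranging,
\begin{align*}
\tfrac{\gamma_s}{\alpha_s}\mathbb{E}[f_{\mu}(\bar x_t)-f_{\mu}^{*}]-\tfrac{\gamma_s}{\alpha_s}(1-\alpha_s-p_s)\mathbb{E}[f_{\mu}(\bar x_{t-1})-f_{\mu}^{*}] &\le \tfrac{\gamma_s p_s}{\alpha_s}[f_{\mu}(\tilde x^{s-1})-f_{\mu}^{*}]+\tfrac12\mathbb{E}\|x_{t-1}-x_{\mu}^{*}\|^2-\tfrac12\mathbb{E}\|x_t-x_{\mu}^{*}\|^2\\
&\quad+\tfrac{\gamma_s}{\alpha_s}\cdot\tfrac{9\alpha_s\gamma_s\mu^2L^2(d+6)^3}{1-L\alpha_s\gamma_s}-\gamma_s\mathbb{E}[\langle e^s,x_t-x_{\mu}^{*}\rangle].
\end{align*}

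\emph{Telescoping the function values.} Because $\tfrac{\gamma_s}{\alpha_s}-\tfrac{\gamma_s}{\alpha_s}(1-\alpha_s-p_s)=\tfrac{\gamma_s}{\alpha_s}(\alpha_s+p_s)=\theta_t$ for $t<T_s$ and $\theta_{T_s}=\tfrac{\gamma_s}{\alpha_s}$, summing the left-hand sides over $t=1,\dots,T_s$ collapses (using $\bar x_0=\tilde x^{s-1}$) to $\sum_{t=1}^{T_s}\theta_t\,\mathbb{E}[f_{\mu}(\bar x_t)-f_{\mu}^{*}]-\tfrac{\gamma_s}{\alpha_s}(1-\alpha_s-p_s)[f_{\mu}(\tilde x^{s-1})-f_{\mu}^{*}]$. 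Summing the right-hand sides with $x_0=x^{s-1}$, $x_{T_s}=x^s$, the quadratic terms telescope, the $T_s$ copies of $\tfrac{\gamma_s p_s}{\alpha_s}[f_\mu(\tilde x^{s-1})-f_\mu^{*}]$ combine with the boundary term into $\mathcal{R}_s[f_{\mu}(\tilde x^{s-1})-f_{\mu}^{*}]$ (a short computation gives $\tfrac{\gamma_s}{\alpha_s}(1-\alpha_s-p_s)+T_s\tfrac{\gamma_s p_s}{\alpha_s}=\mathcal{R}_s$), and the smoothing errors add up to \circled{1}. Finally, $f_{\mu}$ is convex (Lemma~\ref{Nes-DFO-property}), $\sum_t\theta_t=\mathcal{L}_s>0$, and $\tilde x^s=\sum_t\theta_t\bar x_t/\sum_t\theta_t$, so Jensen's inequality yields $\sum_t\theta_t\,\mathbb{E}[f_{\mu}(\bar x_t)-f_{\mu}^{*}]\ge\mathcal{L}_s\,\mathbb{E}[f_{\mu}(\tilde x^s)-f_{\mu}^{*}]$, which is the left-hand side of the claim.

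\emph{The non-vanishing error term.} The key observation is that the accumulated errors sum in closed form. From $\bar x_t=(1-\alpha_s-p_s)\bar x_{t-1}+\alpha_s x_t+p_s\tilde x^{s-1}$ we get $\alpha_s x_t=\bar x_t-(1-\alpha_s-p_s)\bar x_{t-1}-p_s\tilde x^{s-1}$; multiplying by $\tfrac{\gamma_s}{\alpha_s}$ and summing, the very same $\theta_t$-telescoping (with $\bar x_0=\tilde x^{s-1}$) yields
\[
\gamma_s\sum_{t=1}^{T_s}x_t=\sum_{t=1}^{T_s}\theta_t\bar x_t-\mathcal{R}_s\,\tilde x^{s-1}=\mathcal{L}_s\,\tilde x^s-\mathcal{R}_s\,\tilde x^{s-1},\qquad \gamma_s\sum_{t=1}^{T_s}x_{\mu}^{*}=(\mathcal{L}_s-\mathcal{R}_s)\,x_{\mu}^{*},
\]
where the second identity uses $\mathcal{L}_s-\mathcal{R}_s=T_s\gamma_s$. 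Since $e^s$ is fixed once $\tilde x^{s-1}$ is, this gives
\[
-\gamma_s\sum_{t=1}^{T_s}\mathbb{E}[\langle e^s,x_t-x_{\mu}^{*}\rangle]=-\mathcal{L}_s\langle e^s,\mathbb{E}[\tilde x^s]-x_{\mu}^{*}\rangle+\mathcal{R}_s\langle e^s,\tilde x^{s-1}-x_{\mu}^{*}\rangle,
\]
and by Cauchy--Schwarz, Jensen for the norm, $\|e^s\|^2\le E$ from Eq.~\eqref{definition_E}, and $\mathbb{E}\|\tilde x^s-x_{\mu}^{*}\|,\mathbb{E}\|\tilde x^{s-1}-x_{\mu}^{*}\|\le Z$ from \textbf{(A2$_{\boldsymbol{\mu}}$)}, this is at most $(\mathcal{L}_s+\mathcal{R}_s)Z\|e^s\|\le(\mathcal{L}_s+\mathcal{R}_s)Z\sqrt{E}=\circled{2}$. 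Substituting into the telescoped inequality produces both displayed bounds.

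The routine part is the telescoping bookkeeping (checking the coefficients $\mathcal{L}_s,\mathcal{R}_s$ and that the hypotheses of Lemma~\ref{VARAG-lemma-3} hold for the stated parameters). \textbf{The main obstacle} is handling the non-zero-mean error: one has to notice that $\gamma_s\sum_t x_t$ collapses \emph{exactly} to $\mathcal{L}_s\tilde x^s-\mathcal{R}_s\tilde x^{s-1}$, so that the whole error reduces to a statement about how far the pivot sequence $\{\tilde x^s\}$ drifts from $x_{\mu}^{*}$ --- which is precisely what \textbf{(A2$_{\boldsymbol{\mu}}$)} controls, at the cost of the extra $(\mathcal{L}_s+\mathcal{R}_s)Z\sqrt{E}$ overhead. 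Secondary care is needed with conditional-versus-total expectations (tower property) when summing the per-iteration estimates, and with invoking \textbf{(A2$_{\boldsymbol{\mu}}$)} for $\tilde x^{s-1}$, taking the appropriate outer expectation once the epochs are chained in Theorem~\ref{VARAG-theorem-1}.
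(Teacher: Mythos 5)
Your proof follows essentially the same route as the paper: apply Lemma~\ref{VARAG-lemma-3} with $\tau=0$ and $x=x_\mu^*$, telescope over the inner loop with the $\theta_t$ weights, use convexity of $f_\mu$ and the definition of $\tilde x^s$ to get $\mathcal{L}_s\,\mathbb{E}[f_\mu(\tilde x^s)-f_\mu^*]$ on the left, and then collapse the accumulated error term via the identity $\gamma_s\sum_t x_t=\mathcal{L}_s\tilde x^s-\mathcal{R}_s\tilde x^{s-1}$ together with $\mathcal{L}_s-\mathcal{R}_s=T_s\gamma_s$ (this is exactly Eq.~\eqref{VARAG-pivotal-error} in the paper, written in slightly different notation), after which Cauchy--Schwarz, \textbf{(A2$_{\boldsymbol\mu}$)}, and $\|e^s\|^2\le E$ give the $(\mathcal{L}_s+\mathcal{R}_s)Z\sqrt{E}$ overhead. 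Your closing caveat about the expectation being conditional inside the epoch while \textbf{(A2$_{\boldsymbol\mu}$)} is stated in total expectation is a genuine subtlety that the paper glosses over, and you flag it appropriately.
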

\end{mdframed}
\begin{remark}
Compared to the corresponding result by~\citet{lan2019unified} (Lemma 7 in their paper), we note that two additional errors terms appear. \circled{1} is the error due to the Gaussian smooth estimation and \circled{2} is an error due to the approximation made at the pivot point.
We will later see that the coordinate-wise approach introduced in Eq.~\eqref{DFO-framework-cord-finite-difference} yields a constant error bound for \circled{2}, which is independent of the gradient information.
\end{remark}

\begin{proof}[Proof of Lemma \ref{VARAG-lemma-4}]
For $f$ convex and $L$-smooth, we have that $f_{\mu}$ is $L$-smooth and $\tau$-strongly-convex with $\tau = 0$ from Lemma \ref{Nes-DFO-property}. Hence, Lemma \ref{VARAG-lemma-3} can be written as
\begin{align*}
& \mathbb{E}_{u_t, i_t| \mathcal{F}_{t-1}}\bigg[\frac{\gamma_{s}}{\alpha_{s}}\big[f_{\mu}(\bar{x}_t)-f_{\mu}(x)\big] + \frac{1}{2}\|x_t-x\|^2\bigg]\notag\\
\le \  & \frac{\gamma_{s}}{\alpha_{s}}(1-\alpha_{s}-p_s)\big[f_{\mu}(\bar{x}_{t-1})-f_{\mu}(x)\big]+\frac{\gamma_{s}p_s}{\alpha_{s}} \big[f_{\mu}(\tilde{x})-f_{\mu}(x)\big] + \frac{1}{2}\|x_{t-1}-x\|^2 + \frac{\gamma_{s}}{\alpha_{s}} \cdot \frac{9\alpha_{s}\gamma_{s}\mu^2L^2(d+6)^3}{1-L\alpha_{s}\gamma_{s}}\notag\\
& - \frac{\gamma_{s}}{\alpha_{s}} \cdot \alpha_{s} \mathbb{E}_{u_t, i_t| \mathcal{F}_{t-1}}\big[\langle \tilde{g}-\nabla f_{\mu}(\tilde{x}), x_t - x \rangle \big].
\end{align*}
Summing up these inequalities over $t = 1, \dots, T_s$, using the definition of $\theta_t$ and $\bar{x}_0 = \tilde{x}$,
\begin{align*}
\sum_{t=1}^{T_s}\theta_t \mathbb{E}_{\mathcal{F}_t} \big[f_{\mu}(\bar{x}_t)-f_{\mu}(x)\big]
\le \  & \big[\frac{\gamma_{s}}{\alpha_{s}}(1-\alpha_{s})+ (T_s-1)\frac{\gamma_{s}p_s}{\alpha_{s}}\big] \cdot \big[f_{\mu}(\tilde{x})-f_{\mu}(x)\big] + \big(\frac{1}{2}\|x_0-x\|^2-\frac{1}{2}\|x_{T_s}-x\|^2\big) \notag\\
& + T_s \cdot \frac{\gamma_{s}}{\alpha_{s}} \cdot \frac{9\alpha_{s}\gamma_{s}\mu^2L^2(d+6)^3}{1-L\alpha_{s}\gamma_{s}} - \frac{\gamma_{s}}{\alpha_{s}} \sum_{t=1}^{T_s} \alpha_{s} \mathbb{E}_{\mathcal{F}_t}\big[\langle \tilde{g}-\nabla f_{\mu}(\tilde{x}), x_t - x \rangle \big].
\end{align*}
Using the fact that $\tilde{x}^s = \sum_{t=1}^{T_s}\big(\theta_t \bar{x}_t\big)/\sum_{t=1}^{T_s}\theta_t$, $\tilde{x} = \tilde{x}^{s-1}$, $x_0 = x^{s-1}$, $x_{T_s} = x^s$ and using convexity of $f_{\mu}$, the inequality above implies
\begin{align*}
\sum_{t=1}^{T_s}\theta_t \mathbb{E}_{\mathcal{F}_t} \big[f_{\mu}(\tilde{x}^s)-f_{\mu}(x)\big] \le \  & \big[\frac{\gamma_{s}}{\alpha_{s}}(1-\alpha_{s})+ (T_s-1)\frac{\gamma_{s}p_s}{\alpha_{s}}\big] \cdot \big[f_{\mu}(\tilde{x}^{s-1})-f_{\mu}(x)\big] + \big(\frac{1}{2}\|x^{s-1}-x\|^2-\frac{1}{2}\|x^s-x\|^2\big) \notag\\
& + T_s \cdot \frac{\gamma_{s}}{\alpha_{s}} \cdot \frac{9\alpha_{s}\gamma_{s}\mu^2L^2(d+6)^3}{1-L\alpha_{s}\gamma_{s}} - \frac{\gamma_{s}}{\alpha_{s}} \sum_{t=1}^{T_s} \alpha_{s} \mathbb{E}_{\mathcal{F}_t}\big[\langle \tilde{g}^s-\nabla f_{\mu}(\tilde{x}^{s-1}), x_t - x \rangle \big],
\end{align*}
which is equivalent to
\begin{align}
\mathcal{L}_s \mathbb{E}_{\mathcal{F}_t} \big[f_{\mu}(\tilde{x}^s)-f_{\mu}(x)\big] \le \  & \mathcal{R}_s \cdot \big[f_{\mu}(\tilde{x}^{s-1})-f_{\mu}(x)\big] + \big(\frac{1}{2}\|x^{s-1}-x\|^2-\frac{1}{2}\|x^s-x\|^2\big)  \notag\\
& + T_s \cdot \frac{\gamma_{s}}{\alpha_{s}} \cdot \frac{9\alpha_{s}\gamma_{s}\mu^2L^2(d+6)^3}{1-L\alpha_{s}\gamma_{s}} - \frac{\gamma_{s}}{\alpha_{s}} \sum_{t=1}^{T_s} \alpha_{s} \mathbb{E}_{\mathcal{F}_t}\big[\langle \tilde{g}^s-\nabla f_{\mu}(\tilde{x}^{s-1}), x_t - x \rangle \big]. \label{VARAG-lemma-4-ineq-1}
\end{align}
Notice that, since $\bar{x}_0 = \tilde{x} = \tilde{x}^{s-1}$ in the epoch $s$,
\begin{align}
& \frac{\gamma_{s}}{\alpha_{s}}\sum_{t=1}^{T_s} \alpha_{s} \mathbb{E}_{\mathcal{F}_t}\big[\langle \tilde{g}^s-\nabla f_{\mu}(\tilde{x}^{s-1}), x_t - x \rangle \big] \notag\\
= \  & \frac{\gamma_{s}}{\alpha_{s}}\sum_{t=1}^{T_s} \mathbb{E}_{\mathcal{F}_t}\big[\langle \tilde{g}^s-\nabla f_{\mu}(\tilde{x}^{s-1}), \bar{x}_t-(1-\alpha_{s}-p_s)\bar{x}_{t-1}-p_s\tilde{x}^{s-1} - \alpha_{s} x \rangle \big] \notag\\
= \  & \frac{\gamma_{s}}{\alpha_{s}} \mathbb{E}_{\mathcal{F}_{T_s}}\big[\langle \tilde{g}^s-\nabla f_{\mu}(\tilde{x}^{s-1}), \bar{x}_{T_s}+\sum_{t=1}^{T_s-1}(\alpha_{s}+p_s)\bar{x}_{t}-\big[(1-\alpha_{s})+(T_s-1)p_s\big]\tilde{x}^{s-1} - \alpha_{s} T_sx \rangle \big] \notag\\
= \  & \frac{\gamma_{s}}{\alpha_{s}} \mathbb{E}_{\mathcal{F}_{T_s}}\big[\langle \tilde{g}^s-\nabla f_{\mu}(\tilde{x}^{s-1}), \frac{\alpha_{s}}{\gamma_{s}}\sum_{t=1}^{T_s}\theta_t \tilde{x}^s -\big[(1-\alpha_{s})+(T_s-1)p_s\big]\tilde{x}^{s-1} - \alpha_{s} T_sx \rangle \big] \notag\\
= \  & \mathbb{E}_{\mathcal{F}_{T_s}}\big[\langle \tilde{g}^s-\nabla f_{\mu}(\tilde{x}^{s-1}), \mathcal{L}_s \tilde{x}^s -\mathcal{R}_s\tilde{x}^{s-1} - \gamma_{s} T_sx \rangle \big] \notag\\
= \  & \mathbb{E}_{\mathcal{F}_{T_s}}\big[\langle \tilde{g}^s-\nabla f_{\mu}(\tilde{x}^{s-1}), \mathcal{L}_s \big(\tilde{x}^s-x\big) -\mathcal{R}_s\big(\tilde{x}^{s-1}-x\big) \rangle \big]. \label{VARAG-pivotal-error}
\end{align}
The first equality is indeed the update rule of $\bar{x}_t$, the thrid equality is the definition of $\tilde{x}^s$ and the second last equality comes from the definition of $\mathcal{L}_s$ and $\mathcal{R}_s$.

Then, we set $x = x_{\mu}^*$ to the inequality above. Based on the assumption \textbf{(A2$_{\boldsymbol{\mu}}$)} and combining the previous inequality with Eq.~\eqref{VARAG-lemma-4-ineq-1}, we have
\begin{align*}
\mathcal{L}_s \mathbb{E}_{\mathcal{F}_{T_s}} \big[f_{\mu}(\tilde{x}^s)-f_{\mu}(x_{\mu}^*)\big] \le \  & \mathcal{R}_s \cdot \big[f_{\mu}(\tilde{x}^{s-1})-f_{\mu}(x_{\mu}^*)\big] + \big(\frac{1}{2}\|x^{s-1}-x_{\mu}^*\|^2-\frac{1}{2}\|x^s-x_{\mu}^*\|^2\big) \notag\\
& + T_s \cdot \frac{\gamma_{s}}{\alpha_{s}} \cdot \frac{9\alpha_{s}\gamma_{s}\mu^2L^2(d+6)^3}{1-L\alpha_{s}\gamma_{s}} + (\mathcal{L}_s+\mathcal{R}_s)Z\|e^s\|.
\end{align*}
\end{proof}

\noindent Finally, we derive Theorem~\ref{VARAG-theorem-1} directly from Lemma~\ref{VARAG-lemma-4}. For convenience of the reader, we re-write the theorem here.

\begin{mdframed}
\textbf{Theorem 2. }Assume \textbf{(A1)} and \textbf{(A2$_{\boldsymbol{\mu}}$)}. If we define $s_0 := \lfloor \log (d+4)n \rfloor+1$ and set $\{T_s\}$, $\{\gamma_s\}$ and $\{p_s\}$ as
	\begin{align}
	T_s = \begin{cases}
	2^{s-1}, & s \le s_0\\
	T_{s_0}, & s > s_0
	\end{cases}, \ \ \ \ \
	\gamma_s = \tfrac{1}{12 (d+4)L \alpha_s}, \ \ \ \ \
	p_s = \tfrac{1}{2},
	\end{align}
	\vspace{-1.5mm}
	with
	\vspace{-5mm}
	\begin{align}
	\alpha_s =
	\begin{cases}
	\tfrac{1}{2}, & s \le s_0\\
	\tfrac{2}{s-s_0+4},& s > s_0
	\end{cases}.
	\end{align}
	If we set
	\vspace{-3mm}
	\begin{align}
\theta_t =
\begin{cases}
\tfrac{\gamma_{s}}{\alpha_{s}} (\alpha_{s} + p_{s}) & 1 \le t \le T_s-1\\
\tfrac{\gamma_s}{\alpha_s} & t=T_s.
\end{cases}
\end{align} 
	we obtain
	\begin{align*}
	&\mathbb{E} \big[f_{\mu}(\tilde{x}^s)-f^*_{\mu}\big] \le \begin{cases}
	\cfrac{(d+4)D_0}{2^{s+1}}  + \varsigma_1 +\varsigma_2, & 1 \le s \le s_0\\
	\cfrac{16 D_0}{n(s-s_0+4)^2} + \delta_s \cdot (\varsigma_1 +\varsigma_2),&  s > s_0\\
	\end{cases}
	\end{align*}
	where $\varsigma_1 = 2\mu^2L(d+4)^2$, $\varsigma_2 = 3Z\sqrt{E}$, $\delta_s = \mathcal{O}(s-s_0)$ and $D_0$ is defined as
	\begin{align}
	D_0:= \frac{2}{(d+4)}[f_{\mu}(x^0) - f_{\mu}(x_{\mu}^*)] + 6L \|x^0-x_{\mu}^* \|^2,
	\end{align}
	where $x^*_\mu$ is any finite minimizer of $f_\mu$.
\end{mdframed}

\begin{proof}[Proof of Theorem \ref{VARAG-theorem-1}]
First, note that, with the parameter choices described in the theorem statement, \textit{the restrictions in Eq.~\eqref{VARAG-lemma-3-assump-1} and Eq.~\eqref{VARAG-lemma-3-assump-2} are satisfied}:
\begin{align}
1+\tau \gamma_{s} - L\alpha_{s}\gamma_{s} = 1 - \frac{1}{12(d+4)} > 0, 
\end{align}
\begin{align}
p_s - \frac{4(d+4)L\alpha_{s}\gamma_{s}}{1+\tau \gamma_{s} - L\alpha_{s}\gamma_{s}} = \frac{1}{2}-\frac{1}{3}\cdot \frac{1}{1-\frac{1}{12(d+4)}} > 0. 
\end{align}
We further define
\begin{align}
w_s := \mathcal{L}_s - \mathcal{R}_{s+1}.
\end{align}
As in \cite{lan2019unified}, if $1\le s <s_0$, 
\begin{align*}
w_s & = \mathcal{L}_s - \mathcal{R}_{s+1} = \frac{\gamma_{s}}{\alpha_s}\big[1+(T_s-1)(\alpha_{s}+p_s)-(1-\alpha_{s})-(2T_s-1)p_s\big] = \frac{\gamma_{s}}{\alpha_{s}}\big[T_s(\alpha_{s}-p_{s})\big] = 0.
\end{align*}
Otherwise, if $s \ge s_0$, we have $\frac{\gamma_s}{\alpha_s} = \frac{1}{12(d+4)L\alpha_s^2} = \frac{(s-s_0+4)^2}{48(d+4)L}$ and
\begin{align*}
w_s & = \mathcal{L}_s - \mathcal{R}_{s+1} = \frac{\gamma_{s}}{\alpha_{s}} -  \frac{\gamma_{s+1}}{\alpha_{s+1}}(1-\alpha_{s+1})+(T_{s_0}-1)\big[\frac{\gamma_{s}(\alpha_{s}+p_s)}{\alpha_{s}}-\frac{\gamma_{s+1}p_{s+1}}{\alpha_{s+1}}\big]\\
& = \frac{(s-s_0+4)^2}{48(d+4)L} - \frac{(s-s_0+5)^2}{48(d+4)L}\big(1- \frac{2}{s-s_0+5}\big)\\
& \quad +(T_{s_0}-1)\bigg[\frac{(s-s_0+4)^2}{48(d+4)L} \cdot \big(\frac{2}{s-s_0+4}+\frac{1}{2}\big)-\frac{(s-s_0+5)^2}{48(d+4)L}\cdot \frac{1}{2}\bigg]\\
& = \frac{1}{48(d+4)L}+\frac{T_{s_0}-1}{96(d+4)L}\big[2(s-s_0+4)-1\big] > 0.
\end{align*}
Hence, $w_s \ge 0$ for all $s$. We can therefore use Lemma \ref{VARAG-lemma-4} iteratively as follows,
\begin{align}
& \mathcal{L}_s \mathbb{E} \big[f_{\mu}(\tilde{x}^s)-f_{\mu}(x_{\mu}^*)\big]  + \bigg(\sum_{j=1}^{s-1}w_j\mathbb{E}\big[f_{\mu}(\tilde{x}^j) - f_{\mu}(x_{\mu}^*)\big]\bigg)\notag\\
\le \  & \mathcal{R}_1 \cdot \mathbb{E} \big[f_{\mu}(\tilde{x}^{0})-f_{\mu}(x_{\mu}^*)\big] + \mathbb{E}\big[\frac{1}{2}\|x^{0}-x_{\mu}^*\|^2-\frac{1}{2}\|x^s-x_{\mu}^*\|^2\big] + \sum_{j=1}^{s} T_j \cdot \frac{\gamma_{j}}{\alpha_{j}} \cdot \frac{9\alpha_{j}\gamma_{j}\mu^2L^2(d+6)^3}{1-L\alpha_{j}\gamma_{j}} \notag\\
& + \sum_{j=1}^{s}(\mathcal{L}_j+\mathcal{R}_j)Z\|e^j\| \notag\\
\le \  & \frac{1}{6(d+4)L} \big[f_{\mu}(\tilde{x}^{0})-f_{\mu}(x_{\mu}^*)\big] + \frac{1}{2}\|x^{0}-x_{\mu}^*\|^2 + \sum_{j=1}^{s} T_j \cdot \frac{\gamma_{j}}{\alpha_{j}} \cdot \frac{9\alpha_{j}\gamma_{j}\mu^2L^2(d+6)^3}{1-L\alpha_{j}\gamma_{j}}\\ & + \sum_{j=1}^{s}(\mathcal{L}_j+\mathcal{R}_j)Z\|e^j\| \notag\\
= \  & \frac{1}{12L} D_0 + \sum_{j=1}^{s} T_j \cdot \frac{\gamma_{j}}{\alpha_{j}} \cdot \frac{9\alpha_{j}\gamma_{j}\mu^2L^2(d+6)^3}{1-L\alpha_{j}\gamma_{j}} + \sum_{j=1}^{s}(\mathcal{L}_j+\mathcal{R}_j)Z\|e^j\| \notag\\
\le \  & \frac{1}{12L} D_0 + \sum_{j=1}^{s} T_j \cdot \frac{\gamma_{j}}{\alpha_{j}} \cdot \frac{3\mu^2L(d+6)^3}{4(d+4)} + \sum_{j=1}^{s}(\mathcal{L}_j+\mathcal{R}_j)Z\|e^j\|\notag\\
\le \  & \frac{1}{12L} D_0 + \sum_{j=1}^{s} T_j \cdot \frac{\gamma_{j}}{\alpha_{j}} \cdot \mu^2L(d+4)^2 + \sum_{j=1}^{s}(\mathcal{L}_j+\mathcal{R}_j)Z\|e^j\|\notag\\
\le \  & \frac{1}{12L} D_0 + \sum_{j=1}^{s} T_j \cdot \frac{\gamma_{j}}{\alpha_{j}} \cdot \mu^2L(d+4)^2 + \sum_{j=1}^{s}(\mathcal{L}_j+\mathcal{R}_j)Z\sqrt{E}.
\label{VARAG-theorem-1-ineq-1}
\end{align}
The second last equality holds when $\alpha_{j}\gamma_{j} = \frac{1}{12(d+4)L}$ and $x = x_{\mu}^*$, the optimal solution for $f_{\mu}$. We proceed with two cases:\\
\\
\textbf{Case I:} If $s \le s_0$, $\mathcal{L}_s = \frac{2^{s+1}}{12(d+4)L}$, $\mathcal{R}_s = \frac{2^{s}}{12(d+4)L} = \frac{\mathcal{L}_s}{2}$, $\frac{\gamma_{s}}{\alpha_{s}} = \frac{1}{3(d+4)L}$, $T_s = 2^{s-1}$. Hence, we have
\begin{align}
& \mathbb{E} \big[f_{\mu}(\tilde{x}^s)-f_{\mu}(x_{\mu}^*)\big] \le \frac{1}{2^{s+1}} (d+4)D_0 + 2\mu^2L(d+4)^2 + 3Z\sqrt{E}, \qquad 1 \le s \le s_0.
\label{VARAG-theorem-1-ineq-2}
\end{align}
\textbf{Case II:} If $s > s_0$, we have
\begin{align*}
\mathcal{L}_s = \  & \frac{1}{12(d+4)\alpha_s^2}\big[(T_s-1)\alpha_{s} + \frac{1}{2}(T_s+1)\big] = \frac{(s-s_0+4)^2}{48(d+4)L} \cdot \big[(T_{s_0}-1)\alpha_{s} + \frac{1}{2}(T_{s_0}+1)\big]\\
\ge \ & \frac{(s-s_0+4)^2}{96(d+4)L} \cdot (T_{s_0}+1)
\ge \frac{n \cdot (s-s_0+4)^2}{192L},
\end{align*}
where the last inequality holds since $T_{s_0} = 2^{\left\lfloor \log_{2}[(d+4)n]\right\rfloor} \ge \frac{(d+4)n}{2}$, i.e. $2^{s_0} \ge (d+4)n$. Hence, based on $\sum_{i=1}^{n}i^2 = \frac{n(n+1)(2n+1)}{6}$ and Eq.~\eqref{VARAG-theorem-1-ineq-2}, Eq.~\eqref{VARAG-theorem-1-ineq-1} implies
\begin{align}
\mathbb{E} \big[f_{\mu}(\tilde{x}^s)-f_{\mu}(x_{\mu}^*)\big] \le \  & \frac{16 D_0}{n(s-s_0+4)^2} + \mathcal{O}(s-s_0) \cdot \mu^2L(d+4)^2+ \mathcal{O}(s-s_0) \cdot Z\sqrt{E}.
\label{VARAG-theorem-1-ineq-3}
\end{align}
\end{proof}

We conclude by deriving the final complexity result, stated in the main paper.

\begin{proof}[Proof of Corollary~\ref{VARAG-corollary-1}] We pick up from the proof presented in the main paper, which we summarize in the next lines. Note that the analysis we performed in the last pages is based on $f_{\mu}$ rather than $f$. Hence, we first need to ensure that the error between these two functions is sufficiently small. We can bound $f_{\mu}(\tilde{x}^s) - f_{\mu}(x_{\mu}^*)$ from $f(\tilde{x}^s) - f(x^*)$ as follows:
\begin{align*}
    f_{\mu}(\tilde{x}^s) - f_{\mu}(x_{\mu}^*) & = f_{\mu}(\tilde{x}^s) - f(\tilde{x}^s) + f(\tilde{x}^s) - f_{\mu}(x_{\mu}^*) + f_{\mu}(x^*) - f_{\mu}(x^*) + f(x^*) - f(x^*)\\
    & \ge -\frac{\mu^2Ld}{2} + f(\tilde{x}^s) - f_{\mu}(x_{\mu}^*) + f_{\mu}(x^*) -\frac{\mu^2Ld}{2} - f(x^*)\\
    & \ge f(\tilde{x}^s) - f(x^*) - \mu^2Ld.
\end{align*}
The first inequality comes from Lemma \ref{Nes-DFO-error} and the second inequality comes from the definition of $x_{\mu}$.

We want the error term $\mu^2Ld$ we just derived to be small, say $\le \frac{\epsilon}{4}$, i.e. $\mu = \mathcal{O}\big(\sqrt{\frac{\epsilon}{4Ld}}\big)$. From this, we get an upper bound on $\mu$~(\textit{choosing $\mu$ small does not affect the convergence rate}). Next, we bound in the same way the additional~(non-vanishing) error terms in Eq.~\eqref{VARAG-theorem-1-ineq-2} and Eq.~\eqref{VARAG-theorem-1-ineq-3}. This requires $\mu = \mathcal{O}\big(\frac{\epsilon^{1/2}}{L^{1/2} d} \big)$, $\mu = \mathcal{O}\big(\frac{\epsilon}{ZLd^{3/2}}\big)$ and $\nu = \mathcal{O}\big( \frac{\epsilon}{ZL d^{1/2}}\big)$ for  Eq.~\eqref{VARAG-theorem-1-ineq-2} while $\mu = \mathcal{O}\big(\frac{n^{1/4}\epsilon^{3/4}}{L^{1/2} d D_0^{1/4}} \big)$, $\mu = \mathcal{O}\big(\frac{n^{1/2}\epsilon^{3/2}}{Z L d^{3/2} D_0^{1/2}}\big)$ and $\nu = \mathcal{O}\big( \frac{n^{1/2}\epsilon^{3/2}}{Z L d^{1/2} D_0^{1/2}}\big)$ for  Eq.~\eqref{VARAG-theorem-1-ineq-3} to ensure $\epsilon$-optimality, $\frac{\epsilon}{4}$ more specifically. Therefore, if we bound the term which contains $D_0$ by $\frac{\epsilon}{2}$, $f(\tilde{x}^s) - f(x^*)$ would achieve $\epsilon$-optimality in expectation. This is what we do next (following the proof in~\cite{lan2012optimal}), for the two cases in Theorem~\ref{VARAG-theorem-1}.

If $n \ge \frac{D_0}{\epsilon}$, i.e. in the region of relatively low accuracy and/or large number of components, we have
\begin{align*}
\frac{(d+4)D_0}{2^{s_0+1}} \le \frac{D_0}{2n}\le \frac{\epsilon}{2} \Rightarrow \log\frac{(d+4)D_0}{\epsilon} \le s_0.
\end{align*}
Therefore, the number of epochs is at most $s_0$ for the first term in Eq.~\eqref{VARAG-theorem-1-ineq-2} to achieve $\frac{\epsilon}{2}$ optimality inside Case I. Hence, the total number of function queries is bounded by
\begin{multline*}
{d nS_l + \sum_{s=1}^{S_l}T_s= \mathcal{O} \bigg\{\min \bigg(d n\log\frac{(d+4)D_0}{\epsilon}, d n \log (dn), dn\bigg)\bigg\} = \mathcal{O} \bigg\{\min \left(d n \log \frac{d D_0}{\epsilon},dn \right)\bigg\} = \mathcal{O} \left\{d n \log \frac{d D_0}{\epsilon} \right\}.}
\end{multline*}
If instead $n < \frac{D_0}{\epsilon}$, at epoch $S_h = \left\lceil \sqrt{\frac{32D_0}{n\epsilon}} + s_0 -4 \right\rceil$ (ensuring the first term in Eq.~\eqref{VARAG-theorem-1-ineq-3} to be not bigger than $\frac{\epsilon}{2}$), we can achieve $\epsilon$ optimality. Hence, the total number of function queries is
\begin{align*}
dn s_0 + \sum_{s=1}^{s_0}T_s + (T_{s_0}+dn)(S_h-s_0) \le \sum_{s=1}^{s_0}T_s + (T_{s_0}+dn)S_h= \mathcal{O}\bigg\{d\sqrt{\frac{nD_0}{\epsilon}} + dn \log(dn) \bigg\}.
\end{align*}
\end{proof}

\subsection{Proof of Theorem~\ref{VARAG-theorem-2}}

In this section, we assume $f$ to be strongly convex. Hence, $f_{\mu}$ is also strongly convex by Lemma \ref{Nes-DFO-property}. 

\begin{tcolorbox}
\textbf{(A3)} \ \ $f=\frac{1}{n}\sum_{i=1}^n f_i$ is $\tau$-strongly convex. That is, for all $x,y\in\R^d, \ f(y)\ge f(x)+\langle\nabla f(x),y-x\rangle +\frac{\tau}{2}\|y-x\|^2$.
\end{tcolorbox}

We rewrite below Theorem \ref{VARAG-theorem-2}, for convenience of the reader:
\begin{mdframed}
\textbf{Theorem 4.} Assume \textbf{(A1)} and \textbf{(A3)}. Let us denote $s_0 := \lfloor \log (d+4)n\rfloor+1$ and assume that the
	weights $\{\theta_t\}$ are set to Eq.~\eqref{VARAG-def-theta-paper} if $1\le s \le s_0$. Otherwise, they are set to
	\vspace{-1mm}
	\begin{align}
	\theta_t =
	\begin{cases}
	\Gamma_{t-1} - (1 - \alpha_s - p_s) \Gamma_{t}, & 1 \le t \le T_s-1,\\
	\Gamma_{t-1}, & t = T_s,
	\end{cases}
	\end{align}
	where $\Gamma_t= \big(1+\frac{\tau\gamma_s}{2}\big)^t$.
	If the parameters $\{T_s\}$, $\{\gamma_s\}$ and $\{p_s\}$ are set to Eq.~\eqref{parameter-deter-smooth1} with 
	\begin{align}
	\alpha_s =
	\begin{cases}
	\tfrac{1}{2}, & s \le s_0,\\
	\min\{\sqrt{\frac{n \tau}{24L}}, \tfrac{1}{2}\},& s > s_0,
	\end{cases}
	\end{align}
	we obtain
	\begin{align*}
	\mathbb{E} \big[f_{\mu}(\tilde{x}^{s})-f_{\mu}^{*}\big] \le \begin{cases}
	\cfrac{1}{2^{s+1}} (d+4)D_0 + \varsigma_1 + \varsigma_2, & \ \ \ \ \ 1 \le s \le s_0\\
	& \\
	 (4/5)^{s-s_0}\cfrac{D_0}{n}+ \varsigma_1 + \varsigma_2, &\ \ \ \ \ s > s_0 \text{ and } n \ge \frac{6L}{\tau} \\
	& \\
	  \left(1+\frac{1}{4} \sqrt{\frac{n\tau}{6L}}\right)^{-(s-s_0)} \cfrac{D_0}{n}+ \Big(\sqrt{\frac{6L}{n\tau}}+1\Big)\varsigma_1 + \varsigma_2  & \ \ \ \ \ s > s_0 \text{ and } n < \frac{6L}{\tau}
	\end{cases}
	\end{align*}
where $\varsigma_1 = 12\mu^2L(d+4)^2$, $\varsigma_2 = 5E/\tau$ and $D_0$ is defined as in Eq.~\eqref{VARAG-def-D_0}.
\end{mdframed}

\begin{remark}
Compared with smooth convex case, we can drop the assumption \textbf{(A2$_{\boldsymbol{\mu}}$)}.
\end{remark}

We start with the following result.
\begin{mdframed}
\begin{lemma}\label{VARAG-lemma-5}
	Assume \textbf{(A1)}, \textbf{(A3)}. Under the choice of parameters from Theorem~\ref{VARAG-theorem-2}, for any $0 < c \le 1$,
	\begin{align}
	& \mathbb{E}_{u_t, i_t| \mathcal{F}_{t-1}}\bigg[\frac{\gamma_{s}}{\alpha_{s}}\big[f_{\mu}(\bar{x}_t)-f_{\mu}^{*}\big] + \big(1+(1-c)\tau \gamma_{s}\big) \cdot \frac{1}{2}\|x_t-x_{\mu}^*\|^2\bigg] \notag\\
	\le \  & \frac{\gamma_{s}}{\alpha_{s}}(1-\alpha_{s}-p_s)\big[f_{\mu}(\bar{x}_{t-1})-f_{\mu}^{*}\big]+\frac{\gamma_{s}p_s}{\alpha_{s}} \big[f_{\mu}(\tilde{x})-f_{\mu}^{*}\big] + \frac{1}{2}\|x_{t-1}-x_{\mu}^*\|^2 + \frac{\gamma_{s}}{\alpha_{s}} \cdot \mu^2L(d+4)^2 + \frac{\gamma_{s}}{2c\tau} \cdot E, \label{VARAG-lemma-5-result}
	\end{align}
	where $E$ is defined as in Lemma \ref{VARAG-lemma-4}.
\end{lemma}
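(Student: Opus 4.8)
The plan is to specialize Lemma~\ref{VARAG-lemma-3} to the strongly-convex setting (where $f_\mu$ is $\tau$-strongly convex by Lemma~\ref{Nes-DFO-property}), evaluate it at the test point $x=x_\mu^*$, and then dispose of the single non-vanishing pivotal-error term $-\gamma_s\,\mathbb{E}_{u_t,i_t|\mathcal{F}_{t-1}}[\langle e^s, x_t-x_\mu^*\rangle]$ by trading it against a fraction of the strong-convexity contribution $\tfrac{\tau\gamma_s}{2}\|x_t-x_\mu^*\|^2$ that Lemma~\ref{VARAG-lemma-3} already produces on its left-hand side.

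First I would verify that the parameter choice of Theorem~\ref{VARAG-theorem-2} satisfies hypotheses \eqref{VARAG-lemma-3-assump-1} and \eqref{VARAG-lemma-3-assump-2} of Lemma~\ref{VARAG-lemma-3}. Since $\alpha_s\gamma_s=\tfrac{1}{12(d+4)L}$ and now $\tau\gamma_s\ge 0$, the denominator $1+\tau\gamma_s-L\alpha_s\gamma_s$ is at least $1-\tfrac{1}{12(d+4)}>0$, and the second condition follows just as in the convex case (strong convexity only helps here). Applying Lemma~\ref{VARAG-lemma-3} with $x=x_\mu^*$ and writing $e^s=\tilde g-\nabla f_\mu(\tilde x)$ — which is $\mathcal{F}_{t-1}$-measurable since it is fixed at the start of the epoch — gives an inequality whose left-hand side carries the factor $\tfrac{1+\tau\gamma_s}{2}\|x_t-x_\mu^*\|^2$ and whose right-hand side contains the Gaussian-smoothing error $\tfrac{\gamma_s}{\alpha_s}\cdot\tfrac{9\alpha_s\gamma_s\mu^2L^2(d+6)^3}{1+\tau\gamma_s-L\alpha_s\gamma_s}$ together with the pivotal-error term above.

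Next I would bound the two residual errors. The Gaussian-smoothing term is at most $\tfrac{\gamma_s}{\alpha_s}\mu^2L(d+4)^2$, using $\alpha_s\gamma_s=\tfrac{1}{12(d+4)L}$ and the same elementary estimate on $(d+6)^3/(d+4)$ as in the proof of Theorem~\ref{VARAG-theorem-1}. For the pivotal-error term I would use Cauchy--Schwarz followed by Young's inequality with weight $c\tau$: since $e^s$ is deterministic given $\mathcal{F}_{t-1}$,
\begin{align*}
-\gamma_s\,\mathbb{E}_{u_t,i_t|\mathcal{F}_{t-1}}[\langle e^s, x_t-x_\mu^*\rangle] &\le \tfrac{c\tau\gamma_s}{2}\,\mathbb{E}_{u_t,i_t|\mathcal{F}_{t-1}}[\|x_t-x_\mu^*\|^2] + \tfrac{\gamma_s}{2c\tau}\|e^s\|^2 \\
&\le \tfrac{c\tau\gamma_s}{2}\,\mathbb{E}_{u_t,i_t|\mathcal{F}_{t-1}}[\|x_t-x_\mu^*\|^2] + \tfrac{\gamma_s}{2c\tau}\,E,
\end{align*}
where the last step invokes the uniform bound $\|e^s\|^2\le E$ from \eqref{definition_E}. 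Moving the term $\tfrac{c\tau\gamma_s}{2}\,\mathbb{E}[\|x_t-x_\mu^*\|^2]$ to the left turns the coefficient $\tfrac{1+\tau\gamma_s}{2}$ into $\tfrac{1+(1-c)\tau\gamma_s}{2}$, which is exactly the left-hand side of \eqref{VARAG-lemma-5-result}; collecting the remaining terms concludes.

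The one point requiring care — and the reason the free parameter $c\in(0,1]$ appears in the statement — is precisely this last trade-off. Unlike in standard SVRG/Varag, $\mathbb{E}[\delta_t]=e^s\neq 0$, so $\langle e^s, x_t-x_\mu^*\rangle$ cannot be made to vanish; it has to be \emph{paid for} out of the strong-convexity budget $\tfrac{\tau\gamma_s}{2}\|x_t-x_\mu^*\|^2$, which is only available because $f_\mu$ is strongly convex. Keeping $c$ symbolic here is what will allow the subsequent epoch-wise recursion in the proof of Theorem~\ref{VARAG-theorem-2} to balance the geometric decay rate $1+(1-c)\tau\gamma_s$ against the size of the accumulated residual $\propto E/(c\tau)$, eventually producing the error constant $\varsigma_2=\mathcal{O}(E/\tau)$.
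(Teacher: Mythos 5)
Your proposal is correct and follows essentially the same route as the paper: apply Lemma~\ref{VARAG-lemma-3} at $x=x_\mu^*$, simplify the Gaussian-smoothing residual using $\alpha_s\gamma_s=\tfrac{1}{12(d+4)L}$, and absorb the pivotal-error inner product via Young's inequality with weight $c\tau$ against a fraction of the strong-convexity term, finally invoking $\|e^s\|^2\le E$. The only cosmetic difference is that the paper applies the inequality $b\langle u,v\rangle-\tfrac{a}{2}\|v\|^2\le\tfrac{b^2}{2a}\|u\|^2$ pointwise before taking expectation, whereas you phrase it after conditioning, but since $e^s$ is $\mathcal{F}_{t-1}$-measurable this is equivalent.
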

\end{mdframed}

\begin{proof}[Proof of Lemma \ref{VARAG-lemma-5}]
First, note that, with the parameter choices described in the Theorem~\ref{VARAG-theorem-2}, \textit{the restrictions in Eq.~\eqref{VARAG-lemma-3-assump-1} and Eq.~\eqref{VARAG-lemma-3-assump-2} are satisfied}. Hence, Eq.~\eqref{VARAG-lemma-3-result} becomes, when setting $x = x_{\mu}^*$,
	\begin{align*}
	& \mathbb{E}_{u_t, i_t| \mathcal{F}_{t-1}}\bigg[\frac{\gamma_{s}}{\alpha_{s}}\big[f_{\mu}(\bar{x}_t)-f_{\mu}^{*}\big] + \frac{(1+\tau \gamma_{s})}{2}\|x_t-x_{\mu}^*\|^2\bigg] \notag\\
	\le \  & \frac{\gamma_{s}}{\alpha_{s}}(1-\alpha_{s}-p_s)\big[f_{\mu}(\bar{x}_{t-1})-f_{\mu}^{*}\big]+\frac{\gamma_{s}p_s}{\alpha_{s}} \big[f_{\mu}(\tilde{x})-f_{\mu}^{*}\big] + \frac{1}{2}\|x_{t-1}-x_{\mu}^*\|^2 + \frac{\gamma_{s}}{\alpha_{s}} \cdot \frac{9\alpha_{s}\gamma_{s}\mu^2L^2(d+6)^3}{1+\tau\gamma_{s}-L\alpha_{s}\gamma_{s}} \notag\\
	& - \gamma_{s} \mathbb{E}_{u_t, i_t| \mathcal{F}_{t-1}}\big[\langle \tilde{g}-\nabla f_{\mu}(\tilde{x}), x_t - x_{\mu}^* \rangle \big]\\
	= \  & \frac{\gamma_{s}}{\alpha_{s}}(1-\alpha_{s}-p_s)\big[f_{\mu}(\bar{x}_{t-1})-f_{\mu}^{*}\big]+\frac{\gamma_{s}p_s}{\alpha_{s}} \big[f_{\mu}(\tilde{x})-f_{\mu}^{*}\big] + \frac{1}{2}\|x_{t-1}-x_{\mu}^*\|^2 + \frac{\gamma_{s}}{\alpha_{s}} \cdot \frac{3\mu^2L(d+6)^3}{4(1+\tau\gamma_{s}-\frac{1}{12})(d+4)} \notag\\
	& - \gamma_{s} \mathbb{E}_{u_t, i_t| \mathcal{F}_{t-1}}\big[\langle \tilde{g}-\nabla f_{\mu}(\tilde{x}), x_t - x_{\mu}^* \rangle \big]\\
	\le \  & \frac{\gamma_{s}}{\alpha_{s}}(1-\alpha_{s}-p_s)\big[f_{\mu}(\bar{x}_{t-1})-f_{\mu}^{*}\big]+\frac{\gamma_{s}p_s}{\alpha_{s}} \big[f_{\mu}(\tilde{x})-f_{\mu}^{*}\big] + \frac{1}{2}\|x_{t-1}-x_{\mu}^*\|^2 + \frac{\gamma_{s}}{\alpha_{s}} \cdot \mu^2L(d+4)^2 \notag\\
	& - \gamma_{s} \mathbb{E}_{u_t, i_t| \mathcal{F}_{t-1}}\big[\langle \tilde{g}-\nabla f_{\mu}(\tilde{x}), x_t - x_{\mu}^* \rangle \big].
	\end{align*}
	The equality above holds since $\alpha_{s}$ and $\gamma_{s} $ are defined as in Theorem \ref{VARAG-theorem-2} since $\alpha_{s}\gamma_{s} = \frac{1}{12(d+4)L}$.\\
	Moreover, for any $0 < c \le 1$, we have
	\begin{align*}
	- \gamma_{s} \langle \tilde{g}-\nabla f_{\mu}(\tilde{x}), x_t - x_{\mu}^* \rangle - \frac{c\tau\gamma_{s}}{2}\|x_t-x_{\mu}^*\|^2 \le \frac{\gamma_{s}}{2c\tau}\|\tilde{g}-\nabla f_{\mu}(\tilde{x})\|^2,
	\end{align*}
	since $b\langle u, v \rangle - \frac{a}{2} \|v\|^2 \le \frac{b^2}{2a}\|u\|^2$ when $a >0$. Hence, plugging this in, 
	\begin{align*}
	& \mathbb{E}_{u_t, i_t| \mathcal{F}_{t-1}}\bigg[\frac{\gamma_{s}}{\alpha_{s}}\big[f_{\mu}(\bar{x}_t)-f_{\mu}^{*}\big] + \big(1+(1-c)\tau \gamma_{s}\big) \cdot \frac{1}{2}\|x_t-x_{\mu}^*\|^2\bigg] \notag\\
	\le \  & \frac{\gamma_{s}}{\alpha_{s}}(1-\alpha_{s}-p_s)\big[f_{\mu}(\bar{x}_{t-1})-f_{\mu}^{*}\big]+\frac{\gamma_{s}p_s}{\alpha_{s}} \big[f_{\mu}(\tilde{x})-f_{\mu}^{*}\big] + \frac{1}{2}\|x_{t-1}-x_{\mu}^*\|^2 + \frac{\gamma_{s}}{\alpha_{s}} \cdot \mu^2L(d+4)^2 \notag\\
	& + \frac{\gamma_{s}}{2c\tau} \mathbb{E}_{u_t, i_t| \mathcal{F}_{t-1}}\big[\|\tilde{g}-\nabla f_{\mu}(\tilde{x})\|^2\big] \notag\\
	\le \  & \frac{\gamma_{s}}{\alpha_{s}}(1-\alpha_{s}-p_s)\big[f_{\mu}(\bar{x}_{t-1})-f_{\mu}^{*}\big]+\frac{\gamma_{s}p_s}{\alpha_{s}} \big[f_{\mu}(\tilde{x})-f_{\mu}^{*}\big] + \frac{1}{2}\|x_{t-1}-x_{\mu}^*\|^2 + \frac{\gamma_{s}}{\alpha_{s}} \cdot \mu^2L(d+4)^2 + \frac{\gamma_{s}}{2c\tau} \cdot E.
	\end{align*}
\end{proof}

We divide the proof of Theorem \ref{VARAG-theorem-2} into three cases, corresponding to the three lemmas below.

\begin{mdframed}
\begin{lemma}\label{VARAG-lemma-6}
	Assume \textbf{(A1)}, \textbf{(A3)}. Under the choice of parameters from Theorem~\ref{VARAG-theorem-2}, if $s \le s_0$, then for any $x \in \mathbb{R}^d$,
	\begin{align*}
	\mathbb{E}\big[f_{\mu}(\tilde{x}^s) - f_{\mu}^{*}\big] \le \frac{1}{2^{s+1}} (d+4)D_0 + 2 \mu^2L(d+4)^2 + \frac{E}{2\tau},
	\end{align*}
	where $D_0$ is defined in Eq.~\eqref{VARAG-def-D_0}.
\end{lemma}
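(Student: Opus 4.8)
The plan is to reproduce the ``$s\le s_0$'' part (Case~I) of the proof of Theorem~\ref{VARAG-theorem-1}, but starting from Lemma~\ref{VARAG-lemma-5} in place of the consequence of Lemma~\ref{VARAG-lemma-3}. The point is that for $s\le s_0$ the prescribed parameters give $\alpha_s=p_s=\tfrac12$, $\tfrac{\gamma_s}{\alpha_s}=\tfrac{1}{3(d+4)L}$, $T_s=2^{s-1}$, and in particular $1-\alpha_s-p_s=0$, so the weights $\{\theta_t\}$ of Eq.~\eqref{VARAG-def-theta-paper} collapse to the constant $\theta_t\equiv\tfrac{\gamma_s}{\alpha_s}$ and the ``strongly convex'' weighting with $\Gamma_t$ plays no role here. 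Hence the epoch-wise argument is essentially the convex one; the only genuinely new ingredient is the non-vanishing pivotal error $e^s=\tilde g^s-\nabla f_\mu(\tilde x^{s-1})$, which in Lemma~\ref{VARAG-lemma-4} was left as the inner product $-\gamma_s\langle e^s,x_t-x\rangle$ and dealt with afterwards via \textbf{(A2$_{\boldsymbol{\mu}}$)}, whereas Lemma~\ref{VARAG-lemma-5} has already absorbed it into the squared distance by Young's inequality, at the cost of the additive term $\tfrac{\gamma_s}{2c\tau}E$ and of replacing the coefficient of $\tfrac12\|x_t-x_\mu^*\|^2$ by $\big(1+(1-c)\tau\gamma_s\big)$.

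Concretely, I would invoke Lemma~\ref{VARAG-lemma-5} with $c=1$, so that the distance coefficient on the left of \eqref{VARAG-lemma-5-result} is exactly $\tfrac12$, matching the $\tfrac12\|x_{t-1}-x_\mu^*\|^2$ on the right; this is what makes the inner-loop distances telescope without a contraction factor. Because $1-\alpha_s-p_s=0$ and $\bar x_0=\tilde x=\tilde x^{s-1}$ (Option~I), summing \eqref{VARAG-lemma-5-result} over $t=1,\dots,T_s$ kills the $f_\mu(\bar x_{t-1})$ terms and collapses the distances to $\tfrac12\|x^{s-1}-x_\mu^*\|^2-\tfrac12\|x^s-x_\mu^*\|^2$ (using $x_0=x^{s-1}$, $x_{T_s}=x^s$). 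Applying Jensen's inequality with the convexity of $f_\mu$ (Lemma~\ref{Nes-DFO-property}) to $\tilde x^s=\sum_t\theta_t\bar x_t/\sum_t\theta_t$, taking total expectation, and writing $\mathcal{L}_s:=\sum_t\theta_t=T_s\tfrac{\gamma_s}{\alpha_s}$, $\mathcal{R}_s:=\tfrac12\mathcal{L}_s$, I obtain the recursion
\[
\mathcal{L}_s\,\mathbb{E}\big[f_\mu(\tilde x^s)-f_\mu^*\big]+\tfrac12\mathbb{E}\big[\|x^s-x_\mu^*\|^2\big]\le \mathcal{R}_s\,\mathbb{E}\big[f_\mu(\tilde x^{s-1})-f_\mu^*\big]+\tfrac12\mathbb{E}\big[\|x^{s-1}-x_\mu^*\|^2\big]+\mathcal{L}_s\,\mu^2L(d+4)^2+\tfrac{\mathcal{L}_s}{4\tau}E,
\]
where the last term uses $T_s\gamma_s=\alpha_s\mathcal{L}_s=\tfrac12\mathcal{L}_s$ together with $c=1$.

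Next I would iterate this recursion over $j=1,\dots,s$. For $j\le s_0$ one checks $\mathcal{R}_{j+1}=\mathcal{L}_j$, so the telescoping is exact, and $\mathcal{R}_1=\tfrac{\gamma_1}{\alpha_1}(1-\alpha_1)=\tfrac{1}{6(d+4)L}$ gives $\mathcal{R}_1\big[f_\mu(x^0)-f_\mu^*\big]+\tfrac12\|x^0-x_\mu^*\|^2=\tfrac{1}{12L}D_0$ by \eqref{VARAG-def-D_0}. Dropping the nonnegative distance term on the left and bounding $\sum_{j=1}^s\mathcal{L}_j=\tfrac{2^s-1}{3(d+4)L}\le 2\mathcal{L}_s$ yields
\[
\mathcal{L}_s\,\mathbb{E}\big[f_\mu(\tilde x^s)-f_\mu^*\big]\le \tfrac{1}{12L}D_0+2\mathcal{L}_s\,\mu^2L(d+4)^2+\tfrac{\mathcal{L}_s}{2\tau}E,
\]
and dividing by $\mathcal{L}_s=\tfrac{2^{s-1}}{3(d+4)L}$, with $\tfrac{1}{12L\mathcal{L}_s}=\tfrac{d+4}{2^{s+1}}$, gives exactly the stated bound.

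There is no substantive obstacle here beyond bookkeeping, but two small points need care. First, one must take $c=1$: a smaller $c$ would introduce a contraction factor $1+(1-c)\tau\gamma_s$ on the distance that does not match the telescoping structure dictated by the convex $\{\theta_t\}$ used for $s\le s_0$. Second, the $\alpha_s=\tfrac12$ factors must be tracked carefully so that the pivotal-error constant comes out as $\tfrac{E}{2\tau}$ rather than $\tfrac{E}{\tau}$: it is essential that the per-iteration error in \eqref{VARAG-lemma-5-result} is $\tfrac{\gamma_s}{2\tau}E$ with \emph{no} $\tfrac1{\alpha_s}$ factor, so that over $T_s$ iterations it sums to $\tfrac{\mathcal{L}_s}{4\tau}E$ and the geometric series only inflates it to $\tfrac{\mathcal{L}_s}{2\tau}E$; everything else is inherited verbatim from the convex analysis in the proof of Theorem~\ref{VARAG-theorem-1}.
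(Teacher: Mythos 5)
Your proof is correct and takes essentially the same approach as the paper's: set $c=1$ in Lemma~\ref{VARAG-lemma-5}, exploit $1-\alpha_s-p_s=0$ so the convex weights $\theta_t\equiv\gamma_s/\alpha_s$ make the distances telescope exactly, iterate to $\frac{1}{12L}D_0$, and bound the cumulative geometric error sums by twice the last term. The paper's own proof of Lemma~\ref{VARAG-lemma-6} writes the same computation directly in terms of $T_j$ rather than the $\mathcal{L}_s,\mathcal{R}_s$ shorthand you borrow from Lemma~\ref{VARAG-lemma-4}, but the steps and constants are identical, and your two cautionary remarks (why $c=1$ is forced, and why the per-iteration error $\tfrac{\gamma_s}{2\tau}E$ must carry no $1/\alpha_s$) correctly pinpoint the only places where a slip would change the final constants.
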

\end{mdframed}

\begin{proof}[Proof of Lemma \ref{VARAG-lemma-6}]
	For this case, $\alpha_s = p_s = \frac{1}{2}$, $\gamma_{s} = \frac{1}{6(d+4)L}$, $T_s = 2^{s-1}$.
	Starting from Lemma \ref{VARAG-lemma-5}, if we set $c = 1$ in Eq.~\eqref{VARAG-lemma-5-result} and sum it up from $t = 1$ to $T_s$, we have 
	\begin{align*}
	& \sum_{t=1}^{T_s} \frac{\gamma_{s}}{\alpha_{s}}\mathbb{E}_{u_t, i_t| \mathcal{F}_{t-1}} \big[f_{\mu}(\bar{x}_t)-f_{\mu}^{*}\big] +  \frac{1}{2} \mathbb{E} \big[\|x_{T_s}-x_{\mu}^*\|^2\big]\notag\\
	\le \  & \frac{\gamma_{s}}{2\alpha_{s}} \cdot T_s \big[f_{\mu}(\tilde{x})-f_{\mu}^{*}\big] + \frac{1}{2}\|x_{0}-x_{\mu}^*\|^2 + \frac{\gamma_{s}}{\alpha_{s}} \cdot T_s \cdot \mu^2L(d+4)^2 + T_s \cdot \frac{\gamma_{s}}{2\tau} \cdot E.
	\end{align*}
	Thanks to convexity of $f_{\mu}$, we have $f \left( \frac{1}{T_s} \sum_{t=1}^{T_s} \bar{x}_t \right) \leq \frac{1}{T_s} \sum_{t=1}^{T^s} f(\bar{x}_t)$. Hence, the last inequality implies
	\begin{align*}
	& \frac{1}{3(d+4)L} \cdot T_s \cdot \mathbb{E}_{\mathcal{F}_{T_s}} \big[f_{\mu}(\tilde{x}^{s})-f_{\mu}^{*}\big] +  \frac{1}{2} \mathbb{E}_{\mathcal{F}_{T_s}} \big[\|x^{s}-x_{\mu}^*\|^2\big] \notag\\
	\le \  & \frac{1}{6(d+4)L} \cdot T_s \big[f_{\mu}(\tilde{x}^{s-1})-f_{\mu}^{*}\big] + \frac{1}{2}\|x^{s-1}-x_{\mu}^*\|^2 + \frac{1}{3(d+4)L} \cdot T_s \cdot \mu^2L(d+4)^2 + T_s \cdot \frac{1}{12(d+4)L\tau} \cdot E \notag\\
	= \  & \frac{1}{3(d+4)L} \cdot T_{s-1} \big[f_{\mu}(\tilde{x}^{s-1})-f_{\mu}^{*}\big] + \frac{1}{2}\|x^{s-1}-x_{\mu}^*\|^2 + \frac{1}{3(d+4)L} \cdot T_s \cdot \mu^2L(d+4)^2  + T_s \cdot \frac{1}{12(d+4)L\tau} \cdot E,
	\end{align*}
	where $x_{T_s} = x^s$, $x_{0} = x^{s-1}$, $\tilde{x} = \tilde{x}^{s-1}$. Applying the last inequality iteratively, we obtain
	\begin{align}
	& \frac{1}{3(d+4)L} \cdot T_s \cdot \mathbb{E} \big[f_{\mu}(\tilde{x}^{s})-f_{\mu}^{*}\big] +  \frac{1}{2} \mathbb{E} \big[\|x^{s}-x_{\mu}^*\|^2\big] \notag\\
	\le \  & \frac{1}{3(d+4)L} \cdot T_{0} \big[f_{\mu}(\tilde{x}^{0})-f_{\mu}^{*}\big] + \frac{1}{2}\|x^{0}-x_{\mu}^*\|^2 + \frac{1}{3(d+4)L} \sum_{j=1}^{s} T_j \cdot \mu^2L(d+4)^2 + \sum_{j=1}^{s} T_j \cdot \frac{1}{12(d+4)L\tau} \cdot E, \notag
	\end{align}
	where $T_0 = \frac{1}{2}$ is in accordance with the definition of $T_s = 2^{s-1}, \ 0<s \le s_0$. Finally, we obtain
	\begin{align}
	& \mathbb{E} \big[f_{\mu}(\tilde{x}^{s})-f_{\mu}^{*}\big] + \frac{\alpha_{s}}{\gamma_{s} T_s} \cdot \frac{1}{2} \mathbb{E} \big[\|x^{s}-x_{\mu}^*\|^2\big] \notag\\
	= \  & \mathbb{E} \big[f_{\mu}(\tilde{x}^{s})-f_{\mu}^{*}\big] + \frac{3(d+4)L}{T_s} \cdot \frac{1}{2} \mathbb{E} \big[\|x^{s}-x_{\mu}^*\|^2\big] \notag\\
	\le \  & \frac{1}{2^s} \big[f_{\mu}(\tilde{x}^{0})-f_{\mu}^{*} + 3(d+4)L\|x^{0}-x_{\mu}^*\|^2 \big] + \frac{1}{2^{s-1}} \sum_{j=1}^{s} T_j \cdot \mu^2L(d+4)^2 + \frac{1}{2^{s-1}}\sum_{j=1}^{s} T_j \cdot \frac{1}{4\tau} \cdot E \notag\\
	\le \  & \frac{1}{2^{s+1}} (d+4)D_0 + 2\mu^2L(d+4)^2 + \frac{E}{2\tau}. \label{VARAG-lemma-6-ineq-1}
	\end{align}
	We conclude the proof by observing that $\frac{1}{2^{s-1}}\sum_{j=1}^{s} T_j \leq 2$ when $s \le s_0$.
\end{proof}

\begin{mdframed}
\begin{lemma}\label{VARAG-lemma-7}
	Assume \textbf{(A1)}, \textbf{(A3)}. Under the choice of parameters from Theorem~\ref{VARAG-theorem-2}, if $s \ge s_0$ and $n \ge \frac{6L}{\tau}$, then for any $x \in \mathbb{R}^d$
	\begin{align*}
	\mathbb{E}\big[f_{\mu}(\tilde{x}^s) - f_{\mu}^{*}\big] \le \left(\frac{4}{5}\right)^{s-s_0} \frac{D_0}{n}+ 12\mu^2L(d+4)^2 + \frac{5E}{\tau}.
	\end{align*}
\end{lemma}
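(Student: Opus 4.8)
The plan is to re-run the one-step estimate of Lemma~\ref{VARAG-lemma-5} with the \emph{geometric} weights $\{\theta_t\}$ prescribed by Theorem~\ref{VARAG-theorem-2} for $s>s_0$, so that the function gap and the squared distance to $x_\mu^*$ both contract geometrically from epoch to epoch. First I would observe that, in this regime, $n\ge 6L/\tau$ forces $\sqrt{n\tau/(24L)}\ge\tfrac12$, so for every $s>s_0$ we have $\alpha_s=p_s=\tfrac12$, $\gamma_s=\tfrac1{6(d+4)L}$ and $T_s=T_{s_0}$; in particular $1-\alpha_s-p_s=0$, hence $\theta_t=\Gamma_{t-1}$ for all $1\le t\le T_s$. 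I would then apply Lemma~\ref{VARAG-lemma-5} with $c=\tfrac12$ --- this is precisely the value that makes the distance coefficient $1+(1-c)\tau\gamma_s$ on the left equal $\Gamma_1=1+\tfrac{\tau\gamma_s}{2}$, and that turns the error term $\tfrac{\gamma_s}{2c\tau}E$ into $\tfrac{\gamma_s}{\tau}E$.

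Next I would multiply the resulting one-step inequality by $\Gamma_{t-1}$ and sum over $t=1,\dots,T_s$: the distance terms telescope because $\Gamma_{t-1}\Gamma_1=\Gamma_t$, and using $\bar x_0=\tilde x^{s-1}$, $x_0=x^{s-1}$, $x_{T_s}=x^s$, convexity of $f_\mu$ together with $\tilde x^s=\mathcal{W}_s^{-1}\sum_t\Gamma_{t-1}\bar x_t$ (where $\mathcal{W}_s:=\sum_{t=1}^{T_s}\Gamma_{t-1}=\tfrac{2(\Gamma_{T_s}-1)}{\tau\gamma_s}$), I obtain the epoch recursion, with $\mathcal{L}_s':=\mathcal{W}_s\gamma_s/\alpha_s$ and using $p_s/\alpha_s=1$ and $\mathcal{W}_s\gamma_s/\tau=\mathcal{L}_s'/(2\tau)$,
\begin{align*}
\mathcal{L}_s'\,\mathbb{E}\big[f_\mu(\tilde x^s)-f_\mu^*\big]+\Gamma_{T_s}\tfrac12\mathbb{E}\|x^s-x_\mu^*\|^2\le\tfrac12\mathcal{L}_s'\,\mathbb{E}\big[f_\mu(\tilde x^{s-1})-f_\mu^*\big]+\tfrac12\mathbb{E}\|x^{s-1}-x_\mu^*\|^2+\mathcal{L}_s'\big(\mu^2L(d+4)^2+\tfrac{E}{2\tau}\big).
\end{align*}
All the $s>s_0$ quantities being constant, I abbreviate $\mathcal{L}'=\mathcal{L}_s'$ and $\Gamma=\Gamma_{T_{s_0}}$; the two facts I need are $\Gamma\ge 1+T_{s_0}\tfrac{\tau\gamma_s}{2}\ge 1+\tfrac{\tau n}{24L}\ge\tfrac54$ (using $T_{s_0}=2^{s_0-1}\ge\tfrac{(d+4)n}{2}$ and $n\ge6L/\tau$) and, from $\mathcal{L}'=\tfrac{4(\Gamma-1)}{\tau}$ and $\Gamma-1\ge T_{s_0}\tfrac{\tau\gamma_s}{2}$, that $\mathcal{L}'\ge\tfrac{T_{s_0}}{3(d+4)L}$.

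For the base step I would invoke Lemma~\ref{VARAG-lemma-6} at $s=s_0$, which gives $\mathbb{E}[f_\mu(\tilde x^{s_0})-f_\mu^*]+\tfrac{3(d+4)L}{T_{s_0}}\tfrac12\mathbb{E}\|x^{s_0}-x_\mu^*\|^2\le B_0$ with $B_0:=\tfrac{(d+4)D_0}{2^{s_0+1}}+2\mu^2L(d+4)^2+\tfrac{E}{2\tau}\le\tfrac{D_0}{2n}+2\mu^2L(d+4)^2+\tfrac{E}{2\tau}$ (the last bound uses $2^{s_0}\ge(d+4)n$). Plugging $\mathbb{E}[f_\mu(\tilde x^{s_0})-f_\mu^*]\le B_0$ and $\tfrac12\mathbb{E}\|x^{s_0}-x_\mu^*\|^2\le\tfrac{T_{s_0}}{3(d+4)L}B_0\le\mathcal{L}'B_0$ into the epoch recursion at $s=s_0+1$ and dividing by $\mathcal{L}'$ gives $\psi_{s_0+1}\le\tfrac32B_0+\mu^2L(d+4)^2+\tfrac{E}{2\tau}$ for the Lyapunov quantity $\psi_s:=\mathbb{E}[f_\mu(\tilde x^s)-f_\mu^*]+\tfrac{\Gamma}{\mathcal{L}'}\tfrac12\mathbb{E}\|x^s-x_\mu^*\|^2$. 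For $s\ge s_0+2$, dividing the epoch recursion by $\mathcal{L}'$ and bounding $\tfrac12\le\tfrac45$ and $\tfrac1\Gamma\le\tfrac45$ shows $\psi_s\le\tfrac45\psi_{s-1}+\mu^2L(d+4)^2+\tfrac{E}{2\tau}$; iterating from $s_0+1$, summing the geometric series ($\sum_{k\ge0}(\tfrac45)^k=5$), discarding the nonnegative distance term and collecting the numerical constants then yields $\mathbb{E}[f_\mu(\tilde x^s)-f_\mu^*]\le(\tfrac45)^{s-s_0}\tfrac{D_0}{n}+12\mu^2L(d+4)^2+\tfrac{5E}{\tau}$, with the cases $s=s_0$ and $s=s_0+1$ following directly from the bounds on $B_0$ and $\psi_{s_0+1}$.

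The step I expect to be the main obstacle is the transition $s_0\to s_0+1$: Lemma~\ref{VARAG-lemma-6} controls the iterate with the coefficient $\tfrac{3(d+4)L}{T_{s_0}}$ on $\|x^{s_0}-x_\mu^*\|^2$, whereas the geometric recursion naturally carries the coefficient $\tfrac{\Gamma}{\mathcal{L}'}$, and these do not agree; one must therefore spend a single epoch (incurring the factor $\tfrac32$) and rely precisely on $\mathcal{L}'\ge T_{s_0}/(3(d+4)L)$ to re-enter the contraction. The other delicate point --- and the only place where both $T_{s_0}\ge(d+4)n/2$ and $n\ge6L/\tau$ are needed --- is verifying $\Gamma_{T_{s_0}}\ge\tfrac54$, which is exactly what pins the per-epoch contraction factor at $\tfrac45$; the rest is routine constant bookkeeping.
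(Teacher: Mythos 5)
Your proposal is correct and follows essentially the same route as the paper: invoke Lemma~\ref{VARAG-lemma-5} with $c=\tfrac12$, weight by $\Gamma_{t-1}$ so the squared distances telescope, note that $n\ge 6L/\tau$ forces $\alpha_s=p_s=\tfrac12$ and hence $\theta_t=\Gamma_{t-1}$, establish the contraction factor $4/5$ from $\Gamma_{T_{s_0}}\ge 5/4$, and chain into Lemma~\ref{VARAG-lemma-6} at $s_0$. The only cosmetic difference is that you carry the coefficient $\Gamma/\mathcal{L}'$ on the distance term and pay a $3/2$ factor at the hand-off, whereas the paper keeps the coefficient $2\alpha/(\gamma\sum_t\theta_t)$ and pays a factor $2$ via $\sum_t\theta_t\ge T_{s_0}$; both yield the stated constants.
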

\end{mdframed}
\begin{proof}[Proof of Lemma \ref{VARAG-lemma-7}]
	For this case, $\alpha_s = \alpha = p_s = \frac{1}{2}$, $\gamma_{s} = \gamma = \frac{1}{6(d+4)L}$, $T_s = 2^{s_0-1}$ when $s \ge s_0$. Thanks to Lemma \ref{VARAG-lemma-5}, if we set $c = \frac{1}{2}$ in Eq.~\eqref{VARAG-lemma-5-result}, we have
	\begin{multline*}
	\mathbb{E}_{u_t, i_t| \mathcal{F}_{t-1}}\bigg[\frac{\gamma}{\alpha}\big[f_{\mu}(\bar{x}_t)-f_{\mu}^{*}\big] + \big(1+\frac{\tau \gamma}{2}\big) \cdot \frac{1}{2}\|x_t-x_{\mu}^*\|^2\bigg]
	\\ \le \frac{\gamma}{2\alpha} \big[f_{\mu}(\tilde{x})-f_{\mu}^{*}\big] + \frac{1}{2}\|x_{t-1}-x_{\mu}^*\|^2 + \frac{\gamma}{\alpha} \cdot \mu^2L(d+4)^2 + \frac{\gamma}{\tau} \cdot E.
	\end{multline*}
	Multiplying both sides by $\Gamma_{t-1} = (1+\frac{\tau \gamma}{2})^{t-1}$, we obtain
	\begin{align*}
	& \mathbb{E}_{u_t, i_t| \mathcal{F}_{t-1}}\bigg[\frac{\gamma}{\alpha} \Gamma_{t-1} \big[f_{\mu}(\bar{x}_t)-f_{\mu}^{*}\big] + \frac{\Gamma_{t}}{2}\|x_t-x_{\mu}^*\|^2\bigg] \notag\\
	\le \  & \frac{\gamma}{2\alpha} \Gamma_{t-1} \big[f_{\mu}(\tilde{x})-f_{\mu}^{*}\big] + \frac{\Gamma_{t-1}}{2}\|x_{t-1}-x_{\mu}^*\|^2 + \frac{\gamma}{\alpha} \Gamma_{t-1} \cdot \mu^2L(d+4)^2 + \frac{\gamma}{\tau}\Gamma_{t-1} \cdot E.
	\end{align*}
	Since $\theta_t = \Gamma_{t-1}$ as defined in Eq.~\eqref{VARAG-def-theta-2}, the last inequality can be rewritten as
	\begin{multline*}
	\mathbb{E}_{u_t, i_t| \mathcal{F}_{t-1}}\bigg[\frac{\gamma}{\alpha} \theta_{t} \big[f_{\mu}(\bar{x}_t)-f_{\mu}^{*}\big] + \frac{\Gamma_{t}}{2}\|x_t-x_{\mu}^*\|^2\bigg]
	\\ \le \frac{\gamma}{2\alpha} \theta_{t} \big[f_{\mu}(\tilde{x})-f_{\mu}^{*}\big] + \frac{\Gamma_{t-1}}{2}\|x_{t-1}-x_{\mu}^*\|^2 + \frac{\gamma}{\alpha} \theta_{t} \cdot \mu^2L(d+4)^2 + \frac{\gamma}{\tau}\theta_{t}\cdot E.
	\end{multline*}
	Summing up the inequality above from $t = 1$ to $T_s$, we obtain
	\begin{align*}
	& \frac{\gamma}{\alpha} \sum_{t=1}^{T_s}\theta_{t} \mathbb{E}_{\mathcal{F}_t} \big[f_{\mu}(\bar{x}_t)-f_{\mu}^{*}\big] + \frac{\Gamma_{T_s}}{2} \mathbb{E}_{\mathcal{F}_{T_s}} \|x_{T_s}-x_{\mu}^*\|^2\notag\\
	\le \  & \frac{\gamma}{2\alpha} \sum_{t=1}^{T_s}\theta_{t} \mathbb{E}_{\mathcal{F}_{T_s}}\big[f_{\mu}(\tilde{x})-f_{\mu}^{*}\big] + \frac{1}{2}\mathbb{E}_{\mathcal{F}_{T_s}} \big[\|x_{0}-x_{\mu}^*\|^2 \big] + \frac{\gamma}{\alpha} \sum_{t=1}^{T_s}\theta_{t}  \cdot \mu^2L(d+4)^2 + \frac{\gamma}{\tau}\sum_{t=1}^{T_s} \theta_{t}\cdot E,
	\end{align*}
	and then
	\begin{align}
	& \frac{5}{4} \bigg[\frac{\gamma}{2\alpha} \sum_{t=1}^{T_s}\theta_{t} \mathbb{E}_{\mathcal{F}_t} \big[f_{\mu}(\bar{x}_t)-f_{\mu}^{*}\big] + \frac{1}{2} \mathbb{E}_{\mathcal{F}_{T_s}} \|x_{T_s}-x_{\mu}^*\|^2\bigg]\notag\\
	\le \  & \frac{\gamma}{2\alpha} \sum_{t=1}^{T_s}\theta_{t} \mathbb{E}_{\mathcal{F}_{T_s}} \big[f_{\mu}(\tilde{x})-f_{\mu}^{*}\big] + \frac{1}{2}\mathbb{E}_{\mathcal{F}_{T_s}}\big[\|x_{0}-x_{\mu}^*\|^2 \big] + \frac{\gamma}{\alpha} \sum_{t=1}^{T_s}\theta_{t}  \cdot \mu^2L(d+4)^2 + \frac{\gamma}{\tau}\sum_{t=1}^{T_s} \theta_{t}\cdot E, \label{VARAG-lemma-7-ineq-1}
	\end{align}
	The last inequality is based on the fact that, for $s \ge s_0$, $\frac{(d+4)n}{2}\le T_s = T_{s_0} \le (d+4)n$, we have
	\begin{align*}
	\Gamma_{T_s} & = \big(1+\frac{\tau \gamma}{2}\big)^{T_s} = \big(1+\frac{\tau \gamma}{2}\big)^{T_{s_0}} \ge 1+ \frac{\tau \gamma}{2} \cdot T_{s_0} \ge 1 + \frac{\tau \gamma}{2} \cdot \frac{(d+4)n}{2}\\
	& = 1 + \frac{\tau}{12(d+4)L} \cdot \frac{(d+4)n}{2} = 1 + \frac{\tau n}{24 L} \ge \frac{5}{4},
	\end{align*}
	where the last step holds under $n \ge \frac{6L}{\tau}$. Since $\tilde{x}^{s} = \sum_{t=1}^{T_s} (\theta_t \bar{x}_{t})/ \sum_{t=1}^{T_s} \theta_t$, $\tilde{x} = \tilde{x}^{s-1}$, $x_0 = x^{s-1}$, $x_{T_s} = x^s$ in the epoch $s$ and the convexity of $f_{\mu}$, Eq.~\eqref{VARAG-lemma-7-ineq-1} implies
	\begin{align*}
	& \frac{5}{4} \bigg[\frac{\gamma}{2\alpha} \mathbb{E}_{\mathcal{F}_{T_s}} \big[f_{\mu}(\tilde{x}^{s})-f_{\mu}^{*}\big] + \frac{1}{2 \sum_{t=1}^{T_s}\theta_{t}} \mathbb{E}_{\mathcal{F}_{T_s}} \|x^s-x_{\mu}^*\|^2\bigg]\notag\\
	\le \  & \frac{\gamma}{2\alpha} \mathbb{E}_{\mathcal{F}_{T_s}} \big[f_{\mu}(\tilde{x}^{s-1})-f_{\mu}^{*}\big] + \frac{1}{2 \sum_{t=1}^{T_s}\theta_{t}}\mathbb{E}_{\mathcal{F}_{T_s}}\big[\|x^{s-1}-x_{\mu}^*\|^2 \big] + \frac{\gamma}{\alpha} \cdot \mu^2L(d+4)^2 + \frac{\gamma}{\tau} \cdot E.
	\end{align*}
	Applying it recursively for $s \ge s_0$, we obtain
	\begin{align*}
	& \mathbb{E} \big[f_{\mu}(\tilde{x}^{s})-f_{\mu}^{*}\big] + \frac{2\alpha}{\gamma \sum_{t=1}^{T_s}\theta_{t}} \cdot \frac{1}{2}\mathbb{E} \big[\|x^s-x_{\mu}^*\|^2\big] \notag\\
	\le \  & \big(4/5\big)^{s-s_0} \bigg[\mathbb{E} \big[f_{\mu}(\tilde{x}^{s_0})-f_{\mu}^{*}\big] + \frac{2\alpha}{\gamma \sum_{t=1}^{T_s}\theta_{t}} \cdot \frac{1}{2} \mathbb{E} \big[\|x^{s_0}-x_{\mu}^*\|^2 + \sum_{j = s_0+1}^{s} \big( 4/5 \big)^{s+1-j} \bigg[ 2\mu^2L(d+4)^2 + \frac{2\alpha}{\tau} \cdot E\bigg] \notag\\
	\le \  & \big(4/5\big)^{s-s_0} \bigg[\mathbb{E} \big[f_{\mu}(\tilde{x}^{s_0})-f_{\mu}^{*}\big] + \frac{2\alpha}{\gamma T_{s_0}} \cdot \frac{1}{2}\mathbb{E} \big[\|x^{s_0}-x_{\mu}^*\|^2\big]\bigg] + 8\mu^2L(d+4)^2 + \frac{4}{\tau} \cdot E,
	\end{align*}
	where the last inequality holds because $\sum_{j = s_0+1}^{s} \left(\frac{4}{5}\right)^{s+1-j} \le \frac{4}{5} \cdot \frac{1}{1-\frac{4}{5}} = 4$, $\sum_{t=1}^{T_s} \theta_t \ge T_s = T_{s_0}$ and $2\alpha \le 1$. Finally,
	\begin{align*}
	& \mathbb{E} \big[f_{\mu}(\tilde{x}^{s})-f_{\mu}^{*}\big] + \frac{2\alpha}{\gamma \sum_{t=1}^{T_s}\theta_{t}} \cdot \frac{1}{2}\mathbb{E} \big[\|x^s-x_{\mu}^*\|^2\big] \notag\\
	\le \  & \big(4/5\big)^{s-s_0} \bigg[\mathbb{E} \big[f_{\mu}(\tilde{x}^{s_0})-f_{\mu}^{*}\big] + \frac{2\alpha}{\gamma T_{s_0}} \cdot \frac{1}{2}\mathbb{E} \big[\|x^{s_0}-x_{\mu}^*\|^2\big]\bigg] + 8\mu^2L(d+4)^2 + \frac{4}{\tau} \cdot E \notag\\
	\le \  & \big(4/5 \big)^{s-s_0} 2\bigg[\mathbb{E} \big[f_{\mu}(\tilde{x}^{s_0})-f_{\mu}^{*}\big] + \frac{\alpha}{\gamma T_{s_0}} \cdot \frac{1}{2}\mathbb{E} \big[\|x^{s_0}-x_{\mu}^*\|^2\big]\bigg] + 8\mu^2L(d+4)^2 + \frac{4}{\tau} \cdot E \notag\\
	\le \  & \big(4/5 \big)^{s-s_0} 2 \cdot \big[\frac{1}{2^{s_0+1}} (d+4)D_0 + 2\mu^2L(d+4)^2 + \frac{E}{2\tau}\big] + 8\mu^2L(d+4)^2 + \frac{4}{\tau} \cdot E \notag\\
	\le \  & \big( 4/5 \big)^{s-s_0} \cdot \frac{(d+4)D_0}{2^{s_0}} + 12\mu^2L(d+4)^2 + \frac{5E}{\tau} \notag\\
	= \  & \big(4/5\big)^{s-s_0} \frac{(d+4)D_0}{2T_{s_0}}+ 12\mu^2L(d+4)^2 + \frac{5E}{\tau} \notag\\
	\le \  & \big(4/5\big)^{s-s_0} \frac{D_0}{n}+ 12\mu^2L(d+4)^2 + \frac{5E}{\tau},
	\end{align*}
	where the third inequality comes from Eq.~\eqref{VARAG-lemma-6-ineq-1} and the last inquality relies on $T_{s_0} \ge \frac{(d+4)n}{2}$.
\end{proof}
\begin{mdframed}
\begin{lemma}\label{VARAG-lemma-8}
	Assume \textbf{(A1)}, \textbf{(A3)}. Under the choice of parameters from Theorem~\ref{VARAG-theorem-2}, if $s \ge s_0$ and $n < \frac{6L}{\tau}$, then for any $x \in \mathbb{R}^d$,
	\begin{align*}
	\mathbb{E}\big[f_{\mu}(\tilde{x}^s) - f_{\mu}^{*}\big] \le \bigg(1+\frac{1}{4} \sqrt{\frac{n\tau}{6L}}\bigg)^{-(s-s_0)} \frac{D_0}{n} + \left(8\sqrt{\frac{6L}{n\tau}}+4\right)\mu^2L(d+4)^2 + \frac{5E}{\tau}.
	\end{align*}
\end{lemma}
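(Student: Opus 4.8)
This is the third and last case of Theorem~\ref{VARAG-theorem-2}, and the argument parallels the proof of Lemma~\ref{VARAG-lemma-7}, with two differences that must be tracked carefully: for $s>s_0$ the condition $n<6L/\tau$ makes $\alpha_s=\sqrt{n\tau/(24L)}$ \emph{strictly less} than $\tfrac12$ (so $\gamma_s=1/(12(d+4)L\alpha_s)$ is correspondingly larger), and $1-\alpha_s-p_s=\tfrac12-\alpha_s>0$ rather than $0$; these quantities are constant in $s$ on this range, say $\alpha,\gamma,T:=T_{s_0}$, and $\alpha\gamma=1/(12(d+4)L)$.

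The plan is: start from Eq.~\eqref{VARAG-lemma-5-result} with $c=\tfrac12$, so the coefficient of $\tfrac12\|x_t-x_\mu^*\|^2$ on the left is $1+\tfrac{\tau\gamma}{2}=\Gamma_t/\Gamma_{t-1}$ and the last error term reads $\tfrac{\gamma}{\tau}E$; multiply by $\Gamma_{t-1}=(1+\tfrac{\tau\gamma}{2})^{t-1}$ and sum over $t=1,\dots,T$. The quadratic terms telescope to $\tfrac{\Gamma_T}{2}\mathbb{E}\|x_T-x_\mu^*\|^2$ minus $\tfrac12\|x_0-x_\mu^*\|^2$; for the $f_\mu(\bar x_t)$ terms, using $\bar x_0=\tilde x$, $\Gamma_t=(1+\tfrac{\tau\gamma}{2})\Gamma_{t-1}$ and the definition of $\theta_t$ in Eq.~\eqref{VARAG-def-theta-2} exactly collapses $\sum_t\Gamma_{t-1}(f_\mu(\bar x_t)-f_\mu^*)$ together with $(1-\alpha-p_s)\sum_t\Gamma_t(f_\mu(\bar x_{t-1})-f_\mu^*)$ into $\tfrac{\gamma}{\alpha}\sum_t\theta_t(f_\mu(\bar x_t)-f_\mu^*)$ plus a $(1-\alpha-p_s)(f_\mu(\tilde x)-f_\mu^*)$ term; a short geometric-sum computation shows $\sum_t\theta_t=(\alpha+p_s)S+(1-\alpha-p_s)$ with $S:=\sum_{t=1}^T\Gamma_{t-1}$. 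Applying convexity of $f_\mu$ with $\tilde x^s=(\sum_t\theta_t\bar x_t)/\sum_t\theta_t$, $\tilde x=\tilde x^{s-1}$, $x_0=x^{s-1}$, $x_T=x^s$, and balancing the resulting two coefficients exactly as in the passage to Eq.~\eqref{VARAG-lemma-7-ineq-1}, yields a one-epoch recursion in $\Phi_s:=\mathbb{E}[f_\mu(\tilde x^s)-f_\mu^*]+c_s\,\mathbb{E}\|x^s-x_\mu^*\|^2$ (for an appropriate $c_s>0$ built from $\alpha$, $\gamma$ and $\sum_t\theta_t$) of the form $\Phi_s\le q\,\Phi_{s-1}+(\text{error terms})$ with $q<1$.

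The key quantitative points are then (i) the contraction rate: $\Gamma_T=(1+\tfrac{\tau\gamma}{2})^T\ge 1+\tfrac{\tau\gamma}{2}T\ge 1+\tfrac12\sqrt{n\tau/(24L)}=1+\tfrac14\sqrt{n\tau/(6L)}$, using $T=T_{s_0}\ge(d+4)n/2$ and $\alpha\gamma=1/(12(d+4)L)$, which forces $q\le\big(1+\tfrac14\sqrt{n\tau/(6L)}\big)^{-1}$ and hence the claimed geometric factor; and (ii) the size of the accumulated error, where $1/(1-q)$ is controlled by $1/\alpha=2\sqrt{6L/(n\tau)}$ (this is the source of the $\sqrt{6L/(n\tau)}$-type coefficient multiplying $\varsigma_1$ in the statement). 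Unrolling from $s_0$ to $s$, the initial term $\Phi_{s_0}$ is bounded via Eq.~\eqref{VARAG-lemma-6-ineq-1} (legitimate because $\alpha_{s_0}=\tfrac12$, so Lemma~\ref{VARAG-lemma-6} applies at $s_0$), $(d+4)D_0/2^{s_0}$ is turned into $D_0/n$ using $2^{s_0}\ge(d+4)n$, and the geometric series of per-epoch errors is summed. The main obstacle is the bookkeeping at the transition $s=s_0$: both $\alpha$ and $\gamma$ jump there, so the coefficient $c_{s_0}$ of $\|x^{s_0}-x_\mu^*\|^2$ inside $\Phi_{s_0}$ must be matched against the coefficient $\tfrac{3(d+4)L}{2T_{s_0}}$ supplied by Lemma~\ref{VARAG-lemma-6}, and because we only have $n\tau<6L$ (rather than $n\tau\le 3L$, which would make the two coefficients coincide) this match costs a harmless factor of $2$ --- this is precisely what pins down the numerical constants $8\sqrt{6L/(n\tau)}+4$ and $5E/\tau$, which need only be valid upper bounds. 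Everything else follows from Lemma~\ref{VARAG-lemma-5}, convexity of $f_\mu$, and $T_{s_0}\ge(d+4)n/2$, exactly as in the proof of Lemma~\ref{VARAG-lemma-7}.
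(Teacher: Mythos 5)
Your proposal follows essentially the same path as the paper's proof of Lemma~\ref{VARAG-lemma-8}: take Eq.~\eqref{VARAG-lemma-5-result} with $c=\tfrac12$, multiply by $\Gamma_{t-1}$ and sum, use the $\{\theta_t\}$ weights of Eq.~\eqref{VARAG-def-theta-2} and convexity to pass to $\tilde x^s$, extract a per-epoch contraction by the factor $\Gamma_{T_{s_0}}\ge 1+\tfrac14\sqrt{n\tau/(6L)}$ (using $T_{s_0}\ge(d+4)n/2$ and $\alpha\gamma=1/(12(d+4)L)$), then bound the geometric error series via $1/(\Gamma_{T_{s_0}}-1)$ and plug in the $s_0$-bound from Lemma~\ref{VARAG-lemma-6} with the extra factor of $2$ you correctly trace to $\alpha/\gamma=(d+4)\tau n/2\le 3(d+4)L$. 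Your identity $\sum_t\theta_t=(\alpha+p)S+(1-\alpha-p)$ is correct and equivalent to the paper's expression; the one place you compress is the verification that the recursion really does contract by $\Gamma_{T_{s_0}}$, for which the paper proves $1-(1-\alpha-p)(1+\tfrac{\tau\gamma}{2})\ge p\,\Gamma_{T_{s_0}}$ via $\alpha\ge\tfrac{\tau\gamma T_{s_0}}{2}$ and $(1+\delta)^T\le 1+2T\delta$ --- a genuinely new step beyond the ``$\Gamma_{T_s}\ge\tfrac54$'' balancing of Lemma~\ref{VARAG-lemma-7}, but consistent with what you sketch.
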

\end{mdframed}

\begin{proof}[Proof of Lemma \ref{VARAG-lemma-8}]
	For this case, $\alpha_s = \alpha = \sqrt{\frac{n \tau}{24L}}$, $p_s = p = \frac{1}{2}$, $\gamma_{s} = \gamma = \frac{1}{(d+4)\sqrt{6nL\tau}}$, $T_s = T_{s_0} = 2^{s_0-1}$ when $s \ge s_0$. Based on Lemma \ref{VARAG-lemma-5}, if we set $c = \frac{1}{2}$ in Eq.~\eqref{VARAG-lemma-5-result}, we have
	\begin{align*}
	& \mathbb{E}_{u_t, i_t| \mathcal{F}_{t-1}}\bigg[\frac{\gamma}{\alpha}\big[f_{\mu}(\bar{x}_t)-f_{\mu}^{*}\big] + \big(1+\frac{\tau \gamma}{2}\big) \cdot \frac{1}{2}\|x_t-x_{\mu}^*\|^2\bigg] \notag\\
	\le \  & \frac{\gamma}{\alpha}(1-\alpha-p)\big[f_{\mu}(\bar{x}_{t-1})-f_{\mu}^{*}\big] + \frac{\gamma}{2\alpha} \big[f_{\mu}(\tilde{x})-f_{\mu}^{*}\big] + \frac{1}{2}\|x_{t-1}-x_{\mu}^*\|^2 + \frac{\gamma}{\alpha} \cdot \mu^2L(d+4)^2 + \frac{\gamma}{\tau} \cdot E.
	\end{align*}
	Multiplying both sides by $\Gamma_{t-1} = (1+\frac{\tau \gamma}{2})^{t-1}$, we obtain
	\begin{align*}
	& \mathbb{E}_{u_t, i_t| \mathcal{F}_{t-1}}\bigg[\frac{\gamma}{\alpha} \Gamma_{t-1} \big[f_{\mu}(\bar{x}_t)-f_{\mu}^{*}\big] + \frac{\Gamma_{t}}{2}\|x_t-x_{\mu}^*\|^2\bigg] \notag\\
	\le \  & \frac{\Gamma_{t-1} \gamma}{\alpha}(1-\alpha-p)\big[f_{\mu}(\bar{x}_{t-1})-f_{\mu}^{*}\big] + \frac{\Gamma_{t-1} \gamma p}{\alpha} \big[f_{\mu}(\tilde{x})-f_{\mu}^{*}\big]\\& + \frac{\Gamma_{t-1}}{2}\|x_{t-1}-x_{\mu}^*\|^2 + \frac{\gamma}{\alpha} \Gamma_{t-1} \cdot \mu^2L(d+4)^2 + \frac{\gamma}{\tau}\Gamma_{t-1} \cdot E.
	\end{align*}
	Summing up the inequality above from $t = 1$ to $T_s$, we obtain
	\begin{align*}
	& \frac{\gamma}{\alpha} \sum_{t=1}^{T_s}\theta_{t} \mathbb{E}_{\mathcal{F}_t} \big[f_{\mu}(\bar{x}_t)-f_{\mu}^{*}\big] + \frac{\Gamma_{T_s}}{2} \mathbb{E}_{\mathcal{F}_t} \|x_{T_s}-x_{\mu}^*\|^2\notag\\
	\le \  & \frac{\gamma}{\alpha} \big[1-\alpha - p + p\sum_{t=1}^{T_s}\Gamma_{t-1}\big] \big[f_{\mu}(\tilde{x})-f_{\mu}^{*}\big] + \frac{1}{2}\|x_{0}-x_{\mu}^*\|^2 + \frac{\gamma}{\alpha} \sum_{t=1}^{T_s} \Gamma_{t-1} \cdot \mu^2L(d+4)^2 + \frac{\gamma}{\tau}\sum_{t=1}^{T_s} \Gamma_{t-1}\cdot E.
	\end{align*}
	Since $\tilde{x}^{s} = \sum_{t=1}^{T_s} (\theta_t \bar{x}_{t})/ \sum_{t=1}^{T_s} \theta_t$, $\tilde{x} = \tilde{x}^{s-1}$, $x_0 = x^{s-1}$, $x_{T_s} = x^s$ in the epoch $s$, and thanks to the convexity of $f_{\mu}$, the last inequality implies, for $s > s_0$:
	\begin{align}
	& \frac{\gamma}{\alpha} \sum_{t=1}^{T_s}\theta_{t} \mathbb{E}_{\mathcal{F}_{T_s}} \big[f_{\mu}(\tilde{x}^{s})-f_{\mu}^{*}\big] + \frac{\Gamma_{T_{s_0}}}{2} \mathbb{E}_{\mathcal{F}_{T_s}} \|x^s-x_{\mu}^*\|^2\notag\\
	\le \  & \frac{\gamma}{\alpha} \big[1-\alpha - p + p\sum_{t=1}^{T_s}\Gamma_{t-1}\big] \big[f_{\mu}(\tilde{x}^{s-1})-f_{\mu}^{*}\big] + \frac{1}{2}\|x^{s-1}-x_{\mu}^*\|^2 + \frac{\gamma}{\alpha} \sum_{t=1}^{T_s} \Gamma_{t-1} \cdot \mu^2L(d+4)^2 + \frac{\gamma}{\tau}\sum_{t=1}^{T_s} \Gamma_{t-1}\cdot E. \label{VARAG-lemma-8-ineq-1}
	\end{align}
	Moreover, we have
	\begin{align*}
	\sum_{t=1}^{T_{s_0}} \theta_{t} = \  & \Gamma_{T_{s_0}-1} + \sum_{t=1}^{T_{s_0}-1} \big(\Gamma_{t-1}-(1-\alpha - p)\Gamma_{t}\big) \\
	= \  & \Gamma_{T_{s_0}}(1-\alpha -p) + \sum_{t=1}^{T_{s_0}} \big(\Gamma_{t-1}-(1-\alpha - p)\Gamma_{t}\big)\\
	= \  & \Gamma_{T_{s_0}}(1-\alpha -p) + \big[1-(1-\alpha -p)(1+\frac{\tau \gamma}{2})\big]\sum_{t=1}^{T_{s_0}} \Gamma_{t-1}.\\
	\end{align*}
	Considering the range of $\alpha_{s}$, since $T_{s_0} \le (d+4)n$,
	\begin{align*} 
	\alpha & = \sqrt{\frac{n\tau}{24L}} \ge \sqrt{\frac{T_{s_0} \tau}{24(d+4)L}} = \frac{1}{2} \cdot \frac{\tau}{d+4} \cdot \frac{1}{\sqrt{6nL\tau}} \cdot \sqrt{T_{s_0}(d+4)n} \\
	& = \frac{\tau \gamma}{2} \cdot \sqrt{T_{s_0}(d+4)n} \ge \frac{\tau \gamma T_{s_0}}{2}.
	\end{align*}
	Also note that, for any $T > 1$ and $0 \le \delta T \le 1$, $(1+T\delta) \le (1+\delta)^T \le (1+2T \delta)$. If we set $\delta = \frac{\tau \gamma}{2}$ and $T = T_{s_0}$ here,
	\begin{align*}
	\delta T = \frac{\tau \gamma T_{s_0}}{2} \le \alpha < 1.
	\end{align*}
	Then, we have
	\begin{align*}
	1-(1-\alpha -p)(1+\frac{\tau \gamma}{2}) = \  & (1+\frac{\tau \gamma}{2})(\alpha+p-\frac{\tau \gamma}{2}) + \frac{\tau^2\gamma^2}{4} \\
	\ge \ & (1+\frac{\tau \gamma}{2})(\frac{\tau \gamma T_{s_0}}{2} + p-\frac{\tau \gamma}{2})\\
	= \  & p(1+\frac{\tau \gamma}{2})(1+ 2(T_{s_0}-1)\cdot \frac{\tau \gamma}{2})\\
	\ge \ &  p(1+\frac{\tau \gamma}{2})^{T_{s_0}} = p\Gamma_{T_{s_0}}.
	\end{align*}
Hence, we obtain $\sum_{t=1}^{T_{s_0}} \theta_{t} \ge \Gamma_{T_{s_0}} \cdot \big[1-\alpha - p + p\sum_{t=1}^{T_s}\Gamma_{t-1}\big]$. Moreover, using Eq.~\eqref{VARAG-lemma-8-ineq-1} and the fact that $f_{\mu}(\tilde{x}^{s}) - f_{\mu}^{*} \ge 0$, we have
	\begin{align*}
	& \Gamma_{T_{s_0}} \cdot \bigg[\frac{\gamma}{\alpha}\big[1-\alpha - p + p\sum_{t=1}^{T_s}\Gamma_{t-1}\big] \mathbb{E}_{\mathcal{F}_{T_s}} \big[f_{\mu}(\tilde{x}^{s})-f_{\mu}^{*}\big] + \frac{1}{2} \mathbb{E}_{\mathcal{F}_{T_s}} \|x^s-x_{\mu}^*\|^2\bigg]\notag\\
	\le \  & \frac{\gamma}{\alpha} \big[1-\alpha - p + p\sum_{t=1}^{T_s}\Gamma_{t-1}\big] \big[f_{\mu}(\tilde{x}^{s-1})-f_{\mu}^{*}\big] + \frac{1}{2}\|x^{s-1}-x_{\mu}^*\|^2 + \frac{\gamma}{\alpha} \sum_{t=1}^{T_s} \Gamma_{t-1} \cdot \mu^2L(d+4)^2 + \frac{\gamma}{\tau}\sum_{t=1}^{T_s} \Gamma_{t-1}\cdot E.
	\end{align*}
	Applying this inequality iteratively for $s > s_0$, we obtain
	\begin{align*}
	& \frac{\gamma}{\alpha}\big[1-\alpha - p + p\sum_{t=1}^{T_s}\Gamma_{t-1}\big] \mathbb{E} \big[f_{\mu}(\tilde{x}^{s})-f_{\mu}^{*}\big] + \frac{1}{2} \mathbb{E} \|x^s-x_{\mu}^*\|^2 \notag\\
	\le \  & \left(\frac{1}{\Gamma_{T_{s_0}}}\right)^{s-s_0} \bigg[\frac{\gamma}{\alpha} \big[1-\alpha - p + p\sum_{t=1}^{T_s}\Gamma_{t-1}\big] \mathbb{E} \big[f_{\mu}(\tilde{x}^{s_0})-f_{\mu}^{*}\big] + \frac{1}{2}\|x^{s_0}-x_{\mu}^*\|^2\bigg]\notag\\
	& + \sum_{j=1}^{s-s_0} \left(\frac{1}{\Gamma_{T_{s_0}}}\right)^{j}\bigg[\frac{\gamma}{\alpha} \sum_{t=1}^{T_s} \Gamma_{t-1} \cdot \mu^2L(d+4)^2 + \frac{\gamma}{\tau}\sum_{t=1}^{T_s} \Gamma_{t-1}\cdot E\bigg].
	\end{align*}
	Note that $\frac{\gamma}{\alpha}\big[1-\alpha - p + p\sum_{t=1}^{T_s}\Gamma_{t-1}\big] \ge \frac{\gamma p}{\alpha}\sum_{t=1}^{T_s}\Gamma_{t-1} \ge \frac{\gamma p T_s}{\alpha} = \frac{\gamma p T_{s_0}}{\alpha}$ and $p = \frac{1}{2}$, the inequality above implies
	\begin{align*}
	& \mathbb{E} \big[f_{\mu}(\tilde{x}^{s})-f_{\mu}^{*}\big] \notag\\
	\le \  & \left(\frac{1}{\Gamma_{T_{s_0}}}\right)^{s-s_0} \bigg[\mathbb{E} \big[f_{\mu}(\tilde{x}^{s_0})-f_{\mu}^{*}\big] + \frac{ \alpha}{\gamma T_{s_0}} \mathbb{E}\big[\|x^{s_0}-x_{\mu}^*\|^2\big]\bigg] + \sum_{j=1}^{s-s_0} \left(\frac{1}{\Gamma_{T_{s_0}}}\right)^{j}\bigg[ 2\mu^2L(d+4)^2 + \frac{2 \alpha}{\tau} \cdot E\bigg]\\
	\le \  & \left(\frac{1}{\Gamma_{T_{s_0}}}\right)^{s-s_0} \bigg[\mathbb{E} \big[f_{\mu}(\tilde{x}^{s_0})-f_{\mu}^{*}\big] + \frac{ \alpha}{\gamma T_{s_0}} \mathbb{E}\big[\|x^{s_0}-x_{\mu}^*\|^2\big]\bigg] + \frac{1}{\Gamma_{T_{s_0}}-1}\bigg[ 2\mu^2L(d+4)^2 + \frac{2 \alpha}{\tau}\cdot  E\bigg].
	\end{align*}
	Next, as
	$$\Gamma_{T_{s_0}} = \big(1+\frac{\tau \gamma}{2}\big)^{T_{s_0}} \ge 1+ \frac{\tau \gamma T_{s_0}}{2} \ge 1+\frac{\tau \gamma (d+4)n}{4} = 1+\frac{1}{4}\cdot \sqrt{\frac{n\tau}{6L}}$$ and $\frac{2 \alpha}{\tau} = \sqrt{\frac{n}{6L\tau}},$ we have that, for $s > s_0,$
	\begin{align*}
	& \mathbb{E} \big[f_{\mu}(\tilde{x}^{s})-f_{\mu}^{*}\big] \notag\\
	\le \  & \left(\frac{1}{\Gamma_{T_{s_0}}}\right)^{s-s_0} \bigg[\mathbb{E} \big[f_{\mu}(\tilde{x}^{s_0})-f_{\mu}^{*}\big] + \frac{ \alpha}{\gamma T_{s_0}} \mathbb{E}\big[\|x^{s_0}-x_{\mu}^*\|^2\big] \bigg] + 4\sqrt{\frac{6L}{n\tau}}\bigg[ 2\mu^2L(d+4)^2 + \sqrt{\frac{n}{6L\tau}} \cdot  E\bigg]\\
	= \  & \left(\frac{1}{\Gamma_{T_{s_0}}}\right)^{s-s_0} \bigg[\mathbb{E} \big[f_{\mu}(\tilde{x}^{s_0})-f_{\mu}^{*}\big] + \frac{ \alpha}{\gamma T_{s_0}} \mathbb{E}\big[\|x^{s_0}-x_{\mu}^*\|^2\big]\bigg] + 8\sqrt{\frac{6L}{n\tau}}\mu^2L(d+4)^2 + \frac{4E}{\tau}\\
	\le \  & \left(\frac{1}{\Gamma_{T_{s_0}}}\right)^{s-s_0} 2\bigg[\mathbb{E} \big[f_{\mu}(\tilde{x}^{s_0})-f_{\mu}^{*}\big] + \frac{ \alpha}{\gamma T_{s_0}} \cdot \frac{1}{2} \mathbb{E}\big[\|x^{s_0}-x_{\mu}^*\|^2\big]\bigg] + 8\sqrt{\frac{6L}{n\tau}}\mu^2L(d+4)^2 + \frac{4E}{\tau}.
	\end{align*}
	Note that, since $n < \frac{6L}{\tau}$, we have $\frac{\alpha}{\gamma} = 12(d+4)L\alpha^2 = \frac{(d+4)\tau n}{2} \le 3(d+4)L$. Finally, for $s > s_0$,
	\begin{align*}
	& \mathbb{E} \big[f_{\mu}(\tilde{x}^{s})-f_{\mu}^{*}\big] \notag\\
	\le \  & \left(\frac{1}{\Gamma_{T_{s_0}}}\right)^{s-s_0} 2\bigg[\mathbb{E} \big[f_{\mu}(\tilde{x}^{s_0})-f_{\mu}^{*}\big] + \frac{ 3(d+4)L}{T_{s_0}} \cdot \frac{1}{2} \mathbb{E}\big[\|x^{s_0}-x_{\mu}^*\|^2\big]\bigg] + 8\sqrt{\frac{6L}{n\tau}}\mu^2L(d+4)^2 + \frac{4E}{\tau}\\
	\le \  & \left(\frac{1}{\Gamma_{T_{s_0}}}\right)^{s-s_0} 2\bigg[\frac{1}{2^{s_0+1}} (d+4)D_0 + 2\mu^2L(d+4)^2 + \frac{E}{2\tau}\bigg] + 8\sqrt{\frac{6L}{n\tau}}\mu^2L(d+4)^2 + \frac{4E}{\tau}\\
	\le \  & \left(\frac{1}{\Gamma_{T_{s_0}}}\right)^{s-s_0} \frac{(d+4)D_0}{2^{s_0}} + \left(8\sqrt{\frac{6L}{n\tau}}+4\right)\mu^2L(d+4)^2 + \frac{5E}{\tau}\\
	= \  & \left(\frac{1}{\Gamma_{T_{s_0}}}\right)^{s-s_0} \frac{(d+4)D_0}{2T_{s_0}} + \left(8\sqrt{\frac{6L}{n\tau}}+4\right)\mu^2L(d+4)^2 + \frac{5E}{\tau}\\
	\le \  & \left(\frac{1}{\Gamma_{T_{s_0}}}\right)^{s-s_0} \frac{D_0}{n} + \left(8\sqrt{\frac{6L}{n\tau}}+4\right)\mu^2L(d+4)^2 + \frac{5E}{\tau}\\
	= \  & \bigg(1+\frac{1}{2(d+4)} \cdot \sqrt{\frac{\tau}{6nL}}\bigg)^{-T_{s_0}(s-s_0)} \frac{D_0}{n} + \left(8\sqrt{\frac{6L}{n\tau}}+4\right)\mu^2L(d+4)^2 + \frac{5E}{\tau}\\
	\le \  & \bigg(1+\frac{1}{2(d+4)} \cdot \sqrt{\frac{\tau}{6nL}}\bigg)^{-\frac{n(d+4)(s-s_0)}{2}} \frac{D_0}{n} + \left(8\sqrt{\frac{6L}{n\tau}}+4\right)\mu^2L(d+4)^2 + \frac{5E}{\tau}\\
	\le \  & \bigg(1+\frac{1}{4} \cdot \sqrt{\frac{n\tau}{6L}}\bigg)^{-(s-s_0)} \frac{D_0}{n} + \left(8\sqrt{\frac{6L}{n\tau}}+4\right)\mu^2L(d+4)^2 + \frac{5E}{\tau}.
	\end{align*}
	The second inequality is based on Eq.~\eqref{VARAG-lemma-6-ineq-1} and the fourth and fifth inequalities rely on $T_{s_0} \ge \frac{(d+4)n}{2}$. The last inequality comes from $1+T\delta \le (1+\delta)^T$ when $\delta \ge 0$.
\end{proof}

Now, we can derive Theorem \ref{VARAG-theorem-2} based on Lemma \ref{VARAG-lemma-6}, Lemma \ref{VARAG-lemma-7}, Lemma \ref{VARAG-lemma-8}.

\begin{proof}[Proof of Theorem \ref{VARAG-theorem-2}]
To summarize, we have obtained
\begin{align}
\mathbb{E} \big[f_{\mu}(\tilde{x}^{s})-f_{\mu}^{*}\big] := 
\begin{cases}
\frac{1}{2^{s+1}} (d+4)D_0 + 2 \mu^2L(d+4)^2 + \frac{E}{2\tau}, &\quad \quad \quad \quad 1 \le s \le s_0\\
& \\
\left(\frac{4}{5}\right)^{s-s_0} \frac{D_0}{n}+ 12\mu^2L(d+4)^2 + \frac{5E}{\tau}, &\quad \quad \quad \quad s > s_0 \text{ and } n \ge \frac{6L}{\tau}\\
& \\
\bigg(1+\frac{1}{4} \sqrt{\frac{n\tau}{6L}}\bigg)^{-(s-s_0)} \frac{D_0}{n}  &\quad \quad \quad \quad  s > s_0 \text{ and } n < \frac{6L}{\tau}\\
 \ \ \ \ \ \ \ \ + \left(8\sqrt{\frac{6L}{n\tau}}+4\right)\mu^2L(d+4)^2 + \frac{5E}{\tau}, &
\end{cases}
\end{align}
from  Lemma \ref{VARAG-lemma-6}, Lemma \ref{VARAG-lemma-7}, Lemma \ref{VARAG-lemma-8}. Hence, the proof of Theorem \ref{VARAG-theorem-2} is completed.
\end{proof} 

We conclude by deriving the final complexity result, stated in the main paper.

\begin{proof}[Proof of Corollary~\ref{VARAG-corollary-2}]
Using the same technique as for the proof of Corollary~\ref{VARAG-corollary-1}, we can make the error terms depending on $E$ or $\mu$ vanish. In addition to $\mu \le \mathcal{O}\big(\sqrt{\frac{\epsilon}{Ld}}\big)$ comeing from functional approximation error (see proof of Corollary 3), we also need $\mu = \mathcal{O} \big(\frac{\epsilon^{1/2}}{L^{1/2}d}\big)$ for the first two cases ($1\le s \le s_0$ or $s > s_0 \text{ and } n \ge \frac{6L}{\tau}$), $\mu = \mathcal{O} \big(\frac{n^{1/4}\tau^{1/4}\epsilon^{1/2}}{L^{3/4}d}\big)$ for the third case ($s > s_0 \text{ and } n < \frac{6L}{\tau}$) and $\mu = \mathcal{O} \big( \frac{\tau^{1/2}\epsilon^{1/2}}{L d^{3/2}}\big)$, $\nu = \mathcal{O} \big( \frac{\tau^{1/2}\epsilon^{1/2}}{L d^{1/2}}\big)$ to ensure $\epsilon$-optimality, $\frac{\epsilon}{4}$ more specifically. Hence, we can proceed as in~\cite{lan2019unified} , neglecting the errors coming from the DFO framework~(note that a similar procedure is adopted also in~\cite{nesterov2017random} and~\cite{liu2018stochastic,liu2018stochasticb})  . For the first case~($1\le s \le s_0$) the total number of function queries is given in Corollary~\ref{VARAG-corollary-1}. Then, in the second case~($s > s_0 \text{ and } n \ge \frac{6L}{\tau}$), the algorithm run at most $S := \mathcal{O}\big\{\log \big(\frac{(d+4)D_0}{\epsilon}\big)\big\}$ epochs to ensure the first error with $\epsilon$-optimality. Thus, the total number of function queries in this case is bounded by
\begin{align}
dnS + \sum_{s=1}^{S}T_s \le dn + S(d+4)n = \mathcal{O}\left\{dn\log \left(\frac{d D_0}{\epsilon}\right)\right\}.
\end{align}
Finally, in the last case ($s > s_0 \text{ and } n < \frac{6L}{\tau}$) to achieve $\epsilon$-error for the first term, the algorithm need to run at most $S^{'} := s_0 + \sqrt{\frac{6L}{n\tau}} \log \left(\frac{D_0}{n\epsilon}\right)$ epochs. Therefore,  the total number of function queries in this case is bounded by
\begin{align}
\sum_{s=1}^{S^{'}}(dn + T_s) = \  & \sum_{s=1}^{s_0}(dn + T_s) + (dn + T_{s_0})(S^{'}-s_0) \notag\\
\le \  & 2dns_0 + \big(dn+(d+4)n\big)\sqrt{\frac{6L}{n\tau}} \log \left(\frac{D_0}{n\epsilon}\right) \notag\\
= \  & \mathcal{O} \bigg\{dn \log(dn) + d \sqrt{\frac{nL}{\tau}} \log \left(\frac{D_0}{n\epsilon}\right)\bigg\}.
\end{align}
\end{proof}

\section{Proofs for Section~\ref{sec:coordinate-wise}: the coordinate-wise variant of Algorithm~\ref{algorithm-VARAG}}
\label{app:proofs_coord}

When we replace the gradient estimator $g_{\mu}(x, u, i)$ in Algorithm \ref{algorithm-VARAG} with Eq.~\eqref{DFO-framework-cord-finite-difference}, the dependency on the problem dimension $d$ gets better (Lemma \ref{VARAG-cord-lemma-1} compared to Lemma \ref{VARAG-lemma-1}), and the analysis looks more like the original Varag analysis \cite{lan2019unified}, with the addition of DFO errors. However, we should notice that Eq.~\eqref{DFO-framework-cord-finite-difference} requires $d$ times computation per iteration compared to Eq.~\eqref{DFO-framework-gaussian-smoothing}. From another point of view, choosing the gradient estimation in derivative-free optimization is a trade off between computation time and numerical accuracy.

The first lemma follows directly from Lemma 5 in \cite{lan2019unified} (we simplify it to the case with $V(z,x) = \frac{1}{2}\|z-x\|^2$, $X = \mathbb{R}^d$ and $h(x) = 0$). Note that this is very similar to Lemma \ref{VARAG-lemma-2}, but the Lemma below is with respect to $f$ rather than $f_{\mu}$. Indeed, for this appendix we define 
$$\delta_t := G_t - \nabla f(\underline{x}_t).$$

Also we recall that, to make the notation compact, we define
\begin{align*}
x_{t-1}^{+} := \frac{1}{1+\tau \gamma_s}(x_{t-1}+\tau \gamma_s \underline{x}_t), \quad \quad l_{f}(z,x) := f(z) + \langle \nabla f(z), x-z \rangle.
\end{align*}
\begin{mdframed}
\begin{lemma}\label{VARAG-cord-lemma-2}
	Consider the coordinate-wise variant of Algorithm~\ref{algorithm-VARAG}. Assume \textbf{(A1)}. For any $x \in \R^d$, we have
	\begin{multline*}
	\gamma_s [l_{f}(\underline x_t, x_t) - l_{f}(\underline x_t, x)] \\ \le \frac{\tau\gamma_s}{2}\|\underline{x_t} - x\|^2 + \frac{1}{2}\|x_{t-1} - x\|^2 - \frac{1+\tau \gamma_s}{2}\|x_t - x\|^2 - \frac{1+\tau \gamma_s}{2} \|x_t - x^+_{t-1}\|^2 - \gamma_s \langle \delta_t, x_t - x \rangle.
	\end{multline*}
	which can be rewritten as
	\begin{multline*}
	\gamma_s \langle \nabla f(\underline{x}_t), x_t - x \rangle \\ \le  \frac{\tau\gamma_s}{2}\|\underline{x_t} - x\|^2 + \frac{1}{2}\|x_{t-1} - x\|^2 - \frac{1+\tau \gamma_s}{2}\|x_t - x\|^2 - \frac{1+\tau \gamma_s}{2} \|x_t - x^+_{t-1}\|^2 - \gamma_s \langle \delta_t, x_t - x \rangle.
	\end{multline*}
\end{lemma}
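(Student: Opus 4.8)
The plan is to read the inequality straight off the variational characterization of $x_t$, so no properties of $f$ beyond the linearization identity are actually needed (the hypothesis \textbf{(A1)} is carried along only for consistency with the later lemmas). By line~\eqref{eqn:xt} of Algorithm~\ref{algorithm-VARAG}, $x_t$ is the unique minimizer of the $(1+\gamma_s\tau)$-strongly convex function $\phi_t(x):=\gamma_s\langle G_t,x\rangle+\tfrac{\gamma_s\tau}{2}\|\underline{x}_t-x\|^2+\tfrac12\|x_{t-1}-x\|^2$, and its first-order optimality condition is $\gamma_s G_t+\gamma_s\tau(x_t-\underline{x}_t)+(x_t-x_{t-1})=0$. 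Pairing this identity with $x_t-x$ for an arbitrary $x\in\R^d$ gives $\gamma_s\langle G_t,x_t-x\rangle=-\gamma_s\tau\langle x_t-\underline{x}_t,x_t-x\rangle-\langle x_t-x_{t-1},x_t-x\rangle$.

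Next I would expand both inner products on the right with the polarization identity $\langle a-b,a-c\rangle=\tfrac12(\|a-b\|^2+\|a-c\|^2-\|b-c\|^2)$. This yields exactly $\tfrac{\tau\gamma_s}{2}\|\underline{x}_t-x\|^2+\tfrac12\|x_{t-1}-x\|^2-\tfrac{1+\tau\gamma_s}{2}\|x_t-x\|^2$ together with the leftover negative terms $-\tfrac{\tau\gamma_s}{2}\|x_t-\underline{x}_t\|^2-\tfrac12\|x_t-x_{t-1}\|^2$. The only real observation needed is to collapse this leftover into the advertised $-\tfrac{1+\tau\gamma_s}{2}\|x_t-x_{t-1}^{+}\|^2$: since $x_{t-1}^{+}$ from \eqref{VARAG-def-x+} is the convex combination $\tfrac{\tau\gamma_s}{1+\tau\gamma_s}\underline{x}_t+\tfrac{1}{1+\tau\gamma_s}x_{t-1}$, convexity of $\|x_t-\cdot\|^2$ gives $\tfrac{1+\tau\gamma_s}{2}\|x_t-x_{t-1}^{+}\|^2\le\tfrac{\tau\gamma_s}{2}\|x_t-\underline{x}_t\|^2+\tfrac12\|x_t-x_{t-1}\|^2$, and discarding the nonnegative slack gives the claimed upper bound on $\gamma_s\langle G_t,x_t-x\rangle$.

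Finally, substituting $G_t=\nabla f(\underline{x}_t)+\delta_t$ (the appendix definition of $\delta_t$) and moving $\gamma_s\langle\delta_t,x_t-x\rangle$ to the right-hand side produces the second displayed inequality; the first is then immediate because $l_f(\underline{x}_t,x_t)-l_f(\underline{x}_t,x)=\langle\nabla f(\underline{x}_t),x_t-x\rangle$ straight from definition~\eqref{VARAG-def-lf}. I do not foresee a genuine obstacle here: this is the textbook ``three-point'' prox inequality, and the only place that requires care is keeping track of signs and recognising the weighted midpoint $x_{t-1}^{+}$ inside the residual — which is precisely the reason $x_{t-1}^{+}$ was introduced in \eqref{VARAG-def-x+}.
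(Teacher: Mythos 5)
Your argument is correct. The paper itself does not supply a proof — it states the lemma and invokes Lemma~5 of Lan et al.\ in the Euclidean, non-regularized special case — and what you have written out is precisely the standard instantiation of that prox three-point lemma: the first-order optimality identity for the minimizer $x_t$, the polarization expansion of the two inner products, the variance (Jensen) step that collapses the residual into $-\tfrac{1+\tau\gamma_s}{2}\|x_t-x_{t-1}^{+}\|^2$ after discarding the nonnegative slack $\tfrac{\tau\gamma_s}{2(1+\tau\gamma_s)}\|\underline{x}_t-x_{t-1}\|^2$, and finally the substitution $G_t=\nabla f(\underline{x}_t)+\delta_t$ followed by the observation that $l_f(\underline{x}_t,x_t)-l_f(\underline{x}_t,x)=\langle\nabla f(\underline{x}_t),x_t-x\rangle$. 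You are also right that \textbf{(A1)} is never actually used; it is only carried in the hypotheses for uniformity with the lemmas that follow. In short: this is the same route the paper implicitly takes by citing Lan et al., but you have spelled out the computation the paper leaves to the reference, with all steps and signs verified correctly.
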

\end{mdframed}

The next lemma is similar to Lemma 6 in~\cite{lan2019unified}, but with some additional error terms, due to DFO framework.
\begin{mdframed}
\begin{lemma}\label{VARAG-cord-lemma-3}
	Consider the coordinate-wise variant of Algorithm~\ref{algorithm-VARAG}. Assume \textbf{(A1)}. Assume that $\alpha_{s} \in [0,1]$, $p_s \in [0,1]$ and $\gamma_{s} > 0$ satisfy
	\begin{align}\label{VARAG-cord-lemma-3-assump-1}
	1+\tau \gamma_{s} - L\alpha_{s}\gamma_{s} > 0,
	\end{align}
	\begin{align}\label{VARAG-cord-lemma-3-assump-2}
	p_s - \frac{4L\alpha_{s}\gamma_{s}}{1+\tau \gamma_{s} - L\alpha_{s}\gamma_{s}} \ge 0.
	\end{align}
	Under the expectation of $i_t$, we have
	\begin{align}
	& \mathbb{E}_{i_t}\bigg[\frac{\gamma_{s}}{\alpha_{s}}\big[f(\bar{x}_t)-f(x)\big] + \frac{(1+\tau \gamma_{s})}{2}\|x_t-x\|^2\bigg] \notag\\
	\le \  & \frac{\gamma_{s}}{\alpha_{s}}(1-\alpha_{s}-p_s)\big[f(\bar{x}_{t-1})-f(x)\big]+\frac{\gamma_{s}p_s}{\alpha_{s}} \big[f(\tilde{x})-f(x)\big] + \frac{1}{2}\|x_{t-1}-x\|^2 \notag\\
	& + \frac{\gamma_{s}}{\alpha_{s}} \cdot \frac{6\alpha_{s}\gamma_{s}\nu^2L^2d}{1+\tau\gamma_{s}-L\alpha_{s}\gamma_{s}} - \frac{\gamma_{s}}{\alpha_{s}} \cdot \alpha_{s}\langle g_{\nu}(\underline{x}_t) - \nabla f(\underline{x}_t), x_{t-1}^+ - x \rangle. \label{VARAG-cord-lemma-3-result}
	\end{align}
	for any $x \in \mathbb{R}^d$.
\end{lemma}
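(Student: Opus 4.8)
The plan is to reproduce the proof of Lemma~\ref{VARAG-lemma-3} essentially line by line, with $f$ in place of $f_{\mu}$ (so $L$-smoothness comes from \textbf{(A1)} and $\tau$-strong convexity, with $\tau\ge0$, from \textbf{(A3)}), Lemma~\ref{VARAG-cord-lemma-2} in place of Lemma~\ref{VARAG-lemma-2}, and Lemma~\ref{VARAG-cord-lemma-1} in place of Lemma~\ref{VARAG-lemma-1}. Concretely: start from $f(\bar{x}_t)\le f(\underline{x}_t)+\langle\nabla f(\underline{x}_t),\bar{x}_t-\underline{x}_t\rangle+\tfrac{L}{2}\|\bar{x}_t-\underline{x}_t\|^2$, expand $\bar{x}_t-\underline{x}_t$ through the update rule of $\bar{x}_t$ and Eq.~\eqref{VARAG-eq-1}, apply Lemma~\ref{VARAG-cord-lemma-2} to the $\alpha_s\langle\nabla f(\underline{x}_t),x_t-x\rangle$ piece, and use (strong) convexity of $f$ on the $\bar{x}_{t-1}$- and $x$-terms; collecting the $\|x_t-x_{t-1}^{+}\|^2$ contributions and using \eqref{VARAG-cord-lemma-3-assump-1} leaves the coefficient $-\tfrac{\alpha_s}{2\gamma_s}(1+\tau\gamma_s-L\alpha_s\gamma_s)<0$ on that square.

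The one genuinely different step --- and the reason the statement has $x_{t-1}^{+}-x$ in place of $x_t-x$ and carries no outer expectation on the error term --- is the treatment of $-\alpha_s\langle\delta_t,x_t-x\rangle$. I would write it as $-\alpha_s\langle\delta_t,x_t-x_{t-1}^{+}\rangle-\alpha_s\langle\delta_t,x_{t-1}^{+}-x\rangle$ and absorb the \emph{whole} first summand into the negative quadratic via $b\langle u,v\rangle-\tfrac{a}{2}\|v\|^2\le\tfrac{b^2}{2a}\|u\|^2$ with $a=\tfrac{\alpha_s}{\gamma_s}(1+\tau\gamma_s-L\alpha_s\gamma_s)$, $b=-\alpha_s$, $v=x_t-x_{t-1}^{+}$, $u=\delta_t$, obtaining $\tfrac{\alpha_s\gamma_s}{2(1+\tau\gamma_s-L\alpha_s\gamma_s)}\|\delta_t\|^2$. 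Crucially I may work with $\|\delta_t\|^2$ directly --- not the centred $\|G_t-\mathbb{E}[G_t]\|^2$ that the Gaussian proof is forced to use --- because Lemma~\ref{VARAG-cord-lemma-1} bounds $\mathbb{E}_{i_t}\|\delta_t\|^2=\mathbb{E}_{i_t}\|G_t-\nabla f(\underline{x}_t)\|^2$ itself. Taking $\mathbb{E}_{i_t}[\cdot\mid\mathcal{F}_{t-1}]$ and plugging in Lemma~\ref{VARAG-cord-lemma-1} converts this into $\tfrac{6\alpha_s\gamma_s\nu^2L^2d}{1+\tau\gamma_s-L\alpha_s\gamma_s}+\tfrac{4L\alpha_s\gamma_s}{1+\tau\gamma_s-L\alpha_s\gamma_s}\big[f(\tilde{x})-f(\underline{x}_t)-\langle\nabla f(\underline{x}_t),\tilde{x}-\underline{x}_t\rangle\big]$, while the second summand becomes the deterministic $-\alpha_s\langle g_{\nu}(\underline{x}_t)-\nabla f(\underline{x}_t),x_{t-1}^{+}-x\rangle$ because $\underline{x}_t$ and $x_{t-1}^{+}$ are $\mathcal{F}_{t-1}$-measurable and $\mathbb{E}_{i_t}[\delta_t]=g_{\nu}(\underline{x}_t)-\nabla f(\underline{x}_t)$ --- the second display of Lemma~\ref{VARAG-cord-lemma-1}.

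Finally I would merge the $\tfrac{4L\alpha_s\gamma_s}{1+\tau\gamma_s-L\alpha_s\gamma_s}$-multiple of $f(\tilde{x})-f(\underline{x}_t)-\langle\nabla f(\underline{x}_t),\tilde{x}-\underline{x}_t\rangle$ with the $p_s\big[f(\underline{x}_t)+\langle\nabla f(\underline{x}_t),\tilde{x}-\underline{x}_t\rangle\big]$ coming from the smoothness step, leaving the coefficient $\big(p_s-\tfrac{4L\alpha_s\gamma_s}{1+\tau\gamma_s-L\alpha_s\gamma_s}\big)$ on $f(\underline{x}_t)+\langle\nabla f(\underline{x}_t),\tilde{x}-\underline{x}_t\rangle$ plus a leftover $\tfrac{4L\alpha_s\gamma_s}{1+\tau\gamma_s-L\alpha_s\gamma_s}f(\tilde{x})$; bounding $f(\underline{x}_t)+\langle\nabla f(\underline{x}_t),\tilde{x}-\underline{x}_t\rangle\le f(\tilde{x})-\tfrac{\tau}{2}\|\tilde{x}-\underline{x}_t\|^2$ --- valid with the right sign thanks to \eqref{VARAG-cord-lemma-3-assump-2} --- collapses the $f(\tilde{x})$ terms to $p_sf(\tilde{x})$ and produces one more non-positive quadratic. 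Then I discard the non-positive $\|\bar{x}_{t-1}-\underline{x}_t\|^2$ and $\|\tilde{x}-\underline{x}_t\|^2$ terms, move $-\tfrac{1+\tau\gamma_s}{2}\|x_t-x\|^2$ to the left-hand side, multiply through by $\gamma_s/\alpha_s$, and rearrange to obtain Eq.~\eqref{VARAG-cord-lemma-3-result}. I expect no conceptual obstacle; the only care needed is the constant arithmetic ($\tfrac{1}{2}\cdot12=6$ for the $\nu^2L^2d$ coefficient and $\tfrac{1}{2}\cdot8=4$, so that the $L$-coefficient exactly matches the $4L$ in \eqref{VARAG-cord-lemma-3-assump-2}) and the correct identification of $\mathbb{E}_{i_t}[\delta_t]$.
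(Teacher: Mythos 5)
Your proposal is correct and follows the paper's proof step for step: the same smoothness expansion, Lemma~\ref{VARAG-cord-lemma-2}, strong convexity, the split $-\alpha_s\langle\delta_t,x_t-x\rangle=-\alpha_s\langle\delta_t,x_t-x_{t-1}^+\rangle-\alpha_s\langle\delta_t,x_{t-1}^+-x\rangle$ with the first summand absorbed into the negative quadratic via $b\langle u,v\rangle-\tfrac{a}{2}\|v\|^2\le\tfrac{b^2}{2a}\|u\|^2$, and then Lemma~\ref{VARAG-cord-lemma-1} applied to $\mathbb{E}_{i_t}\|\delta_t\|^2$ and to $\mathbb{E}_{i_t}[\delta_t]$. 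You also correctly identified the key structural point that distinguishes this from the Gaussian proof (working directly with $\|\delta_t\|^2$ rather than the centred variance, because Lemma~\ref{VARAG-cord-lemma-1} bounds exactly that), and the constant arithmetic ($\tfrac12\cdot 12=6$, $\tfrac12\cdot 8=4$) matches.
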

\end{mdframed}
\begin{proof}[Proof of Lemma \ref{VARAG-cord-lemma-3}]
	By the $L$-smoothness of $f$,
	\begin{align*}
	f(\bar{x}_t) \le \  & f(\underline{x}_t) + \langle \nabla f(\underline{x}_t), \bar{x}_t - \underline{x}_t \rangle + \frac{L}{2}\|\bar{x}_t-\underline{x}_t\|^2\\
	= \  & (1-\alpha_{s}-p_s)\big[f(\underline{x}_t)+\langle \nabla f(\underline{x}_t), \bar{x}_{t-1} - \underline{x}_t \rangle\big] + \alpha_{s}\big[f(\underline{x}_t)+\langle \nabla f(\underline{x}_t), x_t - \underline{x}_t \rangle\big]\\
	& + p_s\big[f(\underline{x}_t)+\langle \nabla f(\underline{x}_t), \tilde{x} - \underline{x}_t \rangle\big] + \frac{L\alpha_{s}^2}{2}\|x_t - x_{t-1}^+\|^2.
	\end{align*}
	The equality above holds because of the update rule of $\bar{x}_t$ in Algorithm \ref{algorithm-VARAG} and Eq.~\eqref{VARAG-eq-1}. Then, applying Lemma \ref{VARAG-cord-lemma-2} for the inequality above, we have
	\begin{align}
	& f(\bar{x}_t) \notag\\
	\le \  & (1-\alpha_{s}-p_s)\big[f(\underline{x}_t)+\langle \nabla f(\underline{x}_t), \bar{x}_{t-1} - \underline{x}_t \rangle\big] + \alpha_{s}\big[f(\underline{x}_t)+\langle \nabla f(\underline{x}_t), x - \underline{x}_t \rangle\big] \notag\\
	& +\alpha_{s}\big[\frac{\tau}{2}\|\underline{x_t} - x\|^2 + \frac{1}{2\gamma_{s}}\|x_{t-1} - x\|^2 - \frac{1+\tau \gamma_s}{2\gamma_{s}}\|x_t - x\|^2 - \frac{1+\tau \gamma_s}{2\gamma_{s}} \|x_t - x^+_{t-1}\|^2 - \langle \delta_t, x_t - x \rangle\big] \notag\\
	& + p_s\big[f(\underline{x}_t)+\langle \nabla f(\underline{x}_t), \tilde{x} - \underline{x}_t \rangle\big] + \frac{L\alpha_{s}^2}{2}\|x_t - x_{t-1}^+\|^2 \notag\\
	\le \  & (1-\alpha_{s}-p_s)\big[f(\bar{x}_{t-1}) - \frac{\tau}{2}\|\bar{x}_{t-1}-\underline{x}_t\|^2\big] + \alpha_{s}\big[f(x) - \frac{\tau}{2}\|x-\underline{x}_t\|^2\big] \notag\\
	& +\alpha_{s}\big[\frac{\tau}{2}\|\underline{x_t} - x\|^2 + \frac{1}{2\gamma_{s}}\|x_{t-1} - x\|^2 - \frac{1+\tau \gamma_s}{2\gamma_{s}}\|x_t - x\|^2 \big] \notag\\
	& + p_s\big[f(\underline{x}_t)+\langle \nabla f(\underline{x}_t), \tilde{x} - \underline{x}_t \rangle\big] - \frac{\alpha_{s}}{2\gamma_{s}}(1+\tau \gamma_{s} - L\alpha_{s} \gamma_{s})\|x_t - x_{t-1}^+\|^2 -\alpha_{s}\langle \delta_t, x_t - x \rangle \notag\\
	= \  & (1-\alpha_{s}-p_s)\big[f(\bar{x}_{t-1}) - \frac{\tau}{2}\|\bar{x}_{t-1}-\underline{x}_t\|^2\big] +\alpha_{s}\big[f(x) + \frac{1}{2\gamma_{s}}\|x_{t-1} - x\|^2 - \frac{1+\tau \gamma_s}{2\gamma_{s}}\|x_t - x\|^2 \big] \notag\\
	& + p_s\big[f(\underline{x}_t)+\langle \nabla f(\underline{x}_t), \tilde{x} - \underline{x}_t \rangle\big] - \frac{\alpha_{s}}{2\gamma_{s}}(1+\tau \gamma_{s} - L\alpha_{s} \gamma_{s})\|x_t - x_{t-1}^+\|^2 -\alpha_{s}\langle \delta_t, x_t - x_{t-1}^{+} \rangle  -\alpha_{s}\langle \delta_t, x_{t-1}^{+} - x \rangle \notag\\
	\le \  & (1-\alpha_{s}-p_s)\big[f(\bar{x}_{t-1}) - \frac{\tau}{2}\|\bar{x}_{t-1}-\underline{x}_t\|^2\big] +\alpha_{s}\big[f(x) + \frac{1}{2\gamma_{s}}\|x_{t-1} - x\|^2 - \frac{1+\tau \gamma_s}{2\gamma_{s}}\|x_t - x\|^2 \big] \notag\\
	& + p_s\big[f(\underline{x}_t)+\langle \nabla f(\underline{x}_t), \tilde{x} - \underline{x}_t \rangle\big] +\frac{\alpha_{s}\gamma_{s}}{2(1+\tau \gamma_{s}-L\alpha_{s} \gamma_{s})}\|\delta_t\|^2 - \alpha_{s}\langle \delta_t, x_{t-1}^{+} - x \rangle. \label{VARAG-cord-smooth-ineq-1}
	\end{align}
	The second inequality holds thanks to (strong) convexity of $f$. The last inequality follows from $b \langle u,v \rangle - \frac{a}{2}\|v\|^2 \le \frac{b^2}{2a}\|u\|^2, \forall a >0$; where we set $a = \frac{\alpha_{s}}{\gamma_{s}}(1+\tau \gamma_{s} - L\alpha_{s} \gamma_{s})$ and $b = -\alpha_{s}$, requiring $1+\tau \gamma_{s} - L\alpha_{s} \gamma_{s} > 0$.\\
	\\
	Note that $\delta_t = G_t - \nabla f(\underline{x}_t)$ for the coordinate-wise variant. Taking the expectation w.r.t $i_t$, according to Lemma \ref{VARAG-cord-lemma-1},
	\begin{align}
	& p_s \big[f(\underline{x}_t)+\langle \nabla f(\underline{x}_t), \tilde{x} - \underline{x}_t \rangle\big] +\frac{\alpha_{s}\gamma_{s}}{2(1+\tau \gamma_{s}-L\alpha_{s} \gamma_{s})}\mathbb{E}_{i_t}\big[\|\delta_t\|^2\big] -\alpha_{s}\mathbb{E}_{i_t} \big[\langle \delta_t, x_{t-1}^{+} - x \rangle\big] \notag\\
	\le \  & p_s \big[f(\underline{x}_t)+\langle \nabla f(\underline{x}_t), \tilde{x} - \underline{x}_t \rangle\big] + \frac{6\alpha_{s}\gamma_{s}\cdot \nu^2L^2d}{1+\tau \gamma_{s} - L\alpha_{s}\gamma_{s}} +\frac{4\alpha_{s}\gamma_{s}L}{1+\tau \gamma_{s}-L\alpha_{s} \gamma_{s}}\big[f(\tilde{x})-f(\underline{x}_t) - \langle \nabla f(\underline{x}_t), \tilde{x} - \underline{x}_t \rangle \big] \notag\\
	& -\alpha_{s}\big[\langle g_{\nu}(\underline{x}_t) - \nabla f(\underline{x}_t), x_{t-1}^+ - x \rangle\big] \notag\\
	= \  & \big(p_s-\frac{4\alpha_{s}\gamma_{s}L}{1+\tau \gamma_{s}-L\alpha_{s} \gamma_{s}}\big) \big[f(\underline{x}_t)+\langle \nabla f(\underline{x}_t), \tilde{x} - \underline{x}_t \rangle\big] + \frac{6\alpha_{s}\gamma_{s}\cdot \nu^2L^2d}{1+\tau \gamma_{s} - L\alpha_{s}\gamma_{s}} +\frac{4\alpha_{s}\gamma_{s}L}{1+\tau \gamma_{s}-L\alpha_{s} \gamma_{s}}f(\tilde{x})\notag\\
	& -\alpha_{s}\langle g_{\nu}(\underline{x}_t) - \nabla f(\underline{x}_t), x_{t-1}^+ - x \rangle \notag\\
	\le \  & \big(p_s-\frac{4\alpha_{s}\gamma_{s}L}{1+\tau \gamma_{s}-L\alpha_{s} \gamma_{s}}\big) \big[f(\tilde{x}) - \frac{\tau}{2}\|\tilde{x} - \underline{x}_t\|^2\big] + \frac{6\alpha_{s}\gamma_{s}\cdot \nu^2L^2d}{1+\tau \gamma_{s} - L\alpha_{s}\gamma_{s}} +\frac{4\alpha_{s}\gamma_{s}L}{1+\tau \gamma_{s}-L\alpha_{s} \gamma_{s}}f(\tilde{x})\notag\\
	& -\alpha_{s}\langle g_{\nu}(\underline{x}_t) - \nabla f(\underline{x}_t), x_{t-1}^+ - x \rangle \notag\\
	= \  & p_s f(\tilde{x})-\big(p_s-\frac{4\alpha_{s}\gamma_{s}L}{1+\tau \gamma_{s}-L\alpha_{s} \gamma_{s}}\big) \cdot \frac{\tau}{2}\|\tilde{x} - \underline{x}_t\|^2 + \frac{6\alpha_{s}\gamma_{s}\cdot \nu^2L^2d}{1+\tau \gamma_{s} - L\alpha_{s}\gamma_{s}} -\alpha_{s}\langle g_{\nu}(\underline{x}_t) - \nabla f(\underline{x}_t), x_{t-1}^+ - x \rangle, \label{VARAG-cord-smooth-ineq-2}
	\end{align}
	where the last inequality holds if $p_s-\frac{4\alpha_{s}\gamma_{s}L}{1+\tau \gamma_{s}-L\alpha_{s} \gamma_{s}} \ge 0$. Combining Eq.~\eqref{VARAG-cord-smooth-ineq-1} with Eq.~\eqref{VARAG-cord-smooth-ineq-2}, we obtain
	\begin{align}
	&\mathbb{E}_{i_t} \big[f(\bar{x}_t) + \frac{\alpha_{s}(1+\tau \gamma_{s})}{2\gamma_{s}}\|x_t-x\|^2\big] \notag\\
	\le \  & (1-\alpha_{s}-p_s)f(\bar{x}_{t-1})+p_s f(\tilde{x}) + \alpha_{s} f(x) + \frac{\alpha_{s}}{2\gamma_{s}}\|x_{t-1}-x\|^2  + \frac{6\alpha_{s}\gamma_{s}\cdot \nu^2L^2d}{1+\tau \gamma_{s} - L\alpha_{s}\gamma_{s}}\notag\\
	& -\alpha_{s}\langle g_{\nu}(\underline{x}_t) - \nabla f(\underline{x}_t), x_{t-1}^+ - x \rangle - \frac{(1-\alpha_{s}-p_s)\tau}{2}\|\bar{x}_{t-1}-\underline{x}_t\|^2 -\big(p_s-\frac{4\alpha_{s}\gamma_{s}(d+4)L}{1+\tau \gamma_{s}-L\alpha_{s} \gamma_{s}}\big) \cdot \frac{\tau}{2}\|\tilde{x} - \underline{x}_t\|^2 \notag\\
	\le \  & (1-\alpha_{s}-p_s)f(\bar{x}_{t-1})+p_s f(\tilde{x}) + \alpha_{s} f(x) + \frac{\alpha_{s}}{2\gamma_{s}}\|x_{t-1}-x\|^2  + \frac{6\alpha_{s}\gamma_{s}\cdot \nu^2L^2d}{1+\tau \gamma_{s} - L\alpha_{s}\gamma_{s}}\notag\\
	& -\alpha_{s}\langle g_{\nu}(\underline{x}_t) - \nabla f(\underline{x}_t), x_{t-1}^+ - x \rangle. \notag
	\end{align}
	Multiplying both sides by $\frac{\gamma_{s}}{\alpha_{s}}$ and then rearranging the inequality, we finish the proof of this lemma, i.e. Eq.~\eqref{VARAG-cord-lemma-3-result}.
\end{proof}

\subsection{Proof of Theorem~\ref{VARAG-cord-theorem-1}}

To proceed, as in the Gaussian smoothing case, we need a technical assumption:
\begin{tcolorbox}
\textbf{(A2$_{\boldsymbol{\nu}}$)} \ \ Let $x^*\in \text{argmin}_{x\in\R^d} f(x)$. For any epoch $s$ of Algorithm~\ref{algorithm-VARAG}, consider the inner-loop sequences $\{ \underline x_t \}$ and $\{ \bar x_t \}$. There exist a \textit{finite} constant $Z<\infty$, potentially dependent on $L$ and $d$, such that, for $\nu$ small enough,
$$\sup_{s\ge0}\max_{\ \ x\in\{\bar x_t  \}\cup\{\underline x_t\}}\E\left[\|x-x^*\|\right]\le Z.$$
\end{tcolorbox}

Again, as mentioned in the context of \textbf{(A2$_{\boldsymbol{\mu}}$)}, it is possible to show that this assumption holds under the requirement that $f$ is coercive. As for Lemma \ref{VARAG-lemma-4}, thanks to \textbf{(A2$_{\boldsymbol{\nu}}$)}, we can get an epoch-wise inequality of the coordinate-wise approach.
\begin{mdframed}
\begin{lemma}\label{VARAG-cord-lemma-4}
Consider the coordinate-wise variant of Algorithm~\ref{algorithm-VARAG}. Assume \textbf{(A1)}, \textbf{(A2$_{\boldsymbol{\nu}}$)}. Set $\{\theta_t\}$ to
	\begin{align}\label{VARAG-cord-def-theta-1}
	\theta_t =
	\begin{cases}
	\tfrac{\gamma_{s}}{\alpha_{s}} (\alpha_{s} + p_{s}) & 1 \le t \le T_s-1\\
	\tfrac{\gamma_s}{\alpha_s} & t=T_s
	\end{cases}
	\end{align} 
	and define
	\begin{align}
	\mathcal{L}_s :=\frac{\gamma_{s}}{\alpha_{s}}+(T_s-1)\frac{\gamma_{s}(\alpha_{s}+p_s)}{\alpha_{s}};
	\end{align}
	\begin{align}
	\mathcal{R}_s := \frac{\gamma_{s}}{\alpha_{s}}(1-\alpha_{s})+(T_s-1)\frac{\gamma_{s}p_s}{\alpha_{s}}.
	\end{align}
	Under the conditions in Eq.~\eqref{VARAG-cord-lemma-3-assump-1} and Eq.~\eqref{VARAG-cord-lemma-3-assump-2}, we have:
	\begin{align*}
	\mathcal{L}_s \mathbb{E} \big[f(\tilde{x}^s)-f(x^*)\big]
	\le \  & \mathcal{R}_s \cdot \big[f(\tilde{x}^{s-1})-f(x^*)\big] + \big(\frac{1}{2}\|x^{s-1}-x^*\|^2-\frac{1}{2}\|x^s-x^*\|^2\big) \notag\\
	& + T_s \cdot \frac{\gamma_{s}}{\alpha_{s}} \cdot \frac{6\alpha_{s}\gamma_{s}\nu^2L^2d}{1-L\alpha_{s}\gamma_{s}} + T_s \cdot \frac{\gamma_{s}}{\alpha_{s}}(2-\alpha_s)L \sqrt{d}Z \nu,
	\end{align*}
	where $x^{*} := \arg \min_{x\in\R^d} f(x)$.
\end{lemma}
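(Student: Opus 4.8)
The plan is to follow the proof of Lemma~\ref{VARAG-lemma-4} almost verbatim, but replace the Gaussian ingredients by the coordinate-wise per-iteration bound of Lemma~\ref{VARAG-cord-lemma-3} and the deterministic estimate of Lemma~\ref{VARAG-cord-lemma-0}, and use that in the smooth convex regime of Theorem~\ref{VARAG-cord-theorem-1} we take $\tau=0$, so that $x_{t-1}^{+}=x_{t-1}$. First I would apply Lemma~\ref{VARAG-cord-lemma-3} (its hypotheses \eqref{VARAG-cord-lemma-3-assump-1}--\eqref{VARAG-cord-lemma-3-assump-2} being assumed here) with $\tau=0$ and an arbitrary $x\in\R^d$. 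This yields, for each inner iteration $t$, an inequality whose left side is $\tfrac{\gamma_s}{\alpha_s}[f(\bar x_t)-f(x)]+\tfrac12\|x_t-x\|^2$ and whose right side contains the recursive terms $\tfrac{\gamma_s}{\alpha_s}(1-\alpha_s-p_s)[f(\bar x_{t-1})-f(x)]$, $\tfrac{\gamma_s p_s}{\alpha_s}[f(\tilde x)-f(x)]$, $\tfrac12\|x_{t-1}-x\|^2$, the smoothing error $\tfrac{\gamma_s}{\alpha_s}\cdot\tfrac{6\alpha_s\gamma_s\nu^2L^2d}{1-L\alpha_s\gamma_s}$, and the residual $-\gamma_s\langle g_\nu(\underline x_t)-\nabla f(\underline x_t),\,x_{t-1}-x\rangle$.

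Next I would sum these inequalities over $t=1,\dots,T_s$. Using $\bar x_0=\tilde x$ and the choice~\eqref{VARAG-cord-def-theta-1} of $\theta_t$, the function-value terms telescope exactly as in~\cite{lan2019unified}: the net left side is $\sum_{t=1}^{T_s}\theta_t[f(\bar x_t)-f(x)]$ and the $\tilde x$-terms collapse to $\mathcal{R}_s[f(\tilde x)-f(x)]$; the quadratic terms telescope to $\tfrac12\|x_0-x\|^2-\tfrac12\|x_{T_s}-x\|^2$; the smoothing error accumulates to $T_s\cdot\tfrac{\gamma_s}{\alpha_s}\cdot\tfrac{6\alpha_s\gamma_s\nu^2L^2d}{1-L\alpha_s\gamma_s}$; and the residual becomes $-\gamma_s\sum_{t=1}^{T_s}\langle g_\nu(\underline x_t)-\nabla f(\underline x_t),\,x_{t-1}-x\rangle$. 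Then, using convexity of $f$, the identity $\tilde x^s=\big(\sum_t\theta_t\bar x_t\big)/\sum_t\theta_t$ and $\sum_t\theta_t=\mathcal{L}_s$, I would lower-bound $\sum_t\theta_t f(\bar x_t)$ by $\mathcal{L}_s f(\tilde x^s)$, set $x=x^{*}$, and substitute $x_0=x^{s-1}$, $x_{T_s}=x^s$, $\tilde x=\tilde x^{s-1}$; taking expectation over the randomness of epoch $s$ then produces everything except the residual.

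The one genuinely new ingredient — and the point where the argument departs from Lemma~\ref{VARAG-lemma-4} — is bounding the residual. In the Gaussian case the bias $\tilde g^s-\nabla f_\mu(\tilde x^{s-1})$ is the same for every inner iteration, which is exactly what makes the telescoping manipulation of Eq.~\eqref{VARAG-pivotal-error} work; here the bias $g_\nu(\underline x_t)-\nabla f(\underline x_t)$ depends on $t$, so I would bound each summand on its own. Cauchy--Schwarz together with the \emph{deterministic} estimate $\|g_\nu(\underline x_t)-\nabla f(\underline x_t)\|\le L\sqrt d\,\nu$ from Lemma~\ref{VARAG-cord-lemma-0} gives $|\mathbb{E}\langle g_\nu(\underline x_t)-\nabla f(\underline x_t),\,x_{t-1}-x^{*}\rangle|\le L\sqrt d\,\nu\,\mathbb{E}\|x_{t-1}-x^{*}\|$. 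To control $\mathbb{E}\|x_{t-1}-x^{*}\|$, which is \emph{not} itself one of the sequences covered by \textbf{(A2$_{\boldsymbol{\nu}}$)}, I would solve the definition of $\underline x_t$ (at $\tau=0$) for $x_{t-1}$, namely $\alpha_s x_{t-1}=\underline x_t-(1-\alpha_s-p_s)\bar x_{t-1}-p_s\tilde x^{s-1}$, so that $x_{t-1}-x^{*}=\tfrac1{\alpha_s}\big[(\underline x_t-x^{*})-(1-\alpha_s-p_s)(\bar x_{t-1}-x^{*})-p_s(\tilde x^{s-1}-x^{*})\big]$. The triangle inequality, the sign condition $1-\alpha_s-p_s\ge0$ (satisfied by the parameters), \textbf{(A2$_{\boldsymbol{\nu}}$)} applied to $\underline x_t$ and $\bar x_{t-1}$, and convexity of $\|\cdot\|$ (since $\tilde x^{s-1}$ is a convex combination of the $\bar x_t$ of epoch $s-1$, also controlled by \textbf{(A2$_{\boldsymbol{\nu}}$)}) then give $\mathbb{E}\|x_{t-1}-x^{*}\|\le\tfrac{2-\alpha_s}{\alpha_s}Z$. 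Summing over the $T_s$ inner iterations bounds the residual by $T_s\cdot\tfrac{\gamma_s}{\alpha_s}(2-\alpha_s)L\sqrt d\,Z\nu$, which is precisely the last term in the statement; collecting the pieces completes the proof. The only thing requiring care is the bookkeeping of conditioning when passing from the per-step bounds (conditional on $\mathcal{F}_{t-1}$) to the epoch-wise statement, but this is routine and identical to~\cite{lan2019unified}.
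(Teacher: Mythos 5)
Your proposal is correct and essentially mirrors the paper's argument: apply Lemma~\ref{VARAG-cord-lemma-3} with $\tau=0$, sum with the weights $\theta_t$, set $x=x^*$, and bound each residual term individually by Cauchy--Schwarz plus the deterministic bound $\|g_\nu(\underline x_t)-\nabla f(\underline x_t)\|\le L\sqrt d\,\nu$ from Lemma~\ref{VARAG-cord-lemma-0}, which is exactly what the paper does (cf.\ Eq.~\eqref{VARAG-cord-surplus-error}); your rewriting $\alpha_s(x_{t-1}-x^*)=(\underline x_t-x^*)-(1-\alpha_s-p_s)(\bar x_{t-1}-x^*)-p_s(\tilde x^{s-1}-x^*)$ is algebraically identical (at $\tau=0$, so $x_{t-1}^+=x_{t-1}$) to the paper's decomposition of $\alpha_s(x_{t-1}^+-x^*)$, and both yield the coefficient $(2-\alpha_s)$. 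One small thing you make explicit that the paper glosses over: $\tilde x^{s-1}$ is not literally in $\{\bar x_t\}\cup\{\underline x_t\}$, but it is a convex combination of $\{\bar x_t\}$ from the previous epoch, so \textbf{(A2$_{\boldsymbol{\nu}}$)} still controls $\E\|\tilde x^{s-1}-x^*\|\le Z$ by convexity of the norm --- a worthwhile clarification, but not a different proof.
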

\end{mdframed}

\begin{proof}[Proof of Lemma \ref{VARAG-cord-lemma-4}]
	 If we set $x = x^*$, Lemma \ref{VARAG-cord-lemma-3} can be written as
	\begin{align*}
	& \mathbb{E}_{i_t}\bigg[\frac{\gamma_{s}}{\alpha_{s}}\big[f(\bar{x}_t)-f(x^*)\big] + \frac{1}{2}\|x_t-x^*\|^2\bigg] \notag\\
	\le \  & \frac{\gamma_{s}}{\alpha_{s}}(1-\alpha_{s}-p_s)\big[f(\bar{x}_{t-1})-f(x^*)\big]+\frac{\gamma_{s}p_s}{\alpha_{s}} \big[f(\tilde{x})-f(x^*)\big] + \frac{1}{2}\|x_{t-1}-x^*\|^2 \notag\\
	& + \frac{\gamma_{s}}{\alpha_{s}} \cdot \frac{6\alpha_{s}\gamma_{s}\nu^2L^2d}{1-L\alpha_{s}\gamma_{s}} - \frac{\gamma_{s}}{\alpha_{s}} \cdot \alpha_{s}\langle g_{\nu}(\underline{x}_t) - \nabla f(\underline{x}_t), x_{t-1}^+ - x^* \rangle.
	\end{align*}
	Summing up these inequalities over $t = 1, \dots, T_s$, using the definition of $\theta_t$ and $\bar{x}_0 = \tilde{x}$, we get
	\begin{align*}
	\sum_{t=1}^{T_s}\theta_t \mathbb{E} \big[f(\bar{x}_t)-f(x^*)\big]
	\le \  & \left[\frac{\gamma_{s}}{\alpha_{s}}(1-\alpha_{s})+ (T_s-1)\frac{\gamma_{s}p_s}{\alpha_{s}}\right] \cdot \big[f(\tilde{x})-f(x^*)\big] + \left(\frac{1}{2}\|x_0-x^*\|^2-\frac{1}{2}\|x_{T_s}-x^*\|^2\right) \notag\\
	& + T_s \cdot \frac{\gamma_{s}}{\alpha_{s}} \cdot \frac{6\alpha_{s}\gamma_{s}\nu^2L^2d}{1-L\alpha_{s}\gamma_{s}} - \frac{\gamma_{s}}{\alpha_{s}} \sum_{t=1}^{T_s} \alpha_{s} \mathbb{E}\big[\langle g_{\nu}(\underline{x}_t) - \nabla f(\underline{x}_t), x_{t-1}^+ - x^* \rangle \big].
	\end{align*}
	Noticing that $\tilde{x}^s = \sum_{t=1}^{T_s}\big(\theta_t \bar{x}_t\big)/\sum_{t=1}^{T_s}\theta_t$, $\tilde{x} = \tilde{x}^{s-1}$, $x_0 = x^{s-1}$, $x_{T_s} = x^s$ and thanks to the convexity of $f$, the inequality above implies
	\begin{align*}
	\sum_{t=1}^{T_s}\theta_t \mathbb{E} \big[f(\tilde{x}^s)-f(x^*)\big]
	\le \  & \left[\frac{\gamma_{s}}{\alpha_{s}}(1-\alpha_{s})+ (T_s-1)\frac{\gamma_{s}p_s}{\alpha_{s}}\right] \cdot \big[f(\tilde{x}^{s-1})-f(x^*)\big] + \left(\frac{1}{2}\|x^{s-1}-x^*\|^2-\frac{1}{2}\|x^s-x^*\|^2\right) \notag\\
	& + T_s \cdot \frac{\gamma_{s}}{\alpha_{s}} \cdot \frac{6\alpha_{s}\gamma_{s}\nu^2L^2d}{1-L\alpha_{s}\gamma_{s}} - \frac{\gamma_{s}}{\alpha_{s}} \sum_{t=1}^{T_s} \alpha_{s} \mathbb{E}\big[\langle g_{\nu}(\underline{x}_t) - \nabla f(\underline{x}_t), x_{t-1}^+ - x^* \rangle \big],
	\end{align*}
	which is equivalent to
	\begin{align}
	\mathcal{L}_s \mathbb{E} \big[f(\tilde{x}^s)-f(x^*)\big]
	\le \  & \mathcal{R}_s \cdot \big[f(\tilde{x}^{s-1})-f(x^*)\big] + \left(\frac{1}{2}\|x^{s-1}-x^*\|^2-\frac{1}{2}\|x^s-x^*\|^2\right) \notag\\
	& + T_s \cdot \frac{\gamma_{s}}{\alpha_{s}} \cdot \frac{6\alpha_{s}\gamma_{s}\nu^2L^2d}{1-L\alpha_{s}\gamma_{s}} - \frac{\gamma_{s}}{\alpha_{s}} \sum_{t=1}^{T_s} \alpha_{s} \mathbb{E}\big[\langle g_{\nu}(\underline{x}_t) - \nabla f(\underline{x}_t), x_{t-1}^+ - x^* \rangle \big]. \label{VARAG-cord-lemma-4-ineq-1}
	\end{align}
	Now, let us look at the additional term $\langle g_{\nu}(\underline{x}_t) - \nabla f(\underline{x}_t), x_{t-1}^+ - x^* \rangle$, \textit{which can not eliminated by expectation compared to gradient-based Varag }\cite{lan2019unified}. According to Eq.~\eqref{VARAG-eq-1} and the update rule of $\bar{x}_t$ in Algorithm \ref{algorithm-VARAG},
	we have
	\begin{align*}
	\alpha_s(x_{t-1}^+ - x^*) = \  & \alpha_s x_t + \underline{x}_t - \bar{x}_t - \alpha_s x^*\\
	= \  & - (1-\alpha_s-p_s)\bar{x}_{t-1} - p_s \tilde{x} + \underline{x}_t - \alpha_s x^*\\
	= \  & - (1-\alpha_s-p_s)(\bar{x}_{t-1}-x^*) - p_s (\tilde{x}-x^*) + (\underline{x}_t-x^*)\\
	\le \  & (1-\alpha_s-p_s)||\bar{x}_{t-1}-x^*|| + p_s ||\tilde{x}-x^*|| + ||\underline{x}_t-x^*||.
	\end{align*}
	Thanks to assumption \textbf{(A2$_{\boldsymbol{\nu}}$)}, we have
	\begin{align}
	\alpha_s \langle g_{\nu}(\underline{x}_t) - \nabla f(\underline{x}_t), x_{t-1}^+ - x^*\rangle \le \  & \|g_{\nu}(\underline{x}_t) - \nabla f(\underline{x}_t)\| \cdot \|\alpha_s(x_{t-1}^+ - x^*)\| \notag\\
	\le \  & (2-\alpha_s)L \sqrt{d}Z \nu. \label{VARAG-cord-surplus-error}
	\end{align}
	
	The last inequality comes from Lemma \ref{VARAG-cord-lemma-0}. Combining the previous inequality with Eq.~\eqref{VARAG-cord-lemma-4-ineq-1}, we have
	\begin{align*}
	\mathcal{L}_s \mathbb{E} \big[f(\tilde{x}^s)-f(x^*)\big]
	\le \  & \mathcal{R}_s \cdot \big[f(\tilde{x}^{s-1})-f(x^*)\big] + \big(\frac{1}{2}\|x^{s-1}-x^*\|^2-\frac{1}{2}\|x^s-x^*\|^2\big) \notag\\
	& + T_s \cdot \frac{\gamma_{s}}{\alpha_{s}} \cdot \frac{6\alpha_{s}\gamma_{s}\nu^2L^2d}{1-L\alpha_{s}\gamma_{s}} + T_s \cdot \frac{\gamma_{s}}{\alpha_{s}}(2-\alpha_s)L \sqrt{d}Z \nu.
	\end{align*}
\end{proof}

\begin{remark}
Here $\mathbb{E}_{i_t}\big[\delta_t\big] = g_{\nu}(\underline{x}_t) - \nabla f(\underline{x}_t) \neq 0$. Notice that the error terms in Eq.~\eqref{VARAG-cord-surplus-error}, i.e. $\langle g_{\nu}(\underline{x}_t) - \nabla f(\underline{x}_t), x_{t-1}^+ - x^* \rangle$, is different from its counterpart in Eq.~\eqref{VARAG-pivotal-error}, i.e. $\langle \tilde{g}-\nabla f_{\mu}(\tilde{x}), x_t - x^*\rangle$.
\end{remark}

\noindent Then, we can derive Theorem \ref{VARAG-cord-theorem-1} for convex and smooth $f_i$, based on Lemma \ref{VARAG-cord-lemma-4}. For convenience of the reader, we re-write the theorem here.
\begin{mdframed}
\textbf{Theorem 6.}	Consider the coordinate-wise variant of Algorithm~\ref{algorithm-VARAG}. Assume \textbf{(A1)} and \textbf{(A2$_{\boldsymbol{\nu}}$)}. Let us denote $s_0 := \lfloor \log n \rfloor+1$. Suppose the weights $\{\theta_t\}$ are set as in Eq.~\eqref{VARAG-def-theta-paper}
	and parameters $\{T_s\}$, $\{\gamma_s\}$, $\{p_s\}$ are set as
	\begin{align}
	T_s = \begin{cases}
	2^{s-1}, & s \le s_0\\
	T_{s_0}, & s > s_0
	\end{cases}, \
	\gamma_s = \tfrac{1}{12 L \alpha_s}, \
	\ \
	p_s = \tfrac{1}{2}, \ \mbox{with}
	\end{align}
	\begin{align}
	\alpha_s =
	\begin{cases}
	\tfrac{1}{2}, & s \le s_0\\
	\tfrac{2}{s-s_0+4},& s > s_0
	\end{cases}.
	\end{align}
	Then, we have
	\begin{align*}
	& \mathbb{E} \big[f(\tilde{x}^s)-f^*\big] \le\begin{cases} 
	\cfrac{D_0'}{2^{s+1}}  + \varsigma_1 + \varsigma_2, &  \ \ \ \ \ \  1 \le s \le s_0\\
	\cfrac{16 D_0'}{n(s-s_0+4)^2} + \delta_s \cdot (\varsigma_1+\varsigma_2), & \ \ \ \ \ \ s > s_0 \\
	\end{cases}
	\end{align*}
	where $\varsigma_1=\nu^2Ld$, $\varsigma_2=4L\sqrt{d}Z\nu$, $\delta_s = \mathcal{O}(s-s_0)$ and $D_0'$ is defined as
	\begin{align}
	D_0':= 2[f(x^0) - f(x^*)] + 6L \|x^0-x^* \|^2,
	\end{align}
	where $x^*$ is any finite minimizer of $f$.
\end{mdframed}

\begin{proof}[Proof of Theorem \ref{VARAG-cord-theorem-1}]
	Assumption Eq.~\eqref{VARAG-cord-lemma-3-assump-1} and Eq.~\eqref{VARAG-cord-lemma-3-assump-2} are satisfied since 
	\begin{align}
	1+\tau \gamma_{s} - L\alpha_{s}\gamma_{s} = 1 - \frac{1}{12} > 0,
	\end{align}
	\begin{align}
	p_s - \frac{4L\alpha_{s}\gamma_{s}}{1+\tau \gamma_{s} - L\alpha_{s}\gamma_{s}} = \frac{1}{2}-\frac{1}{3}\cdot \frac{1}{1-\frac{1}{12}} > 0.
	\end{align}
	We define
	\begin{align}
	w_s := \mathcal{L}_s - \mathcal{R}_{s+1}.
	\end{align}
	As in \cite{lan2019unified}, if $1\le s <s_0$, 
	\begin{align*}
	w_s = \mathcal{L}_s - \mathcal{R}_{s+1} = \frac{\gamma_{s}}{\alpha_s}\big[1+(T_s-1)(\alpha_{s}+p_s)-(1-\alpha_{s})-(2T_s-1)p_s\big] = \frac{\gamma_{s}}{\alpha_{s}}\big[T_s(\alpha_{s}-\gamma_{s})\big] = 0;
	\end{align*}
	else, if $s \ge s_0$,
	\begin{align*}
	w_s & = \mathcal{L}_s - \mathcal{R}_{s+1} = \frac{\gamma_{s}}{\alpha_{s}} -  \frac{\gamma_{s+1}}{\alpha_{s+1}}(1-\alpha_{s+1})+(T_{s_0}-1)\left[\frac{\gamma_{s}(\alpha_{s}+p_s)}{\alpha_{s}}-\frac{\gamma_{s+1}p_{s+1}}{\alpha_{s+1}}\right]\\
	& = \frac{1}{48L}+\frac{(T_{s_0}-1)\left[2(s-s_0+4)-1\right]}{96L} > 0.
	\end{align*}
	Hence, $w_s \ge 0$ for all $s$. Using Lemma \ref{VARAG-cord-lemma-4} iteratively,
	\begin{align}
	& \mathcal{L}_s \mathbb{E} \big[f(\tilde{x}^s)-f(x^*)\big] \notag\\
	\le \  & \mathcal{R}_1 \cdot \mathbb{E} \big[f(\tilde{x}^{0})-f(x^*)\big] + \mathbb{E}\left[\frac{1}{2}\|x^{0}-x^*\|^2-\frac{1}{2}\|x^s-x^*\|^2\right] + \sum_{j=1}^{s} T_j \cdot \frac{\gamma_{j}}{\alpha_{j}} \cdot \frac{6\alpha_{j}\gamma_{j}\nu^2L^2d}{1-L\alpha_{j}\gamma_{j}}\notag\\
	& + \sum_{j=1}^{s} T_j \cdot \frac{\gamma_{j}}{\alpha_{j}}(2-\alpha_j)L \sqrt{d}Z \nu \notag\\
	= \  & \frac{1}{6L} \big[f(\tilde{x}^{0})-f(x^*)\big] + \frac{1}{2}\|x^{0}-x^*\|^2 + \sum_{j=1}^{s} T_j \cdot \frac{\gamma_{j}}{\alpha_{j}} \cdot \frac{6\alpha_{j}\gamma_{j}\nu^2L^2d}{1-L\alpha_{j}\gamma_{j}} + \sum_{j=1}^{s} T_j \cdot \frac{\gamma_{j}}{\alpha_{j}}(2-\alpha_j)L \sqrt{d}Z \nu \notag\\
	= \  & \frac{1}{12L} D_0' + \sum_{j=1}^{s} \frac{1}{2} T_j \cdot \frac{\gamma_{j}}{\alpha_{j}} \cdot \frac{\nu^2Ld}{1-L\alpha_{j}\gamma_{j}} + \sum_{j=1}^{s} T_j \cdot \frac{\gamma_{j}}{\alpha_{j}}(2-\alpha_j)L \sqrt{d}Z \nu \notag\\
	\le \  & \frac{1}{12L} D_0' + \sum_{j=1}^{s} \frac{1}{2}T_j \cdot \frac{\gamma_{j}}{\alpha_{j}} \cdot \nu^2Ld + \sum_{j=1}^{s} 2 T_j \cdot \frac{\gamma_{j}}{\alpha_{j}}L \sqrt{d}Z \nu. \label{VARAG-cord-theorem-1-ineq-1}
	\end{align}
	The last equality holds since $\alpha_{j}\gamma_{j} = \frac{1}{12L}$. We proceed in two cases:\\
	\\
	\textbf{Case I:} If $s \le s_0$, $\mathcal{L}_s = \frac{2^{s+1}}{12L}$, $\mathcal{R}_s = \frac{2^{s}}{12L} = \frac{\mathcal{L}_s}{2}$, $\frac{\gamma_{s}}{\alpha_{s}} = \frac{1}{3L}$, $T_s = 2^{s-1}$. Hence, we have
	\begin{align}
	& \mathbb{E} \big[f(\tilde{x}^s)-f(x^*)\big] \le \frac{1}{2^{s+1}} D_0' + \nu^2Ld + 4L\sqrt{d}Z\nu, \qquad 1 \le s \le s_0.
	\label{VARAG-cord-theorem-1-ineq-2}
	\end{align}
	\textbf{Case II:} If $s > s_0$, we have
	\begin{align*}
	\mathcal{L}_s = \  & \frac{1}{12\alpha_s^2}\big[(T_s-1)\alpha_{s} + \frac{1}{2}(T_s+1)\big]\\
	= \  & \frac{(s-s_0+4)^2}{48L} \cdot \big[(T_{s_0}-1)\alpha_{s} + \frac{1}{2}(T_{s_0}+1)\big]\\
	\ge \ & \frac{(s-s_0+4)^2}{96L} \cdot (T_{s_0}+1)\\
	\ge \ & \frac{n \cdot (s-s_0+4)^2}{192L}.
	\end{align*}
	where the last inequality holds since $T_{s_0} = 2^{\lfloor \log_{2}n\rfloor} \ge \frac{n}{2}$, i.e. $2^{s_0} \ge n$. Hence, Eq.~\eqref{VARAG-cord-theorem-1-ineq-1} implies
	\begin{align}
	\mathbb{E} \big[f(\tilde{x}^s)-f(x^*)\big]
	\le \frac{16 D_0'}{n(s-s_0+4)^2} + \mathcal{O}(s-s_0) \cdot \nu^2Ld+ \mathcal{O}(s-s_0) \cdot L\sqrt{d}Z\nu.
	\label{VARAG-cord-theorem-1-ineq-3}
	\end{align}
\end{proof}

We can now derive the final complexity result.
\begin{proof}[Proof of Corollary \ref{VARAG-cord-corollary-1}]
Using the same technique as for the proof of Corollary~\ref{VARAG-corollary-1}, we can make the error terms depending on $\nu$ vanishing. This requires $\nu = \mathcal{O}\big(\frac{\epsilon^{1/2}}{L^{1/2} d^{1/2}} \big)$, $\nu = \mathcal{O}\big(\frac{\epsilon}{Ld^{1/2}Z}\big)$ for the first case ($1 \le s \le s_0$) while $\nu = \mathcal{O}\big(\frac{n^{1/4}\epsilon^{3/4}}{L^{1/2} d^{1/2} D_0'^{1/4}} \big)$, $\nu = \mathcal{O}\big(\frac{n^{1/2}\epsilon^{3/2}}{Ld^{1/2}Z D_0'^{1/2}}\big)$ for the second case ($s > s_0$) to ensure $\epsilon$-optimality, $\frac{\epsilon}{2}$ more specifically. Hence, we can proceed as in~\cite{lan2019unified} , neglecting the errors coming from the DFO framework~(note that a similar procedure is adopted also in~\cite{nesterov2017random} and~\cite{liu2018stochastic,liu2018stochasticb}). If $n \ge \frac{D_0'}{\epsilon}$, we require
	\begin{align*}
	\frac{D_0'}{2^{s_0+1}} \le \frac{D_0'}{2n}\le \frac{\epsilon}{2}.
	\end{align*}
	Therefore, the number of epochs can be bounded by $S_l = \min \left\{\log\left(\frac{D_0'}{\epsilon}\right),s_0\right\}$, achieving $\epsilon$ optimality inside Case I~(see proof of Theorem~\ref{VARAG-cord-theorem-1}). The total number of function queries is bounded by
	\begin{align*}
	 d\left(nS_l + \sum_{s=1}^{S_l}T_s\right) = d \cdot \mathcal{O} \bigg\{\min \bigg(n\log\left(\frac{D_0'}{\epsilon}\right), n \log (n), n\bigg)\bigg\} = d \cdot \mathcal{O} \bigg\{\min \left( n \log \left(\frac{D_0'}{\epsilon}\right),n \right)\bigg\},
	\end{align*}
	where the coefficient $d$ corresponds to the number of function queries for each gradient estimation. All in all, the number of function queries is $\mathcal{O} \left\{dn \log \left(\frac{D_0'}{\epsilon}\right)\right\}$.\\
	\\
	If $n < \frac{D_0'}{\epsilon}$ (Case II), we have $S_h = \left\lceil \sqrt{\frac{32D_0'}{n\epsilon}} + s_0 -4 \right\rceil$, ensuring the first term in Eq.~\eqref{VARAG-cord-theorem-1-ineq-3}  is not bigger than $\frac{\epsilon}{2}$. We can achieve $\epsilon$ optimality. Hence, the total number of function queries is
	\begin{align*}
	d\left[n s_0 + \sum_{s=1}^{s_0}T_s + (T_{s_0}+n)(S_h-s_0)\right] \le d \left[\sum_{s=1}^{s_0}T_s + (T_{s_0}+n)S_h\right]
	= \mathcal{O}\bigg\{d\sqrt{\frac{nD_0'}{\epsilon}} + dn \log(n) \bigg\}.
	\end{align*}
\end{proof}

\subsection{Proof of Theorem~\ref{VARAG-cord-theorem-2}}
In this section, we consider $f$ to be strongly convex, which we denoted as \textbf{(A3)}. We rewrite below Theorem \ref{VARAG-cord-theorem-2}, for convenience of the reader:
\begin{mdframed}
\textbf{Theorem 8.} Consider the coordinate-wise variant of Algorithm~\ref{algorithm-VARAG}. Assume \textbf{(A1)}, \textbf{(A2$_{\boldsymbol{\nu}}$)} and \textbf{(A3)}. Let us denote $s_0 := \lfloor \log n \rfloor+1$ and assume that the
	weights $\{\theta_t\}$ are set to Eq.~\eqref{VARAG-def-theta-1} if $1\le s \le s_0$. Otherwise, they are set to
	\vspace{-1mm}
	\begin{align}
	\theta_t =
	\begin{cases}
	\Gamma_{t-1} - (1 - \alpha_s - p_s) \Gamma_{t}, & 1 \le t \le T_s-1,\\
	\Gamma_{t-1}, & t = T_s,
	\end{cases}
	\end{align}
	where $\Gamma_t= \big(1+\tau\gamma_s\big)^t$.
	If the parameters $\{T_s\}$, $\{\gamma_s\}$ and $\{p_s\}$ set  to
		\vspace{-2mm}
	Eq.~\eqref{cord-parameter-deter-smooth1} with 
	\begin{align}
	\alpha_s =
	\begin{cases}
	\tfrac{1}{2}, & s \le s_0,\\
	\min\{\sqrt{\frac{n \tau}{12L}}, \tfrac{1}{2}\},& s > s_0.
	\end{cases}
	\end{align}
	\vspace{-1.5mm}
	We obtain
	\begin{align*}
	& \mathbb{E} \big[f(\tilde{x}^{s})-f^*\big] \le  \begin{cases}
	\cfrac{1}{2^{s+1}} D_0' + \varsigma_1 + \varsigma_2, & \ \ \ \ 1 \le s \le s_0\\
	& \\
	(4/5)^{s-s_0}\cfrac{D_0'}{n} + \varsigma_1 + \varsigma_2,  &  \ \ \ \ s > s_0 \text{ and } n \ge \frac{3L}{\tau}\\
	& \\
	\left(1+\frac{1}{4} \sqrt{\frac{n\tau}{3L}}\right)^{-(s-s_0)} \cfrac{D_0'}{n} + \big(2\sqrt{\frac{3L}{n\tau}}+1\big)(\varsigma_1 + \varsigma_2), &  \ \ \ \  s > s_0 \text{ and } n < \frac{3L}{\tau}
	\end{cases}
	\end{align*}
	where $\varsigma_1=9\nu^2Ld$, $\varsigma_2 = 24L\sqrt{d}Z\nu$ and $D_0'$ is defined as in Eq.~\eqref{VARAG-cord-def-D_0}.
\end{mdframed}
We start with a lemma.
\begin{mdframed}
\begin{lemma}\label{VARAG-cord-lemma-5}
	Consider the coordinate-wise variant of Algorithm~\ref{algorithm-VARAG}. Assume \textbf{(A1)}, \textbf{(A2$_{\boldsymbol{\nu}}$)} and \textbf{(A3)}.  Under the choice of parameters from Theorem~\ref{VARAG-cord-theorem-2}, we have
	\begin{align}
	& \mathbb{E}_{i_t}\bigg[\frac{\gamma_{s}}{\alpha_{s}}\big[f(\bar{x}_t)-f^{*}\big] + \frac{(1+\tau \gamma_{s})}{2}\|x_t-x^*\|^2\bigg] \notag\\
	\le \  & \frac{\gamma_{s}}{\alpha_{s}}(1-\alpha_{s}-p_s)\big[f(\bar{x}_{t-1})-f^{*}\big]+\frac{\gamma_{s}p_s}{\alpha_{s}} \big[f(\tilde{x})-f^{*}\big] + \frac{1}{2}\|x_{t-1}-x^*\|^2 + \frac{\gamma_{s}}{\alpha_{s}} \cdot \frac{3}{4}\nu^2Ld + \frac{\gamma_s}{\alpha_s} (2-\alpha_s)L \sqrt{d}Z \nu. \label{VARAG-cord-lemma-5-result}
	\end{align}
\end{lemma}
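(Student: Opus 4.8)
The plan is to obtain \eqref{VARAG-cord-lemma-5-result} directly from Lemma~\ref{VARAG-cord-lemma-3} by setting $x=x^*$ and then disposing of the two residual error terms using the explicit parameter choices of Theorem~\ref{VARAG-cord-theorem-2} together with \textbf{(A2$_{\boldsymbol{\nu}}$)}. First I would verify the hypotheses \eqref{VARAG-cord-lemma-3-assump-1} and \eqref{VARAG-cord-lemma-3-assump-2}: since $\gamma_s=1/(12L\alpha_s)$ we have $L\alpha_s\gamma_s=\tfrac{1}{12}$, hence $1+\tau\gamma_s-L\alpha_s\gamma_s=1+\tau\gamma_s-\tfrac{1}{12}\ge\tfrac{11}{12}>0$, and with $p_s=\tfrac12$,
\[
p_s-\frac{4L\alpha_s\gamma_s}{1+\tau\gamma_s-L\alpha_s\gamma_s}=\frac12-\frac{1/3}{11/12+\tau\gamma_s}\ge\frac12-\frac{4}{11}>0 .
\]
So Lemma~\ref{VARAG-cord-lemma-3} applies and, with $x=x^*$ and $f(x^*)=f^*$, it gives exactly \eqref{VARAG-cord-lemma-3-result} as the starting inequality.

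Next I would simplify the $\nu^2L^2d$ summand. Using again $\alpha_s\gamma_s=1/(12L)$ and $1+\tau\gamma_s-L\alpha_s\gamma_s\ge\tfrac{11}{12}$,
\[
\frac{6\alpha_s\gamma_s\nu^2L^2d}{1+\tau\gamma_s-L\alpha_s\gamma_s}=\frac{\tfrac12\nu^2Ld}{1+\tau\gamma_s-\tfrac{1}{12}}\le\frac{\tfrac12\nu^2Ld}{\tfrac{11}{12}}\le\frac34\nu^2Ld ,
\]
so this term contributes at most $\tfrac{\gamma_s}{\alpha_s}\cdot\tfrac34\nu^2Ld$, matching the second-to-last term of \eqref{VARAG-cord-lemma-5-result}.

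The remaining and only genuinely new task is to bound the inner-product error $-\tfrac{\gamma_s}{\alpha_s}\alpha_s\langle g_\nu(\underline x_t)-\nabla f(\underline x_t),\,x_{t-1}^+ - x^*\rangle$, which, in contrast to gradient-based Varag, does \emph{not} vanish in expectation because $\mathbb{E}_{i_t}[\delta_t]=g_\nu(\underline x_t)-\nabla f(\underline x_t)\ne0$. I would reuse verbatim the argument leading to Eq.~\eqref{VARAG-cord-surplus-error}: from Eq.~\eqref{VARAG-eq-1} and the update rule for $\bar x_t$,
\[
\alpha_s(x_{t-1}^+ - x^*)=-(1-\alpha_s-p_s)(\bar x_{t-1}-x^*)-p_s(\tilde x-x^*)+(\underline x_t-x^*),
\]
and then combine the triangle inequality, Cauchy--Schwarz, the bound $\|g_\nu(\underline x_t)-\nabla f(\underline x_t)\|\le L\sqrt d\,\nu$ from Lemma~\ref{VARAG-cord-lemma-0}, and \textbf{(A2$_{\boldsymbol{\nu}}$)} — which bounds $\mathbb{E}\|\bar x_{t-1}-x^*\|$, $\mathbb{E}\|\underline x_t-x^*\|$ by $Z$ (and $\mathbb{E}\|\tilde x-x^*\|\le Z$ by Jensen, as $\tilde x$ is a convex combination of earlier $\bar x_t$'s) — to get, in expectation, $\alpha_s\langle g_\nu(\underline x_t)-\nabla f(\underline x_t),x_{t-1}^+ - x^*\rangle\le(2-\alpha_s)L\sqrt d\,Z\nu$. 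Substituting this and the previous bound into \eqref{VARAG-cord-lemma-3-result} yields \eqref{VARAG-cord-lemma-5-result}. The only delicate point — and the nearest thing to an obstacle — is bookkeeping: the $Z$-estimate forces us to pass to the full expectation, so the inequality is stated after taking that expectation, exactly as in the proof of Lemma~\ref{VARAG-cord-lemma-4}; no new idea is needed beyond this and tracking the constants.
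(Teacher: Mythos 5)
Your proposal is correct and follows essentially the same route as the paper: specialize Lemma~\ref{VARAG-cord-lemma-3} at $x=x^*$, verify the two parameter constraints, absorb the variance summand into $\tfrac34\nu^2Ld$ via $\alpha_s\gamma_s=1/(12L)$, and bound the non-vanishing inner-product error by the decomposition of $\alpha_s(x_{t-1}^+-x^*)$ together with Lemma~\ref{VARAG-cord-lemma-0} and \textbf{(A2$_{\boldsymbol{\nu}}$)}, exactly as in Eq.~\eqref{VARAG-cord-surplus-error}. You are in fact slightly more explicit than the paper about the bookkeeping: you note that invoking the constant $Z$ (and the Jensen step for $\tilde x$) forces passage to the full expectation, a point the paper's terse proof leaves implicit.
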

\end{mdframed}

\begin{proof}[Proof of Lemma \ref{VARAG-cord-lemma-5}]
	For strongly convex $f$, Eq.~\eqref{VARAG-cord-lemma-3-result} becomes,
	\begin{align*}
	& \mathbb{E}_{i_t}\bigg[\frac{\gamma_{s}}{\alpha_{s}}\big[f(\bar{x}_t)-f^{*}\big] + \frac{(1+\tau \gamma_{s})}{2}\|x_t-x^*\|^2\bigg] \notag\\
	\le \  & \frac{\gamma_{s}}{\alpha_{s}}(1-\alpha_{s}-p_s)\big[f(\bar{x}_{t-1})-f^{*}\big]+\frac{\gamma_{s}p_s}{\alpha_{s}} \big[f(\tilde{x})-f^{*}\big] + \frac{1}{2}\|x_{t-1}-x^*\|^2 \notag\\
	& + \frac{\gamma_{s}}{\alpha_{s}} \cdot \frac{6\alpha_{s}\gamma_{s}\nu^2L^2d}{1+\tau\gamma_{s}-L\alpha_{s}\gamma_{s}} - \frac{\gamma_{s}}{\alpha_{s}} \cdot \alpha_{s}\langle g_{\nu}(\underline{x}_t) - \nabla f(\underline{x}_t), x_{t-1}^+ - x \rangle\\
	\le \  & \frac{\gamma_{s}}{\alpha_{s}}(1-\alpha_{s}-p_s)\big[f(\bar{x}_{t-1})-f^{*}\big]+\frac{\gamma_{s}p_s}{\alpha_{s}} \big[f(\tilde{x})-f^{*}\big] + \frac{1}{2}\|x_{t-1}-x^*\|^2 \notag\\
	& + \frac{\gamma_{s}}{\alpha_{s}} \cdot \frac{3}{4}\nu^2Ld + \frac{\gamma_s}{\alpha_s} \cdot (2-\alpha_s)L \sqrt{d}Z \nu.
	\end{align*}
	The last inequality holds when $\alpha_{s}$ and $\gamma_{s} $ are as defined in Theorem \ref{VARAG-cord-theorem-2} and Eq.~\eqref{VARAG-cord-surplus-error}.
\end{proof}

We divide the proof of Theorem \ref{VARAG-cord-theorem-2} into three cases, corresponding to Lemma \ref{VARAG-cord-lemma-6}, Lemma \ref{VARAG-cord-lemma-7}, Lemma \ref{VARAG-cord-lemma-8}.
\begin{mdframed}
\begin{lemma}\label{VARAG-cord-lemma-6}
	Consider the coordinate-wise variant of Algorithm~\ref{algorithm-VARAG}. Assume \textbf{(A1)}, \textbf{(A2$_{\boldsymbol{\nu}}$)} and \textbf{(A3)}. Under the choice of parameters from Theorem~\ref{VARAG-cord-theorem-2}, if $s \le s_0$, for any $x \in \mathbb{R}^d$ we have
	\begin{align*}
	\mathbb{E}\big[f(\tilde{x}^s) - f^{*}\big] \le \frac{1}{2^{s+1}} D_0' + \frac{3}{2} \nu^2Ld + 4L \sqrt{d}Z \nu,
	\end{align*}
	where $D_0'$ is defined in Eq.~\eqref{VARAG-cord-def-D_0}.
\end{lemma}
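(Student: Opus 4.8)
The plan is to mirror the proof of Lemma~\ref{VARAG-lemma-6} from the Gaussian-smoothing analysis, using Lemma~\ref{VARAG-cord-lemma-5} in place of Lemma~\ref{VARAG-lemma-5}. The key simplification in the coordinate-wise variant is that the pivotal-error contribution has already been absorbed into a \emph{deterministic} summand in Lemma~\ref{VARAG-cord-lemma-5} (the term $\tfrac{\gamma_s}{\alpha_s}(2-\alpha_s)L\sqrt d\,Z\nu$, controlled via \textbf{(A2$_{\boldsymbol{\nu}}$)}), so there is no need for the $c$-parameter splitting used in the strongly-convex case: the inequality in Lemma~\ref{VARAG-cord-lemma-5} is already in a form that telescopes directly. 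For $s\le s_0$ the relevant parameters are $\alpha_s=p_s=\tfrac12$, $\gamma_s=\tfrac{1}{6L}$ (so $\tfrac{\gamma_s}{\alpha_s}=\tfrac{1}{3L}$, $\tfrac{\gamma_s p_s}{\alpha_s}=\tfrac{1}{6L}$), $T_s=2^{s-1}$, and, crucially, $1-\alpha_s-p_s=0$; moreover the weights $\theta_t$ are all equal to $\tfrac{\gamma_s}{\alpha_s}$ since $\alpha_s+p_s=1$, whence $\tilde x^s=\tfrac{1}{T_s}\sum_{t=1}^{T_s}\bar x_t$.

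First I would set $x=x^*$ in Lemma~\ref{VARAG-cord-lemma-5} and sum the resulting inequality over $t=1,\dots,T_s$. Because $1-\alpha_s-p_s=0$ the $f(\bar x_{t-1})$ contributions drop out, and because $1+\tau\gamma_s\ge 1$ the squared-distance terms telescope, leaving only $\tfrac12\mathbb{E}\|x_{T_s}-x^*\|^2$ on the left and $\tfrac12\|x_0-x^*\|^2$ on the right. Then, using convexity of $f$ (Jensen) together with $\tilde x^s=\tfrac{1}{T_s}\sum_t\bar x_t$, I would lower-bound the left side by $\tfrac{\gamma_s}{\alpha_s}T_s\,\mathbb{E}\big[f(\tilde x^s)-f^*\big]+\tfrac12\mathbb{E}\|x^s-x^*\|^2$. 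Substituting $x_0=x^{s-1}$, $x_{T_s}=x^s$, $\tilde x=\tilde x^{s-1}$ and the numeric values, and using $\tfrac{T_s}{6L}=\tfrac{T_{s-1}}{3L}$, this produces the one-step recursion $\tfrac{T_s}{3L}\mathbb{E}[f(\tilde x^s)-f^*]+\tfrac12\mathbb{E}\|x^s-x^*\|^2\le \tfrac{T_{s-1}}{3L}[f(\tilde x^{s-1})-f^*]+\tfrac12\|x^{s-1}-x^*\|^2+T_s\big(\tfrac14\nu^2 d+\tfrac12\sqrt d\,Z\nu\big)$.

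Next I would unroll this recursion from $s$ down to $0$ (with the usual convention $T_0=\tfrac12$, consistent with $T_s=2^{s-1}$), multiply through by $\tfrac{3L}{T_s}$, and use $T_0/T_s=2^{-s}$ together with $\tfrac{1}{T_s}\sum_{j=1}^{s}T_j=\tfrac{2^s-1}{2^{s-1}}\le 2$. Recognizing that $\tfrac{1}{2^s}[f(x^0)-f^*]+\tfrac{3L}{2^{s+1}}\|x^0-x^*\|^2=\tfrac{1}{2^{s+1}}\big(2[f(x^0)-f^*]+3L\|x^0-x^*\|^2\big)\le\tfrac{1}{2^{s+1}}D_0'$, and bounding the accumulated errors by $\tfrac32\nu^2Ld$ and $3L\sqrt d\,Z\nu\le 4L\sqrt d\,Z\nu$, yields exactly the stated inequality. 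The whole argument is essentially bookkeeping once Lemma~\ref{VARAG-cord-lemma-5} is in hand; the only point requiring a little care — the main (mild) obstacle — is that the per-epoch DFO errors accumulate like $\sum_{j=1}^s T_j$ rather than $T_s$, so one must verify that this geometric sum stays within a constant factor of $T_s$, ensuring that after normalizing by $1/T_s$ the error bound has no residual $s$-dependence (this doubling of $T_s$ in the early epochs is precisely what keeps the error from growing).
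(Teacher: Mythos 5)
Your proposal follows the same route as the paper's own proof: specialize Lemma~\ref{VARAG-cord-lemma-5} to $x=x^*$ with $\alpha_s=p_s=\tfrac12$, $\gamma_s=\tfrac1{6L}$, $T_s=2^{s-1}$, sum over $t$, invoke Jensen and the telescoping of the squared-distance terms, unroll the one-step recursion with $T_0=\tfrac12$, and bound $\tfrac1{T_s}\sum_{j\le s}T_j\le 2$ to keep the DFO error independent of $s$. The bookkeeping (including the final constants $\tfrac32\nu^2Ld$ and $3L\sqrt d\,Z\nu\le 4L\sqrt d\,Z\nu$) matches the paper exactly, so this is correct and essentially the same argument.
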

\end{mdframed}

\begin{proof}[Proof of Lemma \ref{VARAG-cord-lemma-6}]
	For this case, $\alpha_s = p_s = \frac{1}{2}$, $\gamma_{s} = \frac{1}{6L}$, $T_s = 2^{s-1}$.
	For Lemma \ref{VARAG-cord-lemma-5}, sum it up from $t = 1$ to $T_s$, we have 
	\begin{align*}
	& \sum_{t=1}^{T_s} \frac{\gamma_{s}}{\alpha_{s}}\mathbb{E} \big[f(\bar{x}_t)-f^{*}\big] +  \frac{1}{2} \mathbb{E} \big[\|x_{T_s}-x^*\|^2\big]\notag\\
	\le \  & \frac{\gamma_{s}}{2\alpha_{s}} \cdot T_s \big[f(\tilde{x})-f^{*}\big] + \frac{1}{2}\|x_{0}-x^*\|^2 + T_s \cdot \frac{\gamma_{s}}{\alpha_{s}} \cdot \frac{3}{4}\nu^2Ld + T_s \cdot \frac{\gamma_s}{\alpha_s}\cdot (2-\alpha_s)L \sqrt{d}Z \nu.
	\end{align*}
	Since $f$ is convex, we have
	\begin{align*}
	& \frac{1}{3L} \cdot T_s \cdot \mathbb{E} \big[f(\tilde{x}^{s})-f^{*}\big] +  \frac{1}{2} \mathbb{E} \big[\|x^{s}-x^*\|^2\big] \notag\\
	\le \  & \frac{1}{6L} \cdot T_s \big[f(\tilde{x}^{s-1})-f^{*}\big] + \frac{1}{2}\|x^{s-1}-x^*\|^2 + T_s \cdot \frac{1}{3L} \cdot \frac{3}{4}\nu^2Ld + T_s \cdot \frac{1}{3L} \cdot (2-\alpha_s)L \sqrt{d}Z \nu \notag\\
	= \  & \frac{1}{3L} \cdot T_{s-1} \big[f(\tilde{x}^{s-1})-f^{*}\big] + \frac{1}{2}\|x^{s-1}-x^*\|^2 + T_s \cdot \frac{1}{3L} \cdot \frac{3}{4}\nu^2Ld + T_s \cdot \frac{1}{3L} \cdot (2-\alpha_s)L \sqrt{d}Z \nu,
	\end{align*}
	where $x_{T_s} = x^s$, $x_{0} = x^{s-1}$, $\tilde{x} = \tilde{x}^{s-1}$. From using the last inequality iteratively, we obtain
	\begin{align}
	& \frac{1}{3L} \cdot T_s \cdot \mathbb{E} \big[f(\tilde{x}^{s})-f^{*}\big] +  \frac{1}{2} \mathbb{E} \big[\|x^{s}-x^*\|^2\big] \notag\\
	\le \  & \frac{1}{3L} \cdot T_{0} \big[f(\tilde{x}^{0})-f^{*}\big] + \frac{1}{2}\|x^{0}-x^*\|^2 + \frac{1}{3L} \sum_{j=1}^{s} T_j \cdot \frac{3}{4} \nu^2Ld + \frac{1}{3L}\sum_{j=1}^{s} T_j \cdot (2-\alpha_j)L \sqrt{d}Z \nu \notag
	\end{align}
	where $T_0 = \frac{1}{2}$ is in accordance with the definition of $T_s = 2^{s-1}, \ s> 0$. Hence, we obtain
	\begin{align}
	& \mathbb{E} \big[f(\tilde{x}^{s})-f^{*}\big] + \frac{3L}{T_s} \cdot \frac{1}{2} \mathbb{E} \big[\|x^{s}-x^*\|^2\big] \notag\\
	\le \  & \frac{1}{2^s} \big[f(\tilde{x}^{0})-f^{*} + 3L\|x^{0}-x^*\|^2 \big] + \frac{1}{T_s} \sum_{j=1}^{s} T_j \cdot \frac{3}{4} \nu^2Ld + \frac{1}{T_s}\sum_{j=1}^{s} T_j \cdot (2-\alpha_j)L \sqrt{d}Z \nu \notag\\
	\le \  & \frac{1}{2^{s+1}} D_0' + \frac{3}{2} \nu^2Ld + 4L \sqrt{d}Z \nu. \label{VARAG-cord-lemma-6-ineq-1}
	\end{align}
	We conclude the proof by observing that $\frac{1}{2^{s-1}}\sum_{j=1}^{s} T_j \leq 2$ when $s \le s_0$.
\end{proof}

\begin{mdframed}
\begin{lemma}\label{VARAG-cord-lemma-7}
	Consider the coordinate-wise variant of Algorithm~\ref{algorithm-VARAG}. Assume \textbf{(A1)}, \textbf{(A2$_{\boldsymbol{\nu}}$)} and \textbf{(A3)}. Under the choice of parameters from Theorem~\ref{VARAG-cord-theorem-2}, if $s \ge s_0$ and $n \ge \frac{3L}{\tau}$, then for any $x \in \mathbb{R}^d$ we have:
	\begin{align*}
	\mathbb{E} \big[f(\tilde{x}^{s})-f^{*}\big] \le \left(\frac{4}{5}\right)^{s-s_0} \frac{D_0'}{n} + 9\nu^2Ld + 24L\sqrt{d}Z\nu. 
	\end{align*}
\end{lemma}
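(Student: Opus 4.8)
The plan is to follow the proof of Lemma~\ref{VARAG-lemma-7} essentially line by line, but working directly with $f$ (rather than its smoothed version $f_\mu$) and replacing the per-iteration estimate of Lemma~\ref{VARAG-lemma-5} by its coordinate-wise counterpart Lemma~\ref{VARAG-cord-lemma-5}. First I would specialize the parameters to the regime $s\ge s_0$ and $n\ge 3L/\tau$: there $\alpha_s=\alpha=p_s=p=\tfrac12$ (because $\sqrt{n\tau/12L}\ge\tfrac12$ exactly when $n\ge 3L/\tau$), $\gamma_s=\gamma=\tfrac{1}{6L}$ and $T_s=T_{s_0}=2^{s_0-1}$. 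Since then $1-\alpha-p=0$, the term in $f(\bar x_{t-1})$ drops out of Eq.~\eqref{VARAG-cord-lemma-5-result}, leaving, after taking $\mathbb{E}_{i_t}$, an inequality of the shape $\tfrac{\gamma}{\alpha}[f(\bar x_t)-f^*]+\tfrac{1+\tau\gamma}{2}\|x_t-x^*\|^2 \le \tfrac{\gamma}{2\alpha}[f(\tilde x)-f^*]+\tfrac12\|x_{t-1}-x^*\|^2+\tfrac{\gamma}{\alpha}\bigl(\tfrac34\nu^2Ld+(2-\alpha)L\sqrt{d}Z\nu\bigr)$.

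Next I would multiply through by $\Gamma_{t-1}=(1+\tau\gamma)^{t-1}$, observe that $\theta_t=\Gamma_{t-1}$ is exactly Eq.~\eqref{VARAG-cord-def-theta-2} specialised to this regime (as $(1-\alpha-p)\Gamma_t=0$) and that $\tfrac{1+\tau\gamma}{2}\Gamma_{t-1}=\tfrac{\Gamma_t}{2}$, and sum over $t=1,\dots,T_s$; the intermediate quadratic terms $\tfrac{\Gamma_t}{2}\|x_t-x^*\|^2$ cancel. Then, as in Lemma~\ref{VARAG-lemma-7}, I would use $\Gamma_{T_s}=(1+\tau\gamma)^{T_{s_0}}\ge 1+\tau\gamma T_{s_0}\ge 1+\tfrac{\tau n}{12L}\ge\tfrac54$ --- this is the only place where $n\ge 3L/\tau$ (together with $T_{s_0}\ge n/2$) is used --- to pull out a factor $\tfrac54$, invoke convexity of $f$ with $\tilde x^s=\sum_t\theta_t\bar x_t/\sum_t\theta_t$, substitute $\tilde x=\tilde x^{s-1}$, $x_0=x^{s-1}$, $x_{T_s}=x^s$, and divide by $\tfrac{\gamma}{2\alpha}\sum_t\theta_t$. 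This produces the one-step contraction $\Phi_s\le\tfrac45\Phi_{s-1}+\tfrac85\bigl(\tfrac34\nu^2Ld+(2-\alpha)L\sqrt{d}Z\nu\bigr)$ with $\Phi_s:=\mathbb{E}[f(\tilde x^s)-f^*]+\tfrac{2\alpha}{\gamma\sum_t\theta_t}\cdot\tfrac12\mathbb{E}\|x^s-x^*\|^2$.

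Finally I would unroll this recursion from $s_0$ to $s$. The geometric tail sums to $5$, so the accumulated DFO error is $8\bigl(\tfrac34\nu^2Ld+(2-\alpha)L\sqrt{d}Z\nu\bigr)\le 6\nu^2Ld+16L\sqrt{d}Z\nu$ after bounding $2-\alpha\le 2$. For the initial term I would use $\sum_t\theta_t\ge T_{s_0}$ and $\tfrac{2\alpha}{\gamma}=6L$ to get $\Phi_{s_0}\le 2\bigl[\mathbb{E}[f(\tilde x^{s_0})-f^*]+\tfrac{3L}{T_{s_0}}\cdot\tfrac12\mathbb{E}\|x^{s_0}-x^*\|^2\bigr]$, then plug in the $s\le s_0$ bound of Lemma~\ref{VARAG-cord-lemma-6}, obtaining $\Phi_{s_0}\le \tfrac{D_0'}{2^{s_0}}+3\nu^2Ld+8L\sqrt{d}Z\nu$. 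Combining and using $\tfrac{D_0'}{2^{s_0}}=\tfrac{D_0'}{2T_{s_0}}\le\tfrac{D_0'}{n}$ (since $2^{s_0}\ge n$) yields $\mathbb{E}[f(\tilde x^s)-f^*]\le(4/5)^{s-s_0}\tfrac{D_0'}{n}+9\nu^2Ld+24L\sqrt{d}Z\nu$, which is the claim.

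The main obstacle --- and essentially the only point where this proof departs from the Gaussian-smoothing argument --- is the error $\langle g_\nu(\underline x_t)-\nabla f(\underline x_t),x^+_{t-1}-x^*\rangle$ appearing in Lemma~\ref{VARAG-cord-lemma-3}: unlike in gradient-based Varag its conditional expectation is nonzero, but unlike in the Gaussian case it is already tamed once assumption \textbf{(A2$_{\boldsymbol{\nu}}$)} is invoked through Eq.~\eqref{VARAG-cord-surplus-error}, so no Young-type split against $\|x_t-x^*\|^2$ is needed. This is precisely why the weights here use $\Gamma_t=(1+\tau\gamma_s)^t$ with the full strong-convexity coefficient (instead of the halved $(1+\tfrac{\tau\gamma_s}{2})^t$ of Lemma~\ref{VARAG-lemma-7}), and the remaining effort is just the careful tracking of the additive constants through the $\tfrac54$-factor step and the geometric unrolling so as to land exactly on $9\nu^2Ld$ and $24L\sqrt{d}Z\nu$.
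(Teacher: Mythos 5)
Your proposal is correct and follows essentially the same route as the paper: specialize $\alpha_s=p_s=\tfrac12$, $\gamma_s=\tfrac1{6L}$, start from Lemma~\ref{VARAG-cord-lemma-5} with the $(1-\alpha-p)$ term vanishing, weight by $\Gamma_{t-1}=(1+\tau\gamma)^{t-1}$ and telescope, extract the factor $\tfrac54$ from $\Gamma_{T_{s_0}}\ge 1+\tfrac{\tau n}{12L}\ge\tfrac54$, unroll the contraction, and seed with Eq.~\eqref{VARAG-cord-lemma-6-ineq-1}. Your accounting of the geometric tail (factor $5$ on a per-step $\tfrac85[\cdots]$) is arithmetically equivalent to the paper's (factor $4$ on a per-step $2[\cdots]$), both yielding $6\nu^2Ld+16L\sqrt{d}Z\nu$, and your observations about where $n\ge 3L/\tau$ enters and why the full $(1+\tau\gamma_s)^t$ weight can be used here (the $\delta_t$-bias is controlled deterministically by \textbf{(A2$_{\boldsymbol{\nu}}$)} rather than by spending part of the strong convexity) are exactly the paper's reasoning.
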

\end{mdframed}

\begin{proof}[Proof of Lemma \ref{VARAG-cord-lemma-7}]
	For this case, $\alpha_s = \alpha = p_s = \frac{1}{2}$, $\gamma_{s} = \gamma = \frac{1}{6L}$, $T_s = 2^{s_0-1}$ when $s \ge s_0$. Based on Lemma \ref{VARAG-cord-lemma-5}, we have
	\begin{align*}
	\mathbb{E}_{i_t}\bigg[\frac{\gamma}{\alpha}\big[f(\bar{x}_t)-f^{*}\big] + \big(1+ \tau \gamma\big) \cdot \frac{1}{2}\|x_t-x^*\|^2\bigg]
	\le \frac{\gamma}{2\alpha} \big[f(\tilde{x})-f^{*}\big] + \frac{1}{2}\|x_{t-1}-x^*\|^2 + \frac{\gamma}{\alpha} \cdot \frac{3}{4}\nu^2Ld + \frac{\gamma}{\alpha} \cdot 2L \sqrt{d}Z \nu.
	\end{align*}
	Multiplying both sides by $\Gamma_{t-1} = (1 + \tau \gamma)^{t-1}$, we obtain
	\begin{align*}
	& \mathbb{E}_{i_t}\bigg[\frac{\gamma}{\alpha} \Gamma_{t-1} \big[f(\bar{x}_t)-f^{*}\big] + \frac{\Gamma_{t}}{2}\|x_t-x^*\|^2\bigg] \notag\\
	\le \  & \frac{\gamma}{2\alpha} \Gamma_{t-1} \big[f(\tilde{x})-f^{*}\big] + \frac{\Gamma_{t-1}}{2}\|x_{t-1}-x^*\|^2 + \frac{\gamma}{\alpha}\Gamma_{t-1} \cdot \frac{3}{4}\nu^2Ld + \frac{\gamma}{\alpha}\Gamma_{t-1} \cdot 2L \sqrt{d}Z \nu.
	\end{align*}
	Since $\theta_t = \Gamma_{t-1}$, as defined in Eq.~\eqref{VARAG-cord-def-theta-2}, the last inequality can be rewritten as
	\begin{align*}
	\mathbb{E}_{i_t}\bigg[\frac{\gamma}{\alpha} \theta_{t} \big[f(\bar{x}_t)-f^{*}\big] + \frac{\Gamma_{t}}{2}\|x_t-x^*\|^2\bigg]
	\le \frac{\gamma}{2\alpha} \theta_{t} \big[f(\tilde{x})-f^{*}\big] + \frac{\Gamma_{t-1}}{2}\|x_{t-1}-x^*\|^2 + \frac{\gamma}{\alpha}\theta_{t} \cdot \frac{3}{4}\nu^2Ld + \frac{\gamma}{\alpha}\theta_{t} \cdot 2L \sqrt{d}Z \nu.
	\end{align*}
	Summing up the inequality above from $t = 1$ to $T_s$, we obtain
	\begin{align*}
	& \frac{\gamma}{\alpha} \sum_{t=1}^{T_s}\theta_{t} \mathbb{E} \big[f(\bar{x}_t)-f^{*}\big] + \frac{\Gamma_{T_s}}{2} \mathbb{E} \|x_{T_s}-x^*\|^2\notag\\
	\le \  & \frac{\gamma}{2\alpha} \sum_{t=1}^{T_s}\theta_{t} \mathbb{E} \big[f(\tilde{x})-f^{*}\big] + \frac{1}{2}\|x_{0}-x^*\|^2 + \frac{\gamma}{\alpha} \cdot \frac{3}{4}\nu^2Ld \sum_{t=1}^{T_s}\theta_{t} + \frac{\gamma}{\alpha}\cdot 2L \sqrt{d}Z \nu \sum_{t=1}^{T_s} \theta_{t},
	\end{align*}
	and then
	\begin{align}
	& \frac{5}{4} \bigg[\frac{\gamma}{2\alpha} \sum_{t=1}^{T_s}\theta_{t} \mathbb{E} \big[f(\bar{x}_t)-f^{*}\big] + \frac{1}{2} \mathbb{E} \|x_{T_s}-x^*\|^2\bigg]\notag\\
	\le \  & \frac{\gamma}{2\alpha} \sum_{t=1}^{T_s}\theta_{t} \mathbb{E} \big[f(\tilde{x})-f^{*}\big] + \frac{1}{2}\|x_{0}-x^*\|^2 + \frac{\gamma}{\alpha} \cdot \frac{3}{4}\nu^2Ld \sum_{t=1}^{T_s}\theta_{t} + \frac{\gamma}{\alpha}\cdot 2L \sqrt{d}Z \nu \sum_{t=1}^{T_s} \theta_{t}, \label{VARAG-cord-lemma-7-ineq-1}
	\end{align}
	which is based on the fact that, for $s \ge s_0$, $\frac{n}{2}\le T_s = T_{s_0} \le n$, we have
	\begin{align*}
	\Gamma_{T_s} = \big(1+\tau \gamma\big)^{T_s} = \big(1+\tau \gamma\big)^{T_{s_0}} \ge 1+ \tau \gamma \cdot T_{s_0} \ge 1 + \tau \gamma \cdot \frac{n}{2} = 1 + \frac{\tau n}{12 L} \ge \frac{5}{4},
	\end{align*}
	where the last inequality is conditioned on $n \ge \frac{3L}{\tau}$. Since $\tilde{x}^{s} = \sum_{t=1}^{T_s} (\theta_t \bar{x}_{t})/ \sum_{t=1}^{T_s} \theta_t$, $\tilde{x} = \tilde{x}^{s-1}$, $x_0 = x^{s-1}$, $x_{T_s} = x^s$ in the epoch $s$ and thanks to the convexity of $f$, Eq.~\eqref{VARAG-cord-lemma-7-ineq-1} implies
	\begin{align*}
	& \frac{5}{4} \bigg[\frac{\gamma}{2\alpha} \mathbb{E} \big[f(\tilde{x}^{s})-f^{*}\big] + \frac{1}{2 \sum_{t=1}^{T_s}\theta_{t}} \mathbb{E} \|x^s-x^*\|^2\bigg]\notag\\
	\le \  & \frac{\gamma}{2\alpha} \mathbb{E} \big[f(\tilde{x}^{s-1})-f^{*}\big] + \frac{1}{2 \sum_{t=1}^{T_s}\theta_{t}}\|x^{s-1}-x^*\|^2 + \frac{\gamma}{\alpha} \cdot \frac{3}{4}\nu^2Ld + \frac{\gamma}{\alpha}\cdot 2L \sqrt{d}Z \nu.
	\end{align*}
	Multiplying both sides with $\frac{2 \alpha}{\gamma}$ and applying this inequality recursively for $s \ge s_0$, we obtain
	\begin{align*}
	& \mathbb{E} \big[f(\tilde{x}^{s})-f^{*}\big] + \frac{2\alpha}{\gamma \sum_{t=1}^{T_s}\theta_{t}} \cdot \frac{1}{2}\mathbb{E} \big[\|x^s-x^*\|^2\big] \notag\\
	\le \  & \left(\frac{4}{5}\right)^{s-s_0} \bigg[\mathbb{E} \big[f(\tilde{x}^{s_0})-f^{*}\big] + \frac{2\alpha}{\gamma \sum_{t=1}^{T_s}\theta_{t}} \cdot \frac{1}{2} \mathbb{E} \big[\|x^{s_0}-x^*\|^2\big] \bigg] + \sum_{j = s_0+1}^{s} \left(\frac{4}{5}\right)^{s+1-j} \bigg[ \frac{3}{2}\nu^2Ld + 4L\sqrt{d}Z\nu\bigg] \notag\\
	\le \  & \left(\frac{4}{5}\right)^{s-s_0} \bigg[\mathbb{E} \big[f(\tilde{x}^{s_0})-f^{*}\big] + \frac{2\alpha}{\gamma T_{s_0}} \cdot \frac{1}{2}\mathbb{E} \big[\|x^{s_0}-x^*\|^2\big]\bigg] + 6\nu^2Ld + 16L\sqrt{d}Z\nu.
	\end{align*}
	where the last inequality holds since $\sum_{j = s_0+1}^{s} \left(\frac{4}{5}\right)^{s+1-j} \le \frac{4}{5} \cdot \frac{1}{1-\frac{4}{5}} = 4$ and $\sum_{t=1}^{T_s}\theta_t \ge T_s = T_{s_0}$. Hence
	\begin{align*}
	& \mathbb{E} \big[f(\tilde{x}^{s})-f^{*}\big] + \frac{2\alpha}{\gamma \sum_{t=1}^{T_s}\theta_{t}} \cdot \frac{1}{2}\mathbb{E} \big[\|x^s-x^*\|^2\big] \notag\\
	\le \  & \left(\frac{4}{5}\right)^{s-s_0} \bigg[\mathbb{E} \big[f(\tilde{x}^{s_0})-f^{*}\big] + \frac{6L}{T_{s_0}} \cdot \frac{1}{2}\mathbb{E} \big[\|x^{s_0}-x^*\|^2\big]\bigg] + 6\nu^2Ld + 16L\sqrt{d}Z\nu \notag\\
	\le \  & \left(\frac{4}{5}\right)^{s-s_0} 2\bigg[\mathbb{E} \big[f(\tilde{x}^{s_0})-f^{*}\big] + \frac{3L}{ T_{s_0}} \cdot \frac{1}{2}\mathbb{E} \big[\|x^{s_0}-x^*\|^2\big]\bigg] + 6\nu^2Ld + 16L\sqrt{d}Z\nu \notag\\
	\le \  & \left(\frac{4}{5}\right)^{s-s_0} 2 \cdot \big[\frac{1}{2^{s_0+1}} D_0' + \frac{3}{2} \nu^2Ld + 4L \sqrt{d}Z \nu \big] + 6\nu^2Ld + 16L\sqrt{d}Z\nu \notag\\
	\le \  & \left(\frac{4}{5}\right)^{s-s_0} \cdot \frac{D_0'}{2^{s_0}} + 9\nu^2Ld + 24L\sqrt{d}Z\nu \notag\\
	= \  & \left(\frac{4}{5}\right)^{s-s_0} \frac{D_0'}{2T_{s_0}} + 9\nu^2Ld + 24L\sqrt{d}Z\nu \notag\\
	\le \  & \left(\frac{4}{5}\right)^{s-s_0} \frac{D_0'}{n} + 9\nu^2Ld + 24L\sqrt{d}Z\nu,
	\end{align*}
	where the third inequality comes from Eq.~\eqref{VARAG-cord-lemma-6-ineq-1} and the last inequality from the fact that $T_{s_0} \ge \frac{n}{2}$.
\end{proof}

\begin{mdframed}
\begin{lemma}\label{VARAG-cord-lemma-8}
	Consider the coordinate-wise variant of Algorithm~\ref{algorithm-VARAG}. Assume \textbf{(A1)}, \textbf{(A2$_{\boldsymbol{\nu}}$)} and \textbf{(A3)}. If $s \ge s_0$ and $n < \frac{3L}{\tau}$, then for any $x \in \mathbb{R}^d$,
	\begin{align*}
	\mathbb{E}\big[f(\tilde{x}^s) - f^{*}\big] \le \bigg(1+\frac{1}{4} \sqrt{\frac{n\tau}{3L}}\bigg)^{-(s-s_0)} \frac{D_0'}{n} + \left(2\sqrt{\frac{3L}{n\tau}}+1\right)\bigg[ 3\nu^2Ld + 8L \sqrt{d}Z \nu \bigg].
	\end{align*}
\end{lemma}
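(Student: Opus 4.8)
The plan is to mirror, almost verbatim, the argument of Lemma~\ref{VARAG-lemma-8} — the Gaussian-smoothing ill-conditioned case — but starting from the coordinate-wise per-iteration estimate of Lemma~\ref{VARAG-cord-lemma-5} and propagating the coordinate-wise errors $\tfrac34\nu^2Ld$ and $(2-\alpha_s)L\sqrt dZ\nu$ in place of the Gaussian ones. Recall that in this regime $\alpha_s=\alpha=\sqrt{n\tau/(12L)}$, $p_s=p=\tfrac12$, $\gamma_s=\gamma=\tfrac1{12L\alpha}=\tfrac1{\sqrt{12nL\tau}}$, $T_s=T_{s_0}=2^{s_0-1}$, and $\Gamma_t=(1+\tau\gamma)^t$. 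First I would take Eq.~\eqref{VARAG-cord-lemma-5-result} with $x=x^*$, multiply both sides by $\Gamma_{t-1}$ so that $(1+\tau\gamma)\Gamma_{t-1}=\Gamma_t$ telescopes the quadratic terms, and sum over $t=1,\dots,T_s$. Using the definition of $\theta_t$ in Eq.~\eqref{VARAG-cord-def-theta-2}, the function-value terms on the left collapse into $\tfrac\gamma\alpha\sum_t\theta_t\,\mathbb E[f(\bar x_t)-f^*]$, the pivot contribution on the right becomes $\tfrac\gamma\alpha\big[1-\alpha-p+p\sum_t\Gamma_{t-1}\big]\,[f(\tilde x^{s-1})-f^*]$, and the error contributions accumulate into $\tfrac\gamma\alpha\big(\sum_t\Gamma_{t-1}\big)\tfrac34\nu^2Ld$ and $\tfrac\gamma\alpha\big(\sum_t\Gamma_{t-1}\big)(2-\alpha)L\sqrt dZ\nu$.

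The key algebraic step — and the place where the real care is needed — is the weight estimate
\[ \sum_{t=1}^{T_{s_0}}\theta_t \ \ge\ \Gamma_{T_{s_0}}\Big[1-\alpha-p+p\sum_{t=1}^{T_{s_0}}\Gamma_{t-1}\Big]. \]
Writing $\sum_t\theta_t=\Gamma_{T_{s_0}}(1-\alpha-p)+\big[1-(1-\alpha-p)(1+\tau\gamma)\big]\sum_t\Gamma_{t-1}$ and using the identity $1-(1-\alpha-p)(1+\tau\gamma)=(1+\tau\gamma)(\alpha+p-\tau\gamma)+\tau^2\gamma^2$, it suffices to prove $1-(1-\alpha-p)(1+\tau\gamma)\ge p\,\Gamma_{T_{s_0}}$; this follows from $\alpha\ge\tau\gamma T_{s_0}$ — equivalent to $\alpha^2\ge\tfrac{\tau T_{s_0}}{12L}$, which holds because $\alpha^2=\tfrac{n\tau}{12L}$ and $T_{s_0}=2^{\lfloor\log n\rfloor}\le n$ — together with the elementary two-sided inequality $1+T\delta\le(1+\delta)^T\le1+2T\delta$ for $0\le\delta T\le1$, applied with $\delta=\tau\gamma$, $T=T_{s_0}$ (note $\tau\gamma T_{s_0}\le\alpha<1$).

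With that identity in hand, I would substitute $\tilde x^s=\sum_t\theta_t\bar x_t/\sum_t\theta_t$, $x_0=x^{s-1}$, $x_{T_s}=x^s$, invoke convexity of $f$, divide by $\Gamma_{T_{s_0}}$ times the coefficient of $[f(\tilde x^s)-f^*]$, and obtain a one-epoch recursion of the shape
\[ \mathbb E[f(\tilde x^s)-f^*]+\tfrac{\alpha}{\gamma\sum_t\theta_t}\cdot\tfrac12\mathbb E\|x^s-x^*\|^2 \ \le\ \tfrac1{\Gamma_{T_{s_0}}}\Big(\mathbb E[f(\tilde x^{s-1})-f^*]+\tfrac{\alpha}{\gamma\sum_t\theta_t}\cdot\tfrac12\mathbb E\|x^{s-1}-x^*\|^2\Big)+\tfrac32\nu^2Ld+4L\sqrt dZ\nu, \]
where the error constants come from dividing the accumulated errors by the lower bound $\tfrac\gamma{2\alpha}\sum_t\Gamma_{t-1}$ on the leading coefficient and from $2-\alpha\le2$. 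Iterating for $s>s_0$, summing the geometric error series $\sum_{j\ge1}\Gamma_{T_{s_0}}^{-j}\le\tfrac1{\Gamma_{T_{s_0}}-1}$, and plugging in the base case $s=s_0$ from Lemma~\ref{VARAG-cord-lemma-6} — using both its value bound and the combined potential bound Eq.~\eqref{VARAG-cord-lemma-6-ineq-1}, together with $\tfrac\alpha\gamma=12L\alpha^2=n\tau\le3L$ (the case hypothesis $n<3L/\tau$) and $T_{s_0}\ge n/2$ — collects all error contributions. Finally, $\Gamma_{T_{s_0}}=(1+\tau\gamma)^{T_{s_0}}\ge1+\tau\gamma T_{s_0}\ge1+\tfrac14\sqrt{n\tau/(3L)}$ and $\tfrac1{\Gamma_{T_{s_0}}-1}\le\tfrac1{\tau\gamma T_{s_0}}\le4\sqrt{3L/(n\tau)}$, while $\tfrac{D_0'}{2^{s_0}}=\tfrac{D_0'}{2T_{s_0}}\le\tfrac{D_0'}{n}$; collecting constants then yields exactly $\big(1+\tfrac14\sqrt{n\tau/(3L)}\big)^{-(s-s_0)}\tfrac{D_0'}{n}+\big(2\sqrt{3L/(n\tau)}+1\big)\big[3\nu^2Ld+8L\sqrt dZ\nu\big]$.

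I expect the hardest part to be precisely the combinatorial weight estimate above; everything else is constant bookkeeping along a telescoping sum, exactly parallel to Lemma~\ref{VARAG-lemma-8}. The one conceptual simplification in the coordinate-wise setting is that the gradient bias $g_\nu(\underline x_t)-\nabla f(\underline x_t)$ is already carried as a deterministic error inside Lemma~\ref{VARAG-cord-lemma-5} (controlled via assumption (A2$_{\boldsymbol{\nu}}$) and Lemma~\ref{VARAG-cord-lemma-0}, see Eq.~\eqref{VARAG-cord-surplus-error}), so, unlike in the Gaussian case where the pivotal error $e^s$ had to be handled separately through a dedicated inner-product term, no additional argument is needed here.
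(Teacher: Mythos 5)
Your proposal mirrors the paper's own proof of Lemma~\ref{VARAG-cord-lemma-8} essentially verbatim: same multiplication by $\Gamma_{t-1}$, same weight estimate $\sum_t\theta_t \ge \Gamma_{T_{s_0}}\big[1-\alpha-p+p\sum_t\Gamma_{t-1}\big]$ proved via the identity $1-(1-\alpha-p)(1+\tau\gamma)=(1+\tau\gamma)(\alpha+p-\tau\gamma)+\tau^2\gamma^2$ together with $\alpha\ge\tau\gamma T_{s_0}$ and the two-sided bound $1+T\delta\le(1+\delta)^T\le 1+2T\delta$, same geometric iteration, same use of $\alpha/\gamma\le 3L$, $T_{s_0}\ge n/2$, and the base case Eq.~\eqref{VARAG-cord-lemma-6-ineq-1}, landing on identical constants. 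The only cosmetic differences are that you state the two-sided inequality with $T=T_{s_0}$ where the paper applies it at $T_{s_0}-1$ (immaterial, since $\tau\gamma(T_{s_0}-1)\le\tau\gamma T_{s_0}<1$), and you collapse $4-2\alpha$ to $4$ one step earlier than the paper does; neither changes the argument or the final bound.
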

\end{mdframed}

\begin{proof}[Proof of Lemma \ref{VARAG-cord-lemma-8}]
	For this case, $\alpha_s = \alpha = \sqrt{\frac{n \tau}{12L}}$, $p_s = p = \frac{1}{2}$, $\gamma_{s} = \gamma = \frac{1}{\sqrt{12nL\tau}}$, $T_s = T_{s_0} = 2^{s_0-1}$ when $s \ge s_0$. Based on Lemma \ref{VARAG-cord-lemma-5}, we have
	\begin{align*}
	\mathbb{E}_{i_t}\bigg[\frac{\gamma}{\alpha}\big[f(\bar{x}_t)-f^{*}\big] + \frac{(1+\tau \gamma)}{2}\|x_t-x^*\|^2\bigg] \le \  & \frac{\gamma}{\alpha}(1-\alpha-p)\big[f(\bar{x}_{t-1})-f^{*}\big]+\frac{\gamma}{2\alpha} \big[f(\tilde{x})-f^{*}\big] + \frac{1}{2}\|x_{t-1}-x^*\|^2 \notag\\
	& + \frac{\gamma}{\alpha} \cdot \frac{3}{4}\nu^2Ld + \frac{\gamma}{\alpha} \cdot (2-\alpha)L \sqrt{d}Z \nu.
	\end{align*}
	Multiplying both sides by $\Gamma_{t-1} = (1+ \tau \gamma)^{t-1}$, we obtain
	\begin{align*}
	\mathbb{E}_{i_t}\bigg[\frac{\gamma}{\alpha} \Gamma_{t-1} \big[f(\bar{x}_t)-f^{*}\big] + \frac{\Gamma_{t}}{2}\|x_t-x^*\|^2\bigg]
	\le \  & \frac{\Gamma_{t-1} \gamma}{\alpha}(1-\alpha-p)\big[f(\bar{x}_{t-1})-f^{*}\big] + \frac{\Gamma_{t-1} \gamma p}{\alpha} \big[f(\tilde{x})-f^{*}\big] \notag\\
	& + \frac{\Gamma_{t-1}}{2}\|x_{t-1}-x^*\|^2 + \frac{\gamma}{\alpha} \Gamma_{t-1} \cdot \frac{3}{4}\nu^2Ld + \frac{\gamma}{\alpha}\Gamma_{t-1} \cdot (2-\alpha)L \sqrt{d}Z \nu.
	\end{align*}
	Summing up the inequality above from $t = 1$ to $T_s$, we obtain
	\begin{align*}
	& \frac{\gamma}{\alpha} \sum_{t=1}^{T_s}\theta_{t} \mathbb{E} \big[f(\bar{x}_t)-f^{*}\big] + \frac{\Gamma_{T_s}}{2} \mathbb{E} \|x_{T_s}-x^*\|^2\notag\\
	\le \  & \frac{\gamma}{\alpha} \big[1-\alpha - p + p\sum_{t=1}^{T_s}\Gamma_{t-1}\big] \mathbb{E} \big[f(\tilde{x})-f^{*}\big] + \frac{1}{2}\|x_{0}-x^*\|^2 + \frac{\gamma}{\alpha}\sum_{t=1}^{T_s}\Gamma_{t-1} \cdot \frac{3}{4}\nu^2Ld + \frac{\gamma}{\alpha}\sum_{t=1}^{T_s}\Gamma_{t-1} \cdot (2-\alpha)L \sqrt{d}Z \nu.
	\end{align*}
	Since $\tilde{x}^{s} = \sum_{t=1}^{T_s} (\theta_t \bar{x}_{t})/ \sum_{t=1}^{T_s} \theta_t$, $\tilde{x} = \tilde{x}^{s-1}$, $x_0 = x^{s-1}$, $x_{T_s} = x^s$ in the epoch $s$ and the convexity of $f_{\nu}$, it implies, for $s > s_0$,
	\begin{multline}
	\frac{\gamma}{\alpha} \sum_{t=1}^{T_s}\theta_{t} \mathbb{E} \big[f(\tilde{x}^{s})-f^{*}\big] + \frac{\Gamma_{T_{s_0}}}{2} \mathbb{E} \|x^s-x^*\|^2\\
	\le \frac{\gamma}{\alpha} \big[1-\alpha - p + p\sum_{t=1}^{T_s}\Gamma_{t-1}\big] \mathbb{E} \big[f(\tilde{x}^{s-1})-f^{*}\big] + \frac{1}{2}\|x^{s-1}-x^*\|^2 + \frac{\gamma}{\alpha}\sum_{t=1}^{T_s}\Gamma_{t-1} \cdot \frac{3}{4}\nu^2Ld + \frac{\gamma}{\alpha}\sum_{t=1}^{T_s}\Gamma_{t-1} \cdot (2-\alpha)L \sqrt{d}Z \nu. \label{VARAG-cord-lemma-8-ineq-1}
	\end{multline}
	Moreover, we have
	\begin{align*}
	\sum_{t=1}^{T_{s_0}} \theta_{t} = \  & \Gamma_{T_{s_0}-1} + \sum_{t=1}^{T_{s_0}-1} \big(\Gamma_{t-1}-(1-\alpha - p)\Gamma_{t}\big) \\
	= \  & \Gamma_{T_{s_0}}(1-\alpha -p) + \sum_{t=1}^{T_{s_0}} \big(\Gamma_{t-1}-(1-\alpha - p)\Gamma_{t}\big)\\
	= \  & \Gamma_{T_{s_0}}(1-\alpha -p) + \big[1-(1-\alpha -p)(1+\tau \gamma)\big]\sum_{t=1}^{T_{s_0}} \Gamma_{t-1}.\\
	\end{align*}
	Considering the range of $\alpha_{s}$, since $T_{s_0} \le n$,
	\begin{align*}
	\alpha & = \sqrt{\frac{n\tau}{12L}} \ge \sqrt{\frac{T_{s_0} \tau}{12L}} = \tau \cdot \frac{1}{\sqrt{12nL\tau}} \cdot \sqrt{T_{s_0}n} \\
	& = \tau \gamma \cdot \sqrt{T_{s_0}n} \ge \tau \gamma T_{s_0}.
	\end{align*}
	Also note that, for any $T > 1$ and $0 \le \delta T \le 1$, $(1+T\delta) \le (1+\delta)^T \le (1+2T \delta)$. If we set $\delta = \tau \gamma$ and $T = T_{s_0}$ here,
	\begin{align*}
	\delta T = \tau \gamma T_{s_0} \le \alpha < 1.
	\end{align*}
	Then, we have
	\begin{align*}
	1-(1-\alpha -p)(1+ \tau \gamma) = \  & (1+\tau \gamma)(\alpha+p-\tau \gamma) + \tau^2\gamma^2 \\
	\ge \ & (1+\tau \gamma)(\tau \gamma T_{s_0} + p- \tau \gamma)\\
	= \  & p(1+ \tau \gamma)(1+ 2(T_{s_0}-1)\cdot \tau \gamma)\\
	\ge \ &  p(1+ \tau \gamma)^{T_{s_0}} = p\Gamma_{T_{s_0}}.
	\end{align*}
	Hence, we obtain $\sum_{t=1}^{T_{s_0}} \theta_{t} \ge \Gamma_{T_{s_0}} \cdot \big[1-\alpha - p + p\sum_{t=1}^{T_s}\Gamma_{t-1}\big]$. Moreover, thanks to Eq.~\eqref{VARAG-cord-lemma-8-ineq-1} and $f(\tilde{x}^{s}) - f^{*} \ge 0$, the last inequality implies that
	\begin{align*}
	& \Gamma_{T_{s_0}} \cdot \bigg[\frac{\gamma}{\alpha}\big[1-\alpha - p + p\sum_{t=1}^{T_s}\Gamma_{t-1}\big] \mathbb{E} \big[f(\tilde{x}^{s})-f^{*}\big] + \frac{1}{2} \mathbb{E} \|x^s-x^*\|^2\bigg]\notag\\
	\le \  & \frac{\gamma}{\alpha} \big[1-\alpha - p + p\sum_{t=1}^{T_s}\Gamma_{t-1}\big] \mathbb{E} \big[f(\tilde{x}^{s-1})-f^{*}\big] + \frac{1}{2}\|x^{s-1}-x^*\|^2 + \frac{\gamma}{\alpha}\sum_{t=1}^{T_s}\Gamma_{t-1} \cdot \frac{3}{4}\nu^2Ld\notag\\
	& + \frac{\gamma}{\alpha}\sum_{t=1}^{T_s}\Gamma_{t-1} \cdot (2-\alpha)L \sqrt{d}Z \nu.
	\end{align*}
	Applying this inequality iteratively for $s > s_0$, we obtain
	\begin{align*}
	& \frac{\gamma}{\alpha}\big[1-\alpha - p + p\sum_{t=1}^{T_s}\Gamma_{t-1}\big] \mathbb{E} \big[f(\tilde{x}^{s})-f^{*}\big] + \frac{1}{2} \mathbb{E} \|x^s-x^*\|^2 \notag\\
	\le \  & \left(\frac{1}{\Gamma_{T_{s_0}}}\right)^{s-s_0} \bigg[\frac{\gamma}{\alpha} \big[1-\alpha - p + p\sum_{t=1}^{T_s}\Gamma_{t-1}\big] \mathbb{E} \big[f(\tilde{x}^{s_0})-f^{*}\big] + \frac{1}{2}\|x^{s_0}-x^*\|^2\bigg]\notag\\
	& + \sum_{j=1}^{s-s_0} \left(\frac{1}{\Gamma_{T_{s_0}}}\right)^{j}\bigg[ \frac{\gamma}{\alpha}\sum_{t=1}^{T_s}\Gamma_{t-1} \cdot \frac{3}{4}\nu^2Ld + \frac{\gamma}{\alpha}\sum_{t=1}^{T_s}\Gamma_{t-1} \cdot (2-\alpha)L \sqrt{d}Z \nu\bigg].
	\end{align*}
	Note that, since 
	$$\frac{\gamma}{\alpha}\big[1-\alpha - p + p\sum_{t=1}^{T_s}\Gamma_{t-1}\big] \ge \frac{\gamma p}{\alpha}\sum_{t=1}^{T_s}\Gamma_{t-1} \ge \frac{\gamma p T_s}{\alpha} = \frac{\gamma p T_{s_0}}{\alpha}$$
	and $p = \frac{1}{2}$, the inequality above implies
	\begin{align*}
	& \mathbb{E} \big[f(\tilde{x}^{s})-f^{*}\big] \notag\\
	\le \  & \left(\frac{1}{\Gamma_{T_{s_0}}}\right)^{s-s_0} \bigg[\mathbb{E} \big[f(\tilde{x}^{s_0})-f^{*}\big] + \frac{ \alpha}{\gamma T_{s_0}} \mathbb{E}\big[\|x^{s_0}-x^*\|^2\big]\bigg] + \sum_{j=1}^{s-s_0} \big(\frac{1}{\Gamma_{T_{s_0}}}\big)^{j}\bigg[ \frac{3}{2}\nu^2Ld + (4-2\alpha)L \sqrt{d}Z \nu \bigg]\\
	\le \  & \left(\frac{1}{\Gamma_{T_{s_0}}}\right)^{s-s_0} \bigg[\mathbb{E} \big[f(\tilde{x}^{s_0})-f^{*}\big] + \frac{ \alpha}{\gamma T_{s_0}} \mathbb{E}\big[\|x^{s_0}-x^*\|^2\big]\bigg] + \frac{1}{\Gamma_{T_{s_0}}-1}\bigg[ \frac{3}{2}\nu^2Ld + (4-2\alpha)L \sqrt{d}Z \nu \bigg].
	\end{align*}
	As $\Gamma_{T_{s_0}} = \big(1+ \tau \gamma\big)^{T_{s_0}} \ge 1+ \tau \gamma T_{s_0} \ge 1+\frac{\tau \gamma n}{2} = 1 + \frac{1}{2}\cdot \sqrt{\frac{n\tau}{12L}}$, it implies, for $s > s_0$,
	\begin{align*}
	\mathbb{E} \big[f(\tilde{x}^{s})-f^{*}\big]
	\le \  & \left(\frac{1}{\Gamma_{T_{s_0}}}\right)^{s-s_0} \bigg[\mathbb{E} \big[f(\tilde{x}^{s_0})-f^{*}\big] + \frac{ \alpha}{\gamma T_{s_0}} \mathbb{E}\big[\|x^{s_0}-x^*\|^2\big] \bigg] + 4\sqrt{\frac{3L}{n\tau}}\bigg[ \frac{3}{2}\nu^2Ld + (4-2\alpha)L \sqrt{d}Z \nu \bigg].
	\end{align*}
	Note that, since $n < \frac{3L}{\tau}$, we have $\frac{\alpha}{\gamma} = 12L\alpha^2 \le 3L$. Hence, for $s > s_0$, we have
	\begin{align*}
	& \mathbb{E} \big[f(\tilde{x}^{s})-f^{*}\big] \notag\\
	\le \  & \left(\frac{1}{\Gamma_{T_{s_0}}}\right)^{s-s_0} 2\bigg[\mathbb{E} \big[f(\tilde{x}^{s_0})-f^{*}\big] + \frac{ 3L}{T_{s_0}} \cdot \frac{1}{2} \mathbb{E}\big[\|x^{s_0}-x^*\|^2\big]\bigg] + 4\sqrt{\frac{3L}{n\tau}}\bigg[ \frac{3}{2}\nu^2Ld + (4-2\alpha)L \sqrt{d}Z \nu \bigg]\\
	\le \  & \left(\frac{1}{\Gamma_{T_{s_0}}}\right)^{s-s_0} 2\bigg[\frac{1}{2^{s_0+1}} D_0' + \frac{3}{2}\nu^2Ld + 4L \sqrt{d}Z \nu\bigg] + 4\sqrt{\frac{3L}{n\tau}}\bigg[ \frac{3}{2}\nu^2Ld + (4-2\alpha)L \sqrt{d}Z \nu \bigg]\\
	\le \  & \left(\frac{1}{\Gamma_{T_{s_0}}}\right)^{s-s_0} \frac{D_0'}{2^{s_0}} + \big(2\sqrt{\frac{3L}{n\tau}}+1\big)\bigg[ 3\nu^2Ld + 8L \sqrt{d}Z \nu \bigg]\\
	= \  & \left(\frac{1}{\Gamma_{T_{s_0}}}\right)^{s-s_0} \frac{D_0'}{2T_{s_0}} + \big(2\sqrt{\frac{3L}{n\tau}}+1\big)\bigg[ 3\nu^2Ld + 8L \sqrt{d}Z \nu \bigg]\\
	\le \  & \left(\frac{1}{\Gamma_{T_{s_0}}}\right)^{s-s_0} \frac{D_0'}{n} + \big(2\sqrt{\frac{3L}{n\tau}}+1\big)\bigg[ 3\nu^2Ld + 8L \sqrt{d}Z \nu \bigg]\\
	= \  & \bigg(1+\frac{1}{2} \cdot \sqrt{\frac{\tau}{3nL}}\bigg)^{-T_{s_0}(s-s_0)} \frac{D_0'}{n} +  \big(2\sqrt{\frac{3L}{n\tau}}+1\big)\bigg[ 3\nu^2Ld + 8L \sqrt{d}Z \nu \bigg]\\
	\le \  & \bigg(1+\frac{1}{2} \cdot \sqrt{\frac{\tau}{3nL}}\bigg)^{-\frac{n(s-s_0)}{2}} \frac{D_0'}{n} + \big(2\sqrt{\frac{3L}{n\tau}}+1\big)\bigg[ 3\nu^2Ld + 8L \sqrt{d}Z \nu \bigg]\\
	\le \  & \bigg(1+\frac{1}{4} \cdot \sqrt{\frac{n\tau}{3L}}\bigg)^{-(s-s_0)} \frac{D_0'}{n} + \big(2\sqrt{\frac{3L}{n\tau}}+1\big)\bigg[ 3\nu^2Ld + 8L \sqrt{d}Z \nu \bigg],
	\end{align*}
	where the second inequality is based on Eq.~\eqref{VARAG-cord-lemma-6-ineq-1} and the fourth and fifth inequalities rely on $T_{s_0} \ge \frac{n}{2}$. The last inequality comes from $1+T\delta \le (1+\delta)^T$ when $\delta \ge 0$.
\end{proof}

Now, we can finish the proof of Theorem \ref{VARAG-cord-theorem-2}:

\begin{proof}[Proof of Theorem \ref{VARAG-cord-theorem-2}]
	To summarize, we have obtained
	\begin{align}
	\mathbb{E} \big[f(\tilde{x}^{s})-f^{*}\big] := 
	\begin{cases}
	\frac{1}{2^{s+1}} D_0' + \frac{3}{2} \nu^2Ld + 4L \sqrt{d}Z \nu, & \quad \quad \quad 1 \le s \le s_0\\
	& \\
	\left(\frac{4}{5}\right)^{s-s_0} \frac{D_0'}{n} + 9\nu^2Ld + 24L\sqrt{d}Z\nu,  & \quad \quad \quad s > s_0 \text{ and } n \ge \frac{3L}{\tau}\\
	& \\
	\bigg(1+\frac{1}{4}\sqrt{\frac{n\tau}{3L}}\bigg)^{-(s-s_0)} \frac{D_0'}{n} &  \quad \quad \quad s > s_0 \text{ and } n < \frac{3L}{\tau} \\
	 \ \ \ \ \ \ \ + \big(2\sqrt{\frac{3L}{n\tau}}+1\big)\bigg[ 3\nu^2Ld + 8L \sqrt{d}Z \nu \bigg], &
	\end{cases}
	\end{align}
	from  Lemma \ref{VARAG-cord-lemma-6}, Lemma \ref{VARAG-cord-lemma-7}, Lemma \ref{VARAG-cord-lemma-8}.
\end{proof} 

We conclude once again by proving the complexity result.

\begin{proof}[Proof of Corollary \ref{VARAG-cord-corollary-2}]
Using the same technique as for the proof of Corollary~\ref{VARAG-corollary-2}, we can make the error terms depending on $\nu$ vanishing. It requires $\nu = \mathcal{O}\big(\frac{\epsilon^{1/2}}{L^{1/2} d^{1/2}} \big)$, $\nu = \mathcal{O}\big(\frac{\epsilon}{Ld^{1/2}Z}\big)$ for the first two cases ($1 \le s \le s_0$ or $s > s_0 \text{ and } n \ge \frac{3L}{\tau} $) while we need to take the extra conditional number $\frac{L}{\tau}$ into account for the last case ($s > s_0 \text{ and } n < \frac{3L}{\tau} $) to ensure $\epsilon$-optimality, $\frac{\epsilon}{2}$ more specifically. Hence, we can proceed as in~\cite{lan2019unified} , neglecting the errors coming from the DFO framework~(note that a similar procedure is adopted also in~\cite{nesterov2017random} and~\cite{liu2018stochastic,liu2018stochasticb}).
    	For the first case above ($1 \le s \le s_0$), the total number of function queries is given in Theorem \ref{VARAG-cord-theorem-1}. In the second case ($s > s_0 \text{ and } n \ge \frac{3L}{\tau}$), the algorithm runs at most $S := \mathcal{O}\big\{\log \big(\frac{D_0'}{\epsilon}\big)\big\}$ epochs to ensure $\epsilon$-optimality. Thus, the total number of function queries in this case is bounded by
	\begin{align}
	dnS + \sum_{s=1}^{S}d \cdot T_s \le dnS + dnS = \mathcal{O}\left\{dn\log \left(\frac{D_0'}{\epsilon}\right)\right\}.
	\end{align}
	Finally, to achieve $\epsilon$-error for the last case ($s > s_0 \text{ and } n < \frac{3L}{\tau} $), our algorithm needs to run at most $S^{'} := s_0 + \sqrt{\frac{3L}{n\tau}} \log \big(\frac{D_0'}{n\epsilon}\big)$ epochs. Therefore,  the total number of function queries is bounded by
	\begin{align}
	\sum_{s=1}^{S^{'}}(dn + dT_s) = \  & \sum_{s=1}^{s_0}(dn + dT_s) + (dn + dT_{s_0})(S^{'}-s_0) \notag\\
	\le \  & 2dns_0 + \big(dn+dn\big)\sqrt{\frac{3L}{n\tau}} \log \left(\frac{D_0'}{n\epsilon}\right) \notag\\
	= \  & \mathcal{O} \bigg\{dn \log(n) + d \sqrt{\frac{nL}{\tau}} \log \left(\frac{D_0'}{n\epsilon}\right)\bigg\}.
	\end{align}
\end{proof}

\section{Experiments}
\label{app:exp}

\subsection{Parameter settings for Fig.~\ref{fig:basic_test}}

Here, we compare our method~(Algorithm~\ref{algorithm-VARAG}) with ZO-SVRG-Coord-Rand~\cite{ji2019improved}, with the accelerated method in~\cite{nesterov2017random} and with a zero-order version of Katyusha inspired from~\cite{fanhua2017simKatyusha} --- which is a simplified version of the original algoerithm presented in~\citep{allen2017katyusha}. We define this method in Algorithm~\ref{algorithm-Katyusha}.

\begin{algorithm}[ht]
\caption{Simplified ZO-Katyusha}
\label{algorithm-Katyusha}
\renewcommand{\algorithmicoutput}{\textbf{Output:}}
\begin{algorithmic}[1]
\REQUIRE $x^0\in \mathbb{R}^d, \{T_s\}, \{\gamma_s\}, \{\alpha_s\}$.
\FOR{$s=1,2,\ldots,S$}
\STATE {$\tilde{x} = \tilde{x}^{s-1}$, $x^{s}_{0}=y^{s}_{0}=\tilde{x}^{s-\!1}$;}
\STATE \textbf{Pivotal ZO gradient} $\tilde{g} = g_{\nu}(\tilde{x})$ using the coordinate-wise approach by Eq.~\eqref{DFO-framework-cord-finite-difference}.
\FOR{$t=1,2,\ldots,T_{s}$}
\STATE {Pick $i_t$ uniformly at random from $\{1,\ldots,n\}$;}
\STATE {$G_t= g_{\mu}(x_{t-1}^s,u_t,i_t)- g_{\mu}(\tilde{x},u_t,i_t)+\tilde{g}$;}
\STATE {$y^{s}_{t}=y^{s}_{t-\!1}-\gamma_s G_t$;}
\STATE {$x^{s}_{t}=\tilde{x}+\alpha_{s}(y^{s}_{t}-\tilde{x})$;}
\ENDFOR
\STATE {$\tilde{x}^{s}=\frac{1}{T_{s}}\!\sum^{T_{s}}_{t=1}\!x^{s}_{t}$;}
\ENDFOR
\OUTPUT {$\tilde{x}^{S}$}
\end{algorithmic}
\end{algorithm}

We recall some notation from the main paper.
\vspace{-2mm}
\begin{itemize}
\item $n$ is the data-set size;
\item $d$ is the problem dimension;
\item $b$ is the mini-batch size used to compute stochastic ZO-gradients.
\item $\nu$ is the coordinate-smoothing parameter~(see Section~\ref{sec:background});
\item $\mu$ is the Gaussian-smoothing parameter~(see Section~\ref{sec:background});
\item $\{\alpha_s\}, \{\gamma_s\}$, $\{T_s\}$ and $\{\theta_t\}$ are parameters defined for ZO-Varag~(Algorithm~\ref{algorithm-VARAG}), which also appear in ZO-Katyusha~(Algorithm~\ref{algorithm-Katyusha}). $\alpha_k, \gamma_k, \theta_k$ also appear in the algorithm by~\citet{nesterov2017random}, but have different definitions~(see Eq. 60 in their paper);
\item $\{p_s\}$ is the Katyusha momentum parameter in Algorithm~\ref{algorithm-VARAG}, which can be seen as a \textit{Katyusha momentum} even though it is defined differently in the simplified framework of~\cite{fanhua2017simKatyusha} (see definition in the original paper by~\citet{allen2017katyusha});
\item $\eta$ is the step size for ZO-SVRG~\cite{ji2019improved}.
\end{itemize}

Next, we specify some parameter settings used for the experiments in Fig.~\ref{fig:basic_test}. What is not specified here directly appears in the corresponding figures.
\begin{itemize}
\item $\mu = \nu = 0.001$.
\item $T_s$ are set as in Theorem~\ref{VARAG-theorem-1}.
\item In Algorithm~\ref{algorithm-VARAG} and Algorithm~\ref{algorithm-Katyusha} we set $\alpha_s$ according to Eq.~\eqref{parameter-deter-alpha-sm} and Eq.~\eqref{parameter-deter-alpha-unified} in the main paper. For the accelerated method by \citep{nesterov2017random}, we used the choice of $\alpha_k$ reccomended in their paper.
\item Note that, for both Algorithm~\ref{algorithm-VARAG} and Algorithm~\ref{algorithm-Katyusha}, the gradient estimate $G_t$ is actually multiplied~\footnote{In Algorithm~\ref{algorithm-VARAG}, this is actually $\alpha_s \gamma_s/(1+\mu \gamma_s) \approx \alpha_s \gamma_s$.} by $\alpha_s\gamma_s$. Hence, $\alpha_s\gamma_s$ acts like a step-size. Therefore, in ZO-SVRG, we choose the \textit{equivalent stepsize} $\eta_s = \alpha_s\gamma_s$. Also note that, as one can note in Eq.~\eqref{parameter-deter-smooth1}, $\alpha_s\gamma_s$ is actually constant and inversely proportional to $d$ (see also next bullet-point).
\item We choose $\gamma_s$ such that $\eta = \alpha_s \gamma_s = 0.001 \cdot b/d$ for logistic regression and $\eta = \alpha_s \gamma_s = b/d$ for ridge regression when testing on the diabetes dataset (python sklearn). For the ijcnn1 dataset (python LIBSVM), we instead choose $\gamma_s$ such that $\eta = \alpha_s \gamma_s = 0.1 \cdot b/d$ for logistic regression and $\eta = \alpha_s \gamma_s = 0.001 \cdot b/d$ for ridge regression.
\item We pick $p_s = 0.5$, as specified in Eq.~\eqref{parameter-deter-smooth1}.
\end{itemize}

\subsection{Additional experiments}
Next, we discuss potential variations of the parameters discussed in the last subsection.

\paragraph{Options for pivotal point.}
We tested two options for pivot computation in Algorithm~\ref{algorithm-VARAG}:
\begin{center}
    Option I: $\tilde{x} = \tilde{x}^{s-1} = \tilde{x}^s =\sum_{t=1}^{T_s}(\theta_t \bar x_t)/\big(\sum_{t=1}^{T_s} \theta_t\big)$ (as used in our analysis), \quad or \quad  Option II:  $\tilde{x} = \bar{x}^{s-1}$ . 
\end{center}

In addition to the experimental results on the ijcnn1 dataset~(LIBSVM) provided in Fig.~\ref{fig:pivot_selection_ijcnn1}, we also provide results on the diabetes dataset (sklearn) here: from Fig. \ref{fig:pivot_selection_diabetes}, we observe that Option II also achieves faster convergence than Option I on the diabetes dataset in practice. Overall, our empirical evidence seems to indicate that Option II works better than Option I.

\begin{figure}[htbp]
 \centering          \begin{tabular}{c@{}c@{}c@{}c@{}}
        &   \scriptsize{diabetes, S = 200, b = 5, $\lambda = 0$} & \scriptsize{diabetes, S = 200, b = 5, $\lambda = 1e^{-5}$} \\
        \rotatebox[origin=c]{90}{\scriptsize{Logistic}} &  
             \includegraphics[width=0.35\linewidth,valign=c]{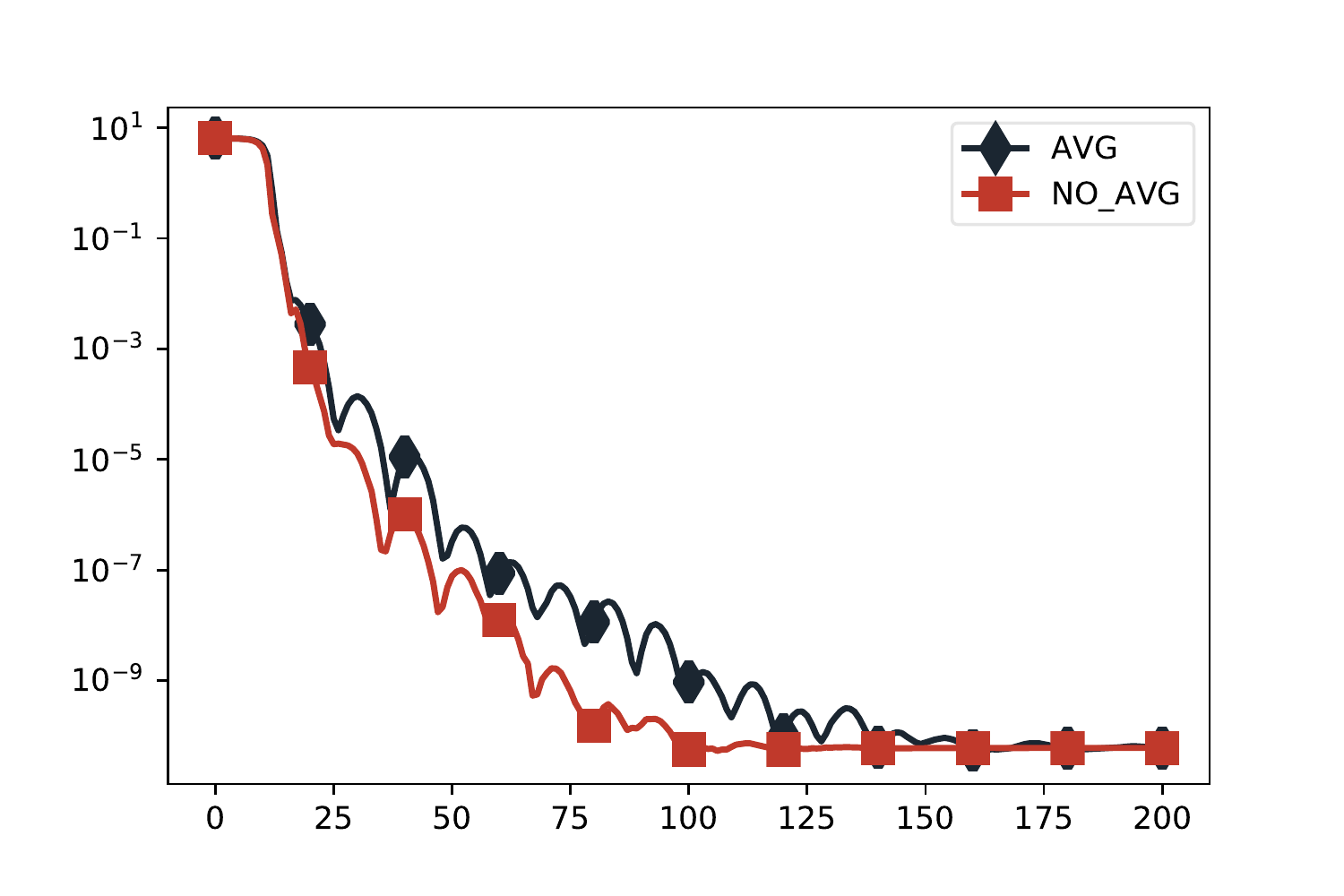}  &  \includegraphics[width=0.35\linewidth,valign=c]{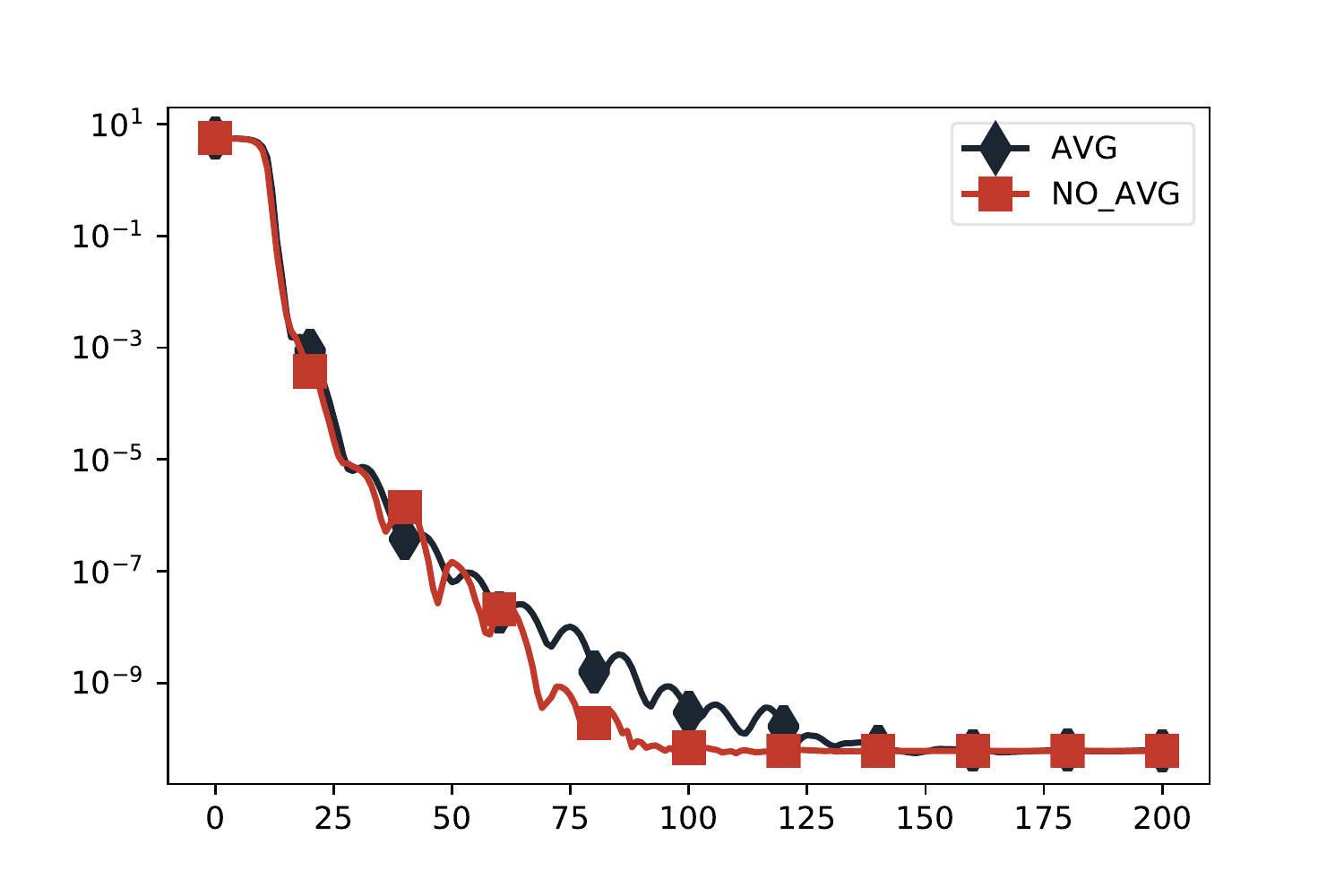}
             \\
                               \vspace{-3mm}
        \rotatebox[origin=c]{90}{\scriptsize{Ridge}}&
             
              \includegraphics[width=0.35\linewidth,valign=c]{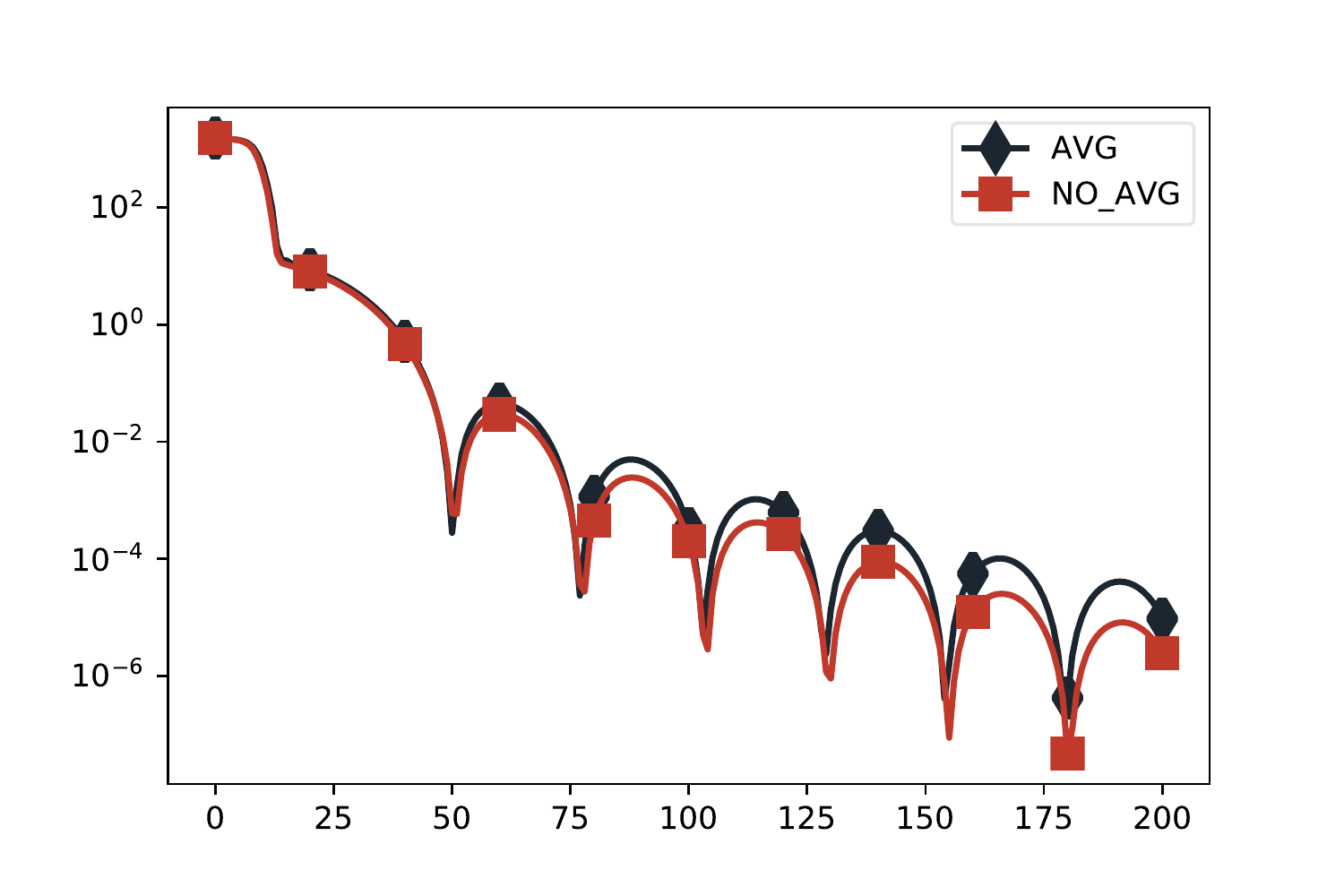}& \includegraphics[width=0.35\linewidth,valign=c]{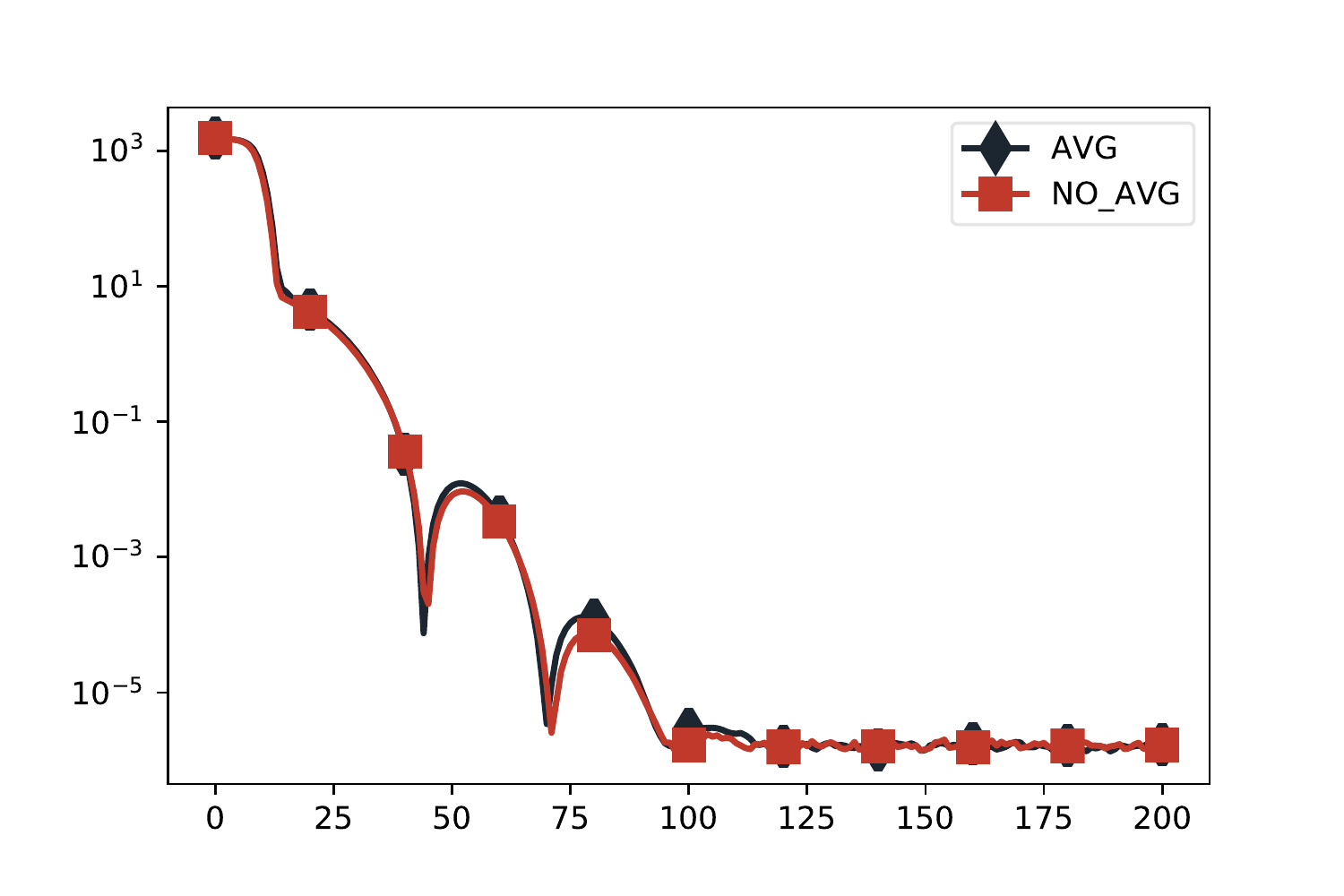}
            \\
        &   \scriptsize{epochs} & \scriptsize{epochs}
   \end{tabular}
          \caption{ \footnotesize{ZO-Varag, averaging~(Option I) vs. no-averaging~(Option II).}}
          \label{fig:pivot_selection_diabetes}
\end{figure}

\vspace{-2mm}
\paragraph{Effect of the momentum $\boldsymbol{p_s}$.}


The effect of $p_s$~(a.k.a Katyusha momentum) varies depending on the data set. From Fig.~\ref{fig:diabetes_momentum_test} and Fig.~\ref{fig:ijcnn1_momentum_test}, we find that increasing values of $p$ can either accelerate or slow down the convergence of the algorithm. Moreover, the algorithm may not converge when $p < 0.5$, since the constraint from Eq.~\eqref{VARAG-lemma-3-assump-2} is not guaranteed anymore (recall the proof we provide is based on $p=0.5$). 

\begin{figure}[htbp]
 \centering          \begin{tabular}{c@{}c@{}c@{}c@{}}
        &   \scriptsize{regularizer $\lambda = 0$} & \scriptsize{regularizer $\lambda = 1e^{-5}$} \\
        \rotatebox[origin=c]{90}{\scriptsize{Logistic}} &    
             \includegraphics[width=0.35\linewidth,valign=c]{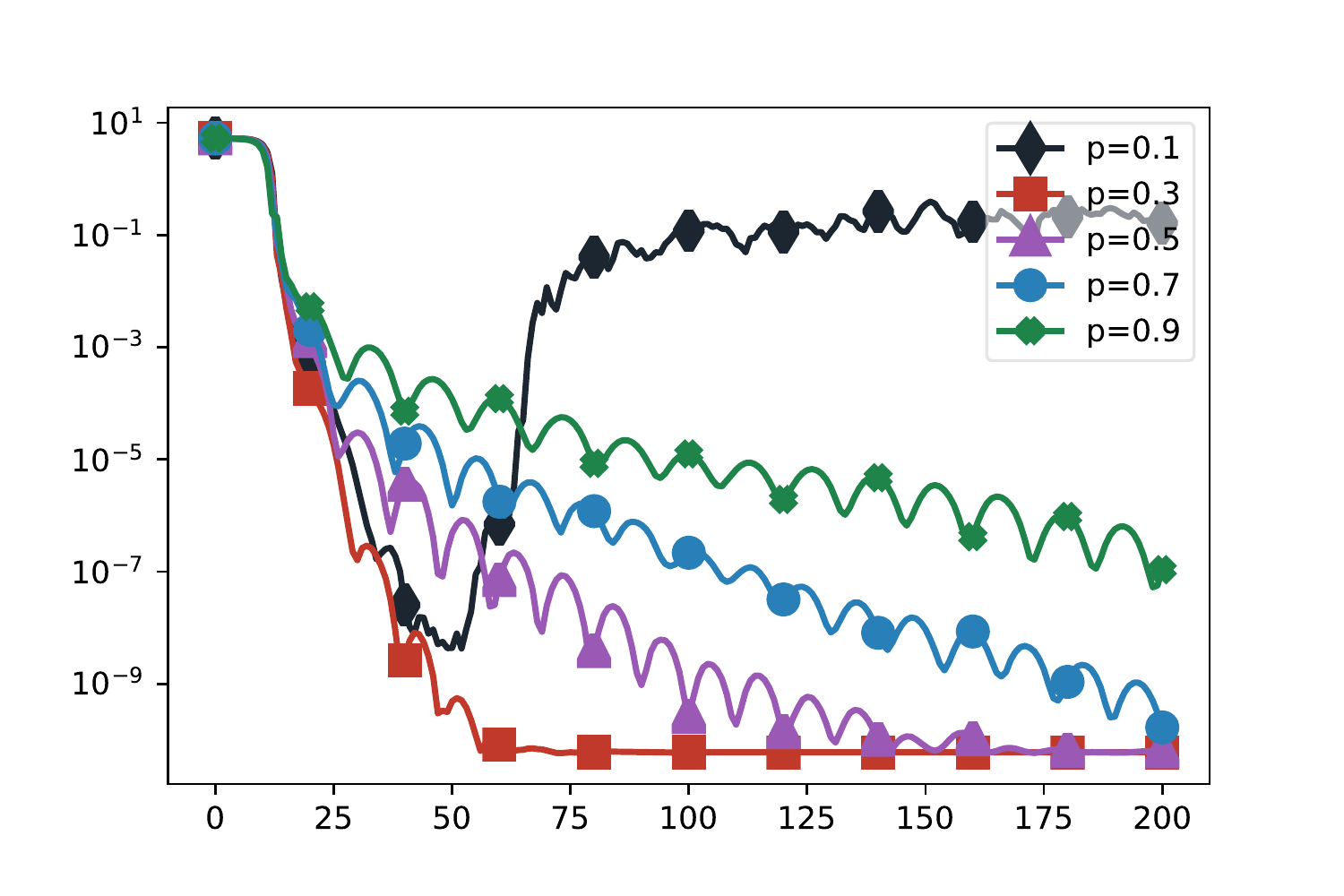}  &  \includegraphics[width=0.35\linewidth,valign=c]{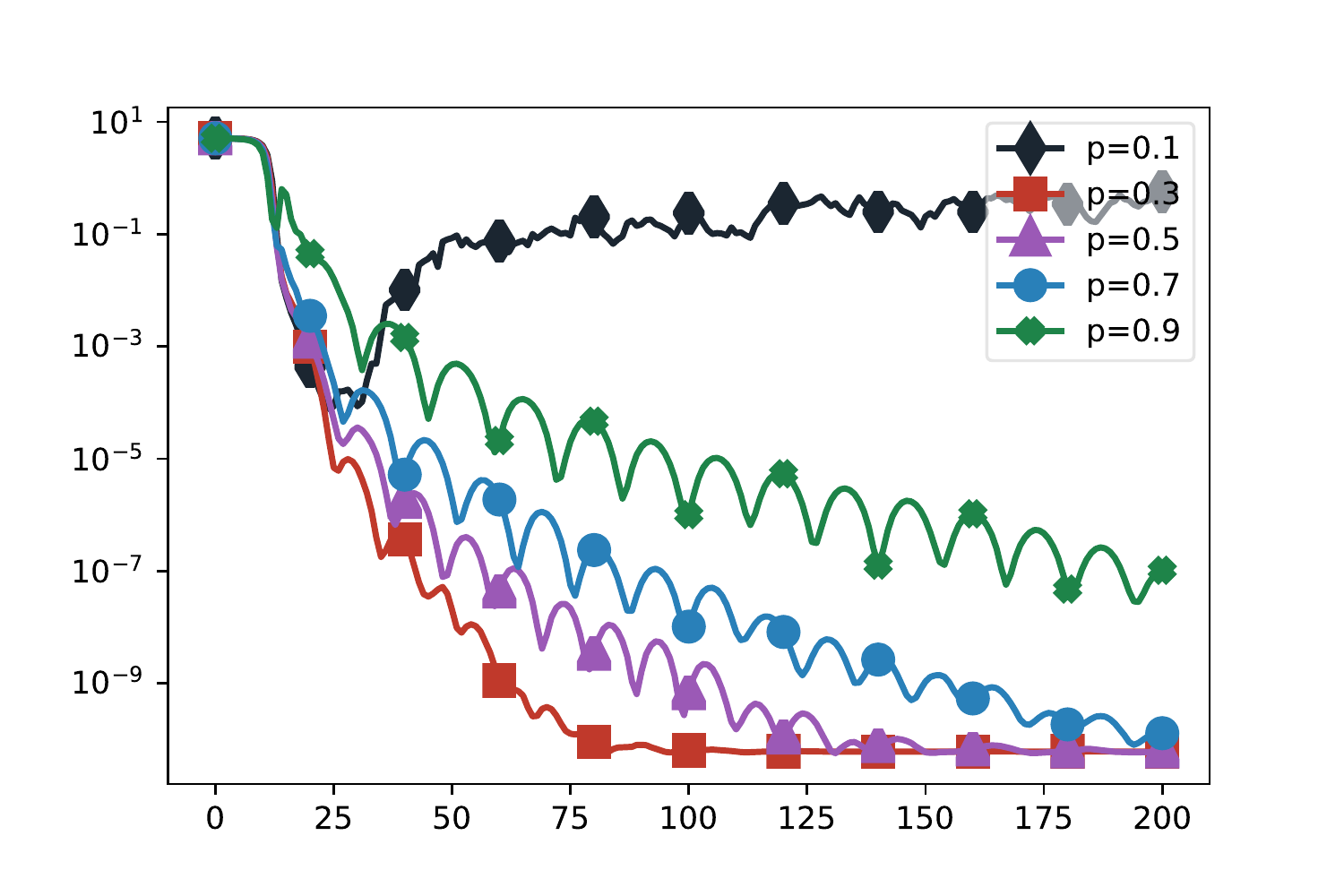}
             \\
                               \vspace{-3mm}
        \rotatebox[origin=c]{90}{\scriptsize{Ridge}}&
             
              \includegraphics[width=0.35\linewidth,valign=c]{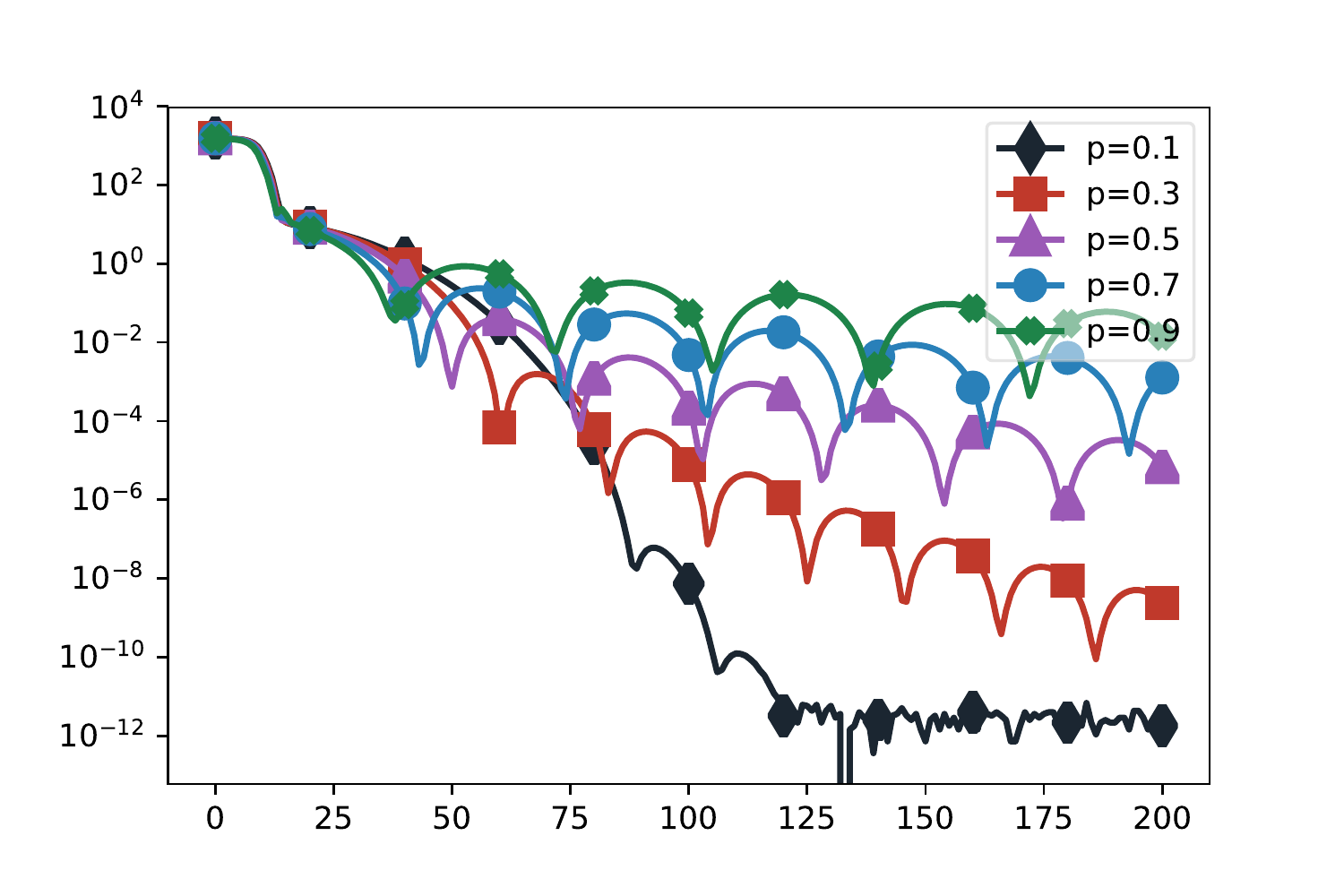}& \includegraphics[width=0.35\linewidth,valign=c]{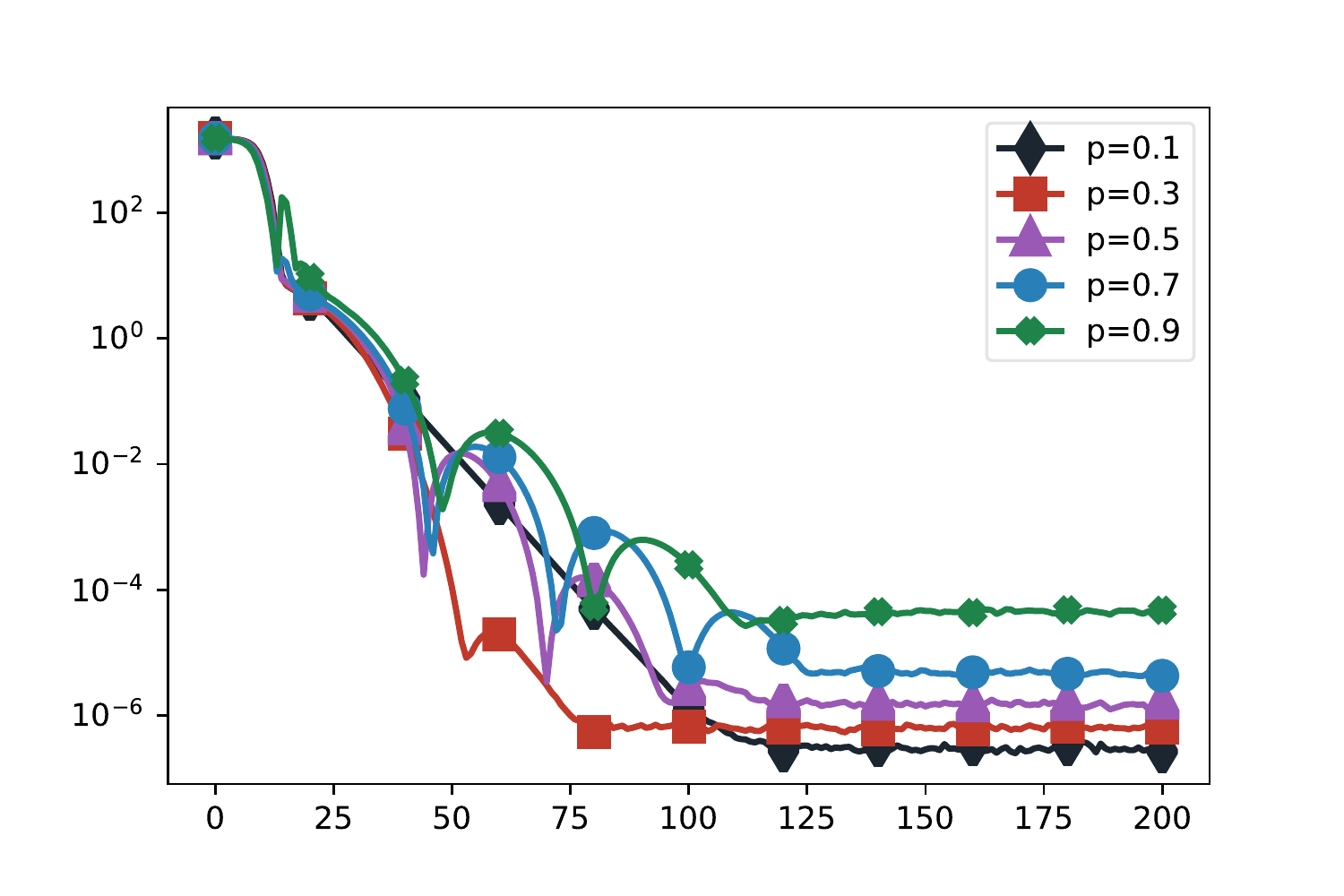}
            \\
         & \scriptsize{epochs} & \scriptsize{epochs}
   \end{tabular}
	      \caption{Effect of $p$ on the diabetes dataset. Recall that our theoritical guarantees hold for $p=0.5$.}
          \label{fig:diabetes_momentum_test}
\end{figure}

\begin{figure}[htbp]
 \centering          \begin{tabular}{c@{}c@{}c@{}c@{}}
        &   \scriptsize{regularizer $\lambda = 0$} & \scriptsize{regularizer $\lambda = 1e^{-5}$} \\
        \rotatebox[origin=c]{90}{\scriptsize{Logistic}} &    
             \includegraphics[width=0.35\linewidth,valign=c]{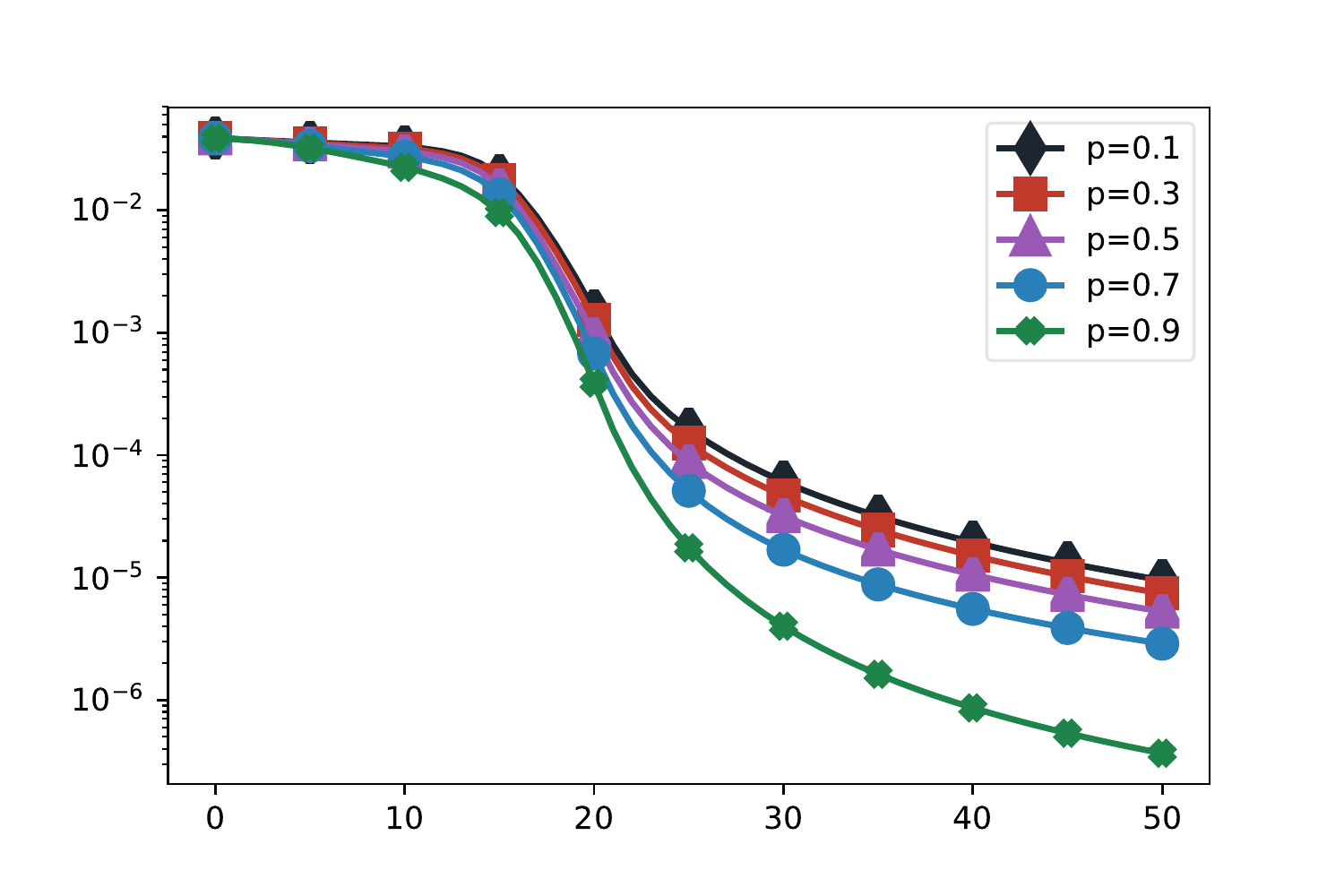}  &  \includegraphics[width=0.35\linewidth,valign=c]{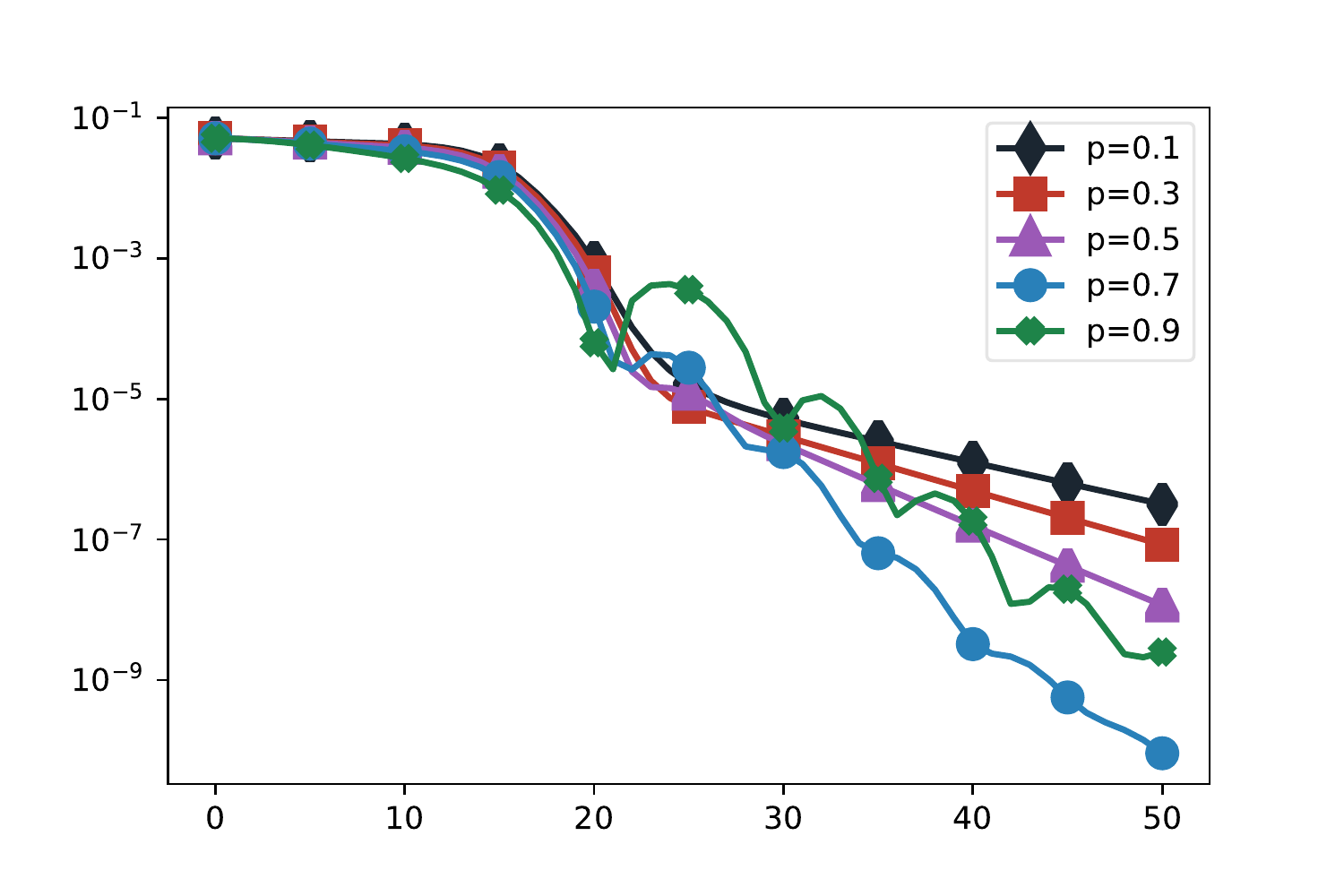}
             \\
                               \vspace{-3mm}
        \rotatebox[origin=c]{90}{\scriptsize{Ridge}}&
             
              \includegraphics[width=0.35\linewidth,valign=c]{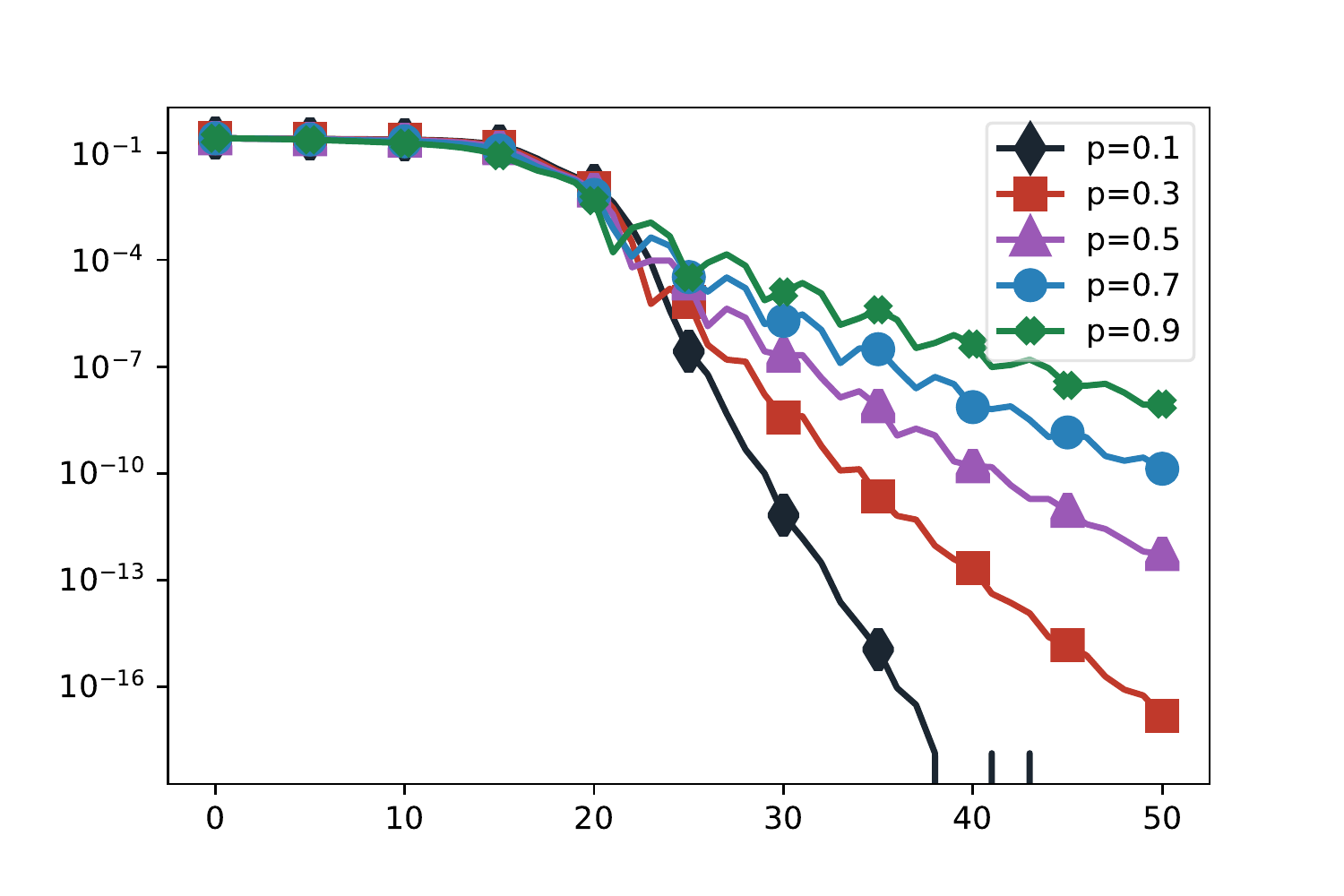}& \includegraphics[width=0.35\linewidth,valign=c]{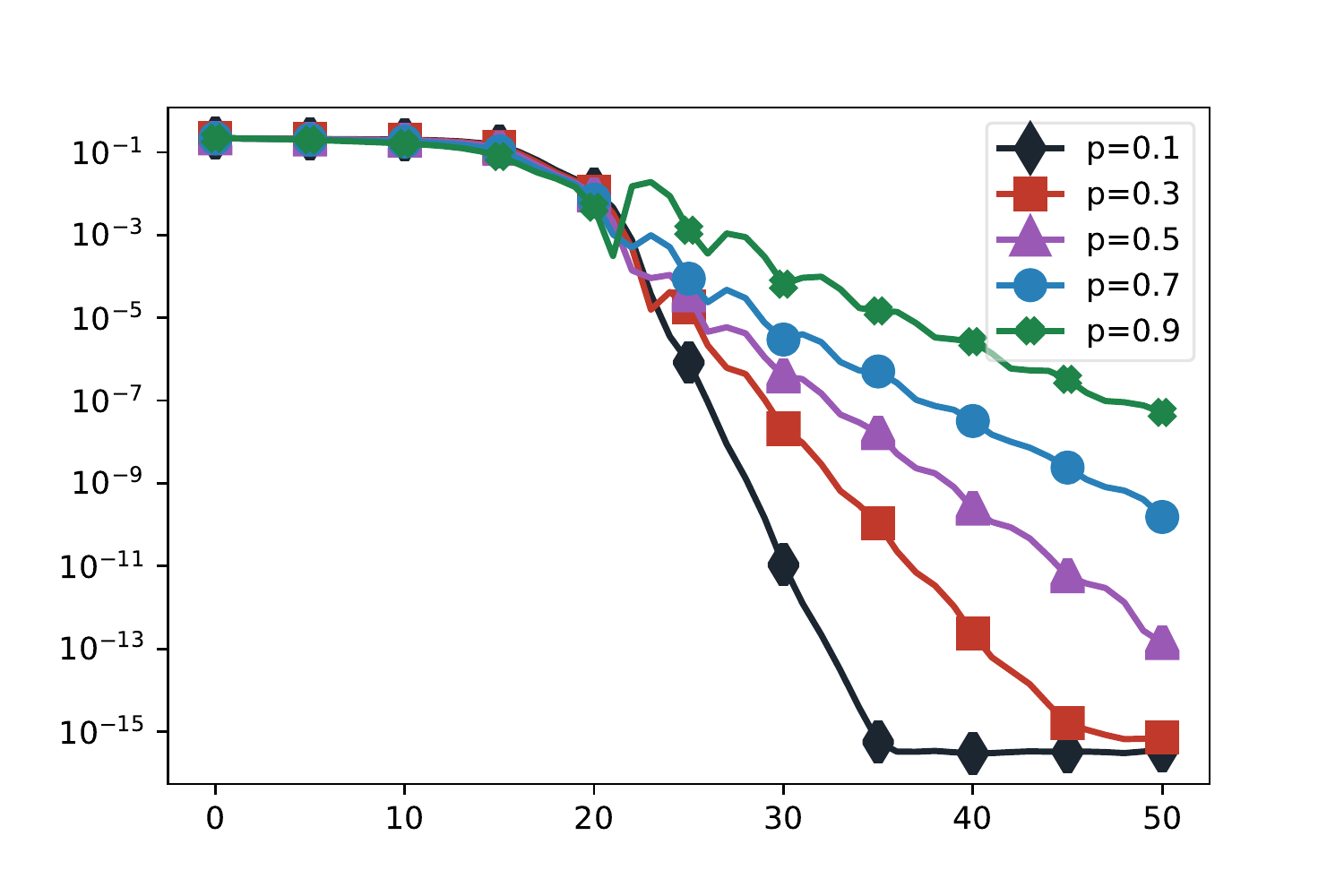}
            \\
        &   \scriptsize{epochs} & \scriptsize{epochs}
   \end{tabular}
	      \caption{Effect of $p$ on the ijcnn1 dataset. Recall that our theoritical guarantees hold for $p=0.5$.}
          \label{fig:ijcnn1_momentum_test}
\end{figure}

\vspace{-2mm}
\paragraph{Effect of the step-size $\boldsymbol{\alpha_s\gamma_s}$.} As discussed in Fig.~\ref{fig:vary_regularizer} from the main paper, we find that the suboptimality stalling effect is related to the magnitude of the regularizer $\lambda$, which influences  the strong-convexity constant of the objective function. Here, we show how such stalling effect of ZO-Varag can be controlled by tuning the step size $\alpha_s \gamma_s$. From Fig. \ref{fig:varying_step}, we see that the final suboptimality decreases if we decrease the magnitude of the step size $\alpha_s \gamma_s$, which however also affects speed of convergence.

\begin{figure}[htbp]
 \centering          \begin{tabular}{c@{}c@{}c@{}c@{}}
        &   \scriptsize{diabetes, S = 600, b = 5, $\lambda = 1e^{-5}$} & \scriptsize{ijcnn1, S = 200, b = 500, $\lambda = 1e^{-5}$} \\
        \rotatebox[origin=c]{90}{\scriptsize{Logistic}} &    
             \includegraphics[width=0.35\linewidth,valign=c]{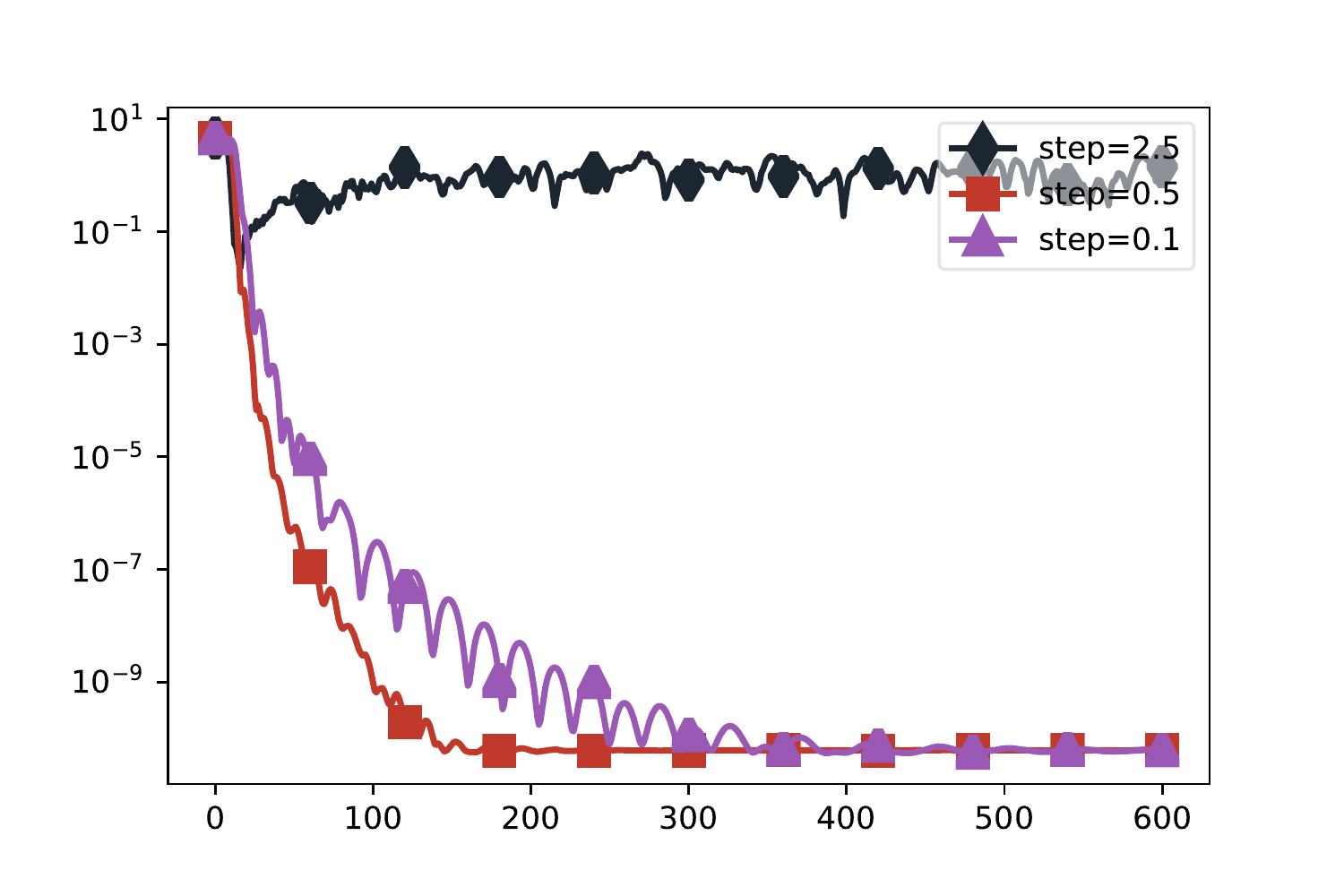}  &  \includegraphics[width=0.35\linewidth,valign=c]{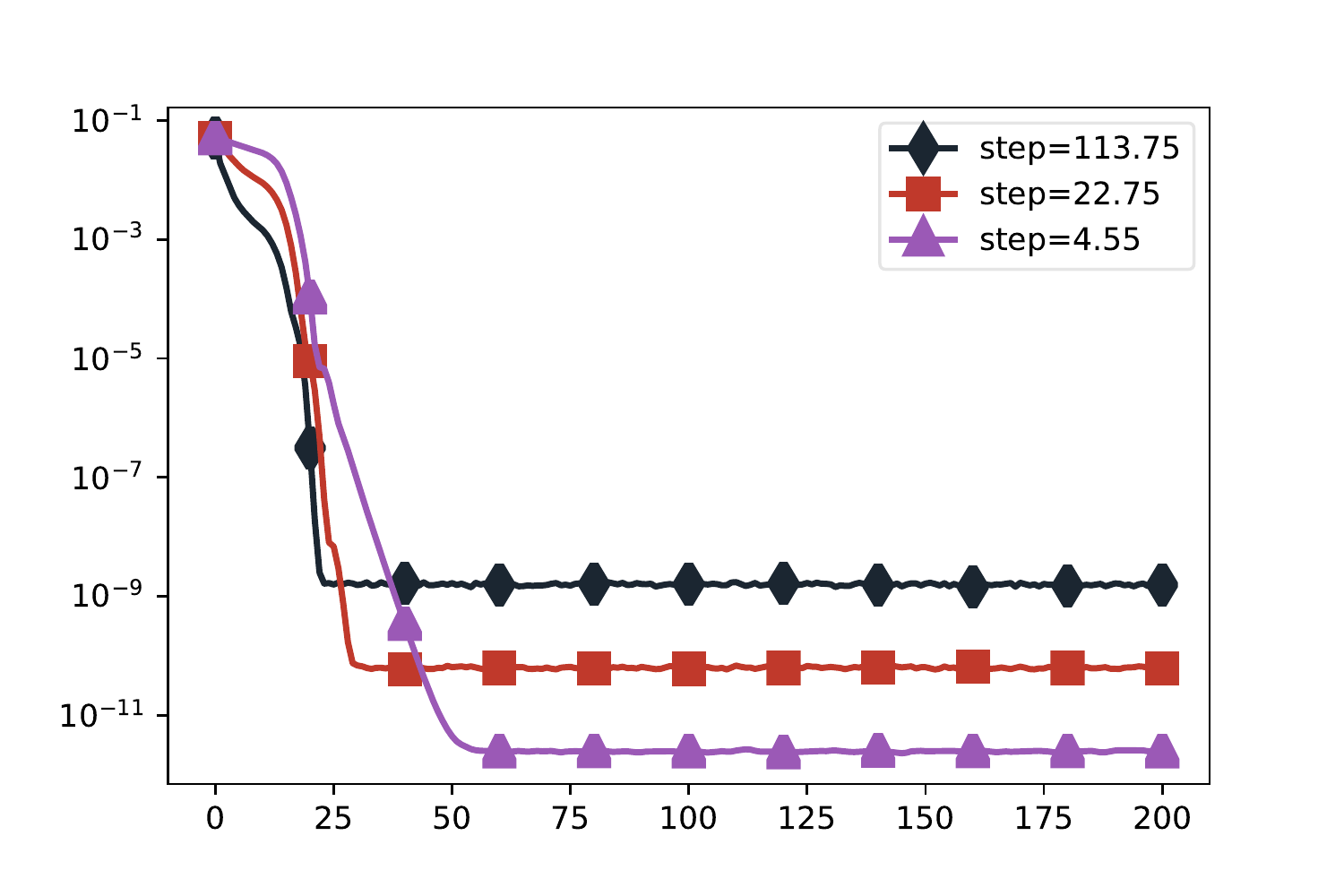}
             \\
        \vspace{-3mm}
        \rotatebox[origin=c]{90}{\scriptsize{Ridge}}&
             
              \includegraphics[width=0.35\linewidth,valign=c]{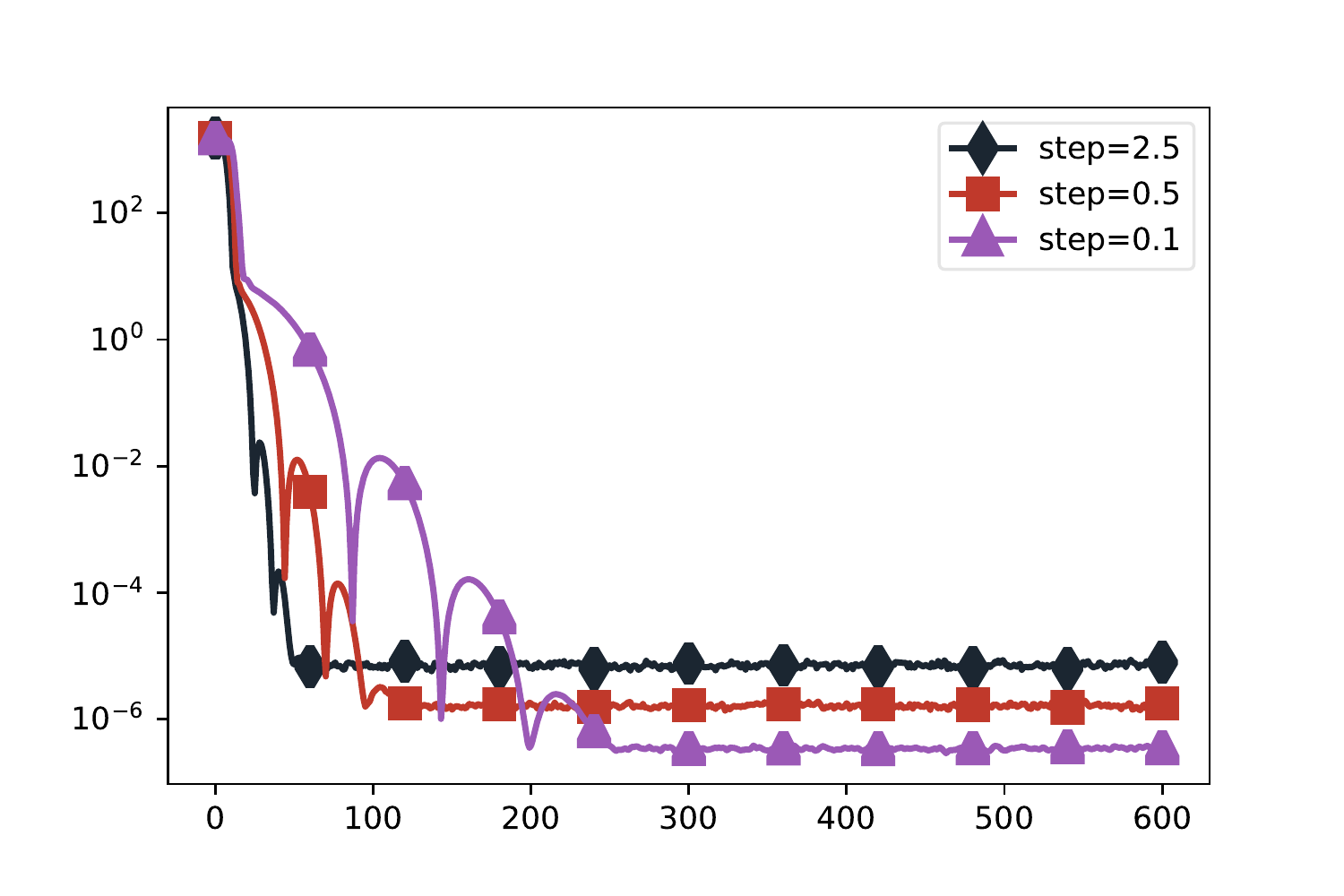}& \includegraphics[width=0.35\linewidth,valign=c]{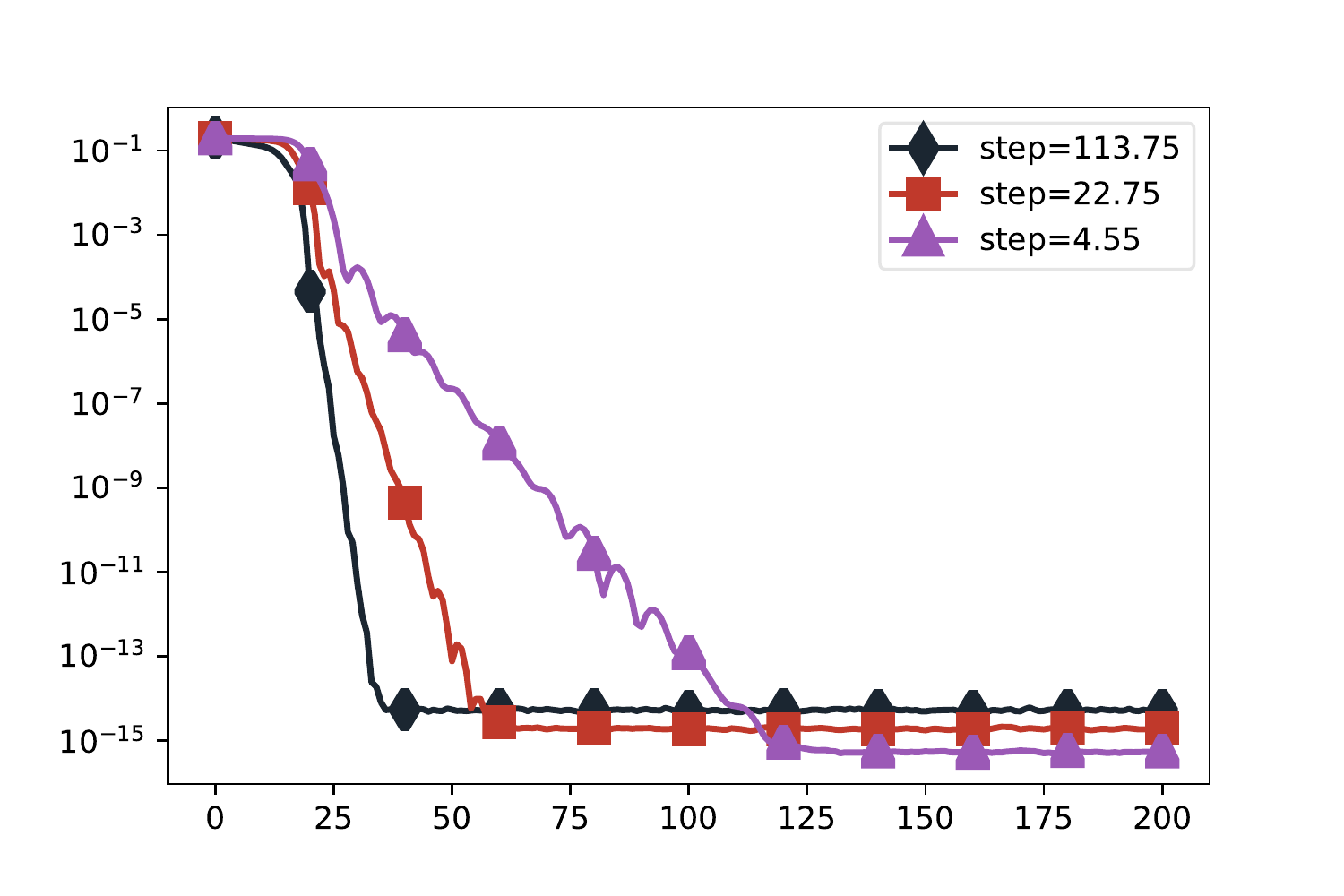}
            \\
        &   \scriptsize{epochs} & \scriptsize{epochs}
   \end{tabular}
   \vspace{-2mm}
          \caption{ \footnotesize{ZO-Varag with varying step-sizes$ =\alpha_s\gamma_s$.}}
          \label{fig:varying_step}
\end{figure}

\vspace{-2mm}

\paragraph{Effect of the smoothing parameter $\boldsymbol{\nu}$.}In this test, we set steps as the biggest step for each scenario in Fig. \ref{fig:varying_step} as we only care about the stalling effects. In Fig. \ref{fig:smoothing_parameter}, we verify that the final error our ZO algorithm is dependent on the smoothing parameter $\nu$ at the pivotal point, i.e. smaller $\nu$ yields smaller error deviating from the optimum. However, we also find that this effect varies depending on the datasets and models being used, and is sometimes negligible: the logistic regression is sensitive to the values of the smoothing parameters, while the ridge regression is not. Note that, as expected, $\mu$ does not influence the steady-state error.
\begin{figure}[htbp]
 \centering          \begin{tabular}{c@{}c@{}c@{}c@{}}
        &   \scriptsize{diabetes, S = 300, b = 5, $\lambda = 1e^{-5}$} & \scriptsize{ijcnn1, S = 100, b = 500, $\lambda = 1e^{-5}$} \\
        \rotatebox[origin=c]{90}{\scriptsize{Logistic}} &    
             \includegraphics[width=0.35\linewidth,valign=c]{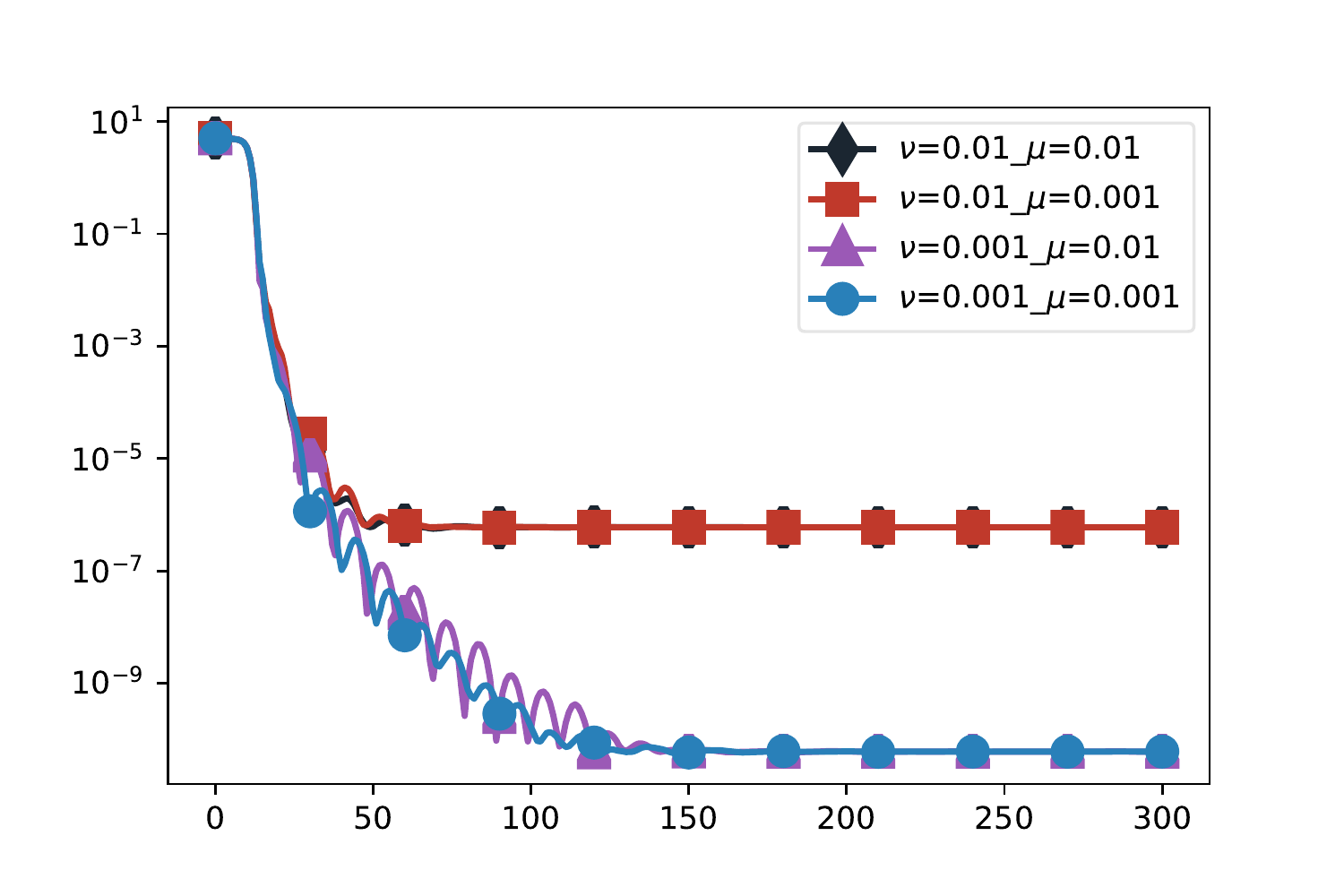}  &  \includegraphics[width=0.35\linewidth,valign=c]{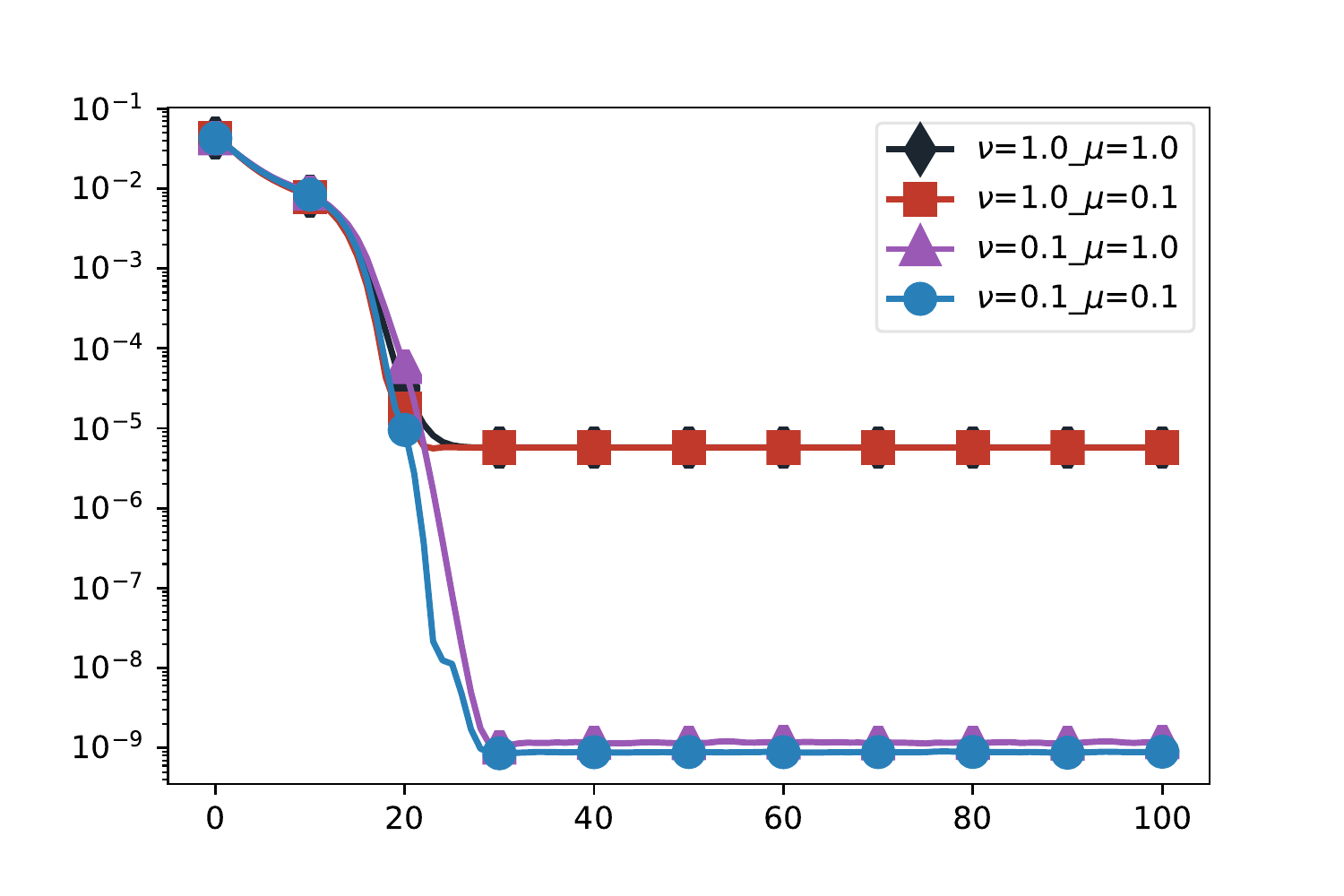}
             \\
              \vspace{-3mm}
        \rotatebox[origin=c]{90}{\scriptsize{Ridge}}&
             
              \includegraphics[width=0.35\linewidth,valign=c]{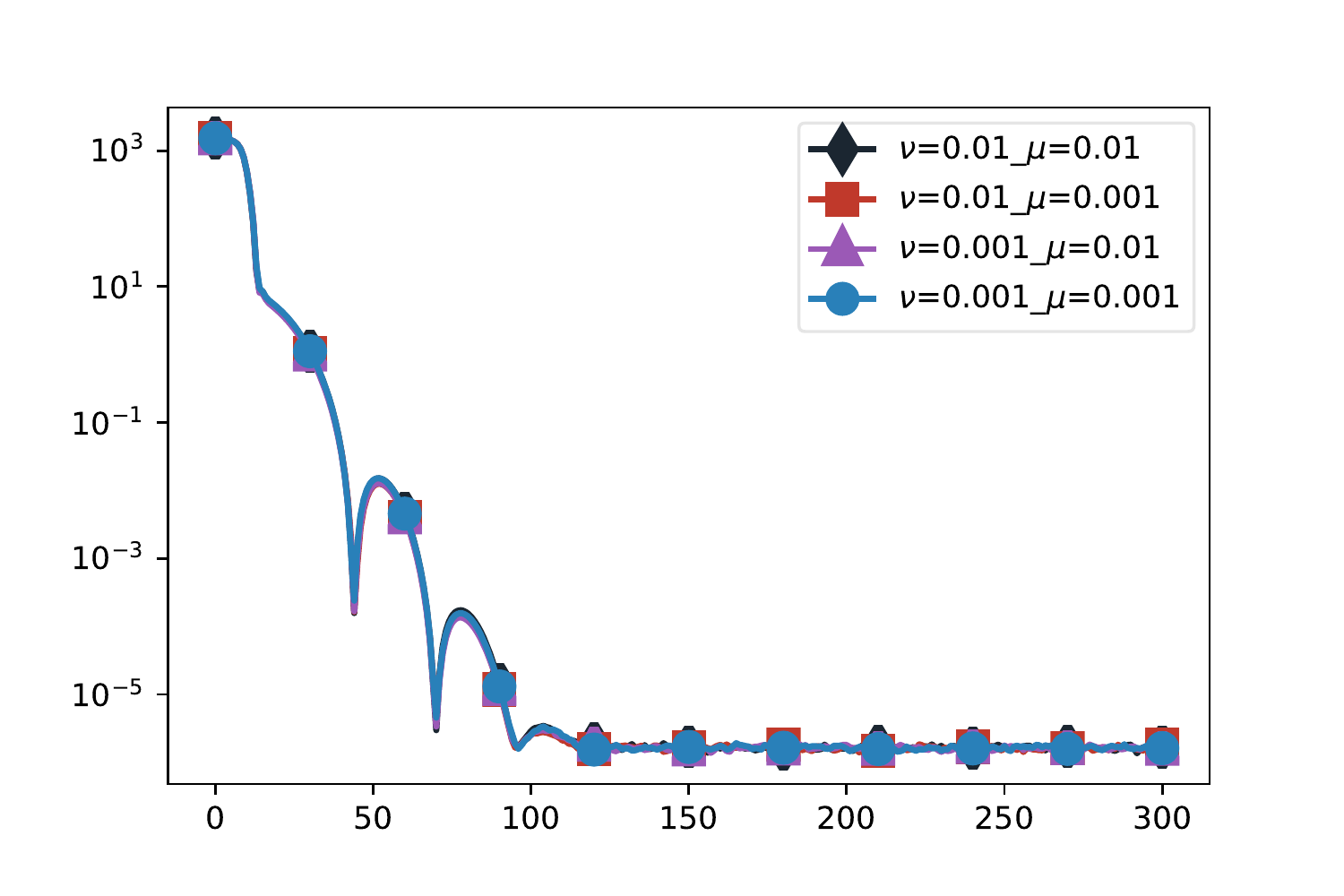}& \includegraphics[width=0.35\linewidth,valign=c]{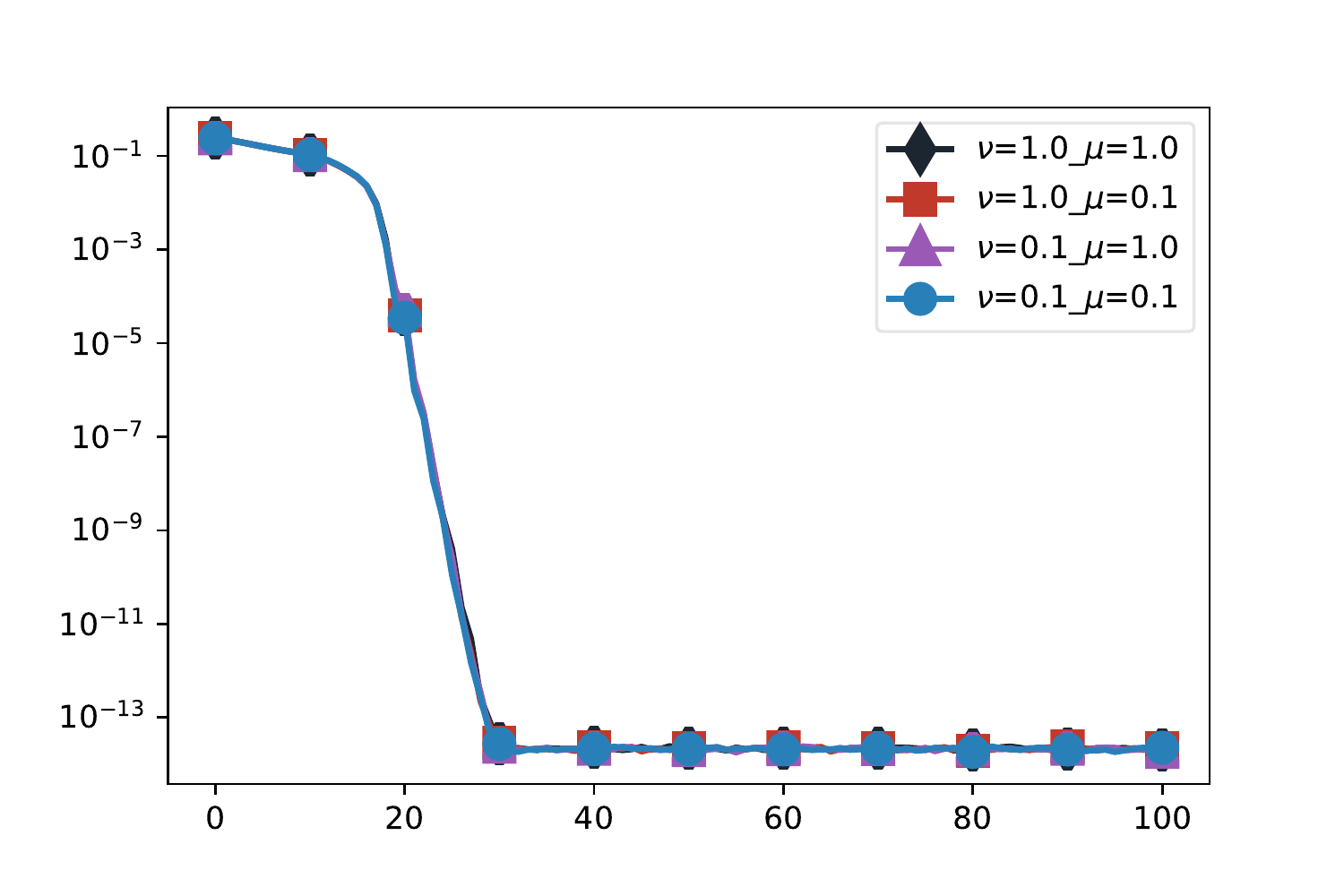}
            \\
        &   \scriptsize{epochs} & \scriptsize{epochs}
   \end{tabular}
          \caption{ \footnotesize{ZO-Varag, varying smoothing parameter $\mu$ and coordinate-wise paramater $\nu$.}}
          \label{fig:smoothing_parameter}
\end{figure}


\paragraph{Comparison with the Coordinate-wise Variant}
\begin{figure}[htbp]
 \centering          \begin{tabular}{c@{}c@{}c@{}c@{}}
        &   \scriptsize{regularizer $\lambda = 0$} & \scriptsize{regularizer $\lambda = 1e^{-5}$} \\
                \vspace{-3mm}
        \rotatebox[origin=c]{90}{\scriptsize{Logistic}} &    
             \includegraphics[width=0.35\linewidth,valign=c]{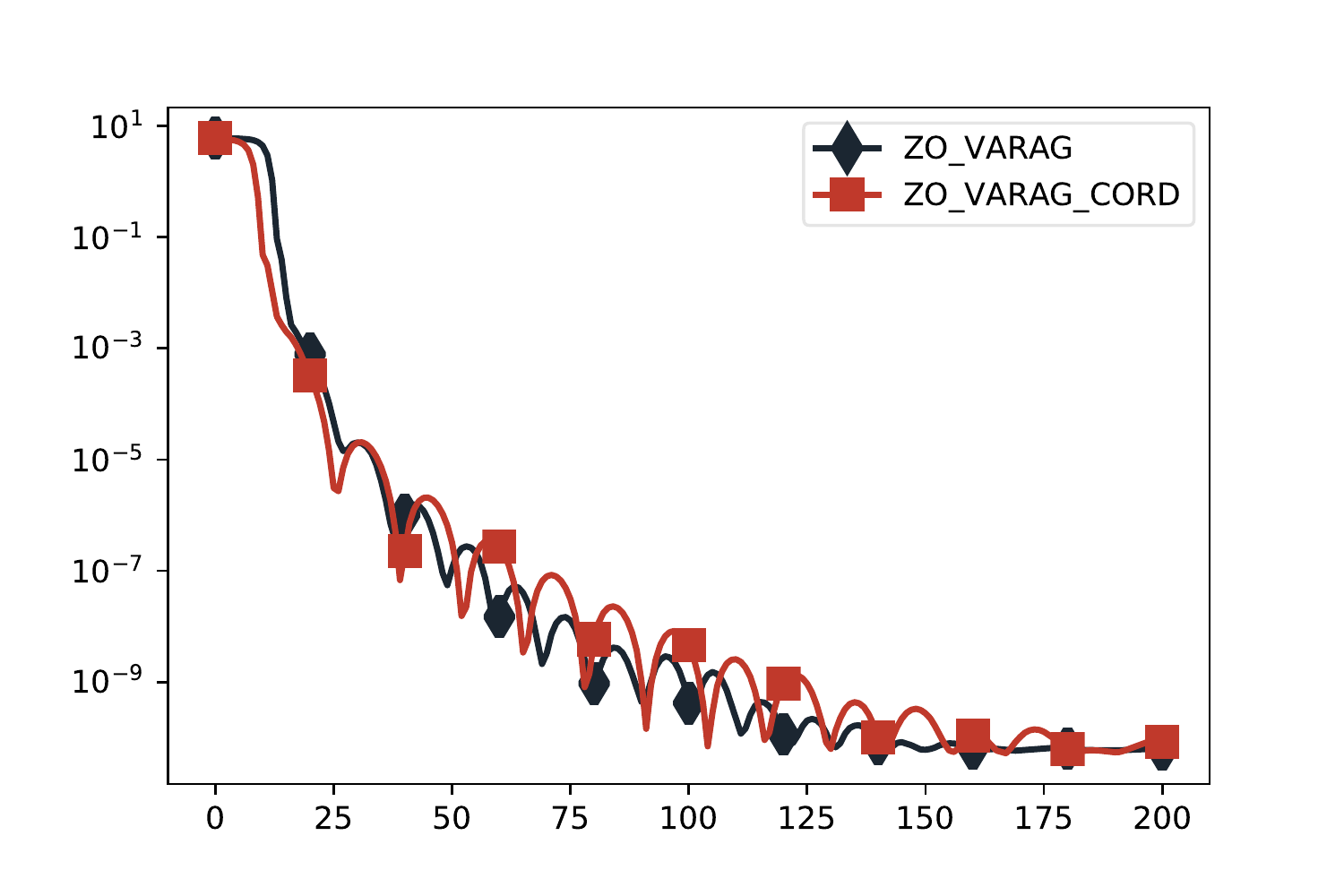}  &  \includegraphics[width=0.35\linewidth,valign=c]{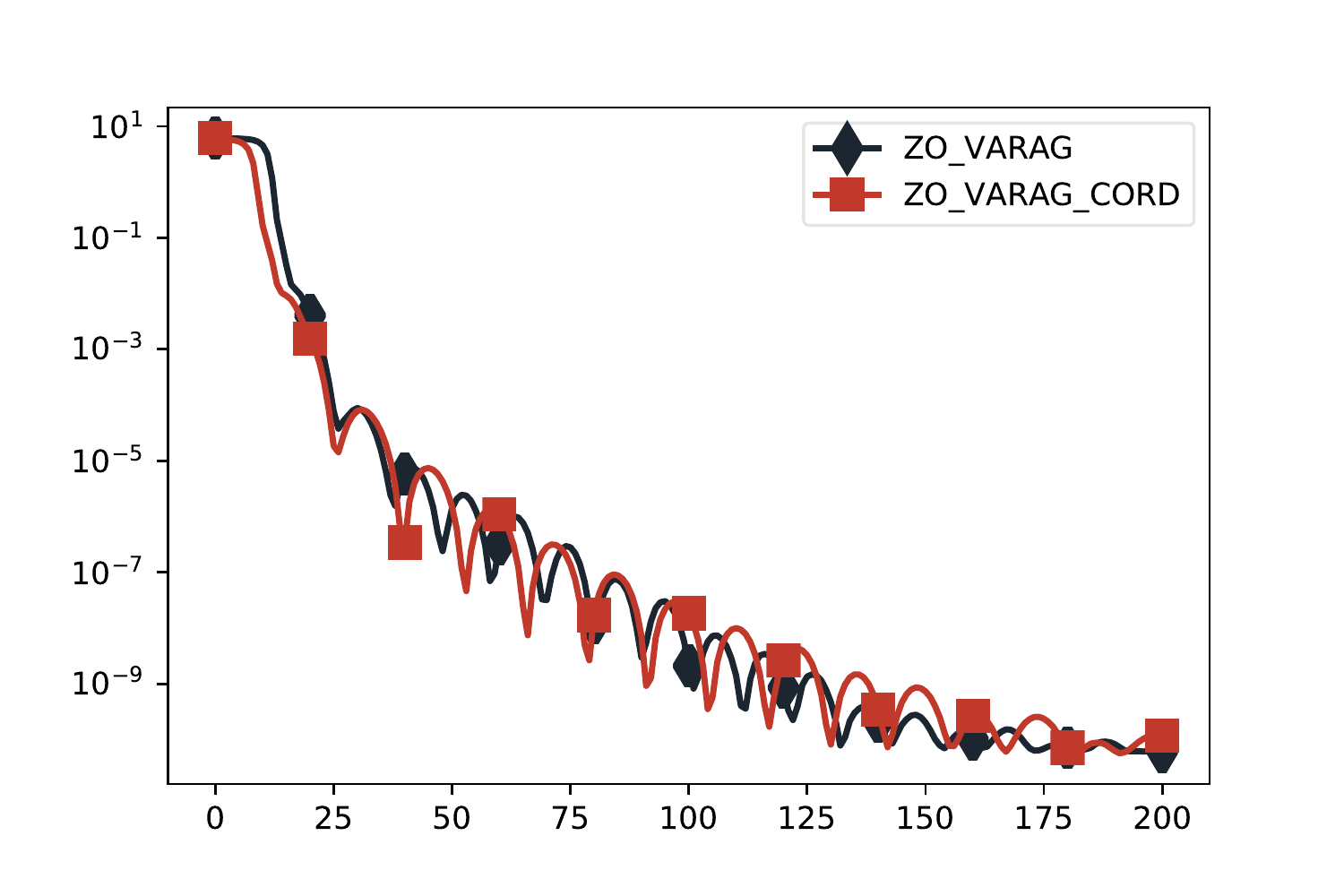}
             \\
                                \vspace{-3mm}
        \rotatebox[origin=c]{90}{\scriptsize{Ridge}}&
             
             \includegraphics[width=0.35\linewidth,valign=c]{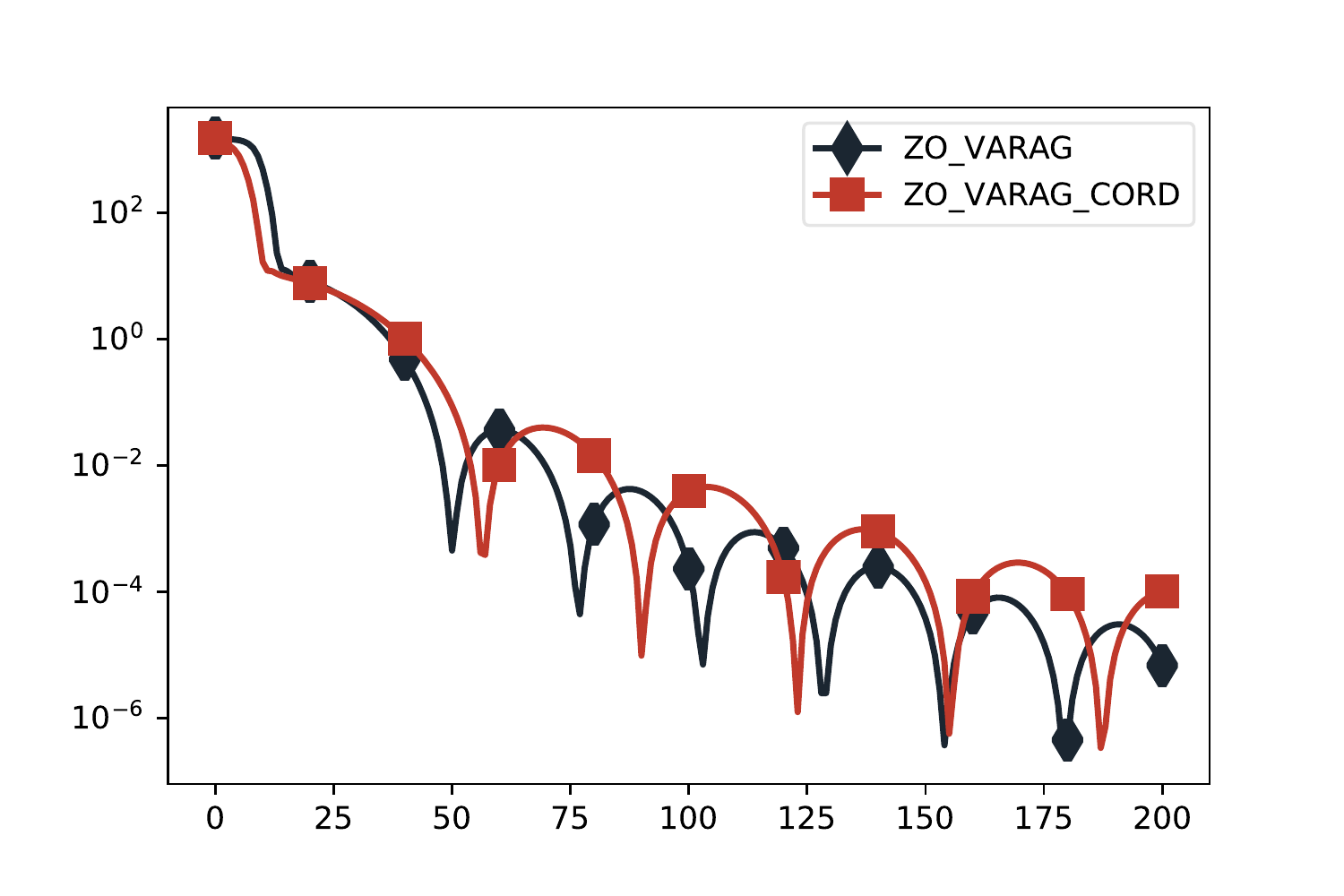}  &  \includegraphics[width=0.35\linewidth,valign=c]{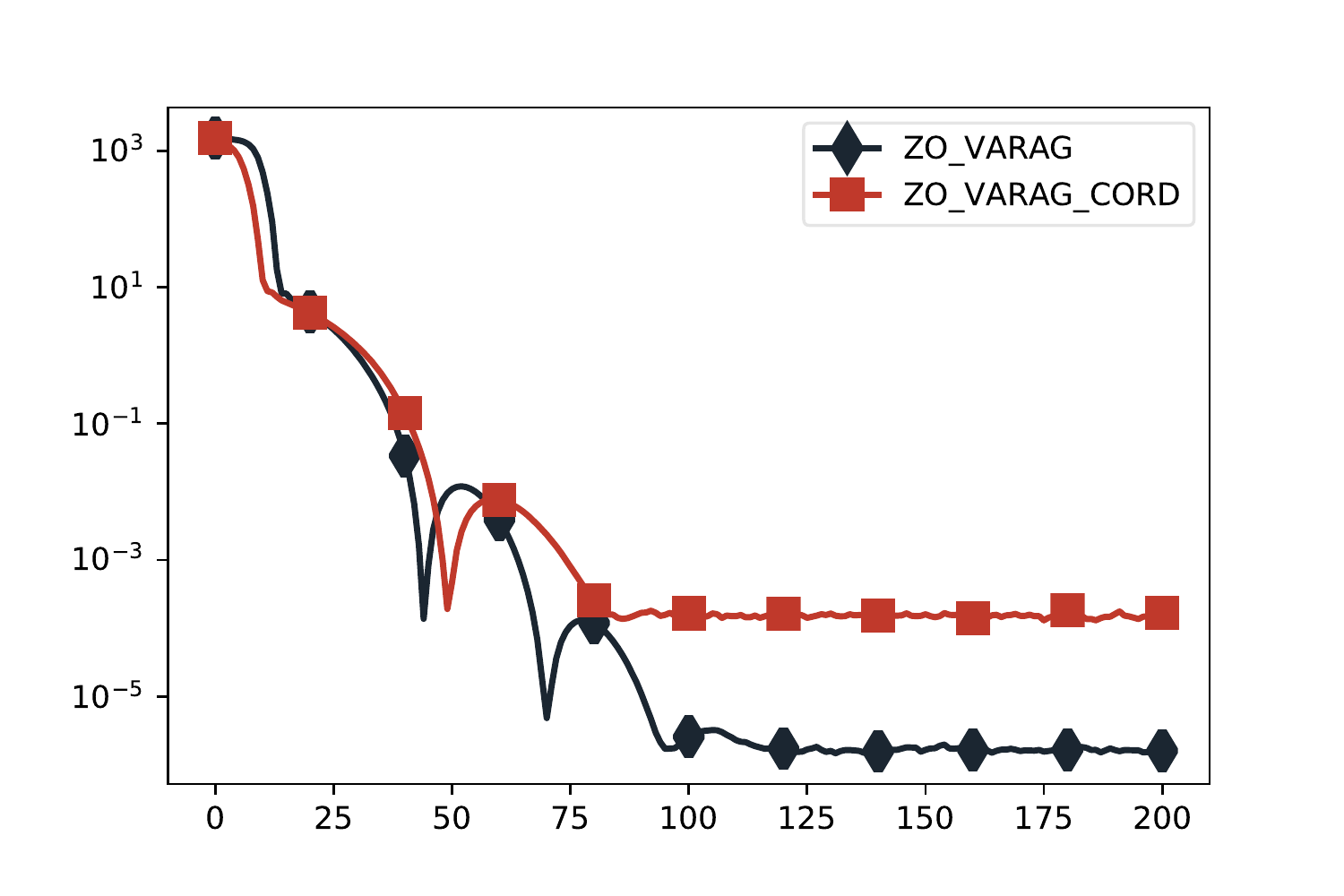}
            \\
        &   \scriptsize{epochs} & \scriptsize{epochs}
   \end{tabular}
          \caption{ \footnotesize{ZO-Varag vs. Coordinate-wise Variant of ZO-Varag (Diabetes)}}
          \label{fig:diabetes_coordinate_variant}
\end{figure}
\vspace{-2mm}
\begin{figure}[htbp]
 \centering          \begin{tabular}{c@{}c@{}c@{}c@{}}
        &   \scriptsize{regularizer $\lambda = 0$} & \scriptsize{regularizer $\lambda = 1e^{-5}$} \\
        \vspace{-3mm}
        \rotatebox[origin=c]{90}{\scriptsize{Logistic}} &    
             \includegraphics[width=0.35\linewidth,valign=c]{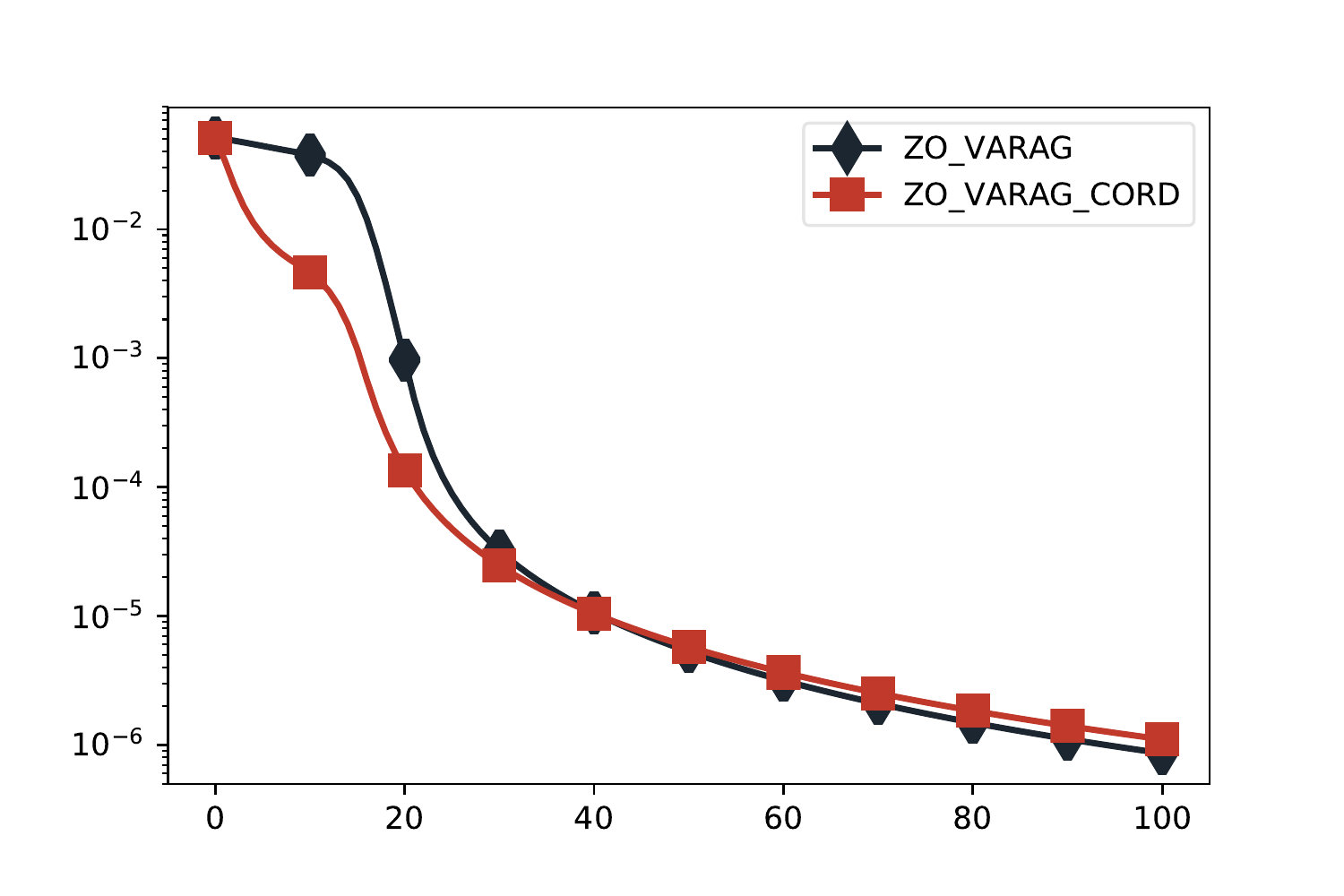}  &  \includegraphics[width=0.35\linewidth,valign=c]{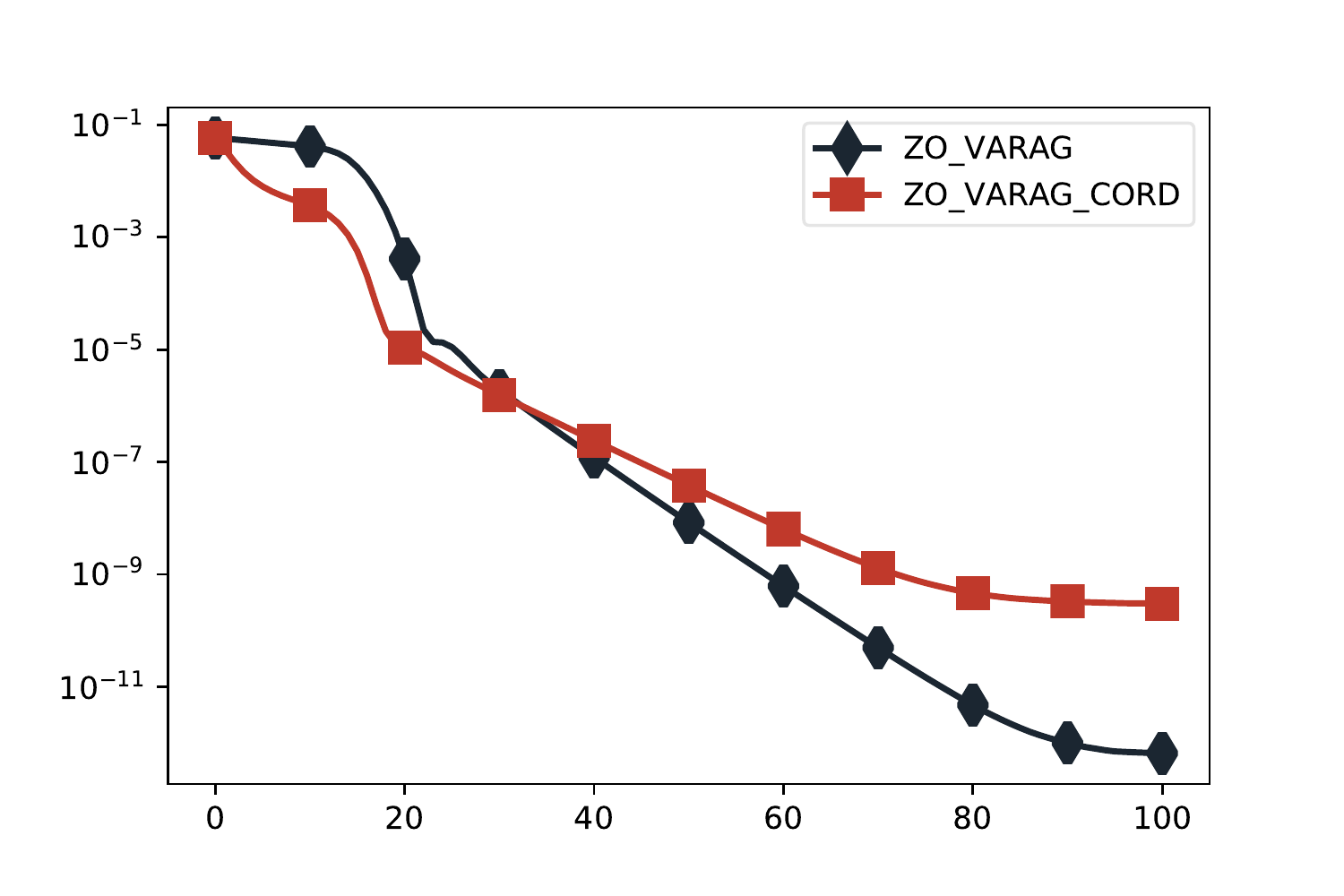}
             \\
                                \vspace{-3mm}
        \rotatebox[origin=c]{90}{\scriptsize{Ridge}}&
             
             \includegraphics[width=0.35\linewidth,valign=c]{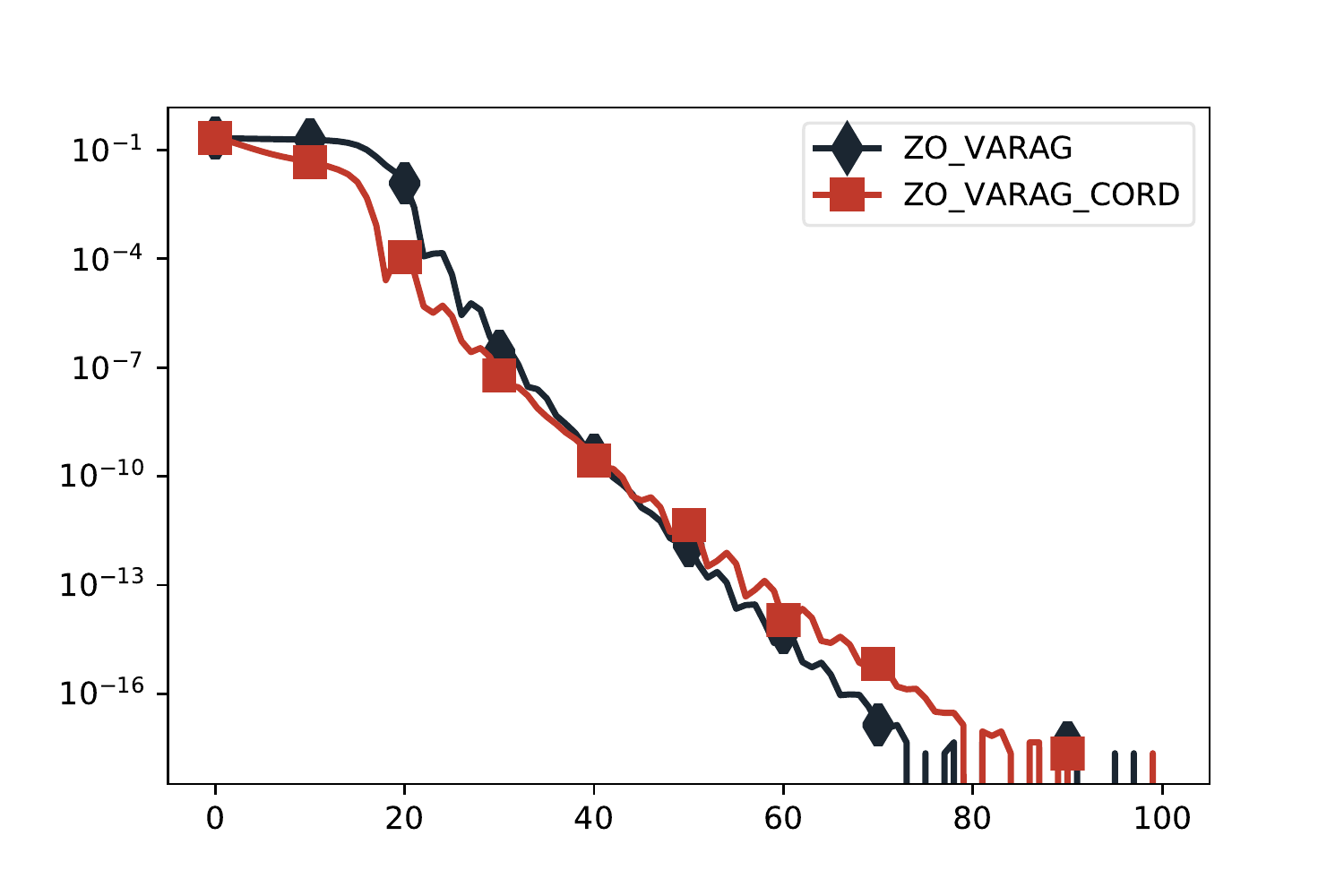}  &  \includegraphics[width=0.35\linewidth,valign=c]{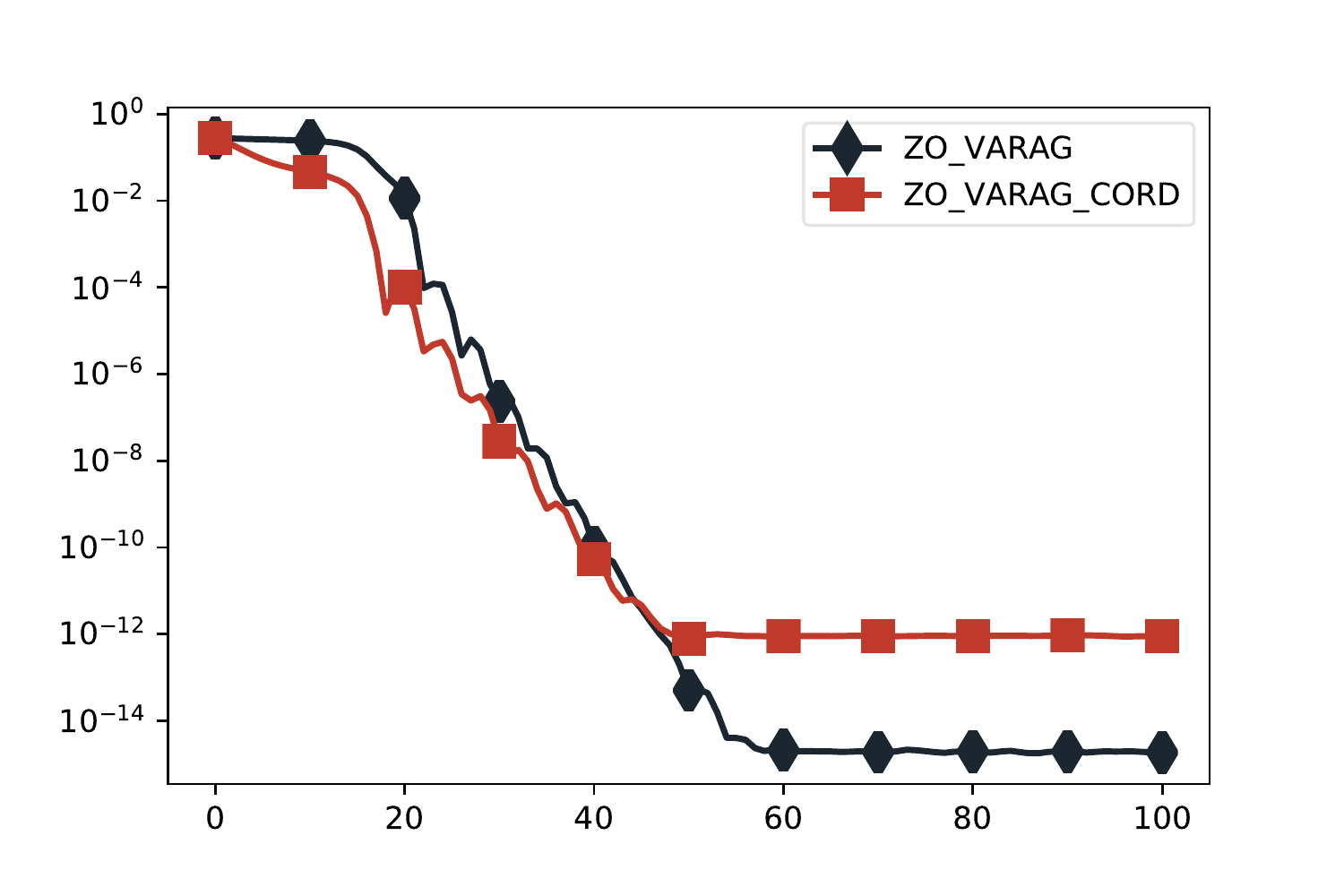}
            \\
        &   \scriptsize{epochs} & \scriptsize{epochs}
   \end{tabular}
          \caption{ \footnotesize{ZO-Varag vs. Coordinate-wise Variant of ZO-Varag (ijcnn1)}}
          \label{fig:ijcnn1_coordinate_variant}
\end{figure}

Finally, we also provide a preliminary test between the ZO-Varag algorithm and its coordinate-wise variant which is introduced in Section~\ref{sec:coordinate-wise}. Although the length of inner loops are not the same for these two algorithms, see different definitions of $s_0$ in Theorem~\ref{VARAG-theorem-1},~\ref{VARAG-theorem-2},~\ref{VARAG-cord-theorem-1},~\ref{VARAG-cord-theorem-2}, we only need to compare the function values at the pivotal points as the function queries are the same inside each inner loop after $s_0$ iterations (defined in Theorem~\ref{VARAG-theorem-1}). The experiments are carried out in Figure~\ref{fig:diabetes_coordinate_variant} and Figure~\ref{fig:ijcnn1_coordinate_variant}, and show that there is almost no difference between the performance of ZO-Varag and the performance of its coordinate-wise variant, except the magnitude of stalling errors. This comes from the fact that the step size for the coordinate-wise variant is $d$ times larger than that for ZO-Varag.

\end{document}